
\documentclass[12pt, leqno]{amsart}

\overfullrule=0pt

\setlength{\textwidth}{15.6cm} \setlength{\textheight}{21cm}
\setlength{\oddsidemargin}{0.0cm}
\setlength{\evensidemargin}{0.0cm}

\usepackage{graphicx}
\usepackage{lscape}
\usepackage{amssymb,amsmath,amsthm,amscd}
\usepackage{mathrsfs}
\usepackage{enumerate}
\usepackage{oldgerm}
\usepackage[usenames,dvipsnames]{color}
\usepackage[colorlinks=true, pdfstartview=FitV,
 linkcolor=blue,citecolor=blue,urlcolor=blue]{hyperref}

\usepackage[all]{xy}

\allowdisplaybreaks[4]
\numberwithin{equation}{section}
\newcommand{\nc}{\newcommand}

\newtheorem{theorem}{Theorem}[section]
\newtheorem{proposition}[theorem]{Proposition}
\newtheorem{corollary}[theorem]{Corollary}
\newtheorem{conjecture}{Conjecture}
\newtheorem{maintheorem}[conjecture]{Main Theorem}
\newtheorem{Itheorem}[conjecture]{Theorem}

\newtheorem{lemma}[theorem]{Lemma}

\newtheorem{sublemma}[theorem]{Sublemma}

\theoremstyle{definition}
\newtheorem{definition}[theorem]{Definition}

\newtheorem{remark}[theorem]{Remark}

\nc{\Th}{\begin{theorem}}
\nc{\enth}{\end{theorem}}
\nc{\Lemma}{\begin{lemma}}
\nc{\enlemma}{\end{lemma}}
\nc{\Cor}{\begin{corollary}}
\nc{\encor}{\end{corollary}}
\nc{\Prop}{\begin{proposition}}
\nc{\enprop}{\end{proposition}}
\nc{\Sub}{\begin{sublemma}}
\nc{\ensub}{\end{sublemma}}


%
%
%



\numberwithin{equation}{section}
\nc{\hs}{\hspace*}

\newcommand{\M}{\mathsf{M}}
\newcommand{\D}{\mathrm{D}}
\nc{\N}{\Z_{\ge0}}
\newcommand{\Z}{\mathbb{Z}}
\newcommand{\B}{\mathbf{B}}
\newcommand{\Q}{\mathbb{Q}}

\newcommand{\A}{{\Z[q^{\pm1}]}}

\newcommand{\g}{\mathfrak{g}}

\newcommand{\n}{\mathfrak{n}}

\newcommand{\Uqg}{U_q(\g)}
\newcommand{\tUqg}{\widetilde{U}_q(\g)}

\newcommand{\seteq}{\mathbin{:=}}

\newcommand{\Hom}{\mathrm{Hom}}

\newcommand{\gmod}{\mbox{-$\mathrm{gmod}$}}

\newcommand{\supp}{{\rm supp}}
\newcommand{\wt}{{\rm wt}}
\newcommand{\ch}{{\rm ch}}


\newcommand{\Seed}{\mathscr{S}}


\newcommand{\rl}{\mathsf{Q}}   
\newcommand{\wl}{\mathsf{P}}   
\newcommand{\cmA}{\mathsf{A}}  
\nc{\on}{\operatorname}


\newcommand{\comp}{\Delta_+}
\newcommand{\comm}{\Delta_-}

\newcommand{\tens}{\mathop\otimes}

\newcommand{\soplus}{\mathop{\mbox{\normalsize$\bigoplus$}}\limits}
\newcommand{\sodot}{\mathop{\mbox{\normalsize$\bigodot$}}\limits}
\newcommand{\snconv}{\mbox{\scriptsize$\odot$}}
\newcommand{\shconv}{\mathbin{\large\diamond}}

\newcommand{\conv}{\mathop{\mathbin{\mbox{\large $\circ$}}}\limits}

\newcommand{\hconv}{\mathbin{\mbox{\large $\diamond$}}}

\nc{\tensp}{\otimes_{_+}\mspace{-1mu}}
\nc{\tensm}{\otimes_{_-}\mspace{-1mu}}

\newcommand{\ko}{\mathbf{k}}
\newcommand{\te}{\tilde{e}}
\newcommand{\tf}{\tilde{f}}
\newcommand{\ve}{\varepsilon}
\newcommand{\vs}{\overline{p}}
\newcommand{\vp}{\varphi}

\newcommand{\lan}{\langle}
\newcommand{\ran}{\rangle}
\newcommand{\ra}{\rangle}
\nc{\la}{\lambda}

\newcommand{\laa}{\langle}
\newcommand{\raa}{\rangle}

\newcommand{\vpi}{{\varpi_i}}

\newcommand{\low}{\mathrm{low}}
\newcommand{\up}{\mathrm{up}}

\newcommand{\ex}{\mathrm{ex}}
\newcommand{\fr}{\mathrm{fr}}

\newcommand{\Oint}{\mathcal{O}_{{\rm int}}}

\newcommand{\ri}{{\mspace{1mu}\rm r}}
\newcommand{\li}{{\rm l}}

\newcommand{\rmat}[1]{{\mathbf{r}}_{\mspace{-2mu}\raisebox{-.6ex}{${\scriptstyle{#1}}$}}}
\newcommand{\soc}{{\rm soc}}
\newcommand{\hd}{{\rm hd}}

\newcommand{\wB}{\widetilde{B}}
\newcommand{\La}{\Lambda}
\newcommand{\tLa}{\widetilde{\Lambda}}
\newcommand{\Lto}{\longrightarrow}

\nc{\de}{\on{\textfrak{d}}}

\newcommand{\eqn}{\begin{eqnarray*}}
\newcommand{\eneqn}{\end{eqnarray*}}

\newcommand{\Dv}{\mathbf{D}_\varphi}

\newcommand{\oi}{\overline{\iota}}
\newcommand{\sym}{\mathfrak{S}}

\newcommand{\K}{{J}}
\nc{\Kfr}{\K_{\mathrm{fr}}}
\nc{\Kex}{\K_{\mathrm{ex}}}

\newcommand{\gMod}{\mbox{-$\mathrm{gMod}$}}
\newcommand{\smod}{\mbox{-$\mathrm{mod}$}}
\newcommand{\Mod}{\mbox{-$\mathrm{Mod}$}}

\newcommand{\isoto}[1][]{\mathop{\xrightarrow%
[{\raisebox{.3ex}[0ex][.3ex]{$\scriptstyle{#1}$}}]%
{{\raisebox{-.6ex}[0ex][-.6ex]{$\mspace{2mu}\sim\mspace{2mu}$}}}}}

\nc{\cl}{\colon}
\nc{\ol}{\overline}
\nc{\Um}{U_q^-(\g)}
\nc{\U}[1][{\g}]{U_q(#1)}
\nc{\ro}{{\rm(}}
\nc{\rf}{{\rm)}}
\nc{\bio}{{\bar{\iota}}}
\nc{\bg}{{\bio_\g}}
\nc{\tU}[1][\g]{\widetilde{U}_q(#1)}
\nc{\An}{A_q(\n)}
\nc{\set}[2]{\left\{{#1}\mid{#2}\right\}}
\nc{\be}{\begin{enumerate}}
\nc{\ee}{\end{enumerate}}
\nc{\bnum}{\be[{\rm(i)}]}
\nc{\bnam}{\be[{\rm(a)}]}
\nc{\bl}{\bigl(}
\nc{\br}{\bigr)}
\newcommand{\To}[1][{\hs{2ex}}]{\xrightarrow{\,#1\,}}
\nc{\shc}{\mathcal{C}}
\nc{\ba}{\begin{array}}
\nc{\ea}{\end{array}}
\nc{\eq}{\begin{eqnarray}}
\nc{\eneq}{\end{eqnarray}}
\nc{\nn}{\nonumber}
\newcommand{\monoto}{\rightarrowtail}
\nc{\epito}{\twoheadrightarrow}
\nc{\noi}{\noindent}
\nc{\eps}{\varepsilon}
\nc{\tEs}{\widetilde{E}^*}
\def\max{{\mathop{\mathrm{max}}}}
\nc{\al}{\alpha}
\nc{\rtl}{\rl}
\nc{\Proof}{\begin{proof}}
\nc{\QED}{\end{proof}}
\nc{\pn}{p_\n}
\nc{\wb}[1]{\mbox{$\rule[-1.1ex]{0ex}{2ex}#1$}}
\nc{\vphi}{\varphi}
\nc{\dP}{\mathrm{E}^*}
\nc{\Up}{U_q^+(\g)}
\nc{\Ag}{A_q(\g)}
\nc{\QA}{\mathbf{A}}
\nc{\id}{\mathrm{id}}
\nc{\Pd}{\wl^+}
\nc{\tLt}{\widetilde{L}}
\nc{\Po}{\wl}
\nc{\De}[1]{\Delta(#1)}
\nc{\rt}{\ri}
\newcommand{\scbul}{{\,\raise1pt\hbox{$\scriptscriptstyle\bullet$}\,}}
\nc{\cor}{\ko}
\nc{\prtl}{\rtl_+}
\nc{\nrtl}{\rtl_-}
\nc{\Rm}{R^{\mathrm{ren}}}
\nc{\lt}{\mathrm{l}}
\nc{\tE}{\widetilde{E}}
\nc{\tF}{\widetilde{F}}
\nc{\bc}{\begin{cases}}
\nc{\ec}{\end{cases}}
\nc{\one}{\mathbf{1}}
\nc{\wtl}{\wt_\lt}
\nc{\wtr}{\wt_\rt}
\nc{\qtext}[1][{and}]{\quad\text{#1}\quad}
\nc{\Cmp}{\comp}
\nc{\Cmm}{\comm}
\nc{\Cm}{\Delta}
\nc{\Uq}{\U}

\newlength{\mylength}
\setlength{\mylength}{0.3\textwidth}

\begin{document}

\title[Monoidal categorification of cluster algebras II]
{Monoidal categorification of cluster algebras II}

\author[S.-J. Kang, M. Kashiwara, M. Kim, Se-jin Oh]{Seok-Jin Kang$^{1}$, Masaki Kashiwara$^{2}$, Myungho Kim, Se-jin Oh$^3$}

\address{Department of Mathematical Sciences
         and
         Research Institute of Mathematics \\
         Seoul National University \\ Seoul 151-747, Korea}
         \email{sjkang@math.snu.ac.kr}

\address{Research Institute for Mathematical Sciences \\
          Kyoto University \\ Kyoto 606-8502, Japan \\
          \& Department of Mathematical Sciences
         and
         Research Institute of Mathematics \\
         Seoul National University \\ Seoul 151-747, Korea}
         \email{masaki@kurims.kyoto-u.ac.jp}

\address{School of Mathematics, Korea Institute for Advanced Study \\ Seoul 130-722, Korea}
         \email{mhkim@kias.re.kr}

\address{Department of Mathematical Sciences
         and
         Research Institute of Mathematics \\
         Seoul National University \\ Seoul 151-747, Korea}
         \email{sj092@snu.ac.kr}

\thanks{$^1$ This work was supported by NRF grant \# 2014021261
and NRF grant \# 2013055408
}
\thanks{$^2$ This work was supported by Grant-in-Aid for
Scientific Research (B) 22340005, Japan Society for the Promotion of
Science.}
\thanks{$^3$ This work was supported by BK21 PLUS SNU Mathematical Sciences Division}

\keywords{Cluster algebra, Quantum cluster algebra,
Monoidal categorification,
Khovanov-Lauda-Rouquier algebra}

\subjclass[2010]
{13F60, 81R50, 17B37}
\date{February 23, 2015}

\maketitle

\begin{abstract}
We prove that the quantum unipotent coordinate algebra $A_q(\n(w))$
associated with a symmetric Kac-Moody algebra and
its Weyl group element $w$
has a monoidal categorification as a quantum cluster algebra.
As an application of our earlier work,  we achieve it by showing the
existence of a quantum monoidal seed of $A_q(\n(w))$
which admits the first-step mutations in all the directions.
As a consequence, we solve the conjecture that
any cluster monomial is a member of the
upper global basis up to a power of $q^{1/2}$.

\end{abstract}

\tableofcontents

\section*{Introduction}
The quantum unipotent coordinate ring $A_q(\mathfrak n)$,
which is isomorphic to the negative  half  $U^-_q(\g)$ of a quantized enveloping algebra $U_q(\g)$ associated with a  symmetrizable Kac-Moody algebra $\g$,
has  very interesting bases so called
{\em upper global basis} and {\em lower global basis} (\cite{Kash91}).
In particular, the upper global basis $\bold B^\up$ has been studied
 emphasizing on its multiplicative structure.
For example, Berenstein and Zelevinsky (\cite{BZ93})
conjectured that,
 in the case $\g$ is of type $A_n$,
the product $b_1 b_2$ of two elements $b_1$ and
$b_2$ in $\bold B^\up$ is again an element of $\bold B^\up$ up to a
multiple of a power of $q$
 if and only if they are $q$-commuting;
 i.e., \ $b_1b_2=q^mb_2b_1$ for some $m\in\Z$.
This conjecture turned out to be not true in general,
because Leclerc (\cite{L03}) found examples of
an {\em imaginary}  element $b \in \bold B^\up$
such that $b^2$ does not belong to $\bold B^\up$.
Nevertheless, the idea of considering  subsets of $\bold B^\up$
whose elements are $q$-commuting with each other and
 studying the relations between those subsets has survived
 and becomes one of the motivations  of {\em \ro quantum\rf\ cluster algebras} (\cite{FZ02}).

 A cluster algebra is a $\Z$-subalgebra of a rational function field
given by a set of generators,
called the {\em cluster variables}. They are grouped
into overlapping subsets, called {\em clusters} and
there is a procedure called {\em mutation} which produces new clusters successively from a given  {\em
initial} cluster.
A quantum cluster algebra is a non-commutative  $q$-deformation of a cluster algebra,
and a {\em quantum cluster} is a family of  mutually $q$-commuting elements of it (\cite{BZ05}).
There are many examples of algebras which turned out to  be  (quantum) cluster algebras.
In particular,
 Gei\ss, Leclerc and
Schr{\"o}er showed that the quantum unipotent coordinate algebra
$A_q(\n(w))$
has a quantum cluster algebra structure (\cite{GLS11}).
Here $A_q(\n(w))$ is a subalgebra of the $\Q(q)$-algebra $A_q(\mathfrak n)\simeq\Um$ , which is  associated with a symmetric Kac-Moody algebra $\g$
and its Weyl group element $w$,

It is shown by Kimura \cite{Kimu12} that $ \bold B^\up \cap A_q(\n(w))$ is a basis of $A_q(\n(w))$.
 Then, in terms of quantum cluster algebras, Berenstein-Zelevinsky's ideas
   can be generalized and reformulated in the following form:
\begin{conjecture} [{\cite[Conjecture 12.9]{GLS11}, \cite[Conjecture 1.1(2)]{Kimu12}}] \label{conj:intro}
When $\g$ is of symmetric type,  every quantum cluster monomial in
$A_{q^{1/2}}(\n(w))\seteq\Q(q^{1/2})\otimes_{\Q(q)}A_q(\n(w))$ belongs to the upper global basis up to a power of $q^{1/2}$.
\end{conjecture}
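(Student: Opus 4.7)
The plan is to derive Conjecture~\ref{conj:intro} from the general monoidal categorification criterion proved in Part~I of this series. The target category is $\shc_w$, the monoidal subcategory of graded finite dimensional modules over the symmetric KLR algebra $R$ whose Grothendieck ring is identified with $A_q(\n(w))$. By the KLR categorification theorem together with the Varagnolo-Vasserot result in the symmetric case, the classes of self-dual simple objects of $R\gmod$ coincide, up to a power of $q^{1/2}$, with the upper global basis $\B^\up$ of $A_q(\n)$; Kimura's theorem then identifies $\B^\up \cap A_q(\n(w))$ with the classes of self-dual simples of $\shc_w$. It therefore suffices to show that every quantum cluster monomial is, up to a power of $q^{1/2}$, the class of a self-dual simple object of $\shc_w$.

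The key input from Part~I is a criterion of the following form: if $\Seed = \bl \{M_k\}_{k\in\K}, \wB \br$ is a quantum monoidal seed in $\shc_w$ that admits first-step mutations in every exchangeable direction $k\in\Kex$, then every cluster monomial obtained from $\Seed$ by iterated mutation lifts to the class of a real simple self-dual object. My first step is therefore to exhibit one such initial seed. Fix a reduced expression $\tw = s_{i_1}\cdots s_{i_\ell}$ of $w$. Following Gei\ss-Leclerc-Schr\"oer, take $M_k$ to be the determinantal module categorifying the unipotent quantum minor $\D(\varpi_{i_k},\, w_{\le k}\varpi_{i_k})$ for $1 \le k \le \ell$, declare those $k$ for which $i_k$ does not reappear later to be frozen, and equip the family with the Gei\ss-Leclerc-Schr\"oer exchange matrix $\wB$. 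One verifies that each $M_k$ is real simple and that the $M_k$ pairwise commute up to a power of $q^{1/2}$, using R-matrices between determinantal modules; the compatibility of $\wB$ with the GLS cluster structure on $A_q(\n(w))$ then upgrades $(\{M_k\},\wB)$ to a genuine quantum monoidal seed whose image in $K(\shc_w)$ is the initial GLS seed.

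The main obstacle, and the core of the work, is to establish the first-step mutation property in every exchangeable direction. Concretely, for each $k\in\Kex$ one must produce, inside $\shc_w$, a short exact sequence of the shape
\[
0 \To \sotimes_{i\,:\, b_{ik}<0} M_i^{\tens (-b_{ik})} \To M_k \tens M_k^* \To \sotimes_{i\,:\, b_{ik}>0} M_i^{\tens b_{ik}} \To 0,
\]
in which $M_k^*$ is a new real simple self-dual object that $q$-commutes with every $M_j$ for $j\ne k$. I expect this to be the technically hardest step: it requires a delicate analysis of heads, socles, and R-matrix invariants (the quantities $\La$ and $\de$ attached to pairs of real simples via the renormalized R-matrix $\Rren$) in order to identify $M_k^*$ as the head of the appropriate tensor product, the socle of the other, and to verify exactness. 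Such mutation sequences play the role of higher-rank T-systems. Once they are constructed uniformly in $k$, the criterion of Part~I applies and delivers Conjecture~\ref{conj:intro}.
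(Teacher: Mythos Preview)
Your proposal is correct and follows essentially the same route as the paper: reduce Conjecture~\ref{conj:intro} to the admissibility criterion of Part~I applied to the GLS initial seed $\{\M(k,0)\}_{1\le k\le r}$ in $\shc_w$, and then establish the first-step mutation sequence in each exchangeable direction. The paper's additional content beyond your outline is the explicit identification of the mutated module as $M_k' = \M(k_+,k)\hconv A$ (where $A$ is the convolution over the ordinary incoming arrows at $k$), together with the construction of the required short exact sequences from the T-system, the head epimorphism $\M(\mu_1,\mu_2)\conv\M(\mu_2,\mu_3)\twoheadrightarrow\M(\mu_1,\mu_3)$, and the computation $\de(\M(k_+,k),\M(k,0))=1$.
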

There are some partial results of this conjecture. It is proved
 for $\g= A_2$, $A_3$, $A_4$ and $ A_q(\n(w))=\An$ in \cite{BZ93} and
 \cite[Section 12]{GLS05},  for $\g=A^{(1)}_1$, $A_n$ and $w$ is a square of  a Coxeter element in \cite{Lampe11} and \cite{Lampe14},  when $\g$ is symmetric and $w$ is a square of  a Coxeter element
 in \cite{KQ14}.

In this paper, we  prove the above conjecture completely by showing that there exists a
{\em  monoidal categorification of $A_{q^{1/2}}(\n(w))$} along the lines of our previous work \cite{KKKO15}.
Note that Nakajima proposed a geometric approach of this conjecture via  quiver varieties (\cite{Nak13}).

Let us briefly recall the notion of monoidal categorifications of quantum cluster algebras.
Let  $\mathcal C$  be an
abelian monoidal category equipped with
 an auto-equivalence  $q$ and
a tensor product which is compatible with a decomposition
$\shc=\soplus\nolimits_{\beta \in \rtl}\shc_\beta$.
Fix a finite index set $\K=\K_\ex \sqcup \K_\fr$ with a decomposition into
the exchangeable part and the frozen part.
Let $\Seed$ be a quadruple $(\{M_i\}_{i \in\K}, L,\wB, D)$
of a family of simple objects $\{M_i\}_{i \in\K}$
in $\mathscr C$, an  integer-valued skew-symmetric $\K \times \K$-matrix $L=(\lambda_{i,j})$,
an integer-valued $\K \times \K_\ex$-matrix $\wB = (b_{i,j})$ with skew-symmetric principal part, and a family of elements  $D=\{d_i\}_{i \in\K}$ in $\rl$.
If those data  satisfy the conditions in
Definition \ref{def:quantum monoidal seed} below, then we call it a {\em quantum monoidal seed} in $\mathcal C$.
For each $k\in\K_\ex$, we have mutations $\mu_k(L),\mu_k(\widetilde B)$ and $\mu_k(D)$ of $L,B$ and $D$, respectively.
We say that a quantum monoidal seed
$\mathscr S =(\{M_i\}_{i\in\K}, L,\widetilde B, D)$
 \emph{admits a mutation in direction $k\in\K_\ex$,} if
there exists  a simple object  $M_k' \in \shc_{\mu_k(D)_k}$
which fits into two short exact sequences \eqref{eq:intro} below
in $\mathcal C$
{\em reflecting} the mutation rule in quantum cluster algebras, and thus obtained
 quadruple $\mu_k(\Seed)\seteq(\{M_i\}_{i\neq k}\cup\{M_k'\},\mu_k(L), \mu_k(\widetilde B), \mu_k(D))$
is again a quantum monoidal seed in $\shc$.
We call $\mu_k(\Seed)$ the mutation of $\Seed$ in direction $k\in\K_\ex$.

Now the category $\shc$ is called a {\em monoidal categorification of a quantum cluster algebra $A$ over $\Z[q^{\pm1/2}]$}
if
{\rm (i)} the Grothendieck ring $\Z[q^{\pm1/2}]\tens_{\Z[q^{\pm1}]} K(\shc)$ is isomorphic to $A$,
{\rm (ii)} there exists a quantum monoidal seed
$\mathscr S =(\{M_i\}_{i\in\K}, L,\widetilde B, D)$ in $\shc$ such that
$[\mathscr S]\seteq(\{q^{m_i}[M_i]\}_{i\in\K}, L, \widetilde B)$
 is a quantum seed of $A$ for some $m_i \in \frac{1}{2}\Z$,
and {\rm (iii)} $\mathscr S$ admits successive mutations in all directions in $\K_\ex$.
Note that if $\shc$ is a monoidal categorification of $A$,  all the quantum cluster monomials in $A$ are the
classes of simple objects in $\shc$ up to a power of $q^{1/2}$.

In  the case of quantum unipotent coordinate ring $A_q(\n)$, there is a natural candidate for monoidal categorification, the category of finite-dimensional graded modules over a {\em Khovanov-Lauda-Rouquier algebras} (\cite{KL09,KL11}, \cite{R08}).
The Khovanov-Lauda-Rouquier algebras (abbreviated by KLR algebras)
 are a family of $\Z$-graded algebras
 $\{ R(\beta) \}_{\beta \in \rl^+}$ such that the Grothendieck
ring of $R \gmod \seteq \bigoplus_{\beta \in \rl^+}R(\beta)\gmod$, the direct sum
of the categories of finite-dimensional graded $R(\beta)$-modules, is
isomorphic to the integral form $A_q(\n)_{\Z[q^{\pm 1}]}$ of
$A_q(\n)$.
 Here $\rl^+$ denotes the positive root lattice of the corresponding
 symmetrizable Kac-Moody algebra $\g$.
The multiplication of $K(R \gmod)$ is given by
the {\em convolution product} of modules, and the action of $q$ is given by
the grading shift functor.
In \cite{VV09, R11},
Varagnolo--Vasserot and Rouquier
proved that the upper global basis $\bold B^\up$ of $A_q(\n)$
corresponds to the
set of the classes of all {\em self-dual} simple modules of $R
\gmod$ under the assumption that $R$ is a symmetric KLR algebra
with the base field of characteristic $0$.
For each  Weyl group element $w$, let $\mathcal C_w$
be the full subcategory of $R\gmod$
consisting of objects $M$ such that
$[M]$ belongs to $A_q(\n(w))$.
Then, $\shc_w$ is an abelian monoidal category
whose Grothendieck ring is isomorphic to $A_q(\n(w))$ and its simple objects correspond to the basis $ \bold B^\up \cap A_q(\n(w))$ of $A_q(\n(w))$.
In particular,  Conjecture \ref{conj:intro} is equivalent to saying that
when $\g$ is of symmetric type,  every quantum cluster monomial in
$\Q(q^{1/2})\otimes_{\Z[q^{\pm 1}]}K(\mathcal C_w)$ belongs to the class of self-dual simple modules up to a power of $q^{1/2}$.

In \cite{KKKO15}, to simplify the conditions of quantum monoidal seeds and their mutations, we introduce
the notion of {\em admissible pairs} in $\mathcal C_w$.
 A pair $(\{M_i\}_{i \in\K}, \widetilde B)$ is called admissible in $\mathcal C_w$ if
(i)  $\{M_i\}_{i \in\K}$ is a commuting family
of self-dual real simple  objects of $\shc_w$,
(ii) $\widetilde B$ is an integer-valued $\K \times \K_\ex$-matrix with skew-symmetric principal part,
and (iii)
 for each $k \in\K$, there exists a  self-dual  simple object $M'_k$ in $\shc_w$
such that $M_k'$ commutes with $M_i$  for all  $i\in\K\setminus\{k\}$
and there are exact sequences in $\shc_w$
\eq&&\ba{l}
 0 \to q \sodot_{b_{i,k} >0} M_i^{\snconv b_{i,k}} \to q^{\tLa(M_k,M_k')} M_k \conv M_k' \to
 \sodot_{b_{i,k} <0} M_i^{\snconv (-b_{i,k})} \to 0\\
 0 \to q \sodot_{b_{i,k} <0} M_i^{\snconv(- b_{i,k})} \to q^{\tLa(M_k',M_k)} M'_k \conv M_k
 \to \sodot_{b_{i,k} >0} M_i^{\snconv b_{i,k}} \to 0
\ea \label{eq:intro}
\eneq
 where $\tLa(M_k,M_k')$ and $\tLa(M_k',M_k)$ are prescribed integers
and $\sodot$ is a tensor product up to a power of $q$.

For an admissible pair  $(\{M_i\}_{i \in\K}, \widetilde B)$, let
$\La=(\La_{i,j})_{i,j \in\K}$
be the skew-symmetric matrix
where $\La_{i,j}$ is the homogeneous degree of $\rmat{{M_i},{M_j}}$, the \emph{r-matrix} between $M_i$ and $M_j$,
and let $D=\{d_i\}_{ i \in\K}$ be the family of elements in $\rl$ given by $M_i \in R(-d_i) \gmod$.

Then, together with the result of \cite{GLS11},
the main theorem of \cite{KKKO15} reads as follows:
\begin{Itheorem}[{Theorem 6.3 and Corollary 6.4 in \cite{KKKO15}}]
\label{th:main}
 If there exists an admissible pair  $(\{M_i\}_{i\in K},\widetilde B)$ in $\mathcal C_w$ such that
 $[\Seed]\seteq\bl\{q^{-(\wt(M_i), \wt(M_i))/4}[M_i]\}_{i\in\K},
-\La,\widetilde B, D\br$
  is an initial seed of $A_{q^{1/2}}(\n(w))$,
then $\mathcal C_w$ is a monoidal categorification of $A_{q^{1/2}}(\n(w))$.
\end{Itheorem}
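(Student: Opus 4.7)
The plan is to verify the three defining conditions of a monoidal categorification directly from the admissible pair hypothesis, the crux being that admissibility is preserved under every mutation. Condition (i), the Grothendieck ring isomorphism $\Z[q^{\pm1/2}]\tens_{\Z[q^{\pm1}]} K(\shc_w) \isoto A_{q^{1/2}}(\n(w))$, is immediate from the definition of $\shc_w$ together with the Varagnolo--Vasserot and Rouquier identification of self-dual simple modules with the upper global basis, combined with Kimura's theorem that $\bold B^\up \cap A_q(\n(w))$ is a basis of $A_q(\n(w))$.

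For condition (ii), I form the quadruple $\Seed = (\{M_i\}_{i\in\K}, -\La, \widetilde B, D)$ out of the admissible data and check that it is a quantum monoidal seed in the sense of Definition \ref{def:quantum monoidal seed}. Skew-symmetry of $-\La$ follows from the symmetry of r-matrix degrees between commuting real simples, while the admissible pair itself directly supplies the pairwise commuting family of self-dual real simple objects together with the two short exact sequences \eqref{eq:intro} that realize the exchange relations inside $\shc_w$. That $[\Seed]$ is an initial quantum seed of $A_{q^{1/2}}(\n(w))$ is given by hypothesis, so compatibility of $-\La$ with $\widetilde B$ is inherited by passing to the Grothendieck ring.

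The heart of the proof is condition (iii): $\Seed$ admits successive mutations in every direction of $\Kex$. By induction on the length of a mutation sequence, it suffices to establish the key lemma that if $(\{M_i\}_{i\in\K}, \widetilde B)$ is admissible in $\shc_w$, then for each $k\in\Kex$ the mutated pair $(\{M_i\}_{i\ne k}\cup\{M_k'\}, \mu_k(\widetilde B))$ is again admissible. This amounts to showing, for the simple object $M_k'$ furnished by \eqref{eq:intro}: (a) $M_k'$ is self-dual and real; (b) $M_k'$ commutes with each $M_i$ for $i\ne k$; and (c) for every $j\in\Kex\setminus\{k\}$ a new exchange pair of short exact sequences compatible with the $j$-th column of $\mu_k(\widetilde B)$ exists. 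The exchange pair in direction $k$ after the mutation is provided tautologically by \eqref{eq:intro} with the roles of $M_k$ and $M_k'$ swapped.

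The principal obstacle is the content of (a)--(c). Reality of $M_k'$ will be proved by analyzing the head and socle of $M_k'\conv M_k'$ through its two defining exact sequences, reducing the claim to numerical identities among r-matrix degrees and the entries of $\widetilde B$. The commutation between $M_k'$ and $M_i$ for $i\ne k$, together with the construction of the new exchange sequences in directions $j\ne k$, will be carried out by splicing the original sequences against those for $M_k$ and $M_k'$, using the machinery of heads, socles, and r-matrices for real commuting families of simples in convolution products developed in \cite{KKKO15}. Finally, compatibility of the new skew-symmetric matrix with $\mu_k(\widetilde B)$ at the categorified level is imported from the well-definedness of mutation in quantum cluster algebras via the Grothendieck ring isomorphism of step (i), closing the induction.
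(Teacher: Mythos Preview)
This theorem is not proved in the present paper: it is quoted from \cite[Theorem~6.3, Corollary~6.4]{KKKO15} both in the Introduction and again in \S5.3, and the paper's contribution is precisely to construct the admissible pair whose existence the theorem assumes. So there is no proof in this paper to compare your proposal against.

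That said, your outline is broadly consonant with the strategy actually executed in \cite{KKKO15}: one checks the three defining conditions of a monoidal categorification, and the substance lies entirely in propagating admissibility through a single mutation step, then inducting. A couple of corrections to your breakdown: in your item (b), commutation of $M_k'$ with each $M_i$ for $i\ne k$ is already part of the \emph{definition} of an admissible pair (Definition~\ref{def:admissible}(3)), so there is nothing to prove there; likewise self-duality of $M_k'$ in (a) is given. The real work is (a) showing $M_k'$ is \emph{real}, and (c) producing, for each $j\in\Kex$ with $j\ne k$, a new mutation partner $M_j''$ together with its exact sequence compatible with the $j$-th column of $\mu_k(\widetilde B)$ and commuting with the entire mutated family including $M_k'$. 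Your proposal only gestures at (c) (``splicing the original sequences\ldots using the machinery\ldots''); in \cite{KKKO15} this step occupies the bulk of the argument and relies on delicate computations with r-matrix degrees $\Lambda$, $\tLa$, and $\de$ on convolutions of real simples. As written, your proposal is a correct table of contents for that proof rather than a proof.
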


This paper is mainly devoted to show that there exists an admissible pair in $\mathcal C_w$ for every symmetric Kac-Moody algebra $\g$ and its Weyl group element $w$.
In \cite{GLS11},  Gei\ss, Leclerc and
Schr{\"o}er provided an initial quantum seed in $A_q(\n(w))$
whose quantum cluster variables are
\emph{unipotent quantum minors}.
The unipotent quantum minors are elements in $A_q(\n)$, which are
 a $q$-analogue of a generalization of the minors of upper triangular matrices.
In particular, they are elements in $\bold B^\up$.
We define the \emph{determinantial module} $\M(\mu,\zeta)$ to be the simple module in
$R \gmod$ corresponding to the unipotent quantum minor $\D(\mu,\zeta)$
 under the isomorphism $A_q(\n(w))_{\Z[q^{\pm1}]} \simeq K(R \gmod)$.
Here $(\mu,\zeta)$ is a pair of elements in the weight lattice of $\g$ satisfying certain conditions.

Our main theorem is as  follows.
\begin{maintheorem}
Let $( \{ D(k,0) \}_{1 \le k \le r}, \wB, L)$ be the initial quantum seed of $A_q(\n(w))$ in \cite{GLS11} with respect to a reduced expression $\widetilde{w}=s_{i_1}\cdots s_{i_r}$ of $w$.
Let $\M(k,0)$  be the determinantial module corresponding to the unipotent quantum minor $D(k,0)$.
Then the pair
$$( \{ \M(k,0) \}_{1 \le k \le r}, \wB)$$ is admissible in $\mathcal C_w$.
\end{maintheorem}

The most essential condition for an admissible pair is that there exists the \emph{first mutation} $\M(k,0)'$ in \eqref{eq:intro} for each $k \in \K_\ex$.
To obtain it,
 we investigate the properties of determinantial modules and those of their convolution products.
Note that a unipotent quantum minor is the image of a global basis element of  the \emph{quantum coordinate ring} $A_q(\g)$ under the projection $A_q(\g) \to A_q(\n)$.
Since there exists a  bicrystal embedding from the crystal basis $B(A_q(\g))$ of $A_q(\g)$ to
the crystal basis $B(\tUqg)$ of the \emph{modified quantum groups} $\tUqg$,
 this investigation amounts to the  study on the interplays among the
 crystal and global bases of
 $A_q(\g)$, $\tUqg$ and $A_q(\n)$.

 \medskip
 This paper is organized as follows:

 In Section 1, we review on quantum groups, their integrable modules, and crystal and global bases.
 In Section 2, we review the algebras $A_q(\g)$, $\tUqg$ and $A_q(\n)$,
 and study relations among them.
 In Section 3, we study the properties of quantum minors including  T-systems and generalized T-systems.
 In Section 4, we review and study further
about the  modules over Khovanov-Lauda-Rouquier algebras
along the lines of \cite{KKKO14, KKKO15}.
 In Section 5, we review the quantum cluster algebras and their monoidal categorifications by symmetric KLR algebras.
 In the last section, we establish our main theorem.

\bigskip

\noindent
{\bf Acknowledgements.} The authors would like to express their gratitude to
Bernard Leclerc for many fruitful discussions.
The last two authors gratefully acknowledge the hospitality of
Research Institute for Mathematical Sciences, Kyoto University
during their visits in 2014.

\section{Preliminaries}
This is a continuation of \cite{KKKO15}, and
we sometimes omit the definitions and notations employed there.
We refer the reader to loc.\ cit.
In this section, we briefly recall the crystal and global bases theory for
$U_q(\g)$. 
We refer to \cite{Kash91,Kas93,Kas95} for materials in this section.

\subsection{The quantum enveloping algebras  and their integrable modules}
Let $\g$ be a Kac-Moody algebra. We
denote by $I$ the index set which parametrizes the set of simple roots $\Pi=\{ \alpha_i \ | \ i \in I \}$
and the set of simple coroots $\Pi^\vee=\{ h_i \ | \ i \in I \}$.
We also denote by $\wl$ the weight lattice, by
$\wl^\vee \seteq \Hom_\Z(P,\Z)$ the co-weight lattice,
 by $\cmA=\big( \lan h_i,\alpha_j \ra \big)_{i,j \in I}$
the generalized Cartan matrix of $\g$,
by $W$ the Weyl group of $\g$, and
by $( \ , \ )$ a $W$-invariant symmetric bilinear form on $\wl$.
The free abelian
group $\rl \seteq \bigoplus_{i \in I}\Z \alpha_i$ is called the {\em root lattice}. Set $\rl^{+}= \sum_{i \in I} \Z_{\ge 0}
\alpha_i\subset\rl$ and $\rl^{-}= \sum_{i \in I} \Z_{\le 0}
\alpha_i\subset\rl$. For $\beta=\sum_{i\in I}m_i\alpha_i\in\rl$,
we set
$|\beta|=\sum_{i\in I}|m_i|$.

Let $U_q(\g)$ be the quantum enveloping algebra of $\g$, which is the $\Q(q)$-algebra generated by $e_i$, $f_i$ ($i \in I$) and $q^h$ ($h \in \wl^\vee$)
with certain defining relations (see e.g.\ \cite[Definition 1.1]{KKKO15}).
We set $t_i=q^{\frac{( \al_i, \al_i )}{2}h_i}$.
We denote by $U_q^-(\g)$ (resp.\ $U_q^+(\g)$) the subalgebra of $U_q(\g)$ generated by $f_i$ ($i \in I$) (resp.\ $e_i$ ($i \in I$)).

We define the divided powers by
$$e_i^{(n)} = e_i^n / [n]_i!, \quad f_i^{(n)} =
f_i^n / [n]_i! \ \ (n \in \Z_{\ge 0}),$$
where $q_i=q^{(\alpha_i,\alpha_i)/2}$,
$[n]_i=\dfrac{ q_i^n - q_i^{-n} }{ q_i - q_i^{-1} }$ and $[n]_i! = \prod^{n}_{k=1} [k]_i$  for  $n \in \Z_{\ge 0}$, $i \in I$.
Let us denote by $U_q(\g)_\A$ the $\A$-subalgebra of $U_q(\g)$ generated by
$e_i^{(n)}$, $f_i^{(n)}$ ($i\in I$, $n \in \Z_{\ge 0}$) and $q^h$ ($h \in \wl^\vee$),
by $U_q^{-}(\g)_\A$ the
$\A$-subalgebra of $U^-_q(\g)$ generated by $f_i^{(n)}$
($i\in I$, $n \in \Z_{\ge 0}$), and
by $U_q^{+}(\g)_\A$ the
$\A$-subalgebra of $U^+_q(\g)$ generated by $e_i^{(n)}$
($i\in I$, $n \in \Z_{\ge 0}$).

We denote by $\Oint(\g)$ the category of integrable left $U_q(\g)$-modules $M$ satisfying the following conditions: {\rm (i)} $M= \bigoplus_{\eta \in \wl} M_\eta$
where $M_\eta \seteq \{  m \in M \ | \ q^hm=q^{\lan h,\eta \ra} m \}$, {\rm (ii)} $\dim M_\eta < \infty$, and {\rm (iii)}
there exist finitely many weights $\la_1$, \ldots, $\la_m$ such that
$\wt(M)\subset\cup_{j}(\la_j+\rl^-)$. The category $\Oint(\g)$ is semisimple with its simple objects being isomorphic to the
highest weight modules $V(\lambda)$ with highest weight vector $u_\lambda$ of
highest weight $\lambda \in \wl^+$, the set of dominant integral weight.

Let us recall the $\Q(q)$-antiautomorphism $\varphi$ and $^*$ of $U_q(\g)$ given as follows:
\begin{align}
&\varphi(e_i)=f_i, \quad \varphi(f_i)=e_i, \quad \varphi(q^h)=q^h, \label{eq: antiauto v} \\
&e_i^*=e_i, \quad\quad \ f_i^*=f_i, \quad\quad \ (q^h)^*=q^{-h}. \label{eq: antiauto s}
\end{align}

Let us recall also the $\Q$-automorphism $\ol{\phantom{a}}$ of $U_q(\g)$ given by
\begin{align}
& \overline{e}_i=e_i, \quad \overline{f}_i=f_i, \quad \overline{q}^h=q^{-h}, \quad \overline{q}=q^{-1}. \label{eq: auto bar}
\end{align}

For $M \in \Oint(\g)$, let us denote by $\Dv M$ the left $U_q(\g)$-module $\bigoplus_{\eta \in \wl} \Hom(M_\eta,\Q(q))$ with the action of $U_q(\g)$ given by:
$$  (a\psi)(m)=\psi(\vp(a)m)\quad\text{for $\psi\in\Dv M$,
$m\in M$ and $a\in U_q(\g)$.}  $$
Then $\Dv M$ belongs to $\Oint(\g)$.

For a left $U_q(\g)$-module $M$, we denote by $M^\ri$ the right $U_q(\g)$-module
$\{m^\ri \ | \ m\in M \}$ with the right action of $U_q(\g)$ given by
$$\text{$(m^\ri)\, x=(\vp(x)m)^\ri$ for $m\in M$ and $x\in U_q(\g)$.}$$
We denote by $\Oint^\ri(\g)$ the category of right integrable $U_q(\g)$-modules $M^\ri$ such that
$M \in \Oint(\g)$.

There are two comultiplications $\comp$ and $\comm$ on $U_q(\g)$ defined as follows:
\begin{align}
&\comp(e_i)=e_i \tens 1 + t_i \tens e_i,\quad \comp(f_i)=f_i \tens t_i^{-1} + 1 \tens f_i,\quad \comp(q^h)=q^h \tens q^h, \label{eq: comp}  \\
&\comm(e_i)=e_i \tens t_i^{-1} + 1 \tens e_i,\quad \comm(f_i)=f_i \tens 1 + t_i \tens f_i,\quad \comm(q^h)=q^h \tens q^h. \label{eq: comm}
\end{align}

For two $U_q(\g)$-modules $M_1$ and $M_2$, let us denote by $M_1
\tensp M_2$ and $M_1 \tensm M_2$ the vector space $M_1 \tens_{\Q(q)}M_2$ endowed with $U_q(\g)$-module
structure induced by the comultiplications $\comp$ and $\comm$, respectively. Then we have
$$ \Dv(M_1 \otimes_{\pm} M_2 ) \simeq (\Dv M_1) \otimes_{\mp} (\Dv M_2).$$

The simple $U_q(\g)$-module $V(\lambda)$ and the
$B_q(\g)$-module $U_q^-(\g)$)
have a unique non-degenerate  symmetric bilinear form
$( \ , \ )$ such that
\begin{align}
& (u_\lambda,u_\lambda) = 1 \text{ and } (xu,v)=(u,\vp(x)v) \text{ for } u,v \in V(\lambda) \text{ and } x \in U_q(\g), \\
& \ ( \mathbf{1} ,\mathbf{1}) = 1  \text{ and } (xu,v)=(u,\vp(x)v) \text{ for } u,v \in U^-_q(\g) \text{ and } x \in B_q(\g).
\end{align}
Recall that $B_q(\g)$ is the quantum boson algebra generated by $e_i'$ and $f_i$, and $\vp$ is the anti-automorphism of $B_q(\g)$ sending $e_i'$ to $f_i$ and
$f_i$ to $e_i'$.

Note that $( \ , \ )$ induces the non-degenerated bilinear form
$$  \lan \cdot , \cdot \ra \cl  V(\lambda)^\ri \otimes_{U_q(\g)} V(\lambda) \to \Q(q)$$
given by $ \lan u^\ri, v \ra  = (u,v)$,
by which $\Dv V(\la)$ is canonically isomorphic to $V(\la)$.

\subsection{Crystal bases and global bases}
For a  subring $A$ of $\Q(q)$, we say that $L$ is an $A$-{\em lattice} of
a $\Q(q)$-vector space $V$ if
$L$ is a free $A$-submodule of $V$ such that $V=\Q(q)\tens_AL$.

Let us denote by $\QA_0$ (resp.\ $\QA_\infty$) the ring of rational
functions in $\Q(q)$ which are regular at $q=0$ (resp.\ $q=\infty$).
We set $\QA\seteq\QA_0\cap\QA_\infty=\Q[q^{\pm1}]$.

\medskip

Let $M$ be a $\U$-module in $\Oint(\g)$.
Then any $u\in M$ can be uniquely written as
$$u=\sum_{n=0}^{\infty}f_i^{(n)}u_n\quad\text{with $e_iu_n=0$.}$$
We define the {\em lower Kashiwara operators} by
\eqn&&\te^\low_i(u)=\sum_{n=1}^{\infty}f_i^{(n-1)}u_n
\quad\text{and}\quad
\tf^\low_i(u)=\sum_{n=0}^{\infty}f_i^{(n+1)}u_n,
\eneqn
and the {\em upper Kashiwara operators}
by
\eqn&&\te^\up_i(u)=\te_i^{\low}q_i^{-1}t_iu\quad\text{and}\quad
\tf^\up_i(u)=\tf_i^{\low}q_i^{-1}t_i^{-1} u.
\eneqn
We say that an $\QA_0$-lattice of $M$
 is a lower (resp.\ upper) crystal lattice of $M$
if $L$ is $\wl$-graded and invariant by the lower (resp.\ upper) Kashiwara operators.

\Lemma\label{lem:lowup} Let $L$ be a lower crystal lattice of $M\in \Oint(\g)$.
Then we have
\bnum
\item $\soplus\nolimits_{\la\in\wl}q^{-(\la,\la)/2}L_\la$ is an upper crystal lattice of $M$.
\item $L^\vee\seteq\set{\psi\in\Dv M}{\lan\psi,L\ran\in\QA_0}$ is an upper crystal lattice of
$\Dv M$.
\ee
\enlemma
\Proof
(i) Let $\phi_M$ be the endomorphism of $M$ given by $\phi_M\vert_{M_\la}=q^{-(\la,\la)/2}\id_{M_\la}$. Then we have
$\te_i^\up=\phi_M\circ\te_i^\low\phi_M^{-1}$ and
$\tf_i^\up=\phi_M\circ\tf_i^\low\phi_M^{-1}$.

\smallskip
\noi
(ii) follows from the fact that
$\te_i^\up$ and $\tf_i^\up$ are the adjoint operators of
$\tf_i^\low$ and $\te_i^\low$, respectively.
\QED

\begin{definition} A {\em lower} \ro resp.\ {\em upper}\rf\
{\em crystal basis} of $M$ consists of a pair $(L,B)$
satisfying the following conditions:
\bnum
\item $L$ is a lower \ro resp. upper\rf\ crystal lattice of $M$,
\item $B= \sqcup_\eta B_\eta$ is a basis of the $\Q$-vector space $L/qL$, where $B_\eta=B \cap (L_\eta/qL_\eta)$,
\item the induced maps $\tilde{e}_i$ and $\tilde{f}_i$ on $L/qL$ satisfy
$$ \tilde{e}_iB, \tilde{f}_iB \subset B \sqcup \{0\}, \text{ and }  \tilde{f}_ib=b' \text{ if and only if } b=\tilde{e}_ib' \text{ for } b,b' \in B.$$
Here $\te_i$ and $\tf_i$ denote the lower (resp.\ upper) Kashiwara operators.
\end{enumerate}
\end{definition}

It is shown in \cite{Kash91} that $V(\lambda)$
has {\em the lower crystal basis} $(L^\low(\lambda),B^\low(\lambda))$. Using
the non-degenerate  symmetric bilinear form $( \ , \ )$, $V(\lambda)$ has
{\em the upper crystal basis}
$(L^\up(\lambda),B^\up(\lambda))$ where
$$ L^\up(\lambda) \seteq \{ u \in V(\lambda) \
| \ (u,L^{\low} (\lambda)) \subset \QA_0 \},$$
and $B^\up(\lambda)\subset L^\up(\lambda)/qL^\up(\lambda)$ is the dual basis of $B^\low(\lambda)$ with respect to
the induced non-degenerate pairing between
$L^\up(\lambda)/qL^\up(\lambda)$ and $L^\low(\lambda)/qL^\low(\lambda)$.
An (abstract) {\em crystal} is a set $B$ together with maps
$$ \wt\cl B \to \wl, \ \ \ve_i,\vp_i\cl  B \to \Z \sqcup \{ \infty \} \text{ and } \te_i,\tf_i\cl B \to B \sqcup \{ 0 \} \text{ for } i \in I,$$
such that
\begin{itemize}
\item[{\rm (C1)}] $\vp_i(b)=\ve_i(b)+\lan h_i,\wt(b) \ra$ for any $i$,
\item[{\rm (C2)}] if $b \in B$ satisfies $\te_i(b) \ne 0$, then
$$ \ve_i(\te_ib)=\ve_i(b)-1, \ \vp_i(\te_ib)=\vp_i(b)+1, \ \wt(\te_i b)=\wt(b)+\alpha_i, $$
\item[{\rm (C3)}] if $b \in B$ satisfies $\tf_i(b) \ne 0$, then
$$ \ve_i(\tf_ib)=\ve_i(b)+1, \ \vp_i(\tf_ib)=\vp_i(b)-1, \ \wt(\tf_i b)=\wt(b)-\alpha_i, $$
\item[{\rm (C4)}] for $b,b' \in B$, $b'=\tf_i b$ if and only if $b=\te_i b'$,
\item[{\rm (C5)}] if $\vp_i(b)=-\infty$, then $\te_ib=\tf_ib=0$.
\end{itemize}

Recall that, with the notions of {\em morphisms} and {\em tensor product rule} of crystals, the category of crystal becomes a monoidal category
(\cite{Kash94}). If $(L,B)$ is a crystal of
$M$, then $B$ is an abstract crystal.
 Since $B^\low(\lambda) \simeq B^\up(\lambda)$,
we drop the superscripts for simplicity.

Let $V$ be a $\Q(q)$-vector space, and let $L_0$ be an $\QA_0$-lattice of $V$,
$L_\infty$ an $\QA_\infty$-lattice of $V$ and $V_{\QA}$ an
$\QA$-lattice of $V$.
We say that the triple $(V_{\QA},L_0,L_\infty)$ is {\em balanced} if the following canonical map is a $\Q$-linear isomorphism:
$$ E\seteq V_{\QA} \cap L_0 \cap L_\infty \To L_0/qL_0.$$
The inverse of the above isomorphism
$G\cl  L_0/qL_0\isoto E$ is called the
{\em globalizing map}.
If $(V_{\QA},L_0,L_\infty)$ is balanced, then we have
\eqn &&\text{$\Q(q)\tens_\Q E\isoto V$,
$\QA\tens _\Q E\isoto V_{\QA}$, $\QA_0\tens _\Q E\isoto L_0$
and  $\QA_\infty\tens _\Q E\isoto L_\infty$.}
\eneqn
Hence, if $B$ is a basis of $L_0/qL_0$, then $G(B)$ is a basis of $V$,
$V_{\QA}$, $L_0$ and $L_\infty$. We call $G(B)$ a {\em global basis}.

\medskip
We define the two $\A$-forms of $V(\la)$ by
\eqn
&&V^\low(\lambda)_\A\seteq \Um_\A u_\lambda\quad\text{and}\\
&&V^\up(\lambda)_\A\seteq
\set{ u \in V(\lambda)}{\bl u, V^\low(\lambda)_\A\br \subset \A }.\eneqn
Then $\bl\Q\tens V^\low(\lambda)_\A,\; L^\low(\lambda),\; \ol{  L^\low(\lambda)}\br$ and
$\bl\Q\tens V^\up(\lambda)_\A,\; L^\up(\lambda),\; \ol{  L^\up(\lambda)}\br$
are balanced.
Let us denote by $G^\low_\la$ and $G^\up_\la$ the associated globalizing maps, respectively.
Then the sets
$$\B^\low(\lambda) \seteq  \{G^\low_\lambda(b) \ | \ b \in B^\low(\lambda) \} \ \  \text{ and } \ \ \B^\up(\lambda) \seteq  \{G^\up_\lambda(b) \ | \ b \in B^\up(\lambda) \}$$
form $\A$-bases of $V^\low(\lambda)_\A$ and $V^\up(\lambda)_\A$, respectively.
They are called the {\em lower global basis} and the {\em upper global basis} of $V(\lambda)$.

Similarly, there exists a crystal basis $(L(\infty),B(\infty))$ of the simple $B_q(\g)$-module $U_q^-(\g)$ such that
$(\Q\tens \Um_\A, L(\infty),\overline{L(\infty)})$ is balanced.
Let us denote the globalizing map by $G^{\low}$.
Then the set
$$\B^\low(\Um)\seteq  \{ G^\low(b )\ | \ b \in B(\infty) \}$$
forms an $\A$-basis of $\Um_\A$ and is called the {\em lower global basis} of $U_q^-(\g)$.

Let us denote by
$$\B^\up(\Um) \seteq  \{G^\up(b) \ | \ b \in B(\infty) \}$$
the dual basis of $\B^\low(\Um)$ with respect to $( \ , \ )$.
Then it is
a $\A$-basis of $$\Um_\A^\vee \seteq \{ x \in U^-_q(\g) \ | \ (x,\Um_\A) \subset \A \}$$ and  called the {\em upper global basis} of $U_q^-(\g)$.

\section{Quantum coordinate rings and modified quantized enveloping algebras}

\subsection{Quantum coordinate ring} Let $U_q(\g)^*$ be $\Hom_{\Q(q)}(U_q(\g),\Q(q))$. Then the comultiplication $\comp$ induces the
 multiplication $\mu$ on $U_q(\g)^*$ as follows:
\begin{align*}
 \mu \cl  & U_q(\g)^* \tens U_q(\g)^* \to (\U\tens\U)^*\To[\;(\comp)^*\;]
U_q(\g)^* .
\end{align*}
Later on, it will be convenient to use Sweedler's notation
$\comp(x)=x_{(1)} \tens x_{(2)}$.
With this notation,
$$\text{$\bl fg\br(x)=f(x_{(1)})\,g(x_{(2)})$\quad
for $f,g \in U_q(\g)^*$ and  $x\in\U$.}$$

The $U_q(\g)$-bimodule structure on $U_q(\g)$ induces
a $U_q(\g)$-bimodule structure on $U_q(\g)^*$. Namely,
\begin{align*}
(x \cdot f) (v) =f(vx) \quad \text{ and } \quad (f \cdot x) (v)
=f(xv) \quad\text{ for } f \in U_q(\g)^* \text{ and } x,v \in U_q(\g).
\end{align*}

Then the multiplication $\mu$ is a morphism of a
$U_q(\g)$-bimodule, where $U_q(\g)^* \tens U_q(\g)^*$ has the
structure of a $U_q(\g)$-bimodule via $\comp$: for $f,g \in U_q(\g)^*$ and  $x,y \in U_q(\g)$,
$$ x(fg)y=(x_{(1)}fy_{(1)})(x_{(2)}gy_{(2)}),$$
where $\comp(x)=x_{(1)}\tens x_{(2)}$ and $\comp(y)=y_{(1)}\tens y_{(2)}$.

\begin{definition}
We define the {\em quantum coordinate ring} $A_q(\g)$ as follows:
$$A_q(\g) = \{ u \in U_q(\g)^* \ | \ \text{ $U_q(\g)u$ belongs to $\Oint(\g)$ and $u U_q(\g)$ belongs to $\Oint^\ri(\g)$}\}. $$
\end{definition}

Then, $A_q(\g)$ is a
subring of $U_q(\g)^*$
because {\rm (i)} $\mu$ is $U_q(\g)$-bilinear, {\rm (ii)}  $\Oint(\g)$ and
$\Oint^\ri(\g)$ are closed under the tensor product.

We have the weight decomposition:
$A_q(\g) = \soplus_{\eta,\zeta \in \wl} A_q(\g)_{\eta,\zeta}$ where
$$A_q(\g)_{\eta,\zeta} \seteq \{  \psi \in A_q(\g) \ | \  q^{h_\li} \cdot \psi \cdot q^{h_\ri} = q^{\langle h_\li,\eta \rangle +\langle h_\ri,\zeta \rangle  } \psi
\text{ for } h_\li,h_\ri \in \wl^\vee \},$$
For $\psi \in A_q(\g)_{\eta,\zeta}$, we write
$$ \wt_\li(\psi)=\eta \quad \text{ and } \quad \wt_\ri(\psi)=\zeta.$$

\medskip
For any $V\in\Oint$, we have the $U_q(\g)$-bilinear homomorphism
$$\Phi_V\colon V\tens (\Dv V)^\ri\to A_q(\g)$$
given by
$$\Phi_V(v\tens \psi^\ri)(a)=\lan \psi^\ri, av\ra=\lan \psi^\ri a, v\ra \quad\text{for $v\in V$,
$\psi\in\Dv V$ and $a\in U_q(\g)$.}$$

\begin{proposition} [{\cite[Proposition 7.2.2]{Kas93}}] We have an isomorphism $\Phi$ of $\Uqg$-bimodules
\begin{equation} \label{eq: Phi}
\Phi\cl  \soplus_{\lambda \in \wl^+} V(\lambda) \tens_{\Q(q)}
V(\lambda)^\ri \overset{\sim}{\longrightarrow} A_q(\g)
\end{equation}
given by $\Phi|_{V(\lambda) \tens_{\Q(q)} V(\lambda)^\ri} = \Phi_\lambda \seteq \Phi_{V(\lambda)}:$ Namely,
$$ \Phi(u \tens v^\ri)(x)= \lan v^\ri,xu \ra = \lan v^\ri x,u \ra =(v,xu) \text{ for any } v,u \in V(\lambda) \text{ and } x \in U_q(\g).$$
\end{proposition}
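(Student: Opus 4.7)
The plan is to verify, in order, that (i) $\Phi$ takes values in $A_q(\g)$ and is a $\Uqg$-bimodule map, (ii) $\Phi$ is injective, and (iii) $\Phi$ is surjective. Step (i) is direct from the formula $\Phi_\la(u\otimes v^\ri)(x)=(v,xu)$: one computes $y\cdot\Phi_\la(u\otimes v^\ri)=\Phi_\la(yu\otimes v^\ri)$ and $\Phi_\la(u\otimes v^\ri)\cdot y=\Phi_\la(u\otimes v^\ri y)$. Hence $\Uqg\cdot\Phi_\la(u\otimes v^\ri)$ is a quotient of $V(\la)\in\Oint(\g)$, and analogously on the right, so the image lies in $A_q(\g)$, with bimodule compatibility built into the construction.

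For (ii), each $V(\la)\otimes V(\la)^\ri$ is simple as a $\Uqg$-bimodule, and $\Phi_\la(u_\la\otimes u_\la^\ri)(1)=(u_\la,u_\la)=1$ shows $\Phi_\la\neq 0$; by simplicity $\Phi_\la$ is injective. For distinct $\la\neq\la'$, the bimodules $V(\la)\otimes V(\la)^\ri$ and $V(\la')\otimes V(\la')^\ri$ are non-isomorphic (the highest left bi-weight of the image determines $\la$), so the sum of the images is direct.

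The hard step is (iii). Given $\psi\in A_q(\g)$, splitting by bi-weight reduces to the case $\psi\in A_q(\g)_{\eta_0,\zeta_0}$. Put $M\seteq\Uqg\psi\in\Oint(\g)$ with the left action. Since $\Oint(\g)$ is semisimple and $\psi$ generates $M$, in any decomposition $M=\soplus_s V(\mu_s)^{m_s}$ the element $\psi$ has only finitely many nonzero components, which forces the decomposition itself to be a finite one $M=\soplus_{j=1}^{N}V(\mu_j)^{m_j}$ with finite multiplicities. Because $\psi$ has right weight $\zeta_0$ and the left and right $\Uqg$-actions on $A_q(\g)$ commute, every element of $M$ has right weight $\zeta_0$. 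Consider $\on{ev}\colon M\to\Q(q)$, $\varphi\mapsto\varphi(1)$. If $\varphi\in M$ has left weight $\eta$ and right weight $\zeta_0$, comparing $\varphi(q^h)=q^{\lan h,\eta\ran}\varphi(1)$ with $\varphi(q^h)=q^{\lan h,\zeta_0\ran}\varphi(1)$ forces $\varphi(1)=0$ unless $\eta=\zeta_0$. Hence $\on{ev}$ is supported on the finite-dimensional weight space $M_{\zeta_0}$ and so defines an element of the restricted dual $\soplus_j(V(\mu_j)^\ri)^{m_j}$; write $\on{ev}=\sum_j\on{ev}_j$ and $\psi=\sum_j\psi_j$ accordingly. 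Then
\[\psi(x)=(x\psi)(1)=\on{ev}(x\psi)=\sum_j\lan\on{ev}_j,\,x\psi_j\ran=\sum_j\Phi_{\mu_j}(\psi_j\otimes\on{ev}_j)(x),\]
placing $\psi$ in the image of $\Phi$.

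The main obstacle is ensuring that $\on{ev}$ lands in the restricted dual rather than the full linear dual of $M$; this is exactly what the right weight $\zeta_0$ of $\psi$ buys, by concentrating the support of $\on{ev}$ on a single finite-dimensional left weight space of $M$. The finite-direct-sum structure of $M=\Uqg\psi$ follows softly from finite generation in $\Oint(\g)$, and once both points are in place the identity $\psi(x)=\on{ev}(x\psi)$ yields the desired matrix-coefficient expansion.
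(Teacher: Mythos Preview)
Your proof is correct. The paper does not prove this statement; it simply cites \cite[Proposition~7.2.2]{Kas93} and moves on, so there is no in-paper argument to compare against. Your argument is the standard Peter--Weyl-type proof: bimodule compatibility is immediate from the defining formula, injectivity follows from simplicity of each $V(\lambda)\otimes V(\lambda)^\ri$ as a bimodule together with pairwise non-isomorphism, and surjectivity comes from writing an arbitrary $\psi$ as a matrix coefficient via the evaluation-at-$1$ functional on the cyclic left module $\Uqg\psi$. The one delicate point you correctly isolate---that $\on{ev}$ lands in the restricted dual---is handled by the fixed right weight of $\psi$, and finiteness of the decomposition of $\Uqg\psi$ follows from conditions (ii) and (iii) in the definition of $\Oint(\g)$ (finite-dimensional weight spaces plus bounded-above weights) together with cyclicity. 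One small cosmetic point: in the final displayed identity you write $\Phi_{\mu_j}(\psi_j\otimes\on{ev}_j)$, but strictly speaking $\on{ev}_j\in\Dv V(\mu_j)$, so you are implicitly using the identification $\Dv V(\mu_j)\simeq V(\mu_j)$ to land in $V(\mu_j)\otimes V(\mu_j)^\ri$; this is harmless but worth a word.
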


We introduce the
 crystal basis $\big(L^\up(A_q(\g)), B(A_q(\g)) \big)$ of $\Ag$ as the images by $\Phi$ of
$$ \soplus_{\lambda \in \wl^+} L^\up(\lambda) \otimes L^\up(\lambda)^\ri \text{ and }
\bigsqcup_{\lambda \in \wl^+} B(\lambda) \otimes B(\lambda)^\ri.$$
Hence it is a crystal base with respect to the left action of
$\U$ and also the right action of $\U$.
We sometimes write by
$e_i^*$ and $f_i^*$ the operators  of $\Ag$ obtained by the right actions of
$e_i$ and $f_i$.

\medskip
We define the $\A$-form of $\Ag$ by
$$\Ag_\A\seteq\set{\psi\in\Ag}{\lan \psi,\, \U_\A\ran\subset\A}.$$
We define the bar-involution $-$ of $A_q(\g)$ by
$$ \overline{\psi}(x) = \overline{\psi(\overline{x})} \quad \text{ for } \psi \in A_q(\g), \  x \in U_q(\g).$$
Note that the bar-involution is not a ring homomorphism but it satisfies
\eqn
&&\ol{\psi\;\theta\;}=q^{(\wt_\lt(\psi),\wt_\lt(\theta))
-(\wt_\rt(\psi),\wt_\rt(\theta))}\;\ol{\theta}\;\ol{\psi}\quad
\text{for any $\psi$, $\theta\in\Ag$.}
\eneqn
Since we do not use this formula and
it is proved similarly to Proposition~\ref{prop:phimul} below,
we omit its proof.

The triple $\big(\Q\tens \Ag_\A,\,L^\up(A_q(\g)),\,\overline{L^\up(A_q(\g))}\big)$ is balanced (\cite[Theorem 1]{Kas93}), and hence there exists an upper global basis of $\Ag$
$$ \B^\up(A_q(\g))\seteq \{ G^\up(b) \ | \ b \in B^\up(A_q(\g)) \}.$$

For $\la\in\Pd$ and $\mu\in W\la$, we denote by $u_{\mu}$
a unique member of the upper global basis
of $V(\la)$ with weight $\mu$. It is also a member of the lower global basis.

\begin{proposition}\label{prop:Agglobal}
Let $\lambda\in\wl^+$, $w\in W$ and $b\in B(\lambda)$.
Then, $\Phi(G^\up(b)\tens u_{w\lambda}^\ri)$ is a member of
the upper global basis of $A_q(\g)$.
\end{proposition}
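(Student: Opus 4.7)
The plan is to show that $\Phi(G^\up(b)\tens u_{w\lambda}^\ri)$ is the upper global basis element corresponding to the crystal element $b \otimes u_{w\lambda}^\ri \in B(A_q(\g))$. Since the triple $\bl\Q\tens A_q(\g)_\A,\; L^\up(A_q(\g)),\; \ol{L^\up(A_q(\g))}\br$ is balanced, by the uniqueness of the globalizing map it suffices to verify three properties: that $\Phi(G^\up(b)\tens u_{w\lambda}^\ri)$ lies in $L^\up(A_q(\g))$ and reduces modulo $q$ to $b \otimes u_{w\lambda}^\ri$; that it is bar-invariant; and that it lies in the integral form $A_q(\g)_\A$.

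The first property is immediate from the definition $L^\up(A_q(\g)) = \Phi\bl\soplus_{\lambda\in \wl^+}L^\up(\lambda) \tens L^\up(\lambda)^\ri\br$, since $G^\up(b)$ and $u_{w\lambda}$ both lie in $L^\up(\lambda)$ and their classes modulo $qL^\up(\lambda)$ are $b$ and $u_{w\lambda}$ respectively. For bar-invariance, one notes that $G^\up(b)$ and $u_{w\lambda}$ are upper global basis elements and so are bar-invariant in $V(\lambda)$; moreover, the bilinear form on $V(\lambda)$ satisfies $\ol{(u,v)}=(\ol u,\ol v)$, which can be checked by reducing to a pairing of the form $(u_\lambda, z u_\lambda)$ via the invariance $(xu,v)=(u,\vp(x)v)$ and using that $\vp$ commutes with the bar-involution on $U_q(\g)$ (as can be verified on the generators). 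Combining these ingredients gives
\[
\ol{\Phi(G^\up(b)\tens u_{w\lambda}^\ri)}(x)
= \ol{(u_{w\lambda},\ol x\,G^\up(b))}
= (u_{w\lambda}, x\,G^\up(b))
= \Phi(G^\up(b)\tens u_{w\lambda}^\ri)(x).
\]

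The main obstacle is the integrality condition, which is where the hypothesis on the second tensorand plays an essential role. The key input is that $u_{w\lambda}$ is a member of both the upper and the lower global bases of $V(\lambda)$, as recalled just before the proposition, and in particular lies in $V^\low(\lambda)_\A$. Using the symmetry of the bilinear form together with the invariance $(xu,v)=(u,\vp(x)v)$,
\[
\Phi(G^\up(b)\tens u_{w\lambda}^\ri)(x)
= (u_{w\lambda},x\,G^\up(b))
= (G^\up(b),\vp(x)\,u_{w\lambda}).
\]
For $x\in U_q(\g)_\A$ we have $\vp(x)\in U_q(\g)_\A$, and since $V^\low(\lambda)_\A$ is stable under $U_q(\g)_\A$, it follows that $\vp(x)u_{w\lambda}\in V^\low(\lambda)_\A$. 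Pairing with $G^\up(b)\in V^\up(\lambda)_\A$ then yields an element of $\A$ by the very definition of $V^\up(\lambda)_\A$. This establishes the integrality and completes the three-point characterization, so $\Phi(G^\up(b)\tens u_{w\lambda}^\ri)=G^\up(b\tens u_{w\lambda}^\ri)\in\B^\up(A_q(\g))$.
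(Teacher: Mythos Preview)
Your proof is correct and follows essentially the same approach as the paper: verify bar-invariance, membership in the crystal lattice, and integrality, with the key point for integrality being that $u_{w\lambda}$ lies in both the upper and lower global bases. The only minor difference is that the paper establishes integrality directly from $PG^\up(b)\in V^\up(\lambda)_\A$ and $u_{w\lambda}\in V^\low(\lambda)_\A$, whereas you first apply $\vp$ to move the action onto $u_{w\lambda}$; both arguments rest on the same duality between the two $\A$-forms, and you additionally spell out the bar-invariance check that the paper simply asserts.
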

\begin{proof}
The element $\psi\seteq\Phi(G^\up(b)\tens u_{w\lambda}^\ri)$ is
bar-invariant, and a member of crystal basis modulo $L^\up(A_q(\g))$.
For any $P\in U_q(\g)_\A$,
$$\lan\psi, P\ra=\big( u_{w\lambda}, PG^\up(b)\big)$$
belongs to $\A$ because $PG^\up(b)\in V^\up(\lambda)_\A$ and
$u_{w\lambda}\in V^\low(\lambda)_\A$.
Hence $\psi$ belongs to $\Ag_\A$.
\end{proof}

\medskip

The $\Q(q)$-algebra anti-automorphism $\vphi$ of $\U$ induces
a $\Q(q)$-linear automorphism $\vphi^*$
of $\Ag$
by
$$\bl\vphi^*(\psi)\br(x)=\psi\bl\vphi(x)\br\quad\text{for any $x\in \U$.}$$
We have
$$\vphi^*\bl\Phi(u\tens v^\rt)\br=\Phi(v\tens u^\rt),$$
and
$$\wtl(\vphi^*\psi)=\wtr(\psi)\qtext \wtr(\vphi^*\psi)=\wtl(\psi).$$
It is obvious that
$\vphi^*$ preserves
$\Ag_\A$, $L^\up(\Ag)$ and $ \B^\up(A_q(\g))$.
\Prop\label{prop:phimul}
$$\vphi^*(\psi\theta)=
q^{(\wtr(\psi),\wtr(\theta))-(\wtl(\psi),\wtl(\theta))}
(\vphi^*\psi)(\vphi^*\theta).$$
\enprop
In order to prove this proposition, we prepare
a sublemma.

Let $\xi$ be the algebra automorphism of $\Uq$ given by
$$\xi(e_i)=q_i^{-1}t_ie_i, \quad\xi(f_i)=q_if_it_i^{-1},\quad \xi(q^h)=q^h.$$
We can easily see
\eq
&&(\xi\tens\xi)\circ\Cmp=\Cmm\circ\xi.
\label{eq:Dexi}
\eneq

Let $\xi^*$ be the automorphism of $\Ag$ given by
$$(\xi^*\psi)(x)=\psi(\xi(x))\quad\text{for $\psi\in\Ag$ and $x\in\Uq$.}$$

\Sub\label{sub:xi}
We have
\eq
&&\xi^*(\psi)=q^{A(\wt_\lt(\psi),\wt_\rt(\psi))}\psi.\label{eq:xistar}
\eneq
Here $A(\la,\mu)=\dfrac{1}{2}\bl(\mu,\mu)-(\la,\la)\br$.
\ensub
\Proof 
Let us show that, for each $x$, the following equality
\eq
&&\psi(\xi(x))=q^{A(\wt_\lt(\psi),\wt_\rt(\psi))}\psi(x)\label{eq:xi}
\eneq
holds for any $\psi$.

The equality \eqref{eq:xi} is obviously true for $x=q^h$.
If \eqref{eq:xi} is true for $x$, then
\eqn
&&\xi^*(\psi)(xe_i)=\psi\bl\xi(xe_i)\br=\psi\bl\xi(x)e_it_i)q_i\\
&&=q^{(\al_i,\wt_\lt(\psi))+(\al_i ,\al_i)/2}\psi(\xi(x)e_i)\\
&&=q^{(\al_i,\wt_\lt(\psi))+(\al_i , \al_i)/2}\bl\xi^*(e_i\psi)\br(x)\\
&&=q^{(\al_i,\wt_\lt(\psi))+(\al_i , \al_i)/2+A(\wt_\lt(\psi)+\al_i,\wt_\rt(\psi))}
(e_i\psi)(x).
\eneqn
Since $\Vert\la+\al_i\Vert^2
=\Vert\la\Vert^2+2(\al_i,\la)+(\al_i, \al_i)$,
\eqref{eq:xi} holds for $xe_i$.
Similarly if
\eqref{eq:xi} holds for $x$,
then it holds for $xf_i$.
\QED

\Proof[{Proof of Proposition~\ref{prop:phimul}}]
We have
\eq
&&(\vphi\circ\vphi)\circ \Cmm=\Cmp\circ \vphi.
\label{eq:phiC}
\eneq
Hence, we have
\begin{align*}
\lan \vphi^*(\psi\theta), x\ran
&=\lan\psi\theta,\vphi(x)\ran\\
&=\lan\psi\tens\theta,\Cmp(\vphi(x))\ran\\
&=\lan\psi\tens\theta,(\vphi\tens\vphi)\circ\Cmm(x)\ran\\
&=\lan \vphi^*(\psi)\tens\vphi^*(\theta),\;\Cmm(x)\ran.
\end{align*}
Hence we have
\begin{align*}
\lan \xi^*(\vphi^*(\psi\theta)),x\ran
&=\lan\vphi^*(\psi\theta), \xi (x)\ran
=\lan \vphi^*(\psi)\tens\vphi^*(\theta),\Cmm(\xi(x))\ran\\
&=\lan\vphi^*(\psi)\tens\vphi^*(\theta),(\xi\tens\xi)\circ\Cmp x \ran\\
&=\lan \xi^*\vphi^*(\psi)\tens\xi^*\vphi^*(\theta),\Cmp x \ran\\
&=\lan\bl\xi^*\vphi^*(\psi)\br\bl\xi^*\vphi^*(\theta)\br,x\ran\\
&=q^{A(\wtr(\psi),\wtl(\psi))+A(\wtr(\theta),\wtl(\theta))}
\lan  (\vphi^*\psi)\,(\vphi^*\theta) ,\,x\ran.
\end{align*}
Hence we obtain
$$\vphi^*(\psi\theta)=q^c(\vphi^*\psi)\,(\vphi^*\theta)$$
with
\eqn
&&c=A(\wtr(\psi),\wtl(\psi))+A(\wtr(\theta),\wtl(\theta))-
A(\wtr(\psi)+\wtr(\theta),\wtl(\psi)+\wtl(\theta))\\
&&\hs{2ex}=(\wtr(\psi),\wtr(\theta))-(\wtl(\psi),\wtl(\theta)).
\eneqn
\QED

\subsection{Unipotent quantum coordinate ring}
Let us endow $U_q^+(\g) \tens U_q^+(\g)$ with the algebra structure defined by
$$(x_1 \tens x_2) \cdot (y_1 \tens y_2) = q^{-(\wt(x_2),\wt(y_1))}(x_1y_1 \tens x_2y_2).$$
Let $\Delta_\n$ be the algebra homomorphism $U_q^+(\g) \to U_q^+(\g)
\tens U_q^+(\g)$ given by
$$ \Delta_\n(e_i) = e_i \tens 1 + 1 \tens e_i.$$

Set
$$ A_q(\n) = \soplus_{\beta \in \rl^-} A_q(\n)_\beta \quad \text{ where } A_q(\n)_\beta \seteq (U^+_q(\g)_{-\beta})^*.$$

Defining the bilinear form $\langle \ \cdot \ , \ \cdot \ \rangle\cl
(A_q(\n) \tens A_q(\n)) \times (U^+_q(\g) \tens U^+_q(\g))$ by
$$ \langle \psi \tens \theta, x \tens y \rangle =\theta(x) \psi(y),$$
we define the algebra structure on $A_q(\n)$ by
$$(\psi \cdot \theta)(x) = \langle \psi \tens \theta, \Delta_\n(x) \rangle
= \theta(x_{(1)})\psi(x_{(2)})$$
where $\Delta_\n(x)=x_{(1)} \tens x_{(2)}$.

Since $U_q^+(\g)$ has a $U_q^+(\g)$-bimodule structure, so does
$A_q(\n)$.

We define the $\A$-form of $\An$ by
\eq
&&\An_\A=\set{\psi\in\An}{\psi\bl\Up_\A\br\subset\A}.
\eneq

We define the bar-operator $-$ on $\An$ by

$$\ol{\psi}(x)=\ol{\psi(\ol{x})} \quad\text{for $\psi\in\An$ and $x\in\Up$.}
$$

Note that the bar-operator is not a ring homomorphism but it satisfies
\eqn
&&\ol{\psi\;\theta\;}=q^{(\wt(\psi),\wt(\theta))}\;\ol{\theta}\;\ol{\psi}\quad
\text{for any $\psi$, $\theta\in\An$.}
\eneqn

For $i\in I$, we denote by $e_i^*$ the right action of
$e_i$ on $\An$.

\begin{lemma} For $u,v \in A_q(\n)$, we have  $q$-boson relations
\begin{align*}
e_i(uv) = (e_iu)v + q^{(\alpha_i,\wt(u))} u (e_iv) \  \text{ and }
 \ e_i^*(uv) = u(e_i^*v) + q^{(\alpha_i,\wt(v))} (e_i^*u)v.
\end{align*}
\end{lemma}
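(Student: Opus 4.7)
The plan is to derive both relations directly from the defining formulas, using only that $\Delta_\n$ is an algebra homomorphism into the twisted tensor product $\Up\tens\Up$ with multiplication $(a_1\tens a_2)(b_1\tens b_2)=q^{-(\wt(a_2),\wt(b_1))}a_1b_1\tens a_2b_2$, together with $\Delta_\n(e_i)=e_i\tens 1+1\tens e_i$. Writing $\Delta_\n(x)=x_{(1)}\tens x_{(2)}$, this immediately gives
\[
\Delta_\n(xe_i)=q^{-(\wt(x_{(2)}),\alpha_i)}\,x_{(1)}e_i\tens x_{(2)}+x_{(1)}\tens x_{(2)}e_i,
\]
\[
\Delta_\n(e_ix)=e_ix_{(1)}\tens x_{(2)}+q^{-(\alpha_i,\wt(x_{(1)}))}\,x_{(1)}\tens e_ix_{(2)}.
\]

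Next I would combine these with the multiplication rule $(uv)(x)=v(x_{(1)})u(x_{(2)})$ and with the fact that the left action of $e_i$ on $\An$ dualizes right multiplication, $(e_i\psi)(x)=\psi(xe_i)$, while $e_i^*$ dualizes left multiplication, $(e_i^*\psi)(x)=\psi(e_ix)$. Then $\bl e_i(uv)\br(x)=(uv)(xe_i)$ splits into two terms which I would read off as $\bl(e_iu)v\br(x)$ and a scalar multiple of $\bl u(e_iv)\br(x)$. The scalar is $q^{-(\wt(x_{(2)}),\alpha_i)}$, but the weight grading of $\An$ forces $u(x_{(2)})=0$ unless $\wt(x_{(2)})=-\wt(u)$, so on the surviving terms this collapses to $q^{(\alpha_i,\wt(u))}$. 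This yields the first identity, and the second is entirely symmetric, using $\Delta_\n(e_ix)$ and the support condition $\wt(x_{(1)})=-\wt(v)$.

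The only genuine step is the correct computation of $\Delta_\n(xe_i)$ and $\Delta_\n(e_ix)$ in the twisted tensor product; everything else is formal unwinding. There is no conceptual obstacle, since the two relations are nothing more than the dualization of the fact that $e_i$ is a primitive element of the bialgebra $(\Up,\Delta_\n)$ twisted by the weight bicharacter.
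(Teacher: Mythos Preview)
Your proposal is correct and follows essentially the same argument as the paper: compute $\Delta_\n(xe_i)$ (resp.\ $\Delta_\n(e_ix)$) in the twisted tensor product using $\Delta_\n(e_i)=e_i\tens 1+1\tens e_i$, pair against $u\tens v$ via $(uv)(x)=v(x_{(1)})u(x_{(2)})$, and use the weight support condition $\wt(x_{(2)})=-\wt(u)$ (resp.\ $\wt(x_{(1)})=-\wt(v)$) to convert the $q$-power into the stated form. The paper writes out only the first identity in detail and leaves the second as ``similar'', exactly as you do.
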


\begin{proof}
$$
\lan e_i(uv), x\ra=\lan uv, xe_i\ra= \lan u\tens v,\; \Delta_\n(xe_i)\ra.$$
If we set $\Delta_\n x=x_{(1)}\tens x_{(2)}$,
then we have
$$\Delta_\n(xe_i)=(x_{(1)}\tens x_{(2)})(e_i\tens 1+1\tens e_i)
=q^{-(\alpha_i,\wt(x_{(2)}))}(x_{(1)}e_i)\tens x_{(2)}+x_{(1)}\tens (x_{(2)}e_i).$$
Hence, we have
\begin{align*}
&\lan u\tens v,\; \Delta_\n(xe_i)\ra
=q^{-(\alpha_i,\wt(x_{(2)}))}u(x_{(2)})v(x_{(1)}e_i)+u(x_{(2)}e_i)v(x_{(1)})\\
&\hspace{5ex}=q^{(\alpha_i,\wt(u))}u(x_{(2)})\cdot (e_iv)(x_{(1)})+(e_iu)(x_{(2)})
\cdot v(x_{(1)})\\
&\hspace{5ex}=\lan q^{(\alpha_i,\wt(u))}u\tens(e_iv)+(e_iu)\tens v,\,\Delta_\n x\ra.
\end{align*}
The second  identity follows in a similar way.
\end{proof}

We define the map $\iota \cl  U_q^-(\g) \to A_q(\n)$ by
$$  \langle \iota(u),x \rangle = (u,\vp(x)) \quad \text{ for any } u \in U_q^-(\g) \text{ and } x \in U_q^+(\g).$$
 Since $(\ ,\ )$ is a non-degenerate bilinear form on $\Um$,
$\iota$ is injective.
The relation $$\langle \iota(e_i'u), x \rangle = (e_i'u,\vp(x)) =
(u,f_i\vp(x))  = \langle e_i \iota(u),x
\rangle,$$ implies that
$$\iota(e_i'u) = e_i \iota(u).$$

\begin{lemma} $\iota$ is an algebra isomorphism.
\end{lemma}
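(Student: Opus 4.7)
The plan is to combine two ingredients that are essentially at hand: the intertwining $\iota(e_i' u) = e_i \iota(u)$, together with the matching $q$-boson relations on $\Um$ and on $\An$, will force multiplicativity, while a weight-by-weight dimension count promotes the injectivity of $\iota$ into a bijection. Throughout, I use that $\iota$ is graded, sending $\Um_\beta$ into $\An_\beta$ for every $\beta\in\rl^-$.

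First I would verify $\iota(1)=1_{\An}$. From the definition, $\iota(1)(x)=(1,\varphi(x))$ vanishes unless $x\in\Up_0=\Q(q)$ and equals $1$ at $x=1$. Hence $\iota(1)$ is the counit $\varepsilon\cl\Up\to\Q(q)$, and the counit axiom $(\id\otimes\varepsilon)\Delta_{\n}=\id=(\varepsilon\otimes\id)\Delta_{\n}$ for the Hopf-like structure on $\Up$ makes $\varepsilon$ into a two-sided identity for the multiplication of $\An$.

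Next I would prove $\iota(uv)=\iota(u)\iota(v)$ by induction on $|\wt u|+|\wt v|$. The base case, when $u,v\in\Um_0=\Q(q)$, reduces to $\iota(1)=1$ and linearity of $\iota$. For the inductive step, the given relation $e_i\circ\iota=\iota\circ e_i'$ together with the $q$-boson relations on both $\Um$ and $\An$ gives
\begin{align*}
e_i\,\iota(uv)
&=\iota\bigl(e_i'(uv)\bigr)=\iota\bigl((e_i'u)v\bigr)+q^{(\alpha_i,\wt u)}\iota\bigl(u(e_i'v)\bigr)\\
&=\iota(e_i'u)\,\iota(v)+q^{(\alpha_i,\wt u)}\iota(u)\,\iota(e_i'v)
 =\bigl(e_i\iota(u)\bigr)\iota(v)+q^{(\alpha_i,\wt u)}\iota(u)\bigl(e_i\iota(v)\bigr)\\
&=e_i\bigl(\iota(u)\iota(v)\bigr),
\end{align*}
where the inductive hypothesis is used in the middle equality because the weight sums $|\wt(e_i'u)|+|\wt v|$ and $|\wt u|+|\wt(e_i'v)|$ drop by one. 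Thus $\Psi\seteq\iota(uv)-\iota(u)\iota(v)\in\An_{\wt u+\wt v}$ is annihilated by every $e_i$. Because $\Up$ is generated by the $e_i$, for any nonzero $\gamma\in\rl^+$ the space $\Up_\gamma$ is spanned by elements of the form $y\,e_i$; therefore $\Psi$ vanishes on $\Up_{-(\wt u+\wt v)}$ whenever $\wt u+\wt v\neq 0$, forcing $\Psi=0$.

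Finally, for surjectivity I would compare graded dimensions. Each $\An_\beta=(\Up_{-\beta})^*$ is finite-dimensional, and via the antiautomorphism $\varphi$ one has $\dim_{\Q(q)}\Um_\beta=\dim_{\Q(q)}\Up_{-\beta}=\dim_{\Q(q)}\An_\beta$. The already-established injectivity of the graded map $\iota\cl\Um_\beta\to\An_\beta$ therefore upgrades to a bijection in each weight, and hence globally. The only delicate point in the scheme is the ``kernel of all $e_i$'s is trivial in positive graded degree'' step inside $\An$; everything else is bookkeeping with the intertwining property and the $q$-boson rule.
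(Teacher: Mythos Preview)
Your proof is correct and follows essentially the same approach as the paper: both rely on the fact that $e_i'$ on $\Um$ and $e_i$ on $\An$ satisfy the identical $q$-boson (twisted Leibniz) rule, together with the intertwining $\iota\circ e_i'=e_i\circ\iota$. The paper's proof is a single sentence pointing at this reason, while you have carefully unpacked it into the induction on $|\wt u|+|\wt v|$, the vanishing of $\Psi$ via the observation that $\Up_\gamma=\sum_i\Up_{\gamma-\alpha_i}e_i$ for $\gamma\neq0$, and the weightwise dimension count for surjectivity (the latter the paper leaves implicit, having already recorded injectivity).
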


\begin{proof}
The map $\iota$ is an algebra homomorphism because $e_i'$ and $e_i$
both satisfy the same $q$-boson relation.
\end{proof}

Hence, the algebra $A_q(\n)$ has an upper crystal basis $(L^\up(A_q(\n)),B(A_q(\n)))$ such that
$B(A_q(\n)) \simeq B(\infty)$. Furthermore, $A_q(\n)$ has an upper global basis $$\B^\up(A_q(\n))=\{ G^\up(b) \}_{ \ b \in B(A_q(\n))}$$
induced by the balanced triple
$\bl \Q\tens\An_\A,\,L^\up(A_q(\n)),\,\overline{L^\up(A_q(\n))}\br$.

\begin{remark} 
Note that the multiplication on $\An$ given in \cite{GLS11} is different from ours.
Indeed, by denoting
the product of $\psi$ and $\phi$  in \cite[\S4.2]{GLS11} by $\psi \cdot \phi$,
we have, for $x \in U_q^+(\g)$
\eqn
(\psi \cdot \phi) (x) = \psi(x^{(1)}) \phi(x^{(2)}),
\eneqn
where $\Delta_+(x)=x^{(1)}q^{h_{(1)}} \otimes x^{(2)}q^{h_{(2)}}$  for $x^{(1)}, x^{(2)} \in U_q^{+}(\g),  \ h_{(1)}, h_{(2)} \in \wl^\vee$.
By Lemma \ref{lem:Deltan} below, we have
\eqn
(\psi \cdot \phi) (x) &&= \psi\bl q^{(\wt(x_{(1)}),\wt(x_{(2)}))} (x_{(2)}) \br \phi \bl x_{(1)} \br \\
&&=q^{(\wt(x_{(1)},\wt(x_{(2)}))} \psi(x_{(2)}) \phi(x_{(1)})
= q^{(\wt(\psi),\wt(\phi))} (\psi \phi)(x),
\eneqn
for $x \in U_q^+(\g)$, where $\Delta_\n(x)=x_{(1)} \tens x_{(2)}$.
In particular, we have a $\Q(q)$-algebra isomorphism
from $(\An, \cdot)$ to $\An$ given by
\eq \label{eq:isoAqn}
 x \mapsto q^{-\frac{1}{2}(\beta, \beta)} x \quad \ \text{for} \ x \in \An_{\beta}.
\eneq
Note also that
the bar-operator $-$ is a ring anti-isomorphism
between $\An$ and $(\An, \cdot)$.
\end{remark}

\subsection{Modified quantum enveloping algebra}
For the materials in this subsection we refer the reader to
\cite{Lusz92}, \cite{Kash94}.
We denote by ${\rm Mod}(\g,\wl)$
the category of left $U_q(\g)$-modules with the weight space decomposition. Let $({\rm forget})$ be the functor from
${\rm Mod}(\g,\wl)$ to the category of vector spaces over $\Q(q)$,
forgetting the $U_q(\g)$-module structure.

Let us denote by $\mathscr{R}$ the endomorphism ring of $({\rm forget})$. Note that $\mathscr{R}$ contains $U_q(\g)$. For $\eta \in \wl$, let
$a_\eta \in \mathscr{R}$ denotes the projector $M \to M_\eta$ to the
weight space of weight $\eta$. Then the defining relation of $a_\eta$ (as a left
$\Uqg$-module) is
$$q^h \cdot a_\eta = q^{\langle h,\eta \rangle}a_\eta.$$
We have
\begin{align*}
a_\eta a_\zeta =\delta_{\eta,\zeta} a_\eta, \quad a_\eta P =
P a_{\eta-\xi} \quad \text{for $\xi \in \rl$ and $P \in \Uqg_\xi$.}
\end{align*}
Then $\mathscr{R}$ is isomorphic to $\displaystyle\prod_{\eta \in \wl} \Uqg
a_\eta$. We set
$$ \tUqg \seteq \soplus_{\eta \in \wl} \Uqg a_\eta\subset \mathscr{R}.$$
Then $\tUqg$ is a subalgebra of $\mathscr{R}$.
We call it the {\em modified quantum enveloping algebra}.
Note that any $\U$-module in ${\rm Mod}(\g,\wl)$ has a natural
$\tU$-module structure.

The (anti-)automorphisms $*$, $\vp$ and $\bar{ \ \ }$ of $\Uqg$ extend to the ones of $\tUqg$
by
$$ a_\eta^* = a_{-\eta}, \quad \varphi(a_\eta)=a_\eta, \quad  \overline{a}_\eta=a_\eta.$$

For a dominant integral weight $\lambda \in \wl^+$, let us denote by
$V(\lambda)$
(resp.\ $V(-\lambda)$) the irreducible module with highest (resp.\
lowest) weight $\lambda$ (resp.\ $-\lambda$). Let $u_\lambda$ (resp.\
$u_{-\lambda}$) be the highest (resp.\ lowest) weight vector.

For $\lambda \in \wl^+$, $\mu \in \wl^- \seteq - \wl^+$, we set
$$V(\la,\mu) \seteq V(\lambda) \tensm V(\mu).$$
Then $V(\la,\mu)$ is generated by $u_\lambda \tens
u_\mu$ as a $\Uqg$-module, and the defining relation of
$u_\lambda \tensm u_\mu$ is
\begin{align*}
& q^h(u_\lambda \tensm u_\mu)=q^{\langle h,\lambda+\mu \rangle}(u_\lambda \tensm u_\mu), \\
& e_i^{1-\langle h_i,\mu \rangle}(u_\lambda \tensm
u_\mu)=0, \quad f_i^{1+\langle h_i,\lambda \rangle}(u_\lambda
\tensm u_\mu)=0.
\end{align*}
Let us define the automorphism $\bar{ \ \ }$ of $V(\lambda,\mu)$
by
$$ \overline{P(u_\lambda \tensm u_\mu)} = \overline{P}(u_\lambda \tensm u_\mu).$$

We set
\begin{itemize}
\item[{\rm (i)}] $L^\low(\lambda,\mu) \seteq L^\low(\lambda) \tens_{\QA_0} L^\low(\mu)$,
\item[{\rm (ii)}]  $V(\lambda,\mu)_\A \seteq V(\lambda)_\A \tens_\A V(\mu)_\A$,
\item[{\rm (iii)}] $B(\lambda,\mu) \seteq B(\lambda) \tens B(\mu)$.
\end{itemize}

\begin{proposition}[\cite{Lusz92}] $(L^\low(\lambda,\mu),B(\lambda,\mu))$ is a lower crystal basis of $V(\la,\mu)$. Furthermore,
$\bl\Q\tens V(\lambda,\mu)_\A,\,L^\low(\lambda,\mu),\,\ol{L^\low(\lambda,\mu)}\br$ is balanced, and
there exists a lower global basis $\B^\low(V(\la,\mu))$ obtained from the lower crystal basis  $(L^\low(\lambda,\mu),B(\lambda,\mu))$.
\end{proposition}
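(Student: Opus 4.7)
The plan is to prove this in two stages, following Lusztig's construction in \cite{Lusz92}.

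\textbf{Stage 1: Crystal basis assertion.} I would apply the tensor product theorem for lower crystal bases, adapted to the comultiplication $\comm$ used in the definition of $V(\la,\mu)=V(\la)\tensm V(\mu)$. Recall
$$\comm(e_i)=e_i\tens t_i^{-1}+1\tens e_i,\qquad \comm(f_i)=f_i\tens 1+t_i\tens f_i.$$
The task is to verify: (a) $L^\low(\la,\mu)=L^\low(\la)\tens_{\QA_0}L^\low(\mu)$ is stable under the lower Kashiwara operators $\te_i,\tf_i$ of $V(\la,\mu)$; and (b) modulo $q$, these operators act on $B(\la)\tens B(\mu)$ by the usual tensor-product combinatorial rule (with the ordering appropriate to $\comm$, dual to the one used for $\comp$). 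As usual, this reduces via the $U_q(\mathfrak{sl}_2)$-decomposition to the rank-one case, where the assertion is a direct string-decomposition computation; the only care needed is to track the twists by $t_i^{\pm1}$ coming from $\comm$.

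\textbf{Stage 2: Balancedness.} Here the key tool is Lusztig's quasi-$R$-matrix $\Theta=\sum_{\xi\in\rl^+}\Theta_\xi$ with $\Theta_\xi\in \Um_{-\xi}\tens\Up_\xi$ and $\Theta_0=1\tens 1$, which satisfies $\comp(x)\Theta=\Theta\comm(x)$ for all $x\in\U$. On $V(\la,\mu)$ this yields the crucial identity
$$\overline{v_1\tensm v_2}=\Theta\cdot(\ol{v_1}\tensm\ol{v_2}),$$
where the right-hand side uses the naive tensor of the bar involutions on $V(\pm\la)$. Because $\Theta$ is unitriangular (only terms of strictly positive $\xi$ beyond the identity contribute, and each $\Theta_\xi$ sends $L^\low\tens L^\low$ into $q\,L^\low\tens L^\low$ for $\xi\ne0$), one may proceed by induction on the weight ordering: for each $(b_1,b_2)\in B(\la)\tens B(\mu)$, define
$$G^\low(b_1\tens b_2)=G^\low_\la(b_1)\tensm G^\low_\mu(b_2)+\sum_{(b_1',b_2')\prec (b_1,b_2)}c_{(b_1',b_2')}(q)\,G^\low_\la(b_1')\tensm G^\low_\mu(b_2'),$$
with $c_{(b_1',b_2')}(q)\in q\QA_0$ uniquely determined by the conditions that the element be bar-invariant and belong to $V(\la,\mu)_\A$. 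The fact that such $c_{(b_1',b_2')}$ exist and are unique is exactly the standard Kazhdan--Lusztig-style triangular argument. This simultaneously produces the lower global basis $\B^\low(V(\la,\mu))$ and witnesses the balancedness of the triple $\bigl(\Q\tens V(\la,\mu)_\A,\,L^\low(\la,\mu),\,\ol{L^\low(\la,\mu)}\bigr)$.

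\textbf{Main obstacle.} The only subtle point is the triangularity/termination of the recursion in Stage~2: one must show the coefficients $c_{(b_1',b_2')}(q)$ lie in $q\QA_0$ and that only finitely many are nonzero in each weight space, which rests on the positivity of $\xi$ in the expansion of $\Theta$ and on the fact that $\Theta_\xi$ lowers the $\QA_0$-filtration induced by $L^\low$. Once this is in hand, balancedness follows formally, and Stage~1 is essentially a bookkeeping exercise. Since all of this is carried out in \cite{Lusz92} (with conventions translated via the interchange $\comp\leftrightarrow\comm$ and $V(\mu)$ replaced by a lowest-weight module), I would record the statement with a short outline and appeal to loc.\ cit.\ for the detailed verification.
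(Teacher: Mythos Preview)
The paper gives no proof of this proposition; it simply records the statement and attributes it to \cite{Lusz92}. Your outline is a correct sketch of the standard argument: Stage~1 is Kashiwara's tensor-product theorem for crystal bases (adapted to $\comm$), and Stage~2 is precisely Lusztig's quasi-$R$-matrix triangularity argument from \cite{Lusz92}. In that sense your proposal is more detailed than the paper itself, which is content to cite the result; your closing remark that one should ``record the statement with a short outline and appeal to loc.\ cit.'' is exactly what the authors do, minus the outline.
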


\begin{theorem} [\cite{Lusz92}]\label{th:tU}
The algebra $\tUqg$ has a lower crystal basis $(L^\low(\tUqg),B(\tUqg))$ satisfying the following properties:
\bnum
\item $L^\low(\tUqg) = \soplus_{\lambda \in \wl} L^\low(\tUqg a_\lambda)$ and $B(\tUqg) = \bigsqcup_{\lambda \in \wl} B(\tUqg a_\lambda)$ where
\begin{itemize}
\item $L^\low(\tUqg a_\lambda) = L^\low(\tUqg) \cap \Uqg a_\lambda$ and
\item $B(\tUqg a_\lambda) = B(\tUqg) \cap \big( L^\low(\tUqg a_\lambda)/qL^\low(\tUqg a_\lambda) \big)$.
\end{itemize}
\item
Set $\tUqg_\A:=\soplus_{\eta \in \wl} \Uqg_\A  a_\eta$. Then
$\bl\Q\tens\tUqg_\A,\, L^\low(\tUqg),\,\ol{ L^\low(\tUqg)}\br$ is balanced,
and
$\tUqg$ has the lower global basis
$\B^\low(\tUqg)\seteq\{ G^\low(b)\mid b \in B(\tUqg)\}$.
\item For any $\lambda \in \wl^+$ and $\mu \in \wl^-$, let
$$\Psi_{\lambda,\mu}\cl\Uqg a_{\lambda+\mu} \to V(\lambda,\mu)$$
 be the $U_q(\g)$-linear map
$a_{\lambda+\mu} \longmapsto u_\lambda \tens u_\mu$.
Then we have $\Psi_{\lambda,\mu}\bl L(\tUqg a_{\lambda+\mu})\br =
L^\low(\lambda,\mu)$.
\item Let $\overline{\Psi}_{\lambda,\mu}$ be the induced homomorphism
$$L^\low(\tUqg a_{\lambda+\mu})/qL^\low(\tUqg a_{\lambda+\mu})   \longrightarrow  L^\low(\lambda,\mu)/qL^\low(\lambda,\mu) .$$
Then we have
\bnam
\item $\{ b \in B(\tUqg a_{\lambda+\mu}) \ | \ \overline{\Psi}_{\lambda,\mu}b \ne 0\} \overset{\sim}{\longrightarrow} B(\lambda,\mu)$,
\item $\Psi_{\lambda,\mu}\bl G^\low(b)\br =  G^\low(\overline{\Psi}_{\lambda,\mu}(b))$ for any $b \in B(\tUqg a_{\lambda+\mu})$.
\ee
\item $B(\tUqg)$ has a structure of crystal such that
 the injective map induced by {\rm (iv) (a)}
$$ B(\lambda,\mu) \to B(\tUqg a_{\lambda+\mu}) \subset B(\tUqg)$$
is a strict embedding of crystals  for any $\la\in\Pd$ and $\mu\in\Po^-$.
\end{enumerate}
\end{theorem}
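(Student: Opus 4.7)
The plan is to realize $\tUqg a_\nu$, for each $\nu\in\wl$, as a controlled limit of the family of tensor product modules $V(\lambda,\mu)$ with $\lambda+\mu=\nu$, and to transport the already-known lower crystal/global bases of the $V(\lambda,\mu)$ to $\tUqg$. Concretely, for each pair $\lambda\in\wl^+$, $\mu\in\wl^-$ with $\lambda+\mu=\nu$ the $\U$-linear map $\Psi_{\lambda,\mu}\cl\Uqg a_\nu\epito V(\lambda,\mu)$ of the statement is surjective, and whenever $\lambda'-\lambda,\ \mu-\mu'\in\wl^+$ with $\lambda'+\mu'=\nu$ we obtain a factorization through a canonical projection $\pi\cl V(\lambda',\mu')\epito V(\lambda,\mu)$ sending $u_{\lambda'}\tensm u_{\mu'}$ to $u_\lambda\tensm u_\mu$. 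Because $\bigcap_{\lambda,\mu}\Ker\Psi_{\lambda,\mu}=0$ as $\la$ grows in $\wl^+$ and $\mu$ in $\wl^-$, we get an embedding $\Uqg a_\nu\hookrightarrow\varprojlim V(\lambda,\mu)$, and (i) amounts to defining $L^\low(\tUqg a_\nu)$ and $B(\tUqg a_\nu)$ as the pullbacks of the compatible system $\{L^\low(\lambda,\mu),B(\lambda,\mu)\}$.

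The key technical step is to show that each projection $\pi\cl V(\lambda',\mu')\to V(\lambda,\mu)$ is \emph{strict} for lower crystal bases: $\pi\bl L^\low(\lambda',\mu')\br=L^\low(\lambda,\mu)$, and the induced map on crystals sends each $b\in B(\lambda',\mu')$ either into $B(\lambda,\mu)$ or to $0$ while commuting with $\te_i^\low,\tf_i^\low$. Equivalently, one must show that $\Ker\pi$, which is the $\U$-submodule of $V(\lambda',\mu')$ generated by $f_i^{(1+\lan h_i,\lambda\ran)}(u_{\lambda'}\tensm u_{\mu'})$ and the dual $e_i$-vectors, is compatible with the tensor product crystal lattice. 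This is proved by induction on $|\lambda'-\lambda|+|\mu-\mu'|$, using the tensor product rule for crystals together with the existence of a crystal embedding of $V(\lambda,\mu)$ into $V(\lambda',\mu')$ that identifies $B(\lambda,\mu)$ with a connected component sink-stable under sufficiently many Kashiwara operators. This step is where the combinatorial heart of Lusztig's argument lies, and it is the main obstacle: once it is settled, assertions (iii), (iv)(a), and (v) follow automatically from the construction, since the Kashiwara operators and the embeddings $B(\lambda,\mu)\to B(\tUqg a_{\la+\mu})$ are defined level by level.

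For the global basis, part (ii), I would lift compatible families. Given $b\in B(\tUqg a_\nu)$, its images $\ol\Psi_{\la,\mu}(b)\in B(\la,\mu)$ are either $0$ or stably nonzero, and in the latter regime the elements $G^\low\bl\ol\Psi_{\la,\mu}(b)\br\in V(\la,\mu)_\A$ form a compatible system under the strict projections of the previous step, so they determine a unique element $G^\low(b)\in\Uqg a_\nu$. It lies in $\tUqg_\A=\soplus_\eta\U_\A a_\eta$ because each finite-level representative lies in $V(\la,\mu)_\A$, it is bar-invariant because $\ol{a_\nu}=a_\nu$ and the bar-operator on $\U$ is compatible with the one on each $V(\la,\mu)$, and it reduces to $b$ modulo $q L^\low(\tUqg a_\nu)$. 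The required balance of $\bl\Q\tens\tUqg_\A,\,L^\low(\tUqg),\,\ol{L^\low(\tUqg)}\br$ descends from the balance of the individual $V(\la,\mu)$, and the equality $\Psi_{\la,\mu}\bl G^\low(b)\br=G^\low(\ol\Psi_{\la,\mu}b)$ of (iv)(b) is built into the construction.

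The crystal structure on $B(\tUqg)$ in (v) is then forced: $\te_i,\tf_i$ act on $B(\tUqg a_\nu)$ by what they do at any sufficiently deep finite level, and the embedding $B(\la,\mu)\monoto B(\tUqg a_{\la+\mu})$ is strict by design because it identifies $B(\la,\mu)$ with the subset of elements not killed by $\ol\Psi_{\la,\mu}$. The only real obstacle throughout is the crystal-strictness of the projections $V(\la',\mu')\epito V(\la,\mu)$; every other ingredient is a formal consequence, and the argument reduces the problem at the level of $\tU$ to the previously established crystal/global basis theory for the highest-lowest weight modules $V(\la,\mu)$.
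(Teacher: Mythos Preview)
The paper does not give its own proof of this theorem: it is stated with attribution to \cite{Lusz92} (and implicitly \cite{Kash94}) and used as background input, so there is no in-paper argument to compare against. Your outline follows the standard inverse-limit approach of Kashiwara \cite{Kash94}, realizing $\tUqg a_\nu$ as the inverse limit of the $V(\lambda,\mu)$ with $\lambda+\mu=\nu$ and pulling back the crystal and global bases; this is indeed how the result is established in the literature, and your identification of the strictness of the projections $V(\lambda',\mu')\epito V(\lambda,\mu)$ with respect to crystal bases as the crux is accurate.

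One caveat: your description of $\Ker\pi$ and the inductive mechanism for proving strictness is somewhat vague, and in practice the argument does not proceed by directly analyzing that kernel. Rather, one uses the tensor-product theorem for crystal bases together with the known compatibility of the embeddings $V(\lambda)\hookrightarrow V(\lambda')\tensm V(\lambda'-\lambda)^{\mathrm{low}}$ (and dually for lowest-weight modules) to see that $B(\lambda,\mu)$ sits inside $B(\lambda',\mu')$ as a union of connected components, from which strictness of $\pi$ on crystal lattices follows. As a sketch your proposal is on the right track, but a full proof would need to make this step precise rather than invoke an unspecified induction on $|\lambda'-\lambda|+|\mu-\mu'|$.
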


For $\lambda \in \wl$, take any $\zeta \in \wl^+$ and $\eta \in \wl^-$ such that
$\lambda = \zeta +\eta$. Then $B(\zeta) \tens B(\eta)$ is
embedded into $B(\tUqg a_\eta)$.

For $\mu \in \wl$, let $T_\mu=\{ t_\mu \}$ be the crystal with
$$\wt(t_\mu) = \mu, \quad \ve_i(t_\mu) = \vp_i(t_\mu)=-\infty, \quad
\te_i (t_\mu) =\tf_i (t_\mu) =0.$$
Since we have
$$ B(\zeta) \hookrightarrow B(\infty) \tens T_\zeta, \  B(\eta) \hookrightarrow  T_\eta \tens B(-\infty) \text{ and }  T_\zeta \tens T_\eta \simeq T_{\zeta+\eta},$$
$B(\zeta) \tens B(\eta)$ is embedded into the crystal $B(\infty)
\tens T_\lambda \tens B(-\infty)$. Taking $\zeta \to \infty$ and $\eta \to -\infty$, we have
\Lemma [\cite{Kash94}]  For any $\la\in\Po$, we have a canonical crystal isomorphism
$$ B(\tUqg a_{\la}) \simeq B(\infty) \tens T_\lambda \tens B(-\infty).$$
\enlemma
Hence we identify
$$ B(\tUqg)=\bigsqcup_{\la\in\Po}B(\infty) \tens T_\lambda \tens B(-\infty).$$

\begin{theorem}  [\cite{Kash94}] \label{Thm: Kas94} \hfill
\bnum
\item $L^\low(\tUqg)$ is invariant under the anti-automorphisms $*$ and $\vp$.
\item $B(\tUqg)^* = \vp(B(\tUqg)) = B(\tUqg)$.
\item $\bl G^\low(b)\br^*=G^\low(b^*)$ and $\vp(G^\low(b))=G^\low(\vp(b))$ for $b \in B(\tUqg)$.
\end{enumerate}
\end{theorem}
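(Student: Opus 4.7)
The plan is to reduce both assertions to invariance of the lattice $L^\low(\tU)$ under $*$ and under $\vp$, since the balanced triple $\bl\Q\tens\tU_\A,\,L^\low(\tU),\,\ol{L^\low(\tU)}\br$ of Theorem~\ref{th:tU}(ii) characterizes the lower global basis uniquely. Both anti-automorphisms preserve the $\A$-form $\tU_\A$: on generators they permute divided powers, and $a_\eta^*=a_{-\eta}$, $\vp(a_\eta)=a_\eta$. Both commute with the bar-involution on generators (a one-line check using $\ol{a_\eta}=a_\eta$, $\ol{e_i}=e_i$, $\ol{f_i}=f_i$, and $\ol{q^h}=q^{-h}$). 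Consequently, once the lattice invariances are established, the uniqueness of the globalizing map produced from a balanced triple forces $*$ and $\vp$ to permute $\B^\low(\tU)$, and the corresponding permutations on $B(\tU)$ must agree with their actions induced modulo $qL^\low(\tU)$, giving (ii) and (iii).

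For $\vp$: The key ingredient is the identity $(\vp\tens\vp)\circ\Cmm = \Cmp\circ\vp$ from \eqref{eq:phiC}. Combined with the self-duality $\Dv V(\la)\simeq V(\la)$, Lemma~\ref{lem:lowup}, and the standard $\Dv(M_1\tensm M_2)\simeq (\Dv M_1)\tensp(\Dv M_2)$, this produces a canonical isomorphism $V(\la,\mu)\isoto V(-\mu,-\la)$ that intertwines $\vp$ with a transposition of the two tensor factors and sends $L^\low(\la,\mu)$ to $L^\low(-\mu,-\la)$. Pulling this back through $\Psi_{\la,\mu}$ and passing to the direct limit as $\la\to\infty$ and $\mu\to-\infty$ yields $\vp(L^\low(\tU a_\eta))=L^\low(\tU a_\eta)$.

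For $*$: This is the more delicate half, and constitutes the main technical obstacle. The antiautomorphism $*$ fixes $e_i,f_i$ but reverses $a_\eta$, so it does not respect the tensor-product presentation of $V(\la,\mu)$. Kashiwara's strategy is to first establish a companion theorem on $\Um$: a second set of Kashiwara operators $\te_i^*,\tf_i^*$ on $\Um$, obtained from the standard operators via conjugation by the antiautomorphism $*\vert_{\Um}$ (which fixes each $f_i$), preserves $L(\infty)$, defines a second crystal structure on $B(\infty)$, and satisfies $\bl G^\low(b)\br^* = G^\low(b^*)$. Proving this requires a grand-loop induction on weight spaces of $\Um$ that simultaneously tracks both crystal structures---this is the genuinely hard step. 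Once it is in hand, the statement propagates to $V(\la,\mu)$ through its presentation as a quotient of $\Um$ by the highest/lowest weight relations (now acted on by both the standard and starred operators), and then to $\tU a_{\la+\mu}$ via $\Psi_{\la,\mu}$. Using the identification $B(\tU a_\la)\simeq B(\infty)\tens T_\la\tens B(-\infty)$, one reads off that $*$ acts as $b\tens t_\la\tens b'\mapsto (b')^{*}\tens t_{-\la}\tens b^{*}$, which manifestly permutes $B(\tU)$, completing the argument.
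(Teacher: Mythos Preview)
The paper does not prove this theorem: it is simply quoted from \cite{Kash94} with no argument supplied. There is therefore no proof in the present paper to compare your proposal against. What you have written is a sketch of (part of) the argument from the original reference, which is perfectly reasonable as an exposition but is not what the present authors do---they just cite the result and move on to Corollary~\ref{cor: inv cry}.

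That said, your sketch contains a concrete error at the end. You claim that under the identification $B(\tU a_\la)\simeq B(\infty)\tens T_\la\tens B(-\infty)$ the antiautomorphism $*$ acts as $b\tens t_\la\tens b'\mapsto (b')^{*}\tens t_{-\la}\tens b^{*}$, swapping the two outer factors. But the correct formula, recorded immediately after the theorem in Corollary~\ref{cor: inv cry}(1), is
\[
(b_1\tens t_\mu\tens b_2)^* = b_1^*\tens t_{-\mu-\wt(b_1)-\wt(b_2)}\tens b_2^*,
\]
so $*$ acts componentwise with a weight shift on the middle factor and does \emph{not} interchange $B(\infty)$ and $B(-\infty)$. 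It is $\vp$ that swaps the factors (Corollary~\ref{cor: inv cry}(2)). This slip does not invalidate your overall strategy (reduce to lattice invariance and use uniqueness from the balanced triple), but it shows that your description of how $*$ interacts with the tensor-product realization is off, and the actual verification in \cite{Kash94} proceeds differently from what you wrote.
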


\begin{corollary} [\cite{Kash94}] \label{cor: inv cry}
For $b_1 \in B(\infty)$, $b_2 \in B(-\infty)$, we have
\begin{enumerate}
\item[{\rm (1)}] $(b_1 \tens t_\mu \tens b_2)^* = b_1^* \tens t_{-\mu-\wt(b_1)-\wt(b_2)} \tens b_2^*$.
\item[{\rm (2)}] $\vp(b_1 \tens t_\mu \tens b_2) = \vp(b_2) \tens t_{\mu+\wt(b_1)+\wt(b_2)} \tens \vp(b_1)$.
\end{enumerate}
\end{corollary}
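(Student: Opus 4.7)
The plan is to derive (1) and (2) from Theorem~\ref{Thm: Kas94}(iii), which asserts $\bl G^\low(b)\br^* = G^\low(b^*)$ and $\varphi\bl G^\low(b)\br = G^\low(\varphi(b))$, by tracking how $*$ and $\varphi$ interact with the decomposition $B(\tUqg a_\mu) \simeq B(\infty) \otimes T_\mu \otimes B(-\infty)$.

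First I would verify both formulas on the base element $a_\mu\in B(\tUqg a_\mu)$, which corresponds to $b_\infty\otimes t_\mu\otimes b_{-\infty}$, where $b_{\pm\infty}$ denotes the unique highest (resp.\ lowest) weight element of $B(\pm\infty)$. The extension rules $a_\mu^* = a_{-\mu}$ and $\varphi(a_\mu) = a_\mu$, combined with Kashiwara's involution fixing $b_{\pm\infty}$ and with $\varphi(b_{\pm\infty}) = b_{\mp\infty}$ (induced by $\varphi\cl\Um\isoto\Up$), reproduce both formulas on $a_\mu$, since the $T$-argument $-\mu-\wt(b_\infty)-\wt(b_{-\infty}) = -\mu$ for (1) and $\mu+\wt(b_\infty)+\wt(b_{-\infty})=\mu$ for (2).

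Next I proceed by induction. Since $*$ is an antiautomorphism of $\tUqg$ with $f_i^* = f_i$ and $e_i^* = e_i$, it carries the left $\Uqg$-action to the right $\Uqg$-action; on crystals, this translates to
\[
(\tf_i b)^* = b^*\star\tf_i \qtext (\te_i b)^* = b^*\star\te_i,
\]
where $\star\tf_i,\star\te_i$ denote the right Kashiwara operators on $B(\tUqg)$. By the tensor product rule on $B(\infty)\otimes T_\mu\otimes B(-\infty)$, the left $\tf_i$ acts on the $B(\infty)$ factor while the right operator $\star\tf_i$ shifts $T_\mu$ to $T_{\mu+\alpha_i}$ and acts on the $B(-\infty)$ factor. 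The analogous statement for $\varphi$ uses $\varphi(e_i)=f_i$, $\varphi(f_i)=e_i$ together with the isomorphism $\varphi\cl B(\infty)\isoto B(-\infty)$ (from $\varphi\cl\Um\isoto\Up$), which swaps the roles of the first and third tensor factors. Since every $b = b_1\otimes t_\mu\otimes b_2$ is reached from some $a_\lambda$ by successive left and right Kashiwara operators, iterating these compatibilities yields (1) and (2); the precise $T$-shifts on the right-hand sides are then forced by weight considerations.

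The principal obstacle is pinning down the explicit action of the right Kashiwara operators on the $B(-\infty)$ factor of the decomposition. This is resolved by unwinding the limit construction $B(\zeta)\otimes B(\eta)\hookrightarrow B(\tUqg a_{\zeta+\eta})$ from Theorem~\ref{th:tU}(v) and exploiting the $*$-symmetry between the left and right $\tUqg$-actions; alternatively, one passes through the maps $\Psi_{\zeta,\eta}$ after choosing $\zeta$ sufficiently dominant and $\eta$ sufficiently antidominant, reducing the identification to explicit computations of $*$ and $\varphi$ on $V(\zeta,\eta)$ relative to its dual $V(-\eta-\wt(b_2),-\zeta-\wt(b_1))$.
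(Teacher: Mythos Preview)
The paper does not supply a proof of this corollary; it is simply quoted from \cite{Kash94} immediately after Theorem~\ref{Thm: Kas94}, with no argument given. There is therefore no proof in the paper to compare your proposal against.

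Your outline---verifying the formula on the base element $a_\mu$ and then propagating via the compatibility of $*$ and $\varphi$ with the Kashiwara operators guaranteed by Theorem~\ref{Thm: Kas94}(iii)---is the natural route and matches how such statements are derived in the original reference. One small caveat: your claim that ``the left $\tf_i$ acts on the $B(\infty)$ factor'' is an oversimplification, since the tensor product rule compares $\varphi_i(b_1)$ with $\varepsilon_i(b_2)-\langle h_i,\mu\rangle$ and may send $\tf_i$ to either outer factor; the induction still goes through once both cases are checked. Alternatively, one can bypass the case analysis by using the triangular expansion \eqref{eq:tUc} together with the direct computation $(G^\low(b_1)G^\low(b_2)a_\mu)^* = a_{-\mu}\,G^\low(b_2^*)\,G^\low(b_1^*)$ and commuting the factors modulo lower terms. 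The ``principal obstacle'' you flag concerning the right crystal operators is indeed handled in \cite{Kash94} via the limit construction you mention.
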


We define, for $b \in B$ with $B=B(\tUqg), B(\infty)$ or $B(-\infty)$,
\begin{align*}
\ve_i^*(b) = \ve_i(b^*),  \ \vp_i^*(b) = \vp_i(b^*), \  \wt^*(b)=\wt(b^*), \  \te_i^*(b)
= \te_i(b^*)^* \text{ and } \tf_i^*(b) = \tf_i(b^*)^*.
\end{align*}

This defines another crystal structure on $\tUqg$: For $b_1 \in
B(\infty)$ and $b_2 \in B(-\infty)$ and $\eta \in \wl$, we have
\begin{align*}
 \ve^*_i(b_1 \tens t_\eta \tens b_2) & = \max( \ve^*_i(b_1), \vp^*_i(b_2)+\langle h_i, \eta \rangle ), \\
 \vp_i^*(b_1 \tens t_\eta \tens b_2) & = \max( \ve^*_i(b_1)-\langle h_i, \eta \rangle , \vp^*_i(b_2)), \\
&= \ve_i^*(b_1 \tens t_\eta \tens b_2) + \langle  h_i,\wt^*(b_1 \tens t_\eta \tens b_2)\rangle, \\
 \wt^*(b_1 \tens t_\eta \tens b_2) & = -\eta, \\
 \te^*_i(b_1 \tens t_\eta \tens b_2) & =
 \begin{cases} (\te^*_ib_1) \tens t_{\eta-\alpha_i} \tens b_2 & \text{ if } \ve^*_i(b_1) \ge \vp^*_i(b_2)+\langle h_i,\eta \rangle, \\
               b_1 \tens t_{\eta-\alpha_i} \tens( \te^*_ib_2) & \text{ if } \ve^*_i(b_1) < \vp^*_i(b_2)+\langle h_i,\eta \rangle, \end{cases} \\
 \tf^*_i(b_1 \tens t_\eta \tens b_2) & =
 \begin{cases} (\tf^*_ib_1) \tens t_{\eta+\alpha_i} \tens b_2 & \text{ if } \ve^*_i(b_1) > \vp^*_i(b_2)+\langle h_i,\eta \rangle, \\
 b_1 \tens t_{\eta+\alpha_i} \tens ( \tf^*_ib_2) & \text{ if } \ve^*_i(b_1) \le \vp^*_i(b_2)+\langle h_i,\eta \rangle. \end{cases} \\
\end{align*}

We have
$$\te_i\circ \vphi=\vphi\circ\tf^*_i\quad\text{and}\quad
\tf_i\circ \vphi=\vphi\circ\tf^*_i\quad\text{for every $i\in I$.}$$

For $\xi\in\nrtl$ and $\eta\in\prtl$, we shall denote by
\begin{align*}
\Up_{<\eta}&\seteq\soplus_{\eta'\in\prtl\cap(\eta+\nrtl)\setminus\{\eta\}}\Up_{\eta'}.
\end{align*}
Then for any $\la\in \Po$, $b_-\in B(\infty)_\xi$ and
$b_+\in B(-\infty)_\eta$, we have
\eq
G^\low(b_-\tens t_\la\tens b_+)
-G^\low(b_-) G^\low(b_+)a_\la\in
\Um_{>\xi}\Up_{<\eta}a_\la.
\label{eq:tUc}
\eneq

\subsection{Relationship of $\Ag$ and $\tU$}

There exists a canonical pairing $A_q(\g) \times \tUqg \to \Q(q)$ by
\begin{equation} \label{eq: coupling}
\laa \psi, x a_\mu \raa = \delta_{\mu,\wt_\li(\psi)} \psi(x)
\quad\text{ for any $x\in\Um$ and $\mu\in\Po$.}
\end{equation}

\begin{theorem}[{\cite{Kash94}}] \label{thm: bicrystal embed}
There exists a bi-crystal embedding
\begin{equation} \label{eq: iota}
\oi_\g\cl  B(A_q(\g)) \To B(\tUqg)
\end{equation}
which satisfies:
$$\laa G^\up(b),\vp(G^\low(b')) \raa =  \delta_{\,\oi_\g(b),b'}$$
for any $b \in B(A_q(\g))$ and $b' \in B(\tUqg)$.
\end{theorem}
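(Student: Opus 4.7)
The plan is to construct $\oi_\g$ by matching the upper global basis of $A_q(\g)$ with the $\vphi$-image of the lower global basis of $\tU$ via two natural maps: the Peter--Weyl decomposition $\Phi\colon \soplus_{\la\in\Pd} V(\la)\tens V(\la)^\ri \isoto A_q(\g)$ of \eqref{eq: Phi} and the surjections $\Psi_{\la,\nu}\colon \tU a_{\la+\nu}\twoheadrightarrow V(\la,\nu)$ of Theorem~\ref{th:tU}(iii). The central identity is
$$\laa\Phi(u\tens v^\ri),\,xa_\mu\raa=\delta_{\mu,\wt u}\,(v,xu)\qquad\text{for $u,v\in V(\la)$ and $x\in\U$,}$$
which reduces the coupling \eqref{eq: coupling} on the weight-$\mu$ piece to the non-degenerate symmetric form on $V(\la)$, for which the upper and lower global bases are mutually dual by construction.

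First, I fix $\mu\in\Po$. For each $\la\in\Pd$ with $\nu\seteq\mu-\la\in\Po^-$, I would study the pairing of the finite-dimensional piece $\Phi\bigl(V(\la)_\mu\tens V(\la)^\ri\bigr)\subset A_q(\g)$ with $\tU a_\mu$ via $\Psi_{\la,\nu}$. Theorem~\ref{th:tU}(iv) shows that the lower global basis of $\tU a_\mu$ is sent to the lower global basis of $V(\la,\nu)$, with kernel precisely the set of $b$ with $\overline{\Psi}_{\la,\nu}b=0$; dually, Proposition~\ref{prop:Agglobal} together with the duality between the upper and lower global bases of $V(\la)$ singles out the corresponding upper global basis elements in $A_q(\g)$, and the pairing between them is the Kronecker delta on the weight-$\mu$ part of $V(\la,\nu)$. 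Letting $\la$ grow sufficiently dominant and invoking the crystal identification $B(\tU a_\mu)\simeq B(\infty)\tens T_\mu\tens B(-\infty)$ together with the stability built into Theorem~\ref{th:tU}(v), one obtains a subset of $B(\tU a_\mu)$ in canonical bijection with $\{b\in B(A_q(\g)):\wtl(b)=\mu\}$; this bijection defines $\oi_\g$ on this piece.

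For the bi-crystal compatibility, the key point is that left multiplication by $e_i$ on $A_q(\g)$ is adjoint under the pairing to right multiplication by $e_i$ on $\tU$: the former acts on $\Phi(u\tens v^\ri)$ via $e_i$ on the first tensor factor, while the latter induces on $B(\tU)$ the $*$-Kashiwara operator $\te_i^*$ by Corollary~\ref{cor: inv cry}. A parallel analysis using $\vphi^*$ and the identity $\vphi(\tU a_\mu)=a_\mu\tU$ handles the right crystal structure on $B(A_q(\g))$. The main technical obstacle is promoting the pairing identity from a triangularity statement to the exact equality $\delta_{\oi_\g(b),b'}$: this combines (a) bar-invariance of both global bases, (b) their $\A$-integrality (which ensures the pairing takes values in $\A$), and (c) a reduction modulo $q$ on each finite-dimensional quotient $V(\la,\nu)$, where using the duality of upper and lower global bases on $V(\la)$ together with Lemma~\ref{lem:lowup} one sees that the diagonal entries equal $1$ and the off-diagonal entries vanish modulo $q$; the bar-invariance then upgrades this to the exact Kronecker delta.
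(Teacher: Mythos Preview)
The paper does not prove this theorem at all: it is stated with the attribution \cite{Kash94} and no argument is given. There is therefore nothing in the paper to compare your proposal against; the authors simply quote Kashiwara's result and use it downstream (e.g.\ in Proposition~\ref{prop: nonzero elt} and Proposition~\ref{prop:Deprod}).

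That said, a few comments on your sketch as it stands. The overall strategy---pair $A_q(\g)$ with $\tU$ via \eqref{eq: coupling}, use that $\Phi$ identifies the left-hand side with $\soplus_\la V(\la)\tens V(\la)^\ri$, and then compute the coupling against $\vphi(G^\low(b'))$ by pushing through $\Psi_{\la,\nu}$---is indeed the one Kashiwara follows. Your central identity $\laa\Phi(u\tens v^\ri), xa_\mu\raa=\delta_{\mu,\wt u}\,(v,xu)$ is correct and is the hinge of the argument. Two points deserve more care, though. First, the passage ``letting $\la$ grow sufficiently dominant'' is not quite the right picture: an element $b\in B(A_q(\g))$ lives in $B(\la)\tens B(\la)$ for a \emph{fixed} $\la$, so you are not free to enlarge $\la$. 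What actually happens is that for each such $b$ one has $\wtr(b)=\zeta\in W\la$ (or more generally a weight of $V(\la)$), and the relevant quotient is $\Psi_{\la,\nu}$ with $\la+\nu=\zeta$; the stability in Theorem~\ref{th:tU}(v) is used to see that the resulting element of $B(\tU a_\zeta)$ does not depend on the auxiliary choices. Second, your treatment of the bi-crystal compatibility conflates the two crystal structures: the left $\U$-action on $A_q(\g)$ corresponds to the ordinary Kashiwara operators on $B(\tU)$, while the right action corresponds to the $*$-operators via $\vphi$ and Corollary~\ref{cor: inv cry}; you have the right ingredients but the matching needs to be stated precisely. The final triangularity-to-orthogonality step via bar-invariance and integrality is standard and as you describe.
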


\subsection{Relationship of $\Ag$ and $\An$}

\begin{definition} \label{def: p_n}
 Let $p_\n\cl A_q(\g) \to A_q(\n)$ be the homomorphism induced by $U_q^+(\g) \to U_q(\g)$:
$$\langle p_\n(\psi),x \rangle = \psi(x) \quad \text{for \ any
$x \in U_q^+(\g)$.}$$
\end{definition}

Then we have
$$ \wt(p_\n(\psi))=\wt_\li(\psi)-\wt_\ri(\psi).$$

Note that the map $p_\n$ sends
the upper global basis of $A_q(\g)$ to the upper global basis of
$A_q(\n)$ or zero. Hence we have a map
$$  \vs_\n \cl  B(A_q(\g)) \to B(A_q(\n)) \bigsqcup \{ 0 \}.$$

It is obvious that $p_\n$ sends all $\Phi(u_{w\lambda}\tens u_{w\lambda}^\ri)$
($\lambda\in\wl^+$ and $w\in W$) to $1$. Note that
$\oi_\g(u_{w\lambda}\tens u_{w\lambda}^\ri)=b_\infty\tens t_{w\lambda}\tens b_{-\infty}\in B(\tUqg)$.

\begin{proposition} \label{prop: nonzero elt}
For $b \in B(A_q(\g))$,
set $\bg(b)=b_1 \tens t_\zeta \tens b_2
\in B(\infty) \tens T_\zeta \tens B(-\infty)\subset B(\tU)$ $(\zeta \in \wl)$.
Then we have
$$ p_\n(G^\up(b)) = \delta_{b_2,\,b_{-\infty}}G^\up(b_1).$$
\end{proposition}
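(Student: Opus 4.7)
The plan is to verify the identity by evaluating both sides on every element of the lower global basis $\{G^\low(d)\}_{d \in B(-\infty)}$ of $U_q^+(\g)$ (which, via $\vp$, corresponds to the lower global basis of $U_q^-(\g)$).

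For the left-hand side, set $\mu \seteq \wt_\li(b)$ and use the canonical pairing \eqref{eq: coupling}, which gives $p_\n(G^\up(b))(G^\low(d)) = \laa G^\up(b),\, G^\low(d)\, a_\mu \raa$. The first move is to rewrite $G^\low(d)\,a_\mu$ as a single lower global basis element of $\tU$. Applying \eqref{eq:tUc} with $b_- = b_\infty\in B(\infty)$ (of weight $0$), the correction term lies in $\Um_{>0}\Up_{<\wt(d)}\,a_\mu = 0$, so the identity $G^\low(b_\infty \tens t_\mu \tens d) = G^\low(d)\,a_\mu$ is exact. Theorem \ref{thm: bicrystal embed} combined with Theorem \ref{Thm: Kas94} (which gives $\vp \circ G^\low = G^\low \circ \vp$ and $\vp^2 = \id$ on $B(\tU)$) then yields $\laa G^\up(b),\, G^\low(b'')\raa = \delta_{\bg(b),\,\vp(b'')}$ for every $b'' \in B(\tU)$. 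Computing $\vp(b_\infty \tens t_\mu \tens d) = \vp(d) \tens t_{\mu+\wt(d)} \tens b_{-\infty}$ via Corollary \ref{cor: inv cry} and writing $\bg(b) = b_1 \tens t_\zeta \tens b_2$, the Kronecker delta survives only when $b_2 = b_{-\infty}$ and $d = \vp(b_1)$; the remaining weight constraint $\mu + \wt(d) = \zeta$ is automatic once one has the compatibility $\mu = \wt_\li(b) = \zeta + \wt(b_1)$, which is itself deduced by applying the same duality formula to $b'' = b_1 \tens t_\zeta \tens b_{-\infty}$ and tracking weights via \eqref{eq: coupling}.

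For the right-hand side, the algebra isomorphism $\iota\colon U_q^-(\g) \isoto A_q(\n)$ transports the upper global basis of $U_q^-(\g)$ to that of $A_q(\n)$, so
\[ G^\up(b_1)(G^\low(d)) = \bl G^\up(b_1),\, \vp(G^\low(d))\br = \bl G^\up(b_1),\, G^\low(\vp(d))\br = \delta_{b_1, \vp(d)} = \delta_{d, \vp(b_1)} \]
by the standard duality between the upper and lower global bases of $U_q^-(\g)$.

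Combining the two computations, both sides of the proposed identity pair to $\delta_{b_2, b_{-\infty}}\,\delta_{d, \vp(b_1)}$ with every $G^\low(d)$; since these elements form a basis of $U_q^+(\g)$, the equality of elements in $A_q(\n)$ follows. The main technical point is the weight-matching identity $\wt_\li(b) = \zeta + \wt(b_1)$ forced by the conventions of the canonical pairing, together with the fact that $\iota$ actually carries the upper global basis to the upper global basis, which ultimately rests on the compatibility of the balanced triples defining these bases on the two sides.
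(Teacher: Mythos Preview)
Your proof is correct and follows essentially the same strategy as the paper's: both evaluate $p_\n(G^\up(b))$ against the lower global basis of $U_q^+(\g)$, identify the result with a pairing $\laa G^\up(b),\, G^\low(b_\infty\tens t_\eta\tens\,\cdot\,)\raa$ in $\tU$ via \eqref{eq:tUc} (with $b_- = b_\infty$ so the correction term vanishes), and then invoke Theorem~\ref{thm: bicrystal embed} together with Corollary~\ref{cor: inv cry} to read off the delta. The only cosmetic difference is that the paper indexes the test elements by $\tilde{b}\in B(\infty)$ and pairs against $\vp(G^\low(\tilde{b}))$, whereas you index by $d\in B(-\infty)$ and pair against $G^\low(d)$; these are the same family under $d=\vp(\tilde{b})$, and the paper simply records the weight relation $\eta=\wt(b_1)+\zeta+\wt(b_2)$ at the outset rather than deducing it a posteriori as you do.
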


\begin{proof}
Set $\eta \seteq \wt(b_1)+\zeta+\wt(b_2)=\wt_\lt(b)$. Then for any $\tilde{b} \in B(\infty)$, we
have
\begin{align*}
\big\lan p_\n(G^\up(b)), \vp(G^\low(\tilde{b})) \big\ra &= \big\laa G^\up(b), G^\low(\vp(\tilde{b}))a_{\eta} \big\raa \\
& \hspace{-20ex} = \big\laa G^\up(b), G^\low(b_\infty \tens t_\eta \tens \vp(\tilde{b})) \big\raa = \delta(\bg(b) = \tilde{b} \tens t_{\eta-\wt(\tilde{b}) } \tens b_{-\infty} ) \\
& \hspace{-20ex} =\delta(b_2=b_{-\infty},b_1 = \tilde{b}).
\end{align*}
\end{proof}

Although the map $\pn$ is not an algebra homomorphism,
$\pn$ preserves the multiplications up to a power of $q$, as we will see below.

\begin{lemma}  \label{lem:Deltan}
 For $x \in U_q^+(\g)$, if $\Delta_\n(x)=x_{(1)} \tens x_{(2)}$, then
\begin{align} \label{eq: Delta_n comp}
\comp(x)=q^{\wt(x_{(1)})} x_{(2)} \tens x_{(1)}.
\end{align}
\end{lemma}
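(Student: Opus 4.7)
I would proceed by induction on the weight $\wt(x)\in\rl^+$, equivalently on the length of an expression of $x$ as a monomial in the generators $e_i$ of $U_q^+(\g)$. For the base case $x=e_i$, one has $\Delta_\n(e_i)=e_i\otimes 1+1\otimes e_i$, so with the convention that $q^\mu$ denotes the element of $U_q^0(\g)$ acting as $q^{(\mu,\nu)}$ on the weight space of weight $\nu$ (in particular $q^{\alpha_i}=t_i$), the right-hand side of the claimed formula evaluates to $q^{\alpha_i}\cdot 1\otimes e_i+q^0\cdot e_i\otimes 1=t_i\otimes e_i+e_i\otimes 1=\comp(e_i)$, which matches the defining formula \eqref{eq: comp}.

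For the inductive step, assume the identity holds for $x,y\in U_q^+(\g)$. Since $\comp\colon U_q(\g)\to U_q(\g)\otimes U_q(\g)$ is a ring homomorphism for the ordinary componentwise tensor multiplication, $\comp(xy)=\comp(x)\comp(y)$. Inserting the induction hypothesis and using the commutation $a\,q^\mu=q^{-(\mu,\wt(a))}q^\mu a$ for weight-homogeneous $a$ to move $q^{\wt(y_{(1)})}$ past $x_{(2)}$ yields
\[
\comp(xy)=\sum q^{-(\wt(y_{(1)}),\wt(x_{(2)}))}\,q^{\wt(x_{(1)})+\wt(y_{(1)})}\,x_{(2)}y_{(2)}\otimes x_{(1)}y_{(1)}.
\]
On the other hand, $\Delta_\n$ is an algebra homomorphism into $U_q^+(\g)\otimes U_q^+(\g)$ equipped with the twisted multiplication $(a\otimes b)(c\otimes d)=q^{-(\wt(b),\wt(c))}\,ac\otimes bd$, so
\[
(xy)_{(1)}\otimes(xy)_{(2)}=\sum q^{-(\wt(x_{(2)}),\wt(y_{(1)}))}\,x_{(1)}y_{(1)}\otimes x_{(2)}y_{(2)}.
\]
Applying the right-hand side of the claimed formula to $xy$ then produces the same expression as $\comp(xy)$, since $\wt((xy)_{(1)})=\wt(x_{(1)})+\wt(y_{(1)})$ and the two $q$-corrections agree by the symmetry of $(\cdot,\cdot)$.

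The only real subtlety is the bookkeeping of two independent sources of $q$-scalars: one coming from commuting weight-homogeneous elements past the Cartan part inside $U_q(\g)\otimes U_q(\g)$, and the other coming from the twisted algebra structure on $U_q^+(\g)\otimes U_q^+(\g)$. Both contribute exactly $q^{-(\wt(x_{(2)}),\wt(y_{(1)}))}$, so they match, and since $U_q^+(\g)$ is generated by the $e_i$ this bilinear induction closes the argument.
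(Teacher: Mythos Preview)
Your proof is correct and follows essentially the same approach as the paper: induction via the multiplicativity of both $\comp$ and $\Delta_\n$. The paper's inductive step is the special case $x=e_i$ of your general step for a product $xy$ (it assumes the formula for $x$ and verifies it for $e_ix$), so the bookkeeping of the two $q$-factors you isolate is exactly what the paper also checks, only with $y$ specialized to a generator.
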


\begin{proof}
Assume that \eqref{eq: Delta_n comp} holds for $x \in U_q^+(\g)$.
Note that
$$ \Delta_\n(e_ix) = (e_i \tens 1+1 \tens e_i)(x_{(1)} \tens x_{(2)}) =
e_ix_{(1)} \tens x_{(2)} + q^{-(\alpha_i,\wt(x_{(1)}))}x_{(1)} \tens (e_ix_{(2)}).$$
On the other hand, we have
\begin{align*}
\comp(e_ix)&=(e_i  \tens 1+ q^{\alpha_i} \tens e_i)(q^{\wt(x_{(1)})} x_{(2)} \tens x_{(1)}) \allowdisplaybreaks \\
&=(e_i q^{\wt(x_1)}) x_{(2)} \tens x_{(1)} +
( q^{\alpha_i+\wt(x_{(1)})} x_{(2)}) \tens (e_ix_{(1)}) \allowdisplaybreaks \\
& = q^{-(\alpha_i,\wt(x_{(1)}))}(q^{\wt(x_{(1)})}e_ix_{(2)}) \tens
x_{(1)}+ (q^{\wt(e_ix_{(1)})}x_{(2)}) \tens (e_ix_{(1)}).
\end{align*}
Hence \eqref{eq: Delta_n comp} holds for $e_ix$.
\end{proof}

\begin{proposition}\label{prop: p_n}
For $\psi,\theta \in A_q(\g)$, we have
$$ p_\n(\psi\theta)= q^{(\wt_\ri(\psi),\wt_\ri(\theta)-\wt_\li(\theta))}p_\n(\psi)p_\n(\theta).$$
\end{proposition}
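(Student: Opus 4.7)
My plan is to evaluate both sides on an arbitrary $x \in U_q^+(\g)$ and match them using the previous lemma to pass from $\Delta_+$ to $\Delta_\n$.

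First, I would unwind the left-hand side. By definition of $p_\n$ and of the multiplication on $A_q(\g)$,
\[
p_\n(\psi\theta)(x) = (\psi\theta)(x) = \langle \psi\otimes\theta,\comp(x)\rangle
\]
for any $x \in U_q^+(\g)$. Applying Lemma \ref{lem:Deltan}, which says $\comp(x) = q^{\wt(x_{(1)})} x_{(2)} \otimes x_{(1)}$ with $\Delta_\n(x) = x_{(1)} \otimes x_{(2)}$, this becomes
\[
p_\n(\psi\theta)(x) = \sum \psi\bl q^{\wt(x_{(1)})} x_{(2)}\br\, \theta(x_{(1)}).
\]

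Next, I would push the Cartan factor $q^{\wt(x_{(1)})}$ through $\psi$. Writing the monomial $q^{\mu}$ with $\mu \in \rl$ as a suitable product of the $t_i$'s, the identity $\psi\cdot q^{h_\ri} = q^{\langle h_\ri,\wtr(\psi)\rangle}\psi$ translates into
\[
\psi(q^\mu v) = q^{(\mu,\,\wtr(\psi))}\,\psi(v)
\qquad\text{for all $v\in U_q(\g)$.}
\]
This uses only the conversion $\langle h_i,\zeta\rangle\,(\alpha_i,\alpha_i)/2 = (\alpha_i,\zeta)$ and the bi-grading of $\psi$. Plugging in with $\mu = \wt(x_{(1)})$ yields
\[
p_\n(\psi\theta)(x) = \sum q^{(\wt(x_{(1)}),\,\wtr(\psi))}\,\psi(x_{(2)})\,\theta(x_{(1)}).
\]

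Then I would observe that the factor $q^{(\wt(x_{(1)}),\,\wtr(\psi))}$ can be pulled out of the sum. Indeed, $\theta(x_{(1)}) = 0$ unless $x_{(1)}$ lies in the weight space $U_q(\g)_{\wtr(\theta) - \wtl(\theta)}$, since (as a consequence of $\psi \in A_q(\g)_{\eta,\zeta}\Rightarrow \psi\vert_{U_q(\g)_\xi}=0$ for $\xi \ne \zeta-\eta$) such grading constraints hold for both factors. Hence on non-vanishing terms $\wt(x_{(1)}) = \wtr(\theta) - \wtl(\theta)$, giving
\[
p_\n(\psi\theta)(x) = q^{(\wtr(\theta) - \wtl(\theta),\,\wtr(\psi))}\sum\psi(x_{(2)})\theta(x_{(1)}).
\]
Finally, the remaining sum is, by the definition of the multiplication on $A_q(\n)$, exactly $\bl p_\n(\psi)\,p_\n(\theta)\br(x)$, so the proposition follows by symmetry of the bilinear form. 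The only mild obstacle is the bookkeeping of the $q$-power produced when $\psi$ swallows $q^{\wt(x_{(1)})}$; everything else is a direct manipulation of definitions, and no heavy input beyond Lemma \ref{lem:Deltan} is required.
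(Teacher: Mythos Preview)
Your proof is correct and follows essentially the same route as the paper's: evaluate on $x\in U_q^+(\g)$, use Lemma~\ref{lem:Deltan} to replace $\comp(x)$ by $q^{\wt(x_{(1)})}x_{(2)}\otimes x_{(1)}$, absorb the Cartan factor into $\psi$ via its right weight, replace $\wt(x_{(1)})$ by $\wtr(\theta)-\wtl(\theta)$ on nonvanishing terms, and recognize the remaining sum as $\bl p_\n(\psi)p_\n(\theta)\br(x)$. The paper phrases the weight step as ``$\wt(x_{(1)})=-\wt\bl p_\n(\theta)\br$'', which is the same constraint you obtain from the bi-grading of $\theta$.
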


\begin{proof}
For $x \in U_q^+(\g)$, set $\Delta_\n(x)=x_{(1)} \tens x_{(2)}$. Then, we
have
\begin{align*}
\lan p_\n(\psi\theta),x \ra & = \lan \psi\theta,x  \ra
= \lan \psi \tens \theta, q^{\wt(x_{(1)})} x_{(2)} \tens x_{(1)} \ra
= \lan \psi , q^{\wt(x_{(1)})} x_{(2)} \ra \lan \theta , x_{(1)} \ra \\
&= q^{(\wt_\ri(\psi),\wt(x_{(1)}))} \lan \psi ,  x_{(2)} \ra \lan \theta , x_{(1)} \ra
= q^{(\wt_\ri(\psi),\wt(x_{(1)}))} \lan p_\n(\psi) ,  x_{(2)} \ra
\lan p_\n(\theta) , x_{(1)} \ra  \\
&\underset{\mathrm{(a)}}{=} q^{(\wt_\ri(\psi),\wt_\ri(\theta)-\wt_\li(\theta))} \lan  p_\n(\psi)\tens p_\n(\theta),\Delta_\n(x) \ra \\
&= q^{(\wt_\ri(\psi),\wt_\ri(\theta)-\wt_\li(\theta))} \lan
p_\n(\psi)p_\n(\theta),x \ra .
\end{align*}
Here, we used $\wt(x_{(1)})=-\wt\bl p_\n (\theta )\br$  in (a).
\end{proof}

There exists an injective map
\eq &&\oi_\lambda\cl  B(\lambda)\to B(\infty)
\eneq
induced by the $\Up$-linear homomorphism
$\iota_\la\cl V(\lambda)\to A_q(\n)$ given by
$$v\longmapsto \big ( \Uqg^+\ni a\mapsto (av,u_\lambda)\big).$$
It commutes with $\te_i$.
We have
$$
G^\low_\lambda(b)=G^\low(\oi_\lambda(b))u_\lambda
 \quad\text{and}\quad
\iota_\la G_\la^\up(b)=G^\up(\oi_\la(b))\quad\text{for any $b\in B(\lambda)$.}
$$

\begin{proposition} [{\cite{Kash94}}]\label{pro:gltens}
Let $\lambda,\mu \in \wl^+$ and $w \in W$. Then for any $b \in
B(\tUqg a_{\lambda+w\mu})$, $G^\low(b)(u_\lambda \tensm
u_{w\mu})$ vanishes or is a member of the lower global basis of
$V(\lambda) \tensm V(\mu)$.
\end{proposition}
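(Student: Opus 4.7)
The plan is to establish the claim by exploiting the balanced triple structure on $V(\la,\mu)\seteq V(\la)\tensm V(\mu)$ and reducing to the lowest-weight case of Theorem \ref{th:tU}(iv)(b) by induction on $\ell(w)$.

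First, I would verify that $u_\la\tensm u_{w\mu}$ is itself a lower global basis element of $V(\la,\mu)$, namely $G^\low(b_\la\tens b_{w\mu})$, where $b_\la\in B(\la)$ and $b_{w\mu}\in B(\mu)$ are the crystal basis elements corresponding to the extremal vectors. This uses the balanced triple $\bl\Q\tens V(\la,\mu)_\A,\, L^\low(\la,\mu),\,\ol{L^\low(\la,\mu)}\br$: the tensor is bar-invariant (since $u_\la$ and $u_{w\mu}$ are bar-invariant extremal lower global basis elements of $V(\la)$ and $V(\mu)$ respectively), it is $\A$-integral (both factors lie in the respective $\A$-forms), and it matches $b_\la\tens b_{w\mu}$ modulo $qL^\low(\la,\mu)$. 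Hence it coincides with $G^\low(b_\la\tens b_{w\mu})$.

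For the base case $w=e$, the vector $u_\la\tensm u_\mu$ is a highest weight vector generating an irreducible $\Uqg$-submodule $V(\la+\mu)\subset V(\la,\mu)$. On this submodule, $G^\low(b)(u_\la\tensm u_\mu)$ is computed via the $\Uqg$-linear surjection $\tUqg a_{\la+\mu}\twoheadrightarrow V(\la+\mu)$, for which the conclusion is the $\mu=0$ specialization of Theorem \ref{th:tU}(iv)(b) (with $\la$ replaced by $\la+\mu$). Since $V(\la,\mu)$ lies in the semisimple category $\Oint(\g)$, the lower global basis of the irreducible summand $V(\la+\mu)$ injects into the lower global basis of $V(\la,\mu)$, proving the base case. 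For the inductive step, write $w=s_iw'$ with $\ell(w')=\ell(w)-1$ and put $n\seteq\lan h_i,w'\mu\ra\ge 0$; since $e_iu_{w'\mu}=0$ by extremality, we have the exact identity $u_{w\mu}=f_i^{(n)}u_{w'\mu}$. Expanding via the coproduct $\comm(f_i^{(n)})=\sum_{j+k=n}q_i^{-jk}f_i^{(j)}t_i^k\tens f_i^{(k)}$, one rewrites $G^\low(b)(u_\la\tensm u_{w\mu})$ as a combination of actions of related lower global basis elements of $\tUqg$ on $u_\la\tensm u_{w'\mu}$, to which the inductive hypothesis applies.

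The main obstacle is the inductive step: the coproduct expansion introduces a sum of terms with $q$-corrections, and one must track carefully that after all cancellations the image $G^\low(b)(u_\la\tensm u_{w\mu})$ remains bar-invariant, $\A$-integral, and matches a single crystal basis element of $L^\low(\la,\mu)/qL^\low(\la,\mu)$ (or is zero). The three balanced-triple conditions then identify the image with a lower global basis element or $0$. This bookkeeping rests on the tensor product theorem for lower crystal bases (guaranteeing that $L^\low(\la,\mu)=L^\low(\la)\tens L^\low(\mu)$ is stable under $\tUqg$-action via $\comm$), and on the compatibility of the bar involution with $\comm$, both of which are available from Theorems \ref{th:tU} and \ref{Thm: Kas94}.
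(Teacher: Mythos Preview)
The paper does not prove this proposition; it is quoted from \cite{Kash94} without argument. So there is no ``paper's own proof'' to compare against, and your task is to produce a self-contained argument or a precise reference. As written, your outline has a genuine gap.

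The problematic step is the bar-invariance of $u_\la\tensm u_{w\mu}$ in $V(\la)\tensm V(\mu)$. You justify it by saying that $u_\la$ and $u_{w\mu}$ are each bar-invariant in $V(\la)$ and $V(\mu)$. But the bar involution on the tensor product is \emph{not} the tensor of the individual bar involutions; it is defined either via a cyclic vector (when one exists) or, in Lusztig's framework, by $\ol{v\tens w}=\Theta(\bar v\tens\bar w)$ where $\Theta$ is the quasi-$R$-matrix. With both $\la,\mu\in\wl^+$, the module $V(\la)\tensm V(\mu)$ is not generated by $u_\la\tensm u_\mu$ as a $\Uqg$-module (it has several highest-weight components), so the cyclic-vector definition you invoke in the first paragraph is unavailable. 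One can still show $u_\la\tensm u_{w\mu}=G^\low(b_\la\tens b_{w\mu})$, but this requires using that the quasi-$R$-matrix acts trivially when the first tensor factor is a highest weight vector (so that $\Theta(u_\la\tensm \bar v)=u_\la\tensm\bar v$), or an equivalent argument; it does not follow from bar-invariance of the factors alone.

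The inductive step has the same defect in a more serious form. After expressing $u_\la\tensm u_{w\mu}$ as a $\Q(q)$-combination of elements $P_a(u_\la\tensm u_{w'\mu})$ and expanding each $G^\low(b)P_a$ in the global basis of $\tU$, you obtain $G^\low(b)(u_\la\tensm u_{w\mu})$ as a $\Q(q)$-linear combination of global basis elements of $V(\la)\tensm V(\mu)$. To conclude it is a \emph{single} such element (or zero), you again need bar-invariance, and here the image lies at an arbitrary weight, so no shortcut via highest weight vectors applies. Saying ``the compatibility of the bar involution with $\comm$ is available from Theorems \ref{th:tU} and \ref{Thm: Kas94}'' is not enough: those results concern the bar on $\tU$, not on the tensor product module. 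The actual proof in \cite{Kash94} proceeds differently, essentially by realizing $V(\la)\tensm V(\mu)$ inside a module of the form $V(\la',\mu')$ (with $\mu'\in\wl^-$) where a cyclic vector is available and bar-invariance can be tracked through the $\tU$-action; you would need to import that mechanism or an equivalent.
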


Hence we have a crystal morphism
\begin{align} \label{eq: pi la wmu}
\pi_{\lambda,w\mu}\cl  B(\tUqg a_{\lambda+w\mu}) \to B(\lambda)
\tens B(\mu)
\end{align}
by $G^\low(b)(u_\lambda \tensm
u_{w\mu})=G^\low(\pi_{\lambda,w\mu}(b))$.

\section{Quantum minors and T-systems}

{\em Hereafter, we assume that the generalized Cartan matrix $\cmA$
is symmetric,} although many of the results hold also in the non-symmetric case.
Hence we assume that  $\cmA=\bl(\al_i,\al_j)\br_{i,j\in I}$.

\subsection{Quantum minors}
Using the isomorphism $\Phi$ in \eqref{eq: Phi}, for each
$\lambda \in \wl^+$ and $\mu,\xi \in W \lambda$, we define the elements
$$ \Delta(\mu,\zeta)\seteq \Phi(u_\mu\tens u_\zeta^\ri) \in A_q(\g)$$
and
$$\D(\mu,\zeta)\seteq p_\n(\Delta(\mu,\zeta)) \in A_q(\n).$$
The element $\Delta(\mu,\zeta)$ is called a (generalized) {\em quantum minor}
and $\D(\mu,\zeta)$ is called {\em a unipotent quantum minor}.

\begin{lemma} \label{lem: global element}
$\Delta(\mu,\zeta)$ is a member of the upper global basis of $A_q(\g)$. Moreover
$\D(\mu,\zeta)$
is either a member of the upper global basis of $ A_q(\n)$ or zero.
\end{lemma}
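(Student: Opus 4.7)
The plan is to derive both statements as essentially formal consequences of two results already proved earlier: Proposition~\ref{prop:Agglobal}, which identifies a large family of elements of $\B^\up(A_q(\g))$, and the remark preceding Proposition~\ref{prop: nonzero elt} together with Proposition~\ref{prop: nonzero elt} itself, which describes how $p_\n$ acts on the upper global basis.

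For the first assertion, I would first observe that because $\mu,\zeta\in W\lambda$ are extremal weights of $V(\la)$, the weight spaces $V(\la)_\mu$ and $V(\la)_\zeta$ are one-dimensional. In particular, $u_\mu$ coincides with the unique upper global basis vector of $V(\la)$ of weight $\mu$, i.e.\ $u_\mu=G^\up_\la(b_\mu)$ for the unique $b_\mu\in B(\la)$ with $\wt(b_\mu)=\mu$. Choose $w\in W$ such that $\zeta=w\la$. Then Proposition~\ref{prop:Agglobal} applied with $b=b_\mu$ and this $w$ gives directly
\[
\Delta(\mu,\zeta)=\Phi(u_\mu\tens u_\zeta^\ri)=\Phi\bl G^\up(b_\mu)\tens u_{w\la}^\ri\br\in \B^\up(A_q(\g)),
\]
which is the first statement.

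For the second assertion, the map $p_\n$ sends upper global basis elements of $A_q(\g)$ to upper global basis elements of $A_q(\n)$ or to $0$ (this is precisely the content of the discussion introducing the map $\vs_\n\colon B(A_q(\g))\to B(A_q(\n))\sqcup\{0\}$). Applied to $\Delta(\mu,\zeta)\in\B^\up(A_q(\g))$, this immediately yields that $\D(\mu,\zeta)=p_\n\bl\Delta(\mu,\zeta)\br$ is either a member of $\B^\up(A_q(\n))$ or zero. Alternatively, one can compute this explicitly via Proposition~\ref{prop: nonzero elt}: writing $\bg(b)=b_1\tens t_\eta\tens b_2$ for the $\bg$-image of the basis label $b$ of $\Delta(\mu,\zeta)$, that proposition gives $p_\n\bl G^\up(b)\br=\delta_{b_2,b_{-\infty}}G^\up(b_1)$, which is manifestly $0$ or a global basis element.

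There is no real obstacle here: the whole content is packaged in Proposition~\ref{prop:Agglobal} and the behaviour of $p_\n$. The only tiny point that needs checking is the normalization identification $u_\mu=G^\up_\la(b_\mu)$, but this follows at once from the one-dimensionality of the extremal weight space together with the definition of the upper global basis (both sides are bar-invariant unit vectors of the same one-dimensional $\A$-lattice). Once that is noted, the lemma is immediate.
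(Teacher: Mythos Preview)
Your proof is correct and follows exactly the same route as the paper's, which simply cites Proposition~\ref{prop:Agglobal} and Proposition~\ref{prop: nonzero elt}. Your only superfluous step is the ``tiny point'' about $u_\mu=G^\up_\la(b_\mu)$: in this paper $u_\mu$ is \emph{defined} to be the unique upper global basis element of $V(\la)$ of weight $\mu$, so there is nothing to check.
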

\begin{proof}  It follows from Proposition~\ref{prop:Agglobal} and
Proposition~\ref{prop: nonzero elt}.
\end{proof}
\Lemma[{\cite[{(9.13)}]{BZ05}}]
 For $u,v \in W$ and $\lambda,\mu \in \wl^+$, we have
$$ \Delta(u\lambda,v\lambda)\Delta(u\mu,v\mu) =
\Delta\bl u (\lambda+\mu),v(\lambda+\mu)\br.$$
\enlemma

By Proposition \ref{prop: p_n}, we have the
following corollary:

\begin{corollary}\label{cor:Duv}
For $u,v \in W$ and $\lambda,\mu \in \wl^+$, we have
$$ \D(u\lambda,v\lambda)\D(u\mu,v\mu) =
q^{-(v\lambda,v\mu-u\mu)}\D\bl u (\lambda+\mu),v(\lambda+\mu)\br.$$
\end{corollary}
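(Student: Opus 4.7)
The plan is to derive the corollary directly from the preceding classical (non-unipotent) identity for $\Delta$ by applying the projection $p_\n$ and tracking the $q$-twist produced by Proposition~\ref{prop: p_n}.

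First, I would apply $p_\n$ to both sides of the identity
$$\Delta(u\lambda,v\lambda)\,\Delta(u\mu,v\mu)=\Delta\bl u(\lambda+\mu),v(\lambda+\mu)\br.$$
The right-hand side becomes $\D\bl u(\lambda+\mu),v(\lambda+\mu)\br$ by the very definition of the unipotent quantum minor. For the left-hand side, Proposition~\ref{prop: p_n} gives
$$p_\n\bl\Delta(u\lambda,v\lambda)\,\Delta(u\mu,v\mu)\br = q^{(\wt_\ri\Delta(u\lambda,v\lambda),\,\wt_\ri\Delta(u\mu,v\mu)-\wt_\li\Delta(u\mu,v\mu))}\,\D(u\lambda,v\lambda)\,\D(u\mu,v\mu).$$

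Next I would read off the left and right weights from $\Delta(\xi,\zeta)=\Phi(u_\xi\tens u_\zeta^\ri)$: since $\Phi$ sends $V(\nu)\tens V(\nu)^\ri$ into $A_q(\g)_{\xi,\zeta}$ when applied to vectors of the indicated weights, we have $\wt_\li\Delta(\xi,\zeta)=\xi$ and $\wt_\ri\Delta(\xi,\zeta)=\zeta$. Specializing to our two factors yields $\wt_\ri\Delta(u\lambda,v\lambda)=v\lambda$, $\wt_\ri\Delta(u\mu,v\mu)=v\mu$ and $\wt_\li\Delta(u\mu,v\mu)=u\mu$, so the exponent of $q$ becomes $(v\lambda,\,v\mu-u\mu)$.

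Combining the two computations and moving the $q$-factor to the other side produces exactly the asserted equality
$$\D(u\lambda,v\lambda)\,\D(u\mu,v\mu)=q^{-(v\lambda,v\mu-u\mu)}\,\D\bl u(\lambda+\mu),v(\lambda+\mu)\br.$$
There is no genuine obstacle here: the content is entirely in the already-established BZ05 multiplicativity of $\Delta$ and in Proposition~\ref{prop: p_n}. The only thing to be careful about is the bookkeeping of left/right weights of a quantum minor, but this is immediate from the definition via $\Phi$.
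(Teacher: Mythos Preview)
Your proposal is correct and is exactly the argument the paper indicates: apply $p_\n$ to the BZ05 identity for $\Delta$ and use Proposition~\ref{prop: p_n} together with $\wt_\li\Delta(\xi,\zeta)=\xi$, $\wt_\ri\Delta(\xi,\zeta)=\zeta$ to compute the $q$-exponent.
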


Note that
$$ \D(\mu,\mu)=1 \quad \text{ for $\mu \in W\lambda$ }.$$

Then $\D(\mu,\zeta) \ne 0$ if and only if $\mu \preceq \zeta$. Recall
that for $\mu,\zeta$ in the same $W$-orbit, we say that $\mu \preceq
\zeta$ if there exists a sequence $\{ \beta_k \}_{1 \le k \le l}$ of positive real roots such that, defining $\lambda_0=\zeta$,
$\lambda_k=s_{\beta_k}\lambda_{k-1}$ $(1 \le k \le l)$, we have
$(\beta_k,\lambda_{k-1}) \ge 0$ and $\lambda_l=\mu$.

More precisely, we have

\begin{lemma} \label{lem: D-property}
Let $\lambda \in \wl^+$ and $\mu,\zeta \in W \lambda$. Then the
following conditions are equivalent:
\begin{enumerate}
\item[{\rm (a)}] $\D(\mu,\zeta)$ is an element of upper global basis of $A_q(\n)$,
\item[{\rm (b)}] $\D(\mu,\zeta) \ne 0$,
\item[{\rm (c)}] $u_\mu \in U_q^-(\g)u_\zeta$,
\item[{\rm (d)}] $u_\zeta \in U_q^+(\g)u_\mu$,
\item[{\rm (e)}] $\mu \preceq \zeta$,
\item[{\rm (f)}] for any $w \in W$ such that $\mu = w \lambda$, there exists $u \le w$ \ro in the Bruhat order\rf\ such that $\zeta=u \lambda$,
\item[{\rm (g)}] there exist $u,v \in W$ such that $\mu = w \lambda$, $\zeta=u\lambda$ and $ u \le w$.
\end{enumerate}
\end{lemma}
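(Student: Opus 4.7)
The plan is to establish the cycle of implications $(a)\Leftrightarrow(b)$, $(b)\Leftrightarrow(c)\Leftrightarrow(d)$, $(d)\Leftrightarrow(e)$, $(e)\Leftrightarrow(f)\Leftrightarrow(g)$. The first one is immediate from Lemma \ref{lem: global element}, which asserts that $\D(\mu,\zeta)$ is always either a member of $\B^\up(\An)$ or zero; consequently it is a basis element precisely when it is nonzero.

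For the algebraic block $(b)\Leftrightarrow(c)\Leftrightarrow(d)$, I would unwind the construction of $\D(\mu,\zeta)$. Combining $\Delta(\mu,\zeta)=\Phi(u_\mu\tens u_\zeta^\rt)$ with the formula for $\Phi$ in \eqref{eq: Phi} gives $\Delta(\mu,\zeta)(x)=(u_\zeta,xu_\mu)$ for all $x\in\Uq$, and restricting along $p_\n$ yields $\D(\mu,\zeta)(x)=(u_\zeta,xu_\mu)$ for $x\in\Up$. Because $\zeta\in W\lambda$ is extremal, the weight space $V(\lambda)_\zeta$ is one-dimensional, and the non-degeneracy of the form on each weight space gives $\D(\mu,\zeta)\ne 0\iff u_\zeta\in\Up u_\mu$, establishing $(b)\Leftrightarrow(d)$. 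The identity $(xu,v)=(u,\vphi(x)v)$ together with $\vphi(\Up)=\Um$ then exchanges $(c)$ and $(d)$.

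The equivalence $(d)\Leftrightarrow(e)$ is the standard description of the poset structure on a $W$-orbit through the integrable action. For $(e)\Rightarrow(d)$, I would induct on the length of a chain $\zeta=\lambda_0,\dots,\lambda_l=\mu$ with $\lambda_k=s_{\beta_k}\lambda_{k-1}$ and $(\beta_k,\lambda_{k-1})\ge 0$: each step is realized by a divided power of a root vector $f_{\beta_k}$ (a positive real root conjugate of some $f_i$ via the Lusztig braid action) applied to $u_{\lambda_{k-1}}$, which lies in $\Um$ under the sign hypothesis. For the converse $(d)\Rightarrow(e)$, I would use the crystal basis of $V(\lambda)$ to extract a lowering sequence of $\tf_i$'s from $u_\zeta$ to $u_\mu$; since $\tf_i$ acting on an extremal weight vector $u_\nu$ is nonzero exactly when $\langle h_i,\nu\rangle>0$, in which case it lands on a scalar multiple of $u_{s_i\nu}$, the intermediate weights are all extremal and each step automatically satisfies the required sign condition $(\alpha_i,\lambda_{k-1})>0$.

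Finally, the block $(e)\Leftrightarrow(f)\Leftrightarrow(g)$ is purely combinatorial: $(f)\Rightarrow(g)$ is trivial, and $(g)\Rightarrow(e)$ follows from the subword property of the Bruhat order, reducing to simple reflections whose effect on $\lambda$ yields a valid $\preceq$-chain from $\mu$ to $\zeta$. For $(e)\Rightarrow(f)$, given any $w$ with $\mu=w\lambda$, factor $w=w_{\min}x$ with $w_{\min}$ the minimal representative of $wW_\lambda$ and $x\in W_\lambda$, so $\ell(w)=\ell(w_{\min})+\ell(x)$ and in particular $w_{\min}\le w$ in Bruhat order; applying the existence statement to the minimal representative produces some $u\le w_{\min}\le w$ with $\zeta=u\lambda$. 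The main obstacle will be the rigorous verification of $(d)\Rightarrow(e)$ in the Kac--Moody setting: one must ensure that the finite sequence of lower Kashiwara operators extracted from $u_\mu\in\Um u_\zeta$ can be chosen so that every intermediate vector is again an extremal weight vector, since otherwise the sign conditions $(\alpha_{i_k},\lambda_{k-1})\ge 0$ cannot be guaranteed at each step.
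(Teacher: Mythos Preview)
Your overall structure is sound and largely parallels the paper, which proves the lemma in three lines: it declares $(b)\Leftrightarrow(c)\Leftrightarrow(d)$ obvious, $(e)\Leftrightarrow(f)\Leftrightarrow(g)$ well-known, and cites \cite{Kas93a} for the bridge $(d)\Leftrightarrow(f)$. Your treatment of the first two blocks and of $(a)\Leftrightarrow(b)$ is fine.

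The substantive difference is the bridge. You attempt $(d)\Leftrightarrow(e)$ directly rather than invoking \cite{Kas93a}, and here there is a genuine error, not merely the obstacle you flag at the end. You write that $\tf_i$ applied to an extremal vector $u_\nu$ with $\langle h_i,\nu\rangle>0$ ``lands on a scalar multiple of $u_{s_i\nu}$''. This is false: $\tf_i u_\nu$ has weight $\nu-\alpha_i$, whereas $s_i\nu=\nu-\langle h_i,\nu\rangle\alpha_i$; the two agree only when $\langle h_i,\nu\rangle=1$. What is true is $\tf_i^{\langle h_i,\nu\rangle}u_\nu=u_{s_i\nu}$. Consequently a crystal path of single $\tf_i$'s from $u_\zeta$ to $u_\mu$ will pass through many non-extremal vertices, and there is no mechanism in your argument to regroup these steps into full $i$-strings while respecting the fact that the sequence of $i$'s came from an arbitrary monomial in $\Um$. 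This is precisely why the paper does not attempt a direct proof: the implication $(d)\Rightarrow(f)$ is a genuine theorem about Demazure modules (the extremal vector $u_\zeta$ generates under $\Up$ exactly those $u_\mu$ with $\mu$ below $\zeta$ in the appropriate order), established in \cite{Kas93a}. Your $(e)\Rightarrow(d)$ direction via Lusztig's braid group action on root vectors is correct in spirit, so the gap is one-directional but real.
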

\Proof
The equivalence of (b), (c) and  (d) is obvious.
The equivalence of (e), (f), (g) is well-known.
The equivalence of (d) and (f) is proved in \cite{Kas93a}.
\QED

For any $u\in\An\setminus\{0\}$ and $i\in I$,
we set
\begin{align*}
\eps_i(u)&\seteq\max\set{n\in\Z_{\ge0}}{e_i^nu\not=0},\\
\eps^*_i(u)&\seteq\max\set{n\in\Z_{\ge0}}{e_i^{*\,n}u\not=0}.
\end{align*}
Then for any $b\in B(\An)$, we have
$$\text{$\eps_i(G^\up(b))=\eps_i(b)$ and $\eps^*_i(G^\up(b))=\eps^*_i(b)$.}$$

\Lemma\label{lem: weyl left right}
Let $\lambda \in \wl^+$, $\mu,\zeta \in W \lambda$
such that $\mu \preceq \zeta$ and $i \in I$.
\bnum
\item If $n \seteq \lan h_i,\mu \ra \ge 0$, then
$$ \ve_i(\D(\mu,\zeta))=0 \ \ \text{ and } \ \ e_i^{(n)}\D(s_i\mu,\zeta)=\D(\mu,\zeta).$$
\item If $\lan h_i,\mu \ra \le 0$ and $s_i\mu \preceq \zeta$, then $\ve_i(\D(\mu,\zeta))=-\lan h_i,\mu \ra$.
\item If $m \seteq -\lan h_i,\zeta \ra \le 0$, then
$$ \ve^*_i(\D(\mu,\zeta))=0 \ \ \text{ and } \ \ e^{*\;(m)}_i \D(\mu,s_i\zeta)=
\D(\mu,\zeta).$$
\item If $\lan h_i,\zeta \ra \ge 0$ and $\mu \preceq s_i\zeta$, then $\ve^*_i(\D(\mu,\zeta))=\lan h_i,\zeta \ra$.
\end{enumerate}
\enlemma
\Proof
We have
$\eps_i\bl\Delta(\mu,\zeta)\br=\max(-\lan h_i,\mu\ran, 0)$
and $\eps_i^*\bl\Delta(\mu,\zeta)\br=\max(\lan h_i,\zeta\ran, 0)$.
Moreover, $\pn$ commutes with $e_i^{(n)}$ and ${e_i^*}^{(n)}$.

Let us show (ii). Set $\ell=-\lan h_i,\mu \ra$.
Then we have
$e_i^{\ell+1}\Delta(\mu,\zeta)=0$, which implies
$e_i^{\ell+1}\D(\mu,\zeta)=0$. Hence $\eps_i(\D(\mu,\zeta))\le \ell$.
We have
$$e_i^{(\ell)}\Delta(\mu,\zeta)=\Delta(s_i\mu,\zeta).$$
Hence we have $e_i^{(\ell)}\D(\mu,\zeta)=\D(s_i\mu,\zeta)$.
By the assumption  $s_i\mu \preceq \zeta$, $\D(s_i\mu,\zeta)$ does not vanish.
Hence we have $\eps_i(\D(\mu,\zeta))\ge \ell$.

\smallskip
The other statements can be proved similarly.
\QED

\begin{proposition}[{\cite[(10.2)]{BZ05}}] \label{prop: BZ form} Let $\lambda,\mu \in \wl^+$ and $s,t,s',t' \in W$ such that $\ell(s's)=\ell(s')+\ell(s)$ and
$\ell(t't)=\ell(t')+\ell(t)$. Then we have
\bnum
\item
$ \Delta(s's\lambda,t'\lambda)\Delta(s'\mu,t't\mu)=q^{(s\lambda,\mu)-(\lambda,t\mu)}\Delta(s'\mu,t't\mu)\Delta(s's\lambda,t'\lambda)$.
\item
If we assume further that $s's\lambda \preceq t'\lambda$ and
$s'\mu \preceq t't\mu$, then we have
\begin{align}
& \D(s's\lambda,t'\lambda)\D(s'\mu,t't\mu) = q^{(s's\lambda+t'\lambda,\;s'\mu-t't\mu)} \D(s'\mu,t't\mu)\D(s's\lambda,t'\lambda),
\label{eq: quantum commuting factor} \end{align}
or equivalently
\begin{align}
& q^{(t'\lambda,\;t't\mu-s'\mu)}\D(s's\lambda,t'\lambda)\D(s'\mu,t't\mu)
=q^{(s'\mu-t't\mu,\;s's\lambda)}\D(s'\mu,t't\mu)\D(s's\lambda,t'\lambda)
\label{eq: quantum commuting factor 2}
\end{align}
\ee
\end{proposition}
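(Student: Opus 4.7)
The statement has two parts: (i) is a commutation relation for generalized quantum minors in $A_q(\g)$, and (ii) is the analogous identity for their images in $A_q(\n)$ under the projection $\pn$. I would treat (i) as the main input—the reference \cite[Proposition~10.2]{BZ05} records exactly this formula—and deduce (ii) from it by applying $\pn$ and using Proposition~\ref{prop: p_n}.

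For (i), the natural approach is induction on $\ell(s)+\ell(t)$. The base case $s=t=e$ is immediate: the $q$-factor equals $1$, and both products coincide with $\Delta(s'(\lambda+\mu),t'(\lambda+\mu))$ by the multiplicativity identity preceding Corollary~\ref{cor:Duv}. For the inductive step, I would realize $\psi\theta$ and $\theta\psi$ (with $\psi=\Delta(s's\lambda,t'\lambda)$ and $\theta=\Delta(s'\mu,t't\mu)$) as matrix coefficients on $V(\lambda)\tensp V(\mu)$ and $V(\mu)\tensp V(\lambda)$ respectively, evaluated against the data $u_{s's\lambda}\otimes u_{s'\mu}$ and $u_{t'\lambda}^\ri\otimes u_{t't\mu}^\ri$ (and its transpose). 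Identify the two tensor modules by the quasi-$R$-matrix intertwiner and peel off one simple reflection from $s$ or $t$ at a time via $u_{s_i\nu}=f_i^{(\lan h_i,\nu\ran)}u_\nu$. The length hypotheses $\ell(s's)=\ell(s')+\ell(s)$ and $\ell(t't)=\ell(t')+\ell(t)$ are exactly what ensures that the relevant vectors remain extremal enough for the intertwiner to act by a pure scalar, so each reduction contributes one increment of the target exponent $(s\lambda,\mu)-(\lambda,t\mu)$. This is the substantive step and the main obstacle; once it is in place, the remainder is bookkeeping.

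Granted (i), part (ii) follows by a direct computation. With $\wtl(\psi)=s's\lambda$, $\wtr(\psi)=t'\lambda$, $\wtl(\theta)=s'\mu$, $\wtr(\theta)=t't\mu$, Proposition~\ref{prop: p_n} yields
\begin{align*}
\pn(\psi\theta)&=q^{(t'\lambda,\,t't\mu-s'\mu)}\,\D(s's\lambda,t'\lambda)\,\D(s'\mu,t't\mu),\\
\pn(\theta\psi)&=q^{(t't\mu,\,t'\lambda-s's\lambda)}\,\D(s'\mu,t't\mu)\,\D(s's\lambda,t'\lambda).
\end{align*}
Applying $\pn$ to (i) and rearranging, the exponent $E$ in $\D(s's\lambda,t'\lambda)\D(s'\mu,t't\mu)=q^E\D(s'\mu,t't\mu)\D(s's\lambda,t'\lambda)$ equals
\[
E=(s\lambda,\mu)-(\lambda,t\mu)+(t't\mu,\,t'\lambda-s's\lambda)-(t'\lambda,\,t't\mu-s'\mu).
\]
The cross terms $(t'\lambda,t't\mu)$ and $(t't\mu,t'\lambda)$ cancel by symmetry of $(\cdot,\cdot)$; invoking the $W$-invariances $(s\lambda,\mu)=(s's\lambda,s'\mu)$ and $(\lambda,t\mu)=(t'\lambda,t't\mu)$ and regrouping produces $E=(s's\lambda+t'\lambda,\,s'\mu-t't\mu)$, which is \eqref{eq: quantum commuting factor}; the equivalent form \eqref{eq: quantum commuting factor 2} is a cosmetic rearrangement by a $q$-power. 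The hypothesis $s's\lambda\preceq t'\lambda$ and $s'\mu\preceq t't\mu$ together with Lemma~\ref{lem: D-property} guarantees that neither unipotent minor vanishes, so the identity has content.
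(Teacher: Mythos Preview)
Your proposal is correct and matches the paper's approach: the paper does not reprove (i) but simply cites \cite[(10.2)]{BZ05}, and for (ii) it says only that ``(ii) follows from Proposition~\ref{prop: p_n} and (i)''; your computation of the exponent $E$ is exactly the explicit unpacking of that one-line remark, and it is carried out correctly. Your inductive sketch for (i) is additional content beyond what the paper provides, but since (i) is quoted rather than proved there, this is harmless (and your outline is a reasonable roadmap for the argument in \cite{BZ05}).
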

Note that (ii) follows from by Proposition \ref{prop: p_n} and (i).
Note also that the both sides of \eqref{eq: quantum commuting factor 2}
are bar-invariant, and hence they are members of the upper global basis
as seen by \cite[Corollary 3.5]{KKKO15}.

\Prop\label{prop:Deprod}
For $\la,\mu\in\Pd$ and $s,t\in W$,
set
$\bg\bl u_{s\la}\tens (u_\la)^\rt\br=b_-\tens t_\la\tens b_{-\infty}$
and $\bg\bl u_{\mu}\tens (u_{t\mu})^\rt\br=b_{\infty}\tens t_{t\mu}\tens b_+$
with $b_{\mp}\in B(\pm\infty)$.
Then we have
$$\De{s\la,\la}\De{\mu,t\mu}=G^\up\bl\bg^{-1} (b_-\tens t_{\la+t\mu}\tens b_+)\br.$$
\enprop
\Proof
Let
$(\scbul,\scbul):
\bl V(\la)\tensm V(\mu)\br\times \bl V(\la)\tensp V(\mu)\br\to \Q(q)$
be the coupling defined
$(u\tensm  v,u'\tensp v')=(u,u')(v,v')$.
Then it satisfies
$$\bl P(u\tensm v), u'\tensp v'\br=\bl u\tensm v, \vphi(P)(u'\tensp v')\br
 \quad \text{for any} \ P \in \Uqg.
$$
For $u,u'\in V(\la)$ and $v,v'\in V(\mu)$,
we have
\begin{align*}
\lan \Phi(u\tens u'^\rt)\Phi(v\tens v'^\rt), P\ran&=
\bl u'\tensm v', P(u\tensp v)\br\\
&=\bl\vphi(P)(u'\tensm v'),u\tensp v\br.
\end{align*}
Hence for $P\in\U$, we have
\begin{align*}\lan \De{s\la,\la}\De{\mu,t\mu}, Pa_{\zeta}\ran
&=\delta(\zeta=s\la+\mu)
\bl\vphi(P)(u_{\la}\tensm u_{t\mu}),u_{s\la}\tensp u_{\mu}\br.
\end{align*}
If $Pa_\zeta=G^\low(\vphi(b))$ for $b\in B(\tU)$,
then
we have
$$\lan \De{s\la,\la}\De{\mu,t\mu},\vphi(G^\low(b))\ran
=\delta(\zeta=s\la+\mu)
\bl G^\low(b)(u_{\la}\tensm u_{t\mu}),u_{s\la}\tensp u_{\mu}\br.$$
The element
$G^\low(b)(u_{\la}\tensm u_{t\mu})$
vanishes or is a global basis of $V(\la)\tensm V(\mu)$
by Proposition~\ref{pro:gltens}.
Since $u_{s\la}\tensp u_{\mu}$ is a member of the upper global basis of
$V(\la)\tensp V(\mu)$,
 we have
$$\lan \De{s\la,\la}\De{\mu,t\mu},\vphi(G^\low(b))\ran
=\delta(\zeta=s\la+\mu)
\delta\bl \pi_{\la,t\mu}(b)=u_{s\la}\tens u_{\mu}\br.$$
Here $\pi_{\la,t\mu}\cl B(\tU a_{\la+t\mu})\to B(\la)\tens B(\mu)$
is the crystal morphism given in \eqref{eq: pi la wmu}.

Hence we obtain
$$\De{s\la,\la}\De{\mu,t\mu}=G^\up( \bg^{-1}( b))$$
where $b\in B(\tU)$ is a unique element such that
$\bl G^\low(b)(u_{\la}\tensm u_{s\mu}), u_{s\la}\tensp u_{\mu}\br=1$.

On the other hand, we have
$G^\low(b_+)u_{t\mu}=u_{\mu}$ and
$G^{\low}(b_-)u_\la=u_{s\la}$.
The last equality implies $\vphi(G^{\low}(b_-))u_{s\la}=u_\la$ because
$(\vphi(G^{\low}(b_-))u_{s\la},u_\la)=(u_{s\la},G^{\low}(b_-)u_\la)=(u_{s\la},u_{s\la})=1$.
As seen in \eqref{eq:tUc}, we have
$$G^{\low}(b_-)G^\low(b_+)a_{\la+t\mu}-G^\low(b_-\tens t_{\la+t\mu}\tens b_+)
\in \Um_{>s\la-\la}\Up_{<\mu-t\mu}a_{\la+t\mu}.$$
Hence we obtain
\eqn
&&\bl G^\low(b_-\tens t_{\la+t\mu}\tens b_+)(u_{\la}\tensm u_{t\mu}),\;
u_{s\la}\tensp u_{\mu}\br\allowdisplaybreaks[1]\\
&&\hs{10ex}
=\bl G^{\low}(b_-)G^\low(b_+)(u_{\la}\tensm u_{t\mu}),\;
u_{s\la}\tensp u_{\mu}\br\allowdisplaybreaks[1]\\
&&\hs{20ex}
=\bl G^\low(b_+)(u_{\la}\tensm u_{t\mu}),\;
\vphi(G^\low(b_-))(u_{s\la}\tensp u_{\mu})\br
=1.
\eneqn
In the last equality, we used
$G^\low(b_+)(u_{\la}\tensm u_{t\mu})
=u_{\la}\tensm (G^\low(b_+)u_{t\mu})=u_{\la}\tensm u_\mu$
and
$\vphi(G^\low(b_-))(u_{s\la}\tensp u_{\mu})
=\bl\vphi(G^\low(b_-))u_{s\la})\tensp u_{\mu}=u_{\la}\tensp u_\mu$.

Hence we conclude that $b=b_-\tens t_{\la+t\mu}\tens b_+$.
\QED

Let
\eq
&&\iota_{\la,\mu}\cl V(\la+\mu)\monoto V(\la)\tens V(\mu)\eneq
be the canonical embedding and
\eq
&&\bio_{\la,\mu}\cl B(\la+\mu)\monoto B(\la)\tens B(\mu)\eneq
the induced crystal embedding.

\begin{lemma}
For $\lambda,\mu \in \wl^+$ and $x,y \in W$ such that $x \ge y$, we have
$$ u_{x\lambda} \otimes u_{y \mu} \in \overline{\iota}_{\lambda,\mu} (B(\lambda+\mu)) \subset B(\lambda) \otimes B(\mu).$$
\end{lemma}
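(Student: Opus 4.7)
The plan is to induct on $\ell(x)$. For the base case $\ell(x)=0$ we have $x=e$, and then $y\le x$ forces $y=e$, so $u_\lambda\otimes u_\mu=\bar\iota_{\lambda,\mu}(u_{\lambda+\mu})$ lies in the image by the very definition of $\bar\iota_{\lambda,\mu}$.

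For the inductive step with $\ell(x)\ge 1$, I would pick a simple reflection $s_i$ with $x':=s_ix<x$. The lifting property of the Bruhat order then furnishes two cases: either (a) $s_iy<y$, in which case setting $y':=s_iy$ one has $x'\ge y'$; or (b) $s_iy\ge y$, in which case $x'\ge y$. Since $\ell(x')<\ell(x)$, the inductive hypothesis places $u_{x'\lambda}\otimes u_{y'\mu}$ (in case (a)) or $u_{x'\lambda}\otimes u_{y\mu}$ (in case (b)) inside $\bar\iota_{\lambda,\mu}(B(\lambda+\mu))$.

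To pass to $u_{x\lambda}\otimes u_{y\mu}$, I would apply $\tilde f_i$ repeatedly and use the tensor product rule for crystals, together with the standard identities on extremal vectors. Since $s_ix'>x'$, we have $n:=\langle h_i,x'\lambda\rangle\ge 0$, $\varepsilon_i(u_{x'\lambda})=0$, $\varphi_i(u_{x'\lambda})=n$, and $\tilde f_i^{\,n}u_{x'\lambda}=u_{s_ix'\lambda}=u_{x\lambda}$; analogously, in case (a), $m:=\langle h_i,y'\mu\rangle\ge 0$ and $\tilde f_i^{\,m}u_{y'\mu}=u_{y\mu}$. In case (a) the tensor product rule shows that the first $n$ applications of $\tilde f_i$ to $u_{x'\lambda}\otimes u_{y'\mu}$ hit the left factor (yielding $u_{x\lambda}\otimes u_{y'\mu}$, at which point $\varphi_i$ on the left has dropped to $0$), and the subsequent $m$ applications hit the right factor (yielding $u_{x\lambda}\otimes u_{y\mu}$). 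Case (b) is the same argument but starting directly from $u_{x'\lambda}\otimes u_{y\mu}$ and applying only $\tilde f_i^{\,n}$ on the left, since $\varepsilon_i(u_{y\mu})=0$ and $\varphi_i(u_{x'\lambda})=n>0$ (the case $n=0$ being vacuous).

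Finally, because $\bar\iota_{\lambda,\mu}\colon B(\lambda+\mu)\hookrightarrow B(\lambda)\otimes B(\mu)$ is a strict embedding of crystals, its image is stable under $\tilde f_i$ whenever the result is nonzero, and every intermediate crystal element in the computation above is manifestly nonzero. Hence $u_{x\lambda}\otimes u_{y\mu}\in\bar\iota_{\lambda,\mu}(B(\lambda+\mu))$, completing the induction. The only potential friction is combinatorial bookkeeping between the Bruhat lifting property and the $\varepsilon_i,\varphi_i$ values on extremal vectors in the two cases; both ingredients are standard, so there is no serious obstacle.
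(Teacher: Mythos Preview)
Your proposal is correct and follows essentially the same approach as the paper: induction on $\ell(x)$, using the Bruhat lifting property to split into the two cases $s_iy<y$ and $s_iy>y$, and then connecting $u_{x\lambda}\otimes u_{y\mu}$ to the inductive element via crystal operators. The only cosmetic difference is direction: the paper applies $\tilde e_i^{\max}$ to $u_{x\lambda}\otimes u_{y\mu}$ and lands on the smaller element (then invokes connectedness of the image), whereas you start from the smaller element and apply $\tilde f_i$'s to reach $u_{x\lambda}\otimes u_{y\mu}$ (invoking stability of the image under $\tilde f_i$); these are inverse to each other and equally valid.
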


\begin{proof}
Let us show by induction on $\ell(x)$, the length of $x$ in $W$. We may assume that $x \ne 1$. Then there exists $i \in I$ such that
$s_i x < x$. If $s_iy<y$, then $s_ix \ge s_iy$ and $\tilde{e}_i^{\max}(u_{x\lambda} \otimes u_{y\mu})= u_{s_ix\lambda} \otimes u_{s_iy\mu}$.

If $s_iy>y$, then $s_ix\ge y$ and
$\tilde{e}_i^{\max}(u_{x\lambda} \otimes u_{y\mu})= u_{s_ix\lambda} \otimes u_{y\mu}$. In
both cases,
$u_{x\lambda} \otimes u_{y \mu}$  is connected with an element of $\overline{\iota}_{\lambda,\mu}(B(\lambda+\mu))$.
\end{proof}

\Lemma \label{lem:Deprod}
For $\la, \mu\in\Pd$ and $w\in W$, we have
$$\De{w\la,\la}\De{\mu,\mu}=
G^\up\bl\bio_{\la,\mu}^{-1}(u_{w\la}\tens u_\mu)\tens u_{\la+\mu}{}^\rt\br.$$
\enlemma
\Proof We have
\eqn
&&\bg(u_{w\la}\tens u_\la^\rt\br=b_{w\la}\tens t_{\la}\tens b_{-\infty},\\
&&\bg(u_{\mu}\tens u_\mu^\rt\br=b_{\infty}\tens t_{\mu}\tens b_{-\infty},
\eneqn
where $b_{w\la}\seteq\bio_\la(u_{w\la})$.
Hence Proposition~\ref{prop:Deprod}
implies that
$$\De{w\la,\la}\De{\mu,\mu}=G^\up\bl\bg^{-1}(b_{w\la}\tens t_{\la+\mu}\tens b_{-\infty})\br.$$
Then, $\bg\bl\bio_{\la,\mu}^{-1}(u_{w\la}\tens u_\mu)\tens u_{\la+\mu}{}^\rt\br
=b_{w\la}\tens t_{\la+\mu}\tens b_{-\infty}$
implies the desired result.
\QED

\subsection{T-systems} \label{subsec: T-system} In this subsection, we record the {\em $T$-system} among the (unipotent) quantum minors for later use
(see \cite{KNS11} for $T$-system).

\begin{proposition} [{\cite[Proposition 3.2]{GLS11}}] \label{prop: the ses}
Assume that $u,v \in W$ and $i \in I$ satisfy $u < us_i$ and $v <
vs_i$. Then
\begin{align*}
& \Delta(us_i\varpi_i,vs_i\varpi_i)\Delta(u\varpi_i,v\varpi_i) =
q^{-1}\Delta(u  s_i \varpi_i,v\varpi_i)\Delta( u\varpi_i, v s_i\varpi_i)
+ \prod_{j \ne i}\Delta(u\varpi_j,v\varpi_j)^{-a_{i,j}}, \\
& \Delta(u\varpi_i,v\varpi_i)\Delta(us_i\varpi_i,vs_i\varpi_i) =
q\Delta( u\varpi_i\ ,vs_i\varpi_i)\Delta(u  s_i  \varpi_i,v\varpi_i) +
\prod_{j \ne i}\Delta(u\varpi_j,v\varpi_j)^{-a_{i,j}},
\end{align*}
and
\begin{align*}
& q^{(vs_i\varpi_i,v\varpi_i-u\varpi_i)}\D(us_i\varpi_i,vs_i\varpi_i)\D(u\varpi_i,v\varpi_i) \allowdisplaybreaks\\
& \hspace{5ex} = q^{-1+(v\varpi_i,vs_i\varpi_i-u\varpi_i)}\D(us_i\varpi_i,v\varpi_i)\D(u\varpi_i,vs_i\varpi_i) +\D(u\lambda,v\lambda) \allowdisplaybreaks\\
& \hspace{5ex} = q^{-1+(vs_i\varpi_i,v\varpi_i-us_i\varpi_i)}\D(u\varpi_i,vs_i\varpi_i)\D(us_i\varpi_i,v\varpi_i) +\D(u\lambda,v\lambda), \allowdisplaybreaks\\
& q^{(v\varpi_i,vs_i\varpi_i-us_i\varpi_i)}\D(u\varpi_i,v\varpi_i)\D(us_i\varpi_i,vs_i\varpi_i) \allowdisplaybreaks\\
& \hspace{5ex} = q^{1+(vs_i\varpi_i,v\varpi_i-us_i\varpi_i)}\D(u\varpi_i,vs_i\varpi_i)\D(us_i\varpi_i,v\varpi_i) +\D(u\lambda,v\lambda) \allowdisplaybreaks\\
& \hspace{5ex} = q^{1+(v\varpi_i,vs_i\varpi_i-u\varpi_i)}\D(us_i\varpi_i,v\varpi_i)\D(u\varpi_i,vs_i\varpi_i)+\D(u\lambda,v\lambda),
\end{align*}
where $\lambda=s_i\varpi_i+\varpi_i = -\sum_{j \ne i}a_{j,i}\varpi_j$.
\end{proposition}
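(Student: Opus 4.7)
The two $\Delta$-identities are exactly the $T$-system of Gei{\ss}--Leclerc--Schr\"oer \cite[Proposition 3.2]{GLS11}; they are deduced by realising each product of quantum minors as a matrix coefficient on $V(\varpi_i) \otimes V(\varpi_i)$ via the isomorphism $\Phi$ of \eqref{eq: Phi}, and then decomposing the $(2\varpi_i - \alpha_i)$-weight space of $V(\varpi_i) \otimes V(\varpi_i)$ into its $V(2\varpi_i)$-summand (generated by $f_i(u_{\varpi_i}\otimes u_{\varpi_i})$) and the highest-weight line of the $V(\lambda)$-summand, with $\lambda = 2\varpi_i - \alpha_i$. The two $\Delta$-identities differ only by the order of the factors on the left-hand side and are compatible via the quasi-commutation of Proposition~\ref{prop: BZ form}(i); the relative coefficients $q^{\mp 1}$ come from the explicit action of $\Delta_\pm(f_i)$ on $u_{\varpi_i}\otimes u_{\varpi_i}$.

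My main task is to descend these two $\Delta$-identities to the four $\D$-identities by applying the projection $\pn$. The essential preparation is to recognise the last term on the right: iterating the multiplicativity lemma for $\Delta$ over the indices $j \ne i$ yields $\prod_{j \ne i}\Delta(u\varpi_j, v\varpi_j)^{-a_{i,j}} = \Delta(u\lambda, v\lambda)$ with $\lambda = -\sum_{j \ne i}a_{j,i}\varpi_j = \varpi_i + s_i\varpi_i$. This weight is dominant because $a_{j,i} \le 0$ for $j \ne i$ and $\langle h_i, \lambda\rangle = 0$, so $\Delta(u\lambda, v\lambda)$ is a single quantum minor and is sent to $\D(u\lambda, v\lambda)$ by $\pn$ with no $q$-correction. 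For the two-factor products, Proposition~\ref{prop: p_n} gives
\(\pn(\psi\theta) = q^{(\wt_\ri(\psi),\,\wt_\ri(\theta)-\wt_\li(\theta))}\,\pn(\psi)\pn(\theta)\),
and plugging in $\wt_\li(\Delta(u\zeta, v\zeta)) = u\zeta$ and $\wt_\ri(\Delta(u\zeta, v\zeta)) = v\zeta$ produces exactly the pairings $(vs_i\varpi_i, v\varpi_i - u\varpi_i)$, $-1+(v\varpi_i, vs_i\varpi_i - u\varpi_i)$, $(v\varpi_i, vs_i\varpi_i - us_i\varpi_i)$ and $1+(vs_i\varpi_i, v\varpi_i - us_i\varpi_i)$ that appear in the first form of each $\D$-identity.

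The alternative form of each $\D$-identity (the second ``$=$'' in each chain) is obtained by swapping the two $\D$-factors in the first term on the right-hand side through the quasi-commutation relation \eqref{eq: quantum commuting factor 2} of Proposition~\ref{prop: BZ form}(ii), applied with $s = t = s_i$, $s' = u$, $t' = v$, $\lambda = \mu = \varpi_i$; the length hypotheses $\ell(us_i)=\ell(u)+1$ and $\ell(vs_i)=\ell(v)+1$ required by that proposition are precisely the standing assumptions $u<us_i$ and $v<vs_i$. The only real obstacle is the $q$-arithmetic: one must repeatedly use the symmetry of $(\,\cdot\,,\,\cdot\,)$ and its $W$-invariance — for instance $(v\varpi_i, vs_i\varpi_i) = (\varpi_i, s_i\varpi_i) = (vs_i\varpi_i, v\varpi_i)$ — to verify that the exponent produced by the swap simplifies exactly to the one claimed, after combining with the exponent already supplied by $\pn$.
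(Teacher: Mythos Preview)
Your proposal is correct. In fact the paper gives no proof of its own here: the entire proposition is stated with the attribution ``[GLS11, Proposition 3.2]'' and no proof environment follows. Your write-up supplies exactly the missing derivation of the $\D$-identities from the $\Delta$-identities via Proposition~\ref{prop: p_n} and the multiplicativity lemma $\Delta(u\lambda,v\lambda)\Delta(u\mu,v\mu)=\Delta(u(\lambda+\mu),v(\lambda+\mu))$, and your use of Proposition~\ref{prop: BZ form}(ii) with $s=t=s_i$, $s'=u$, $t'=v$, $\lambda=\mu=\varpi_i$ to pass between the two forms of each $\D$-identity is the right swap. One small caveat: Proposition~\ref{prop: BZ form}(ii) literally assumes $us_i\varpi_i\preceq v\varpi_i$ and $u\varpi_i\preceq vs_i\varpi_i$, which are not part of the hypotheses here; but when either of those fails the corresponding $\D$-factors vanish by Lemma~\ref{lem: D-property} and the swapped identity holds trivially, so the argument goes through.
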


\subsection{Revisit of crystal bases and global bases}

In order to prove Theorem~\ref{thm: DDD} below, we first investigate
the upper crystal lattice of
$\Dv V$ induced by an upper  crystal lattice of $V\in\Oint(\g)$.

Let $V$ be a $U_q(\g)$-module in $\Oint$. Let $L^\up$ be an upper crystal lattice of $V$. Then we have (see Lemma~\ref{lem:lowup})
$$ \soplus_{\xi \in \wl} q^{(\xi,\xi)/2}(L^\up)_\xi \text{ is a lower crystal lattice of $V$.}$$

Recall that, for $\lambda\in \wl^+$, the upper crystal lattice $L^\up(\lambda)$ and the lower crystal lattice $L^\low(\lambda)$ of $V(\lambda)$ are related by
\begin{align}
L^\up(\lambda) = \soplus_{\xi \in \wl} q^{((\lambda,\lambda)-(\xi,\xi))/2}L^\low(\lambda)_\xi \subset L^\low(\lambda).
\end{align}

Write
$$V \simeq \soplus_{\lambda \in \wl^+} E_\lambda \otimes V(\lambda)$$
with finite-dimensional $\Q(q)$-vector spaces $E_\lambda$ .
Accordingly, we have a canonical decomposition
$$L^\up \simeq \soplus_{\lambda\in \wl^+} C_\lambda \otimes_{\QA_0} L^\up(\lambda),$$
where $C_\lambda \subset E_\lambda$ is an $\QA_0$-lattice of $E_\lambda$.

On the other hand, we have
$$\Dv V \simeq\soplus_{\lambda\in \wl^+}  E^*_\lambda \otimes V(\lambda).$$

Note that we have
$$ \Phi_V \left( (a \otimes u) \otimes (b \otimes v)^\ri \right) = \lan a,b \ra \Phi_\lambda(u \otimes v^\ri)
\quad \text{for $u,v \in V(\lambda)$ and $a \in E_\lambda$, $b \in E^*_\lambda$.}$$

We define the induced upper crystal lattice $\Dv L^\up$ of $\Dv V$ by
$$\Dv L^\up \seteq \soplus_{\lambda \in \wl^+} C_\lambda^\vee \otimes_{\QA_0} L^\up(\lambda)
\subset \Dv V, $$
where $C_\lambda^\vee \seteq  \set{ u \in E_\lambda^*}{\lan u,C_\lambda \ra \subset \QA_0 }$. Then we have
$$ \Phi_V \left(L^\up \otimes (\Dv L^\up)^\ri \right) \subset L^\up(A_q(\g)).$$

Indeed, we have
$$ \Dv L^\up =\set{ u \in \Dv V }{\Phi_V(L^\up \otimes u^\ri) \subset L^\up(A_q(\g)) }. $$

Since $(L^\up(\lambda))^\vee = L^\low(\lambda)$, we have
$$ (L^\up)^\vee = \soplus_{\lambda \in \wl^+} C_\lambda^\vee \otimes_{\QA_0} L^\low(\lambda).$$

The properties $L^\up(\lambda) \subset L^\low(\lambda)$ and $L^\up(\lambda)_\lambda = L^\low(\lambda)_\lambda$ imply the following lemma.

\begin{lemma} \label{lem: largest upper crystal}
$\Dv L^\up$ is the largest upper crystal lattice of $\Dv V$ contained in the lower crystal lattice $(L^\up)^\vee$.
\end{lemma}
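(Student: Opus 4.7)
My plan is to establish maximality via the isotypic decomposition $\Dv V \simeq \soplus_{\lambda\in\wl^+} E^*_\lambda \otimes V(\lambda)$, reducing to the irreducible case. The inclusion $\Dv L^\up \subset (L^\up)^\vee$ and the fact that $\Dv L^\up$ is an upper crystal lattice of $\Dv V$ are immediate from the factor-wise containment $L^\up(\lambda) \subset L^\low(\lambda)$ highlighted in the paragraph just before the lemma, so the entire content lies in maximality.

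Let $M$ be any upper crystal lattice of $\Dv V$ contained in $(L^\up)^\vee$. Since each isotypic projector $\Dv V \to E^*_\lambda \otimes V(\lambda)$ is $\U$-linear and hence intertwines the Kashiwara operators $\te_i^\up$ and $\tf_i^\up$, the lattice $M$ respects the decomposition: $M = \soplus_\lambda M^{\mathrm{iso}}_\lambda$ where $M^{\mathrm{iso}}_\lambda \seteq M \cap (E^*_\lambda \otimes V(\lambda))$ is an upper crystal lattice of $E^*_\lambda \otimes V(\lambda)$ contained in $C_\lambda^\vee \otimes_{\QA_0} L^\low(\lambda)$. It therefore suffices to prove, for each $\lambda \in \wl^+$, that any upper crystal lattice $N$ of $E^*_\lambda \otimes V(\lambda)$ with $N \subset C_\lambda^\vee \otimes_{\QA_0} L^\low(\lambda)$ is automatically contained in $C_\lambda^\vee \otimes_{\QA_0} L^\up(\lambda)$.

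The key step is the structural result that every upper crystal lattice $N$ of $E^*_\lambda \otimes V(\lambda)$ factors as $N = D \otimes_{\QA_0} L^\up(\lambda)$ for a unique $\QA_0$-lattice $D$ of $E^*_\lambda$, characterized by $D \otimes u_\lambda = N_\lambda$. The inclusion $D \otimes_{\QA_0} L^\up(\lambda) \subset N$ follows by iteratively applying the lowering operators $\tf_i^\up$ to $N_\lambda$. The reverse inclusion proceeds by induction on $(\lambda,\lambda)-(\xi,\xi)$: given $u \in N_\xi$ with $\xi \ne \lambda$, irreducibility of $V(\lambda)$ produces an index $i$ with $\te_i^\up u \ne 0$, and the induction hypothesis applied to $\te_i^\up u$ combined with the $\tf_i^\up$-invariance of $N$ forces $u \in D \otimes_{\QA_0} L^\up(\lambda)$. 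Once this factorization is established, evaluating $N \subset C_\lambda^\vee \otimes_{\QA_0} L^\low(\lambda)$ at weight $\lambda$ gives $D \otimes u_\lambda \subset C_\lambda^\vee \otimes_{\QA_0} L^\low(\lambda)_\lambda$, and because $L^\up(\lambda)_\lambda = L^\low(\lambda)_\lambda = \QA_0 u_\lambda$ (the second identity explicitly invoked in the lemma's statement), we conclude $D \subset C_\lambda^\vee$. Summing over $\lambda$ delivers $M \subset \Dv L^\up$.

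I expect the main obstacle to be the structural factorization $N = D \otimes_{\QA_0} L^\up(\lambda)$. It is intuitively clear---$V(\lambda)$ admits only one upper crystal lattice up to a scalar and $E^*_\lambda$ parametrizes these scalars---but executing the inductive descent cleanly requires both of the preceding inputs $L^\up(\lambda) \subset L^\low(\lambda)$ and $L^\up(\lambda)_\lambda = L^\low(\lambda)_\lambda$ to verify that passage through $\te_i^\up$ and back through $\tf_i^\up$ preserves membership in the putative lattice $D \otimes_{\QA_0} L^\up(\lambda)$ at every step.
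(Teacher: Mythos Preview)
Your overall strategy matches the paper's exactly: use the canonical decomposition of an upper crystal lattice as $\soplus_\lambda D_\lambda \otimes_{\QA_0} L^\up(\lambda)$, compare with $(L^\up)^\vee = \soplus_\lambda C_\lambda^\vee \otimes_{\QA_0} L^\low(\lambda)$ at the highest weight using $L^\up(\lambda)_\lambda = L^\low(\lambda)_\lambda$, and conclude via $L^\up(\lambda)\subset L^\low(\lambda)$. The paper simply asserts the canonical decomposition (in the paragraph before the lemma) and then says these two properties yield the result. Where you diverge is in attempting to \emph{prove} the canonical decomposition, and that attempt has a genuine gap.

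The inductive step in your factorization $N = D\otimes_{\QA_0} L^\up(\lambda)$ does not go through as written. From $\te_i^\up u \in D\otimes L^\up(\lambda)$ you can only deduce $\tf_i^\up\te_i^\up u \in D\otimes L^\up(\lambda)$, and $\tf_i^\up\te_i^\up u \neq u$ in general: writing $u=\sum_n f_i^{(n)}u_n$ with $e_iu_n=0$, one has $\tf_i^\low\te_i^\low u = \sum_{n\ge 1} f_i^{(n)}u_n = u - u_0$, so the $e_i$-primitive part $u_0$ escapes. When $\dim V(\lambda)_\xi>1$ this part is typically nonzero, and ``$\tf_i^\up$-invariance of $N$'' does nothing to control it. (Your claim that $M$ splits along isotypic components has a similar issue: the projectors commuting with Kashiwara operators gives only $M\subset\soplus_\lambda p_\lambda(M)$, not the reverse inclusion.) The uniqueness of crystal lattices on $V(\lambda)$ is a real theorem---it is part of Kashiwara's grand loop argument in \cite{Kash91}---and the canonical decomposition for arbitrary $V\in\Oint(\g)$ follows from it together with complete reducibility. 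You should cite it, as the paper implicitly does, rather than improvise an induction.
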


Let $\la, \mu\in\Pd$.
Then
$\bl L^\up(\la)\tensp L^\up(\mu)\br^\vee
=L^\low(\la)\tensm L^\low(\mu)$ is a lower crystal lattice of
$\Dv\bl V(\la)\tensp V(\mu)\br\simeq V(\la)\tensm V(\mu)$.
Let
$\Xi_{\la,\mu}\cl V(\la)\tensp V(\mu)\isoto V(\la)\tensm V(\mu)
\simeq \Dv\bl V(\la)\tensp V(\mu)\br$
be the $\U$-module isomorphism defined by
$$\Xi_{\la,\mu}(u\tensp v)=q^{(\la,\mu)-(\xi,\eta)}\bl u\tensm v\br
\quad\text{for $u\in V(\la)_\xi$ and $v\in V(\mu)_\eta$.}
$$

Then
\eqn
\tLt&\seteq&\Xi_{\la,\mu}\bl L^\up(\la)\tensp L^\up(\mu)\br\\
&\;=&\soplus_{\xi,\eta\in\Po}
q^{(\la,\mu)-(\xi,\eta)}L^\up(\la)_\xi\tensm L^\up(\mu)_\eta.
\eneqn
is an upper crystal lattice of $V(\la)\tensm V(\mu)$.
Since we have $(\la,\mu)-(\xi,\eta)\ge0$
for any $\xi\in\wt\bl V(\la)\br$ and $\eta\in\wt\bl V(\mu)\br$,
Lemma~\ref{lem: largest upper crystal} implies that
$$\tLt\subset \Dv\bl L^\up(\la)\tensp L^\up(\mu)\br.$$

\begin{lemma}
Let $\lambda, \mu \in \wl^+$ and $x_1,x_2,y_1,y_2 \in W$ such that $x_k \ge y_k$
 $(k=1,2)$. Then we have
\begin{equation} \label{eq: multi}
\begin{aligned}
& q^{(\lambda,\mu)-(x_2\lambda,y_2\mu)} \Delta(x_1\lambda,x_2\lambda)\Delta(y_1\mu,y_2\mu) \\
& \hspace{10ex} \equiv G^\up( \oi^{-1}_{\lambda,\mu}(u_{x_1\lambda} \otimes u_{y_1 \mu}) \otimes  \oi^{-1}_{\lambda,\mu}(u_{x_2\lambda} \otimes u_{y_2 \mu})^\ri )
\quad {\rm mod} \ q L^\up(A_q(\g)).
\end{aligned}
\end{equation}
\end{lemma}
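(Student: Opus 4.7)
The plan is to recast the product $\De{x_1\la,x_2\la}\De{y_1\mu,y_2\mu}$ as a single application of $\Phi$ on the tensor module $V(\la)\tensp V(\mu)$, use the isomorphism $\Xi_{\la,\mu}$ to absorb the $q$-prefactor, and then reduce modulo $q$ via the upper crystal lattice machinery together with the crystal embedding $\bio_{\la,\mu}$.

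\smallskip

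First, since multiplication on $\Ag$ is induced by $\Cmp$ and the pairing $(v_1\tens v_2,\; u_1\tens u_2)\seteq(v_1,u_1)(v_2,u_2)$ identifies $\Dv\bl V(\la)\tensp V(\mu)\br$ with $V(\la)\tensm V(\mu)$ (as in the proof of Proposition \ref{prop:Deprod}), one directly computes
$$\De{x_1\la,x_2\la}\De{y_1\mu,y_2\mu}
=\Phi_{V(\la)\tensp V(\mu)}\bl(u_{x_1\la}\tensp u_{y_1\mu})\tens (u_{x_2\la}\tensm u_{y_2\mu})^\rt\br.$$
Using $\Xi_{\la,\mu}(u_{x_2\la}\tensp u_{y_2\mu})=q^{(\la,\mu)-(x_2\la,y_2\mu)}(u_{x_2\la}\tensm u_{y_2\mu})$ and multiplying by $q^{(\la,\mu)-(x_2\la,y_2\mu)}$ turns the second tensor factor into $\Xi_{\la,\mu}(u_{x_2\la}\tensp u_{y_2\mu})$. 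Now $u_{x_1\la}\tensp u_{y_1\mu}\in L^\up(\la)\tensp L^\up(\mu)$ and $\Xi_{\la,\mu}(u_{x_2\la}\tensp u_{y_2\mu})\in \tLt\subset\Dv L^\up\bl V(\la)\tensp V(\mu)\br$ by Lemma \ref{lem: largest upper crystal}, so $\Phi_{V(\la)\tensp V(\mu)}$ of the whole expression lies in $L^\up(\Ag)$.

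\smallskip

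Next I reduce modulo $q$. The hypothesis $x_k\ge y_k$ combined with the preceding lemma yields $u_{x_k\la}\tens u_{y_k\mu}\in\bio_{\la,\mu}(B(\la+\mu))$, so $b_k\seteq\oi^{-1}_{\la,\mu}(u_{x_k\la}\tens u_{y_k\mu})\in B(\la+\mu)$ is well defined. The class of $u_{x_1\la}\tensp u_{y_1\mu}$ in $\bl L^\up(\la)\tensp L^\up(\mu)\br/q\bl L^\up(\la)\tensp L^\up(\mu)\br$ is $\bio_{\la,\mu}(b_1)$ and therefore agrees modulo $q$ with $\iota_{\la,\mu}\bl G^\up_{\la+\mu}(b_1)\br$. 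Via the $\U$-linear isomorphism $\Xi_{\la,\mu}$, the class of $\Xi_{\la,\mu}(u_{x_2\la}\tensp u_{y_2\mu})$ in $\tLt/q\tLt$ corresponds to $\bio_{\la,\mu}(b_2)$ and so agrees modulo $q$ with $\iota'_{\la,\mu}\bl G^\up_{\la+\mu}(b_2)\br$, where $\iota'_{\la,\mu}\cl V(\la+\mu)\hookrightarrow V(\la)\tensm V(\mu)$ is the canonical embedding into the ``$-$'' tensor product. Because $\iota_{\la,\mu}(u_{\la+\mu})=\iota'_{\la,\mu}(u_{\la+\mu})=u_\la\tens u_\mu$, a routine $\U$-invariance argument gives
$$\Phi_{V(\la)\tensp V(\mu)}\bl\iota_{\la,\mu}(u)\tens\iota'_{\la,\mu}(v)^\rt\br=\Phi_{V(\la+\mu)}(u\tens v^\rt)\quad\text{for all $u,v\in V(\la+\mu)$.}$$
Applying this to $u=G^\up_{\la+\mu}(b_1)$ and $v=G^\up_{\la+\mu}(b_2)$ shows that the LHS is congruent modulo $qL^\up(\Ag)$ to $\Phi_{V(\la+\mu)}\bl G^\up_{\la+\mu}(b_1)\tens G^\up_{\la+\mu}(b_2)^\rt\br$, which shares the same class in $L^\up(\Ag)/qL^\up(\Ag)$ as the RHS of the statement.

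\smallskip

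The main obstacle is the reduction modulo $q$: the map $\Xi_{\la,\mu}$ is not a crystal lattice isomorphism in a naive sense, because of the weight-dependent scaling $q^{(\la,\mu)-(\xi,\eta)}$. The $q$-prefactor in the statement is tuned precisely to neutralize this scaling so that both sides land in $L^\up(\Ag)$ and descend to the same crystal basis class. A secondary subtlety is the compatibility of $\Phi_{V(\la)\tensp V(\mu)}$ with the canonical embeddings $\iota_{\la,\mu}$ and $\iota'_{\la,\mu}$ into the ``$+$'' and ``$-$'' tensor products, which rests on matching the corresponding invariant bilinear pairings.
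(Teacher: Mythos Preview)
Your proof is correct and follows essentially the same approach as the paper: express the product as a single $\Phi_{V(\la)\tensp V(\mu)}$, absorb the $q$-prefactor via $\Xi_{\la,\mu}$, and then reduce modulo $q$ using the crystal embedding $\bio_{\la,\mu}$ together with the inclusion $\tLt\subset\Dv\bl L^\up(\la)\tensp L^\up(\mu)\br$. Your embedding $\iota'_{\la,\mu}$ into the ``$-$'' tensor product is exactly the paper's $\Xi_{\la,\mu}\circ\iota_{\la,\mu}$ (both are $\U$-linear and send $u_{\la+\mu}$ to $u_\la\tens u_\mu$), so the compatibility identity you invoke is precisely the equality $\Phi_{\la+\mu}(v_1\tens v_2^\rt)=\Phi_{V(\la)\tensp V(\mu)}\bl\iota_{\la,\mu}(v_1)\tens(\Xi_{\la,\mu}\iota_{\la,\mu}(v_2))^\rt\br$ used in the paper.
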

\Proof
By the definition, we have
$$\De{x_1\la,x_2\la}\De{y_1\mu,y_2\mu}
=\Phi_{V(\la)\tensp V(\mu)}\bl
(u_{x_1\la}\tensp u_{y_1\mu})\tens (u_{x_2\la}\tensm u_{y_2\mu})^\rt\br.$$
Hence we have
\eqn
&&q^{(\la,\mu)-(x_2\la,y_2\mu)}
\De{x_1\la,x_2\la}\De{y_1\mu,y_2\mu}\\
&&\hs{3ex}=\Phi_{V(\la)\tensp V(\mu)}\bl
(u_{x_1\la}\tensp u_{y_1\mu})\tens q^{(\la,\mu)-(x_2\la,y_2\mu)}(u_{x_2\la}\tensm u_{y_2\mu})^\rt\br\\
&&\hs{6ex}=\Phi_{V(\la)\tensp V(\mu)}\bl
(u_{x_1\la}\tensp u_{y_1\mu})\tens
\bl\Xi_{\la,\mu}(u_{x_2\la}\tensp u_{y_2\mu})\br^\rt\br.
\eneqn
The right-hand side of \eqref{eq: multi} can be calculated as follows.
Let us take $v_k\in L^\up(\la+\mu)$ such that $\iota_{\la,\mu}(v_k)-u_{x_k\la}\tensp u_{y_k\mu}
\in qL^\up(\la)\tensp L^\up(\mu) $  for $k=1,2$.
Here $\iota_{\la,\mu}\cl V(\la+\mu)\to V(\la)\tensp V(\mu)$
denotes the canonical $\U$-module homomorphism.
Then we have
\eqn
&&G^\up\Bigl(\bio_{\la,\mu}^{\ -1}(u_{x_1\la}\tens u_{y_1\mu})
\tens\bl  \bio_{\la,\mu}^{\ -1}(u_{x_2\la}\tens u_{y_2\mu})\br^\rt\Bigl)\\
&&\hs{5ex}\equiv \Phi_{\la+\mu}(v_1\tens v_2^\rt)\mod qL^\up(\Ag)\\
&&\hs{6ex}=\Phi_{V(\la)\tensp V(\mu)}\bl \iota_{\la,\mu}(v_1)\tens
\bl\Xi_{\la,\mu}\iota_{\la,\mu}(v_2))^\rt\br
\eneqn
On the other hand, we have
$$\iota_{\la,\mu}(v_1)\equiv
u_{x_1\la}\tensp u_{y_1\mu}\mod qL^\up(\la)\tensp L^\up(\mu)$$
and
$$\Xi_{\la,\mu}\bl\iota_{\la,\mu}(v_2)\br
\equiv \Xi_{\la,\mu}(u_{x_2\la}\tensp u_{y_2\mu})\mod q\tLt.
$$
Hence
\eqn
&&\Phi_{V(\la)\tensp V(\mu)}\bl
(u_{x_1\la}\tensp u_{y_1\mu})\tens \Xi_{\la,\mu}(u_{x_2\la}\tensp u_{y_2\mu})^\rt\br\\
&&\hs{10ex}\equiv
\Phi_{V(\la)\tensp V(\mu)}\bl (\iota_{\la,\mu}(v_1)\tens
(\Xi_{\la,\mu}\iota_{\la,\mu}(v_2))^\rt\br
\mod qL^\up(\Ag).
\eneqn
\QED

\begin{theorem} \label{thm: DDD}
Let $\lambda \in \wl^+$ and $x,y \in W$ such that $x \ge y$. Then we have
$$ \D(x\lambda,y\lambda)\D(y\lambda,\lambda) \equiv \D(x\lambda,\lambda) \quad {\rm mod} \ qL^\up(A_q(\n)).$$
\end{theorem}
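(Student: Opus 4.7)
The plan is as follows.  First, apply the preceding Lemma with $\mu=\la$ and $(x_1,y_1,x_2,y_2)=(x,y,y,e)$; writing $M\seteq(\la-y\la,\la)\ge 0$, this gives
\[
q^{M}\,\Delta(x\la,y\la)\,\Delta(y\la,\la)\;\equiv\;G^\up(b)\pmod{qL^\up(A_q(\g))},
\]
where $b\seteq \oi^{-1}_{\la,\la}(u_{x\la}\tens u_{y\la})\tens \oi^{-1}_{\la,\la}(u_{y\la}\tens u_\la)^\rt$.  Now apply $p_\n$ to both sides.  Proposition~\ref{prop: p_n} produces the factor $q^{(y\la,\la-y\la)}=q^{-M}$, which cancels the $q^{M}$ on the left; together with the fact that $p_\n$ sends $L^\up(A_q(\g))$ into $L^\up(A_q(\n))$, we arrive at
\[
\D(x\la,y\la)\,\D(y\la,\la)\;\equiv\;p_\n\bl G^\up(b)\br\pmod{qL^\up(A_q(\n))}.
\]

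In parallel, Lemma~\ref{lem:Deprod} applied with $w=x$ and $\mu=\la$ gives the exact identity
\[
\Delta(x\la,\la)\,\Delta(\la,\la)\;=\;G^\up(b^{*}),\qquad b^{*}\seteq \oi^{-1}_{\la,\la}(u_{x\la}\tens u_\la)\tens u_{2\la}^\rt.
\]
Since $\wt_\ri(\Delta(\la,\la))=\wt_\li(\Delta(\la,\la))=\la$, Proposition~\ref{prop: p_n} introduces no $q$-twist here; combined with $\D(\la,\la)=1$ this yields $\D(x\la,\la)=p_\n\bl G^\up(b^{*})\br$.  Moreover, Proposition~\ref{prop:Deprod} with $s=x$, $\mu=\la$, $t=e$ gives the explicit form $\bg(b^{*})=b_{-}\tens t_{2\la}\tens b_{-\infty}$, where $b_{-}\in B(\infty)$ is the element determined by $G^\up(b_{-})=\D(x\la,\la)$.

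The theorem therefore reduces to showing
\[
p_\n\bl G^\up(b)\br\;\equiv\;\D(x\la,\la)\pmod{qL^\up(A_q(\n))}.
\]
By Proposition~\ref{prop: nonzero elt}, if we write $\bg(b)=c_1\tens t_\zeta\tens c_2$, this amounts to verifying (i) $c_2=b_{-\infty}$ and (ii) $c_1=b_{-}$.  This is the main obstacle.  I would attack it by analysing how $\bg$ acts on the $B(2\la)\tens B(2\la)^\rt$-component of $B(A_q(\g))$ via the crystal embedding $B(2\la)\hookrightarrow B(\la)\tens B(\la)$: the crucial point is that the right tensor-factor of $b$, namely $\oi^{-1}_{\la,\la}(u_{y\la}\tens u_\la)$, carries the highest-weight vector $u_\la$ in its second slot, so by compatibility of the right crystal structure on $B(A_q(\g))$ with the embedding into $B(-\infty)$ the $B(-\infty)$-component of $\bg(b)$ is forced to coincide with $b_{-\infty}$; a parallel analysis on the left identifies $c_1$ with $b_{-}$, completing the proof.
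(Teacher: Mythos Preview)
Your reduction is correct and matches the paper exactly: apply the preceding Lemma with $(x_1,y_1,x_2,y_2)=(x,y,y,e)$, $\mu=\la$, then apply $p_\n$; the $q$-powers cancel as you compute, and one is left with
\[
\D(x\la,y\la)\,\D(y\la,\la)\;\equiv\;p_\n\bigl(G^\up(b)\bigr)\pmod{qL^\up(A_q(\n))},
\qquad b=\oi^{-1}_{\la,\la}(u_{x\la}\tens u_{y\la})\tens\bigl(\oi^{-1}_{\la,\la}(u_{y\la}\tens u_\la)\bigr)^\rt.
\]
The remaining task, via Proposition~\ref{prop: nonzero elt}, is to show $\bg(b)=\bio_\la(u_{x\la})\tens t_{y\la+\la}\tens b_{-\infty}$.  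This is the content of Lemma~\ref{lem: iotas} in the paper, and it is the real work.

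Your sketch of this step is not a proof.  The assertion that ``$u_\la$ in the second slot of the right factor forces $c_2=b_{-\infty}$'' has no obvious justification: $\bg$ is defined on $B(2\la)\tens B(2\la)^\rt$, and the right factor $\oi^{-1}_{\la,\la}(u_{y\la}\tens u_\la)$ is a single element of $B(2\la)$, not a tensor.  One cannot read off its image in $B(-\infty)$ from the decomposition in $B(\la)\tens B(\la)$ without an argument.  The ``parallel analysis on the left'' is even more problematic: if one naively expected $c_1=\bio_{2\la}(b_1)$ with $b_1=\oi^{-1}_{\la,\la}(u_{x\la}\tens u_{y\la})$, this would already fail on weight grounds when $y\la\ne\la$, since $\wt\bigl(\bio_{2\la}(b_1)\bigr)=x\la+y\la-2\la$ while $\wt\bigl(\bio_\la(u_{x\la})\bigr)=x\la-\la$.

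The paper handles this by an induction on $\ell(x)$ (Lemma~\ref{lem: iotas}): one picks $i$ with $s_ix<x$, splits into the cases $s_iy>y$ and $s_iy<y$, and in each case moves from the inductively known value of $\bg$ on the smaller element to the desired one by applying an explicit string of operators $\tf_i$ (and, in the second case, also $\te_i^{*}$), checking at each step which tensor factor the crystal operator hits.  You should either supply this induction or give a precise alternative computation of $\bg(b)$; the heuristic you wrote does not substitute for it.
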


\begin{proof}
Applying $p_\n$ to \eqref{eq: multi}, we have
\begin{equation}  \label{eq: DDD}
\begin{aligned}
& \D(x\lambda,y\lambda)\D(y\lambda,\lambda) \\
& \hspace{10ex} \equiv p_\n
\left( G^\up( \oi^{-1}_{\lambda,\lambda}(u_{x\lambda} \otimes u_{y \lambda})  \otimes \oi^{-1}_{\lambda,\lambda}(u_{y\lambda} \otimes u_{\lambda})^\ri ) \right)
\quad {\rm mod} \ q L^\up(A_q(\n)).
\end{aligned}
\end{equation}
Hence the desired result follows from
Proposition~\ref{prop: nonzero elt} and  Lemma \ref{lem: iotas} below.
\end{proof}

\Lemma\label{lem: iotas}
Let $\la\in \Pd$ and $x,y\in W$ such that $x\ge y$.
Then we have
$$
\bio_\g\Bigl(
\bio_{\la,\la}^{\ -1}(u_{x\la}\tens u_{y\la})
\tens\bl  \bio_{\la,\la}^{\ -1}(u_{y\la}\tens u_{\la})\br^\rt\Bigr)
=\bio_\la(u_{x\la})\tens t_{y\la+\la}\tens b_{-\infty}.$$
\enlemma

\Proof
We shall argue by induction on $\ell(x)$.
We set $b_{x\la}=\bio_\la(u_{x\la})$.
Since the case $x=1$ is obvious, assume that $x\not=1$.
Take $i\in I$ such that $x'\seteq s_ix<x$

\medskip
\noi
(a) First assume that $s_iy>y$. Then we have
$y\le  x'$.
Hence by the induction hypothesis,
\eq
&&\bio_\g\Bigl(
\bio_{\la,\la}^{\ -1}(u_{x'\la}\tens u_{y\la})
\tens\bl  \bio_{\la,\la}^{\ -1}(u_{y\la}\tens u_{\la})\br^\rt\Bigr)
=b_{x'\la}\tens t_{y\la+\la}\tens b_{-\infty}.\label{eq:agtu}
\eneq
We have $\vphi_i(u_{x'\la})=\lan h_i,x'\la\ran$ and
$\vphi_i(b_{x'\la}\tens t_{y\la+\la})=\lan h_i,x'\la\ran
+\lan h_i,y\la\ran\ge\lan h_i,x'\la\ran$.
Hence, applying $\tf_i^{\lan h_i,x'\la\ran}$ to \eqref{eq:agtu}, we obtain
\eqn
&&\bio_\g\Bigl(
\bio_{\la,\la}^{\ -1}(u_{x\la}\tens u_{y\la})
\tens\bl  \bio_{\la,\la}^{\ -1}(u_{y\la}\tens u_{\la})\br^\rt\Bigr)
=b_{x\la}\tens t_{y\la+\la}\tens b_{-\infty}.
\eneqn

\medskip
\noi
(b) Assume that $y'\seteq s_iy<y$. Then we have
$y'\le x'$, and the induction hypothesis implies that
$$
\bio_\g\Bigl(
\bio_{\la,\la}^{\ -1}(u_{x'\la}\tens u_{y'\la})
\tens\bl  \bio_{\la,\la}^{\ -1}(u_{y'\la}\tens u_{\la})\br^\rt\Bigr)
=b_{x'\la}\tens t_{y'\la+\la}\tens b_{-\infty}.$$
We shall apply $\te_i^{*\;\lan h_iy'\la\ran}\tf_i^{\lan h_i,x'\la+y'\la\ran}$
to the both sides.
Then the left-hand side yields
$$\bio_\g\Bigl(
\bio_{\la,\la}^{\ -1}(u_{x\la}\tens u_{y\la})
\tens\bl  \bio_{\la,\la}^{\ -1}(u_{y\la}\tens u_{\la})\br^\rt\Bigr).$$
Since $\vphi_i(b_{x'\la}\tens t_{y'\la+\la})=\lan h_i,x'\la\ran
+\lan h_i,y'\la+\la\ran\ge\lan h_i,x'\la+y'\la\ran$,
the right-hand side yields
\begin{align*}
\te_i^{*\;\lan h_i,y'\la\ran}\tf_i^{\lan h_i,x'\la+y'\la\ran}
\bl b_{x'\la}\tens t_{y'\la+\la}\tens b_{-\infty}\br
&=\te_i^{*\;\lan h_i,y'\la\ran}\bl(\tf_i^{\lan h_i,x'\la+y'\la\ran}
 b_{x'\la})\tens t_{y'\la+\la}\tens b_{-\infty}\br\\
&=\te_i^{*\;\lan h_i,y'\la\ran}\bl(\tf_i^{\lan h_i,y'\la\ran}
 b_{x\la})\tens t_{y'\la+\la}\tens b_{-\infty}\br.
\end{align*}
Since
$\eps_i^*(b_{x\la})=-\vphi_i(b_{x\la})=\lan h_i, \la\ran$ and
$\tf_i^{\lan h_i,y'\la\ran}b_{x\la}=\tf_i^{*\;\lan h_i,y'\la\ran}b_{x\la}$,
we have
\begin{align*}\te_i^{*\;\lan h_i,y'\la\ran}\bl(\tf_i^{\lan h_i,y'\la\ran}
 b_{x\la})\tens t_{y'\la+\la}\tens b_{-\infty}\br
&=b_{x\la}\tens t_{y\la+\la}\tens b_{-\infty}.
\end{align*}
\QED

\subsection{Generalized T-systems}

The $T$-system in \S \ref{subsec: T-system} can be interpreted as an equation among the three products of elements
in $\B^\up(A_q(\g))$ or $\B^\up(A_q(\n))$.
In this subsection, we introduce another equation among the three products
of elements in $\B^\up(A_q(\g))$, called {\em a generalized $T$-system}.

\begin{proposition} \label{prop: g T-system D}
Let $\mu \in W \varpi_i$ and set $b = \oi_{\varpi_i}(u_\mu) \in B(\infty)$. Then we have
\begin{equation}\label{eq: G t-sys}
\begin{aligned}
\Delta(\mu,s_i\varpi_i)\Delta(\varpi_i,\varpi_i) & = q^{-1}
G^\up\left( \oi^{-1}_{\varpi_i,\varpi_i}(u_\mu \otimes u_{\varpi_i})
\otimes \big( \oi^{-1}_{\varpi_i,\varpi_i}(u_{s_i\varpi_i} \otimes u_{\varpi_i}) \big)^\ri \right) \\
& \hspace{10ex} +G^\up \left( \oi_{\varpi_i+s_i\varpi_i}^{-1}(\widetilde{e^*_i}b) \otimes u^\ri_{\varpi_i+s_i\varpi_i} \right).
\end{aligned}
\end{equation}
\end{proposition}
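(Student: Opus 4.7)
The plan is to realize both sides inside the isotypic decomposition of $V(\varpi_i)\otimes V(\varpi_i)$, via the isomorphism $\Phi$ of \eqref{eq: Phi}. Using that $\Phi$ turns multiplication in $\Ag$ into the coproduct $\comp$, I first rewrite the left-hand side as
\[
\Delta(\mu,s_i\varpi_i)\,\Delta(\varpi_i,\varpi_i)
=\Phi_V\bigl((u_\mu\tensp u_{\varpi_i})\otimes(u_{s_i\varpi_i}\tensm u_{\varpi_i})^\ri\bigr),
\]
where $V=V(\varpi_i)\tensp V(\varpi_i)$ and $\Dv V$ is identified with $V(\varpi_i)\tensm V(\varpi_i)$ via $\Xi_{\varpi_i,\varpi_i}$. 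I then decompose $V$ into its $\Uqg$-isotypic components. A weight count shows that only $V(2\varpi_i)$ and $V(\varpi_i+s_i\varpi_i)=V(2\varpi_i-\alpha_i)$ contribute, since $s_i\varpi_i+\varpi_i$ is the highest weight of $V(\varpi_i+s_i\varpi_i)$ and no component with strictly smaller highest weight can accommodate this weight.

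For the $V(2\varpi_i)$-contribution, the crystal tensor product rule together with $\ve_i(u_{\varpi_i})=0$ forces both $u_\mu\otimes u_{\varpi_i}$ and $u_{s_i\varpi_i}\otimes u_{\varpi_i}$ to lie in the $B(2\varpi_i)$-subcrystal of $B(\varpi_i)\otimes B(\varpi_i)$. Modulo $q$ in the relevant crystal lattices, the $V(2\varpi_i)$-projections of $u_\mu\tensp u_{\varpi_i}$ and of $u_{s_i\varpi_i}\tensm u_{\varpi_i}$ therefore coincide with the canonical embeddings of the corresponding global basis elements of $V(2\varpi_i)$ indexed by $\oi^{-1}_{\varpi_i,\varpi_i}(u_\mu\otimes u_{\varpi_i})$ and $\oi^{-1}_{\varpi_i,\varpi_i}(u_{s_i\varpi_i}\otimes u_{\varpi_i})$. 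Applying $\Phi_{2\varpi_i}$ gives the first term of the right-hand side; the coefficient $q^{-1}$ arises from the $\Xi_{\varpi_i,\varpi_i}$-scalar at weight $s_i\varpi_i+\varpi_i$ combined with the normalization between upper and lower crystal lattices at the non-extremal weight $2\varpi_i-\alpha_i$ of $V(2\varpi_i)$.

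For the $V(\varpi_i+s_i\varpi_i)$-contribution, the weight-$(\varpi_i+s_i\varpi_i)$ subspace of $V(\varpi_i+s_i\varpi_i)$ is one-dimensional, so the projection of $u_{s_i\varpi_i}\tensm u_{\varpi_i}$ onto it is proportional to the highest weight vector $u_{\varpi_i+s_i\varpi_i}$. The projection of $u_\mu\tensp u_{\varpi_i}$ is identified through the crystal embedding $B(\varpi_i+s_i\varpi_i)\hookrightarrow B(\infty)$: the bicrystal embedding of Theorem~\ref{thm: bicrystal embed}, combined with the compatibility of $\oi_\g$ with the $\ast$-crystal structure on $B(\infty)$, translates the ``missing factor'' of $u_{s_i\varpi_i}$ in the right-hand tensorand into the application of $\widetilde{e^*_i}$ to $b=\oi_{\varpi_i}(u_\mu)$, producing the crystal label $\oi^{-1}_{\varpi_i+s_i\varpi_i}(\widetilde{e^*_i}\,b)$. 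Applying $\Phi_{\varpi_i+s_i\varpi_i}$ yields the second term.

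The main obstacle is tracking the scalars precisely, particularly the $q^{-1}$ in front of the first term. This requires careful bookkeeping of the $\Xi$-twist between the $\tensp$ and $\tensm$ structures, together with the relation between upper and lower global bases at the non-extremal weight $2\varpi_i-\alpha_i$ of $V(2\varpi_i)$, mirroring the computations underlying Proposition~\ref{prop:Deprod} and Lemma~\ref{lem:Deprod}.
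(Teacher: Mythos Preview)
Your isotypic-decomposition approach is a plausible strategy, but it leaves the exact scalars undetermined, and you acknowledge this yourself. Crystal-lattice arguments only pin down elements modulo $qL^\up$, so they cannot by themselves produce the precise coefficient $q^{-1}$ in front of the first term (nor confirm that the second term has coefficient exactly $1$). Computing the projections of $u_{s_i\varpi_i}\tensm u_{\varpi_i}$ onto the $V(2\varpi_i)$ and $V(\varpi_i+s_i\varpi_i)$ summands \emph{exactly}, and similarly for $u_\mu\tensp u_{\varpi_i}$, would require explicit formulas for the isotypic projectors, which you have not supplied. Your identification of the $V(\varpi_i+s_i\varpi_i)$-projection of $u_\mu\tensp u_{\varpi_i}$ with $G^\up_\lambda(\oi_\lambda^{-1}(\te_i^*b))$ is also only a heuristic: the bicrystal embedding tells you what happens at the crystal level, not that the actual projection is a global basis vector on the nose.

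The paper sidesteps all of this with a different mechanism. It sets $u=\Delta(\mu,s_i\varpi_i)\Delta(\varpi_i,\varpi_i)-q^{-1}G^\up(\cdots)$ and verifies directly, via the comultiplication and the action of $f_i$ on global basis elements, that $uf_j=0$ for every $j\in I$. The computation for $j=i$ is where the specific coefficient $q^{-1}$ is forced: one compares $\bl\Delta(\mu,s_i\varpi_i)\Delta(\varpi_i,\varpi_i)\br f_i=q_i^{-1}\Delta(\mu,\varpi_i)\Delta(\varpi_i,\varpi_i)$ (from $\comp(f_i)$) with $G^\up(\cdots)f_i=\Delta(\mu,\varpi_i)\Delta(\varpi_i,\varpi_i)$ (from $\te_i(u_{s_i\varpi_i}\otimes u_{\varpi_i})=u_{\varpi_i}\otimes u_{\varpi_i}$ and Lemma~\ref{lem:Deprod}). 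Once $u$ is a right lowest-weight vector of weight $\lambda=\varpi_i+s_i\varpi_i$, one has $u=\Phi(v\otimes u_\lambda^\ri)$ for some $v\in V(\lambda)$, and then $v$ is identified by applying $p_\n$: one computes $p_\n(\Delta(\mu,s_i\varpi_i)\Delta(\varpi_i,\varpi_i))=\D(\mu,s_i\varpi_i)=G^\up(\te_i^*b)$ and shows separately that $p_\n$ kills the first global-basis term via the bicrystal computation $\oi_\g(\cdots)=b\otimes t_\lambda\otimes\te_i b_{-\infty}$ and Proposition~\ref{prop: nonzero elt}. This right-action argument is what makes the coefficients fall out cleanly; your isotypic-projection route would need substantial additional work to reach an exact identity.
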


Note that if $\mu = \varpi_i$, then $b=1$ and the last term in \eqref{eq: G t-sys} vanishes. If $\mu \ne \varpi_i$, then
$\ve_i^*(b)=1$ \ and $\oi_{\varpi_i+s_i\varpi_i}^{-1}(\te_i^* b) \in B(\varpi_i+s_i\varpi_i)$, and $u_\mu \otimes u_{\varpi_i} \in
\oi_{\varpi_i,\varpi_i}B(2\varpi_i)$.

\begin{proof}
In the sequel, we omit $\oi^{-1}_{\varpi_i,\varpi_i}$ for the sake of simplicity. Set
$$  u = \Delta(\mu,s_i\varpi_i)\Delta(\varpi_i,\varpi_i)- q^{-1}
G^\up\left( (u_\mu \otimes u_{\varpi_i}) \otimes
(u_{s_i\varpi_i} \otimes u_{\varpi_i})^\ri  \right).$$
 Then $\wt_\ri(u)=\lambda\seteq\varpi_i+s_i\varpi_i$.

It is obvious that we have $uf_j=0$ for $j \ne i$. Since $\te_i(u_{s_i\varpi_i} \otimes u_{\varpi_i})= u_{\varpi_i} \otimes u_{\varpi_i}$,
we have
\begin{align*}
G^\up\left( (u_\mu \otimes u_{\varpi_i}) \otimes (u_{s_i\varpi_i} \otimes u_{\varpi_i})^\ri  \right)f_i  & =
G^\up\left( (u_\mu \otimes u_{\varpi_i}) \otimes (u_{\varpi_i} \otimes u_{\varpi_i})^\ri  \right) \\
& = G^\up(u_\mu \otimes u^\ri_{\varpi_i})G^\up(u_{\varpi_i} \otimes u^\ri_{\varpi_i}) \\
& = \Delta(\mu,\varpi_i)\Delta(\varpi_i,\varpi_i).
\end{align*}
Here the second equality follows from Corollary~\ref{lem:Deprod}.
On the other hand, we have
\begin{align*}
\left( \Delta(\mu,s_i\varpi_i)\Delta(\varpi_i,\varpi_i)   \right) f_i & =  \left( \Delta(\mu,s_i\varpi_i)f_i \right)
\left(\Delta(\varpi_i,\varpi_i)t_i^{-1}\right) \\ & = q_i^{-1}\Delta(\mu,\varpi_i)\Delta(\varpi_i,\varpi_i).
\end{align*}
Hence we have $uf_i=0$. Thus, $u$ is a lowest weight vector of wight $\lambda$ with respect to the right action of
$U_q(\g)$. Therefore there exists some $v \in V(\lambda)$ such that
$$u=\Phi(v \otimes u_\lambda^\ri).$$
Hence we have $p_\n(u)=\iota_\lambda(v) \in A_q(\n)$. On the other hand, we have
\begin{align*}
p_\n\left( \Delta(\mu,s_i\varpi_i)\Delta(\varpi_i,\varpi_i) \right) & =
p_\n\left(\Delta(\mu,s_i\varpi_i) \right) p_\n\left(\Delta(\varpi_i,\varpi_i)\right) \\
& = \D(\mu,s_i\varpi_i)=G^{\up}(\te_i^*b) \\
& = \iota_\lambda\left(G_\lambda^\up \big(\bio_\lambda^{-1}(\te_i^* b) \big) \right)
\end{align*}
Note that since $\ve_i^*(\te_i^*b) =0$ and $\ve_j^*(\te_i^*b) \le -\lan h_j,\alpha_i \ra$ for $j \ne i$, we have
$\te_i^*b \in \oi_\lambda(B(\lambda))$.

Hence in order to prove our assertion, it is enough to show that
$$
p_\n\left( G^\up\big( (u_\mu \otimes u_{\varpi_i}) \otimes (u_{s_i\varpi_i} \otimes u_{\varpi_i})^\ri  \big) \right) =0.
$$
This follows from Proposition \ref{prop: nonzero elt} and
\begin{align} \label{eq: kernel}
\oi_\g\left( (u_\mu \otimes u_{\varpi_i}) \otimes (u_{s_i\varpi_i} \otimes u_{\varpi_i})^\ri \right) = b \otimes t_\lambda \otimes \te_ib_{-\infty}.
\end{align}

Let us prove \eqref{eq: kernel}. Since
$$
(u_\mu \otimes u_{\varpi_i}) \otimes (u_{s_i\varpi_i} \otimes u_{\varpi_i})^\ri
= \te_i^*
\bl(u_\mu \otimes u_{\varpi_i}) \otimes (u_{\varpi_i} \otimes u_{\varpi_i})^\ri\br,
$$
the left-hand side of \eqref{eq: kernel} is equal to
$$
\te_i^*\left(\oi_\g\big( (u_\mu \otimes u_{\varpi_i}) \otimes (u_{\varpi_i} \otimes u_{\varpi_i})^\ri \big)\right)
= \te_i^*(b \otimes t_{2\varpi_i}\otimes b_{-\infty}).
$$

Since $\ve_i^*(b)=1 < \lan h_i,2\varpi_i \ra=2$, we obtain
$$ \te_i^*\left(b \otimes t_{2\varpi_i}\otimes b_{-\infty}\right)
= b \otimes t_{2\varpi_i-\al_i}\otimes \te_i^* b_{-\infty}
=b \otimes t_\lambda \otimes \te_ib_{-\infty}.$$

\end{proof}

\section{KLR algebras and their modules}

\subsection{KLR algebras and $R$-matrices} In this subsection,
we briefly recall the basic materials of
symmetric KLR algebras and $R$-matrices following \cite{KKKO15}.
For precise definitions in this subsection, we refer
\cite{KKK13,KKKO14,KKKO15}.
In this paper, for the sake of simplicity,
KLR {\em algebras are always assumed to be symmetric}
 although some of results in this paper hold also in the non-symmetric case.
 We  consider only graded modules over KLR algebras.
Hence {\em we write ``a  module'' instead of ``a graded module''.}
{\em We sometimes omit the grading shift if there is no afraid of confusion.}

\medskip

Let the quintuple $(\cmA, \wl, \Pi, \wl^{\vee}, \Pi^{\vee})$ be a symmetric
Cartan datum and let $\ko$ be a base field.
Hence we assume that the generalized Cartan matrix
$A=(a_{i,j})_{i,j\in I}$ is equal to $\bl(\al_i,\al_j)\br_{i,j\in I}$.
For $i,j\in I$ such that $i \ne j$, let
us take  a family of polynomials $(Q_{i,j})_{i,j\in I}$ in $\ko[u,v]$
such that $Q_{i,i}=0$ and
\begin{equation} \label{eq:Q}
Q_{i,j}(u,v) =c_{i,j}(u-v)^{-a_{i,j}} \quad\text{for $i\not=j$}
\end{equation}
with $c_{i,j}\in \ko^{\times}$. We assume
$$Q_{i,j}(u,v)=Q_{j,i}(v,u).$$

For $n \in \Z_{\ge 0}$ and $\beta \in \rl^+$ such that $|\beta| = n$, we set
$$I^{\beta} = \{\nu = (\nu_1, \ldots, \nu_n) \in I^{n} \ | \ \alpha_{\nu_1} + \cdots + \alpha_{\nu_n} = \beta \}.$$

For $\beta \in \rl^+$, we denote by $R(\beta)$ the KLR algebra at $\beta$ associated
with $(\cmA,\wl, \Pi,\wl^{\vee},\Pi^{\vee})$ and
$(Q_{i,j})_{i,j \in I}$. It is a $\Z$-graded $\ko$-algebra generated by
the generators $\{ e(\nu) \}_{\nu \in  I^{\beta}}$, $ \{x_k \}_{1 \le
k \le n}$, $\{ \tau_m \}_{1 \le m \le n-1}$ with certain defining relations
(e.g., see \cite[Definition 1.2]{KKKO15}). Note that
the isomorphism class of $R(\beta)$ does not depend on the choice of $c_{i,j}$'s.

\medskip
We denote by $R(\beta) \Mod$
the category of $R(\beta)$-modules and by $R(\beta)\smod$
the category of $R(\beta)$-modules $M$ which are finite-dimensional over $\ko$ and the action of $x_k$ on $M$ is  nilpotent for any
$k$.

Similarly,  we  also denote by $R(\beta) \gMod$ and by $R(\beta)\gmod$
the category of graded $R(\beta)$-modules and
the category of graded $R(\beta)$-modules finite-dimensional over $\ko$, respectively.

For $\beta \in \rl^+$ and $M \in R(\beta) \Mod$, we set
$$  \wt(M)=-\beta.$$
We set $$R \gmod=\soplus_{\beta\in\rtl^+}R(\beta)\gmod.$$
Then $R\gmod$ is a $\ko$-linear abelian monoidal category,
whose tensor product is given by the {\em convolution product} $\conv$.

Let us denote by $K(R\gmod)$
the  Grothendieck group of $R\gmod$.
Then $K(R\gmod)$ becomes a $\A$-algebra whose
multiplication is induced by the convolution product
and the $\A$-action is induced by the {\em grading shift functor} $q$.

\medskip
For $M \in R( \beta) \smod$, the dual space
$M^* \seteq  \Hom_{\ko}(M, \ko)$
admits an $R(\beta)$-module structure via
\begin{align*}
(r \cdot  f)(u) \seteq f(\psi(r) u) \quad (r \in R( \beta), \ u \in M),
\end{align*}
where $\psi$ denotes the $\ko$-algebra anti-involution on $R(\beta
)$ which fixes the generators $e(\nu)$, $x_m$ and $\tau_k$ for $\nu
\in I^{\beta}, 1 \leq m \leq  |\beta|$ and $1 \leq k \leq
|\beta|-1$.

It is known that (see \cite[Theorem 2.2 (2)]{LV11})
\begin{align}
&(M_1 \circ M_2)^* \simeq q^{(\wt(M_1),\wt(M_2))}
(M_2^* \circ M_1 ^*) \qquad \text{for } M_1,M_2 \in R\gmod. \label{eq:dualconv}
\end{align}

A simple module $M$ is called \emph{self-dual} if $M^* \simeq M$.
Every simple module is isomorphic to a grading shift of a self-dual simple module (\cite[\S.3.2]{KL09}).

\begin{theorem}[\cite{KL09, R11}]  \label{Thm:categorification}
There exists a $\A$-algebra isomorphism
\begin{align} \label{eq: ch}
{\rm ch} \cl   \soplus_{\beta \in \rl^+} K(R(\beta) \gmod) \isoto A_q(\n)_{\Z[q^{\pm1}]}.
\end{align}
\end{theorem}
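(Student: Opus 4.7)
The plan is to follow the proofs of Khovanov-Lauda \cite{KL09} and Rouquier \cite{R11}: construct $\ch$ explicitly, verify it is an $\A$-algebra homomorphism, then establish bijectivity separately.

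First, I would define $\ch$. For $M\in R(\beta)\gmod$ and $\nu\in I^\beta$, the idempotent $e(\nu)\in R(\beta)$ projects $M$ onto a finite-dimensional graded subspace, so $\dim_q e(\nu)M\in\A$ is well-defined. The assignment $M\longmapsto\bl f_{\nu_1}\cdots f_{\nu_n}\mapsto \dim_q e(\nu)M\br$ defines an element of $(\Up_\A)^\vee\subset\Hom_{\Q(q)}(\Up,\Q(q))$; via the algebra isomorphism $\iota\cl\Um\isoto\An$ of Section~2.2 this produces an $\A$-linear map $\ch\cl K(R(\beta)\gmod)\to\An_{-\beta,\A}$. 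Compatibility with the grading shift functor $q$ is built into the definition of $\dim_q$, so we get an $\A$-module map on the full direct sum.

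Next, I would verify that $\ch$ respects multiplication. The key input is the Mackey/shuffle decomposition for KLR algebras, which describes $e(\nu)(M\conv N)$ as a direct sum indexed by $(m,n)$-shuffles of the $I$-sequence $\nu$, with summands built from $e(\nu')M\tens e(\nu'')N$ together with explicit grading shifts coming from crossing strands. A direct comparison shows this combinatorial identity matches precisely the coproduct $\Delta_\n$ dualized to give the multiplication on $\An$ (cf.\ Lemma~\ref{lem:Deltan}), including the $q$-twist $q^{-(\wt(x_2),\wt(y_1))}$ built into the algebra structure on $\Up\tens\Up$. Surjectivity is comparatively easy: the one-dimensional simple module $L(i)\in R(\al_i)\gmod$ maps under $\ch$ to the generator of $\An_{-\al_i}$ dual to $e_i\in\Up$, and since $\An_\A$ is generated as an $\A$-algebra by these elements (an immediate consequence of $\Up$ being generated by the $e_i$ together with integrality of divided powers), the image of $\ch$ spans $\An_\A$.

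The main obstacle is injectivity. The standard route is to introduce the split Grothendieck group $K(R(\beta)\text{-proj})$ of finitely generated projective graded modules, construct a Cartan-type pairing $K(R\text{-proj})\times K(R\gmod)\to\A$ by graded dimensions of $\Hom$-spaces (perfect because indecomposable projectives and self-dual simples form dual bases), and then show that $\ch$, together with a dual character map $K(R\text{-proj})\to\Um_\A$, intertwines this pairing with Kashiwara's non-degenerate bilinear form $(\cdot,\cdot)$ on $\Um$. Non-degeneracy of the latter combined with the surjectivity established above then forces $\ch$ to be an isomorphism. The sharper identification of self-dual simple modules with the upper global basis of $\An$, due to Varagnolo-Vasserot \cite{VV09} and Rouquier \cite{R11} in the symmetric case, refines this result but is not needed for the isomorphism asserted here.
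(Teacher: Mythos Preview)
The paper does not supply its own proof of this theorem; it is recorded as a citation of results of Khovanov--Lauda \cite{KL09} and Rouquier \cite{R08,R11}. Your outline is a faithful summary of the argument in those sources---the character map via graded dimensions of $e(\nu)M$, multiplicativity from the shuffle lemma (indeed the paper invokes \cite[Lemma~2.20]{KL09} later in Proposition~\ref{prop: Econv}), and bijectivity from the Cartan pairing between projectives and simples matched against the non-degenerate form on $\Um$---so there is nothing further to compare.
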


\begin{theorem} [\cite{R11,VV09}] \label{thm:categorification 2}
We assume further that $\ko$ has characteristic $0$.
Then under the isomorphism ${\rm ch}$ in \eqref{eq: ch}, the upper global basis $\B^\up(\An)$ corresponds to
the set of the isomorphism classes of self-dual simple $R$-modules.
\end{theorem}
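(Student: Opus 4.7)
The plan is to follow the geometric approach of Varagnolo--Vasserot and Rouquier, identifying $R(\beta)$ with the Ext-algebra of a Lusztig-type semisimple complex on a quiver representation variety. Fix a quiver $Q$ with vertex set $I$ whose underlying graph has adjacency matrix $2\,\mathrm{id}-\cmA$, and for each dimension vector $\nu\in\N^I$ with $\sum_i\nu_i\al_i=\beta$ form the space $E_\nu$ of representations of $Q$ of dimension $\nu$, acted on by $G_\nu=\prod_i GL(\nu_i)$. Lusztig's construction produces a family $\mathcal{P}_\beta$ of $G_\nu$-equivariant semisimple perverse sheaves on $E_\nu$ whose simple objects (up to cohomological shift) form a basis of a Grothendieck group that is canonically identified with $\An_\A^\vee$, $q$ acting by the shift $[1]$; moreover, this basis is $\B^\up(\An)$.

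The central step is the explicit identification $R(\beta)\simeq\End^\bullet_{D^b_{G_\nu}(E_\nu)}(L_\beta)$ as $\Z$-graded $\ko$-algebras, where $L_\beta=\soplus_\nu\pi_{\nu,!}\one[d_\nu]$ is the sum of pushforwards from the flag-type varieties parametrizing complete flags of subrepresentations with prescribed simple quotients. Under this identification, the functor $R\Hom(L_\beta,-)$ induces an equivalence between a suitable subcategory of $D^b_{G_\nu}(E_\nu)$ generated by shifts of summands of $L_\beta$ and the derived category of graded $R(\beta)$-modules; restricting to the hearts gives a bijection between self-dual simple perverse summands of $L_\beta$ and self-dual simple objects of $R(\beta)\gmod$, matching the grading shifts induced by cohomological shifts. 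Combined with Lusztig's theorem, this is already the desired bijection on the level of isomorphism classes.

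It remains to check that the correspondence is compatible with $\mathrm{ch}$ from Theorem~\ref{Thm:categorification} and matches the two notions of self-duality. On the topological side, the bar-involution of $\An$ is induced by Verdier duality on $\mathcal{P}_\beta$; on the algebraic side, the dual module $M^*$ was defined via the anti-involution $\psi$, and the comparison of the two dualities on the Ext-algebra $R(\beta)$ shows that they agree up to a grading shift by $-(\beta,\beta)/2$. Hence, after normalizing by the same shift that defines the upper crystal lattice $L^\up(\An)$ out of the lower one (see Lemma~\ref{lem:lowup}), self-duality of a simple $R(\beta)$-module $M$ translates precisely to bar-invariance of $\mathrm{ch}([M])$, and the set $\{\mathrm{ch}([M])\}_{M\text{ self-dual simple}}$ is a bar-invariant $\A$-basis of $\An_\A^\vee$ whose image in $L^\up(\An)/qL^\up(\An)$ is a $\Q$-basis. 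By the uniqueness of balanced triples, this basis must coincide with $\B^\up(\An)$.

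The main obstacle is the geometric computation underlying the identification $R(\beta)\simeq\End^\bullet(L_\beta)$: this is not formal and requires matching the KLR generators $e(\nu), x_k, \tau_m$ with explicit classes in equivariant (Borel--Moore) cohomology of the Steinberg-type variety $Z_\beta=L_\beta\times_{E_\nu} L_\beta$, along with a verification of the braid-type quadratic relations involving the polynomials $Q_{i,j}$. The characteristic-zero hypothesis on $\ko$ enters precisely here, through the use of the Decomposition Theorem to ensure the semisimplicity of $L_\beta$; once this geometric input is in place, the rest of the argument is the categorical and combinatorial matching outlined above.
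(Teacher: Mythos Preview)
The paper does not give its own proof of this theorem: it is stated with the citation \cite{R11,VV09} and used as a black box. Your proposal is therefore not to be compared against a proof in the paper, but is rather an outline of the argument in the cited references of Varagnolo--Vasserot and Rouquier.

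As such an outline, your sketch is broadly faithful to the geometric approach in \cite{VV09}: identify $R(\beta)$ with the $\Ext$-algebra of Lusztig's semisimple complex $L_\beta$ on the quiver representation variety, invoke Lusztig's theorem that the simple perverse constituents give the canonical basis, and then match Verdier duality with the algebraic duality $M\mapsto M^*$. A few points deserve more care if you want this to stand as a proof. First, the bijection between simple perverse summands of $L_\beta$ and simple graded $R(\beta)$-modules is not obtained by ``restricting to the hearts'' of a derived equivalence in the way you phrase it; rather one uses that $R(\beta)$, being the endomorphism algebra of a semisimple object, has a standard Morita-type description whose simple modules are indexed by the simple summands. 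Second, Lusztig's construction gives the \emph{lower} (canonical) basis of $U_q^-(\g)\simeq\An$, and the passage to the \emph{upper} global basis $\B^\up(\An)$ via the nondegenerate form needs to be tracked through the isomorphism $\mathrm{ch}$; your invocation of Lemma~\ref{lem:lowup} and the grading shift is in the right spirit but the precise normalization should be checked against how $\mathrm{ch}$ is defined in \cite{KL09}. Finally, the characteristic-zero hypothesis is indeed used for the Decomposition Theorem, as you say, but also implicitly to ensure that the field $\ko$ can serve as coefficients for the perverse sheaves in question.
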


\medskip
Let $z$ be an indeterminate  which is  homogeneous of degree $2$, and
let $\psi_z$ be the algebra homomorphism
\begin{align*}
&\psi_z\colon R( \beta )\to \ko[z]\tens R( \beta )
\end{align*}
given by
$$\psi_z(x_k)=x_k+z,\quad\psi_z(\tau_k)=\tau_k, \quad\psi_z(e(\nu))=e(\nu).$$

For an $R( \beta )$-module $M$, we denote by $M_z$ the
$\big(\ko[z]\tens R( \beta )\big)$-module $\ko[z]\tens M$ with the
action of $R( \beta )$ twisted by $\psi_z$.

For a non-zero $R(\beta)$-module $M$ and a non-zero $R(\gamma)$-module $N$,
\begin{eqnarray}&&\parbox{70ex}{%
let $s$ be the order of zero of $R_{M_z,N}\colon  M_z\conv
N\longrightarrow N\conv M_z$;
i.e., the largest non-negative integer such that the image of
$R_{M_z,N}$ is contained in $z^s N\conv M_z$,}
\label{def:s} \end{eqnarray}
where $R_{M,N}$ is the R-matrix from $M \conv N$ to $N \conv M$ constructed in \cite{KKK13}.

\begin{definition} For a non-zero $R(\beta)$-module $M$ and a non-zero $R(\gamma)$-module
$N$,
\begin{align*}
R^{{\rm ren}}_{M,N} \colon M_z\conv N\to N\conv M_z \quad \text{ and }\quad   \rmat{M,N}\colon M\conv N\to N\conv M
\end{align*}
by \begin{align*}
R^{{\rm ren}}_{M,N} = z^{-s}R_{M_z,N} \quad  \text{ and } \quad  \rmat{M,N} = \big( z^{-s}R_{M_z,N}\big)\vert_{z=0},
\end{align*}
where $s$ is the integer in \eqref{def:s}.
\end{definition}

For $M,N \in R \gmod$, $\La(M,N)$ denotes the homogeneous degree of the morphisms $R^{{\rm ren}}_{M_z,N}$ and $\rmat{M,N}$.
Hence we have a morphism in $R\gmod$:
$$q^{\La(M,N)}M\conv N\To N\conv M.$$

\medskip

We say that a simple $R$-module $M$ is {\em real} 
if $M \conv M$ is simple again.

\medskip

The following theorem is the main result of \cite{KKKO14}
\begin{theorem}[{\cite[Theorem 3.2]{KKKO14}}]
Let $M$ and $N$ be simple modules.
We assume that one of them is real. Then we have:
\bnum
\item $M \conv N$ has a simple head and a simple socle,
\item ${\rm Im}(\rmat{M \circ N})$ coincides with the head of $M \conv N$ and the socle of $N \conv M$ \ro up to grading shifts\rf.
\end{enumerate}
\end{theorem}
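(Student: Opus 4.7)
The plan is to show that the image $S \seteq \im(\rmat{M,N})\subset N\conv M$ is a simple module, and that this $S$ is simultaneously the unique simple quotient of $M\conv N$ and the unique simple submodule of $N\conv M$. Both parts (i) and (ii) then follow immediately: $\rmat{M,N}\colon M\conv N\to N\conv M$ is a nonzero morphism (its zeros in $z$ have been stripped off in the renormalization), so $\im(\rmat{M,N})$ is automatically a quotient of the source and a submodule of the target, and the uniqueness statements will force $S$ to coincide with $\hd(M\conv N)$ and $\soc(N\conv M)$, respectively, up to grading shifts.

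First I would exploit the hypothesis that (say) $M$ is real. Since $M\conv M$ is simple, every homogeneous endomorphism of $M\conv M$ of degree $0$ is a scalar, so once suitably normalized $\rmat{M,M}$ coincides with (a grading shift of) the identity. Combined with the hexagon-type relations for the renormalized R-matrix, this yields a factorization of $\rmat{M\conv M,\,N}$ as the composition of two successive applications of $\rmat{M,N}$ (with $\mathrm{id}_M$ in the free slot), and symmetrically a factorization of $\rmat{M,\,N\conv N'}$ through two copies of $\rmat{M,N}$ and $\rmat{M,N'}$. These factorizations are the essential algebraic input; they would be derived by starting with the corresponding identities for the spectral R-matrices $R_{M_z,N}$ coming from their construction in \cite{KKK13} and then tracking the orders of vanishing in $z$.

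Next I would establish the simplicity of $S$ and the uniqueness of the simple top/socle simultaneously. Suppose for contradiction that $S$ has a nonzero proper submodule $T$, or equivalently that $M\conv N$ admits two non-isomorphic simple quotients $L_1,L_2$. Applying the endofunctor $M\conv(-)$ and using the factorization $\rmat{M\conv M,\,N}=(\rmat{M,N}\conv\mathrm{id})\circ(\mathrm{id}\conv\rmat{M,N})$ from the previous step, one lifts this pathology to $M\conv M\conv N$. Because $\rmat{M,M}$ is a scalar, the composite map splits off a copy of $\rmat{M,N}$, and a diagram chase shows that the image must lie inside a single simple subquotient, contradicting the assumed multiplicity. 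A symmetric argument, applied to $M\conv N\conv M$ and using the factorization of $\rmat{M,\,N\conv M}$, produces the dual statement for the socle of $N\conv M$; the case in which $N$ rather than $M$ is real is entirely analogous.

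The main obstacle is the careful bookkeeping with the vanishing order $s$ in the definition of $\Rren_{M_z,N}$. To land on $\rmat{}$ rather than on a positive power of $z$ that specializes to zero after the hexagon composition, one must control how the integers $s$ add under the above factorizations, in particular show that no extra cancellation occurs. This is the technical heart of \cite[Theorem 3.2]{KKKO14} and relies crucially on the theory of affinizations and intertwiners developed in \cite{KKK13}; once this numerical control is in hand, the simplicity and uniqueness arguments sketched above become essentially formal.
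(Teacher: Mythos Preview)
The paper does not prove this theorem at all: it is quoted verbatim from \cite[Theorem 3.2]{KKKO14} and used as a black box, so there is no ``paper's own proof'' to compare against here.

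That said, your sketch is in the right spirit and close to the actual argument in \cite{KKKO14}. The key inputs you identify are correct: reality of $M$ forces $\rmat{M,M}$ to be (a grading shift of) the identity, and the multiplicativity relations $\rmat{M\conv M,N}=(\rmat{M,N}\conv M)\circ(M\conv\rmat{M,N})$ etc.\ are what make the proof work. Where your outline drifts from the published proof is the shape of the contradiction step. You phrase it as ``suppose $S$ has a nonzero proper submodule, or equivalently $M\conv N$ admits two non-isomorphic simple quotients''; these are not equivalent, and neither is quite what is argued. The actual proof is more direct: one shows that for \emph{every} nonzero submodule $X\subset N\conv M$, the image $\im(\rmat{M,N})$ is already contained in $X$. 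Concretely, from $M\conv X\subset M\conv N\conv M$ one uses the factorization of $\rmat{M,N\conv M}$ together with $\rmat{M,M}=\mathrm{id}$ to force $\im(\rmat{M,N})\conv M\subset X\conv M$, hence $\im(\rmat{M,N})\subset X$. This immediately gives that $\im(\rmat{M,N})$ is the socle of $N\conv M$; the dual argument with $(-)\conv M$ gives the head statement. No contradiction or ``two simple quotients'' set-up is needed.

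Your remark about controlling the vanishing order $s$ so that the hexagon composite does not collapse to zero is well placed; this is indeed handled in \cite{KKKO14} (and the necessary additivity of $\Lambda$ is what underlies \cite[Lemma 2.5]{KKKO15} cited here). So the technical worry you flag is real, but already resolved in the cited sources.
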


We also have the following
\Prop\label{prop:rmat}
 Let $M$ and $N$ be simple modules.
We assume that one of them is real.
Then we have
$$\Hom_{R\smod}(M\conv N,N\conv M)=\cor\,\rmat{M,N}.$$
\enprop
\Proof
 Since the other case can be proved similarly, we assume that $M$ is real.
Let $f\cl M\conv N\to N\conv M$ be a morphism.
Then we have a commutative diagram (up to a constant multiple)

$$\xymatrix@C=10ex{
M\conv M\conv N\ar[r]^{M\circ \rmat{M,N}}\ar[d]_{M\circ f}
&M\conv N\conv M\ar[d]_{f\circ M}\\
M\conv N\conv M\ar[r]^{\rmat{M,N}\circ M}
&N\conv M\conv M
}$$
Note that $\rmat{M,\,M\circ N}=M\conv \rmat{M,N}$
and $\rmat{M,\,N\circ M}=\rmat{M,N}\conv M$.
Hence we have
$$M\conv {\rm Im}(\rmat{M,N})\subset f^{-1}\bl {\rm Im}(\rmat{M,N})\br\conv M.$$
Hence there exists a submodule $K$ of $N$ such that
${\rm Im}(\rmat{M,N})\subset K\conv M$ and
$M\conv K\subset f^{-1}\bl{\rm Im}(\rmat{M,N})\br$.
Since $K\not=0$, we have $K=N$.
Hence
$f(M\conv N)\subset {\rm Im}(\rmat{M,N})$, which means that
$f$ factors as
$M\conv N\to \soc(N\conv M)\monoto N\conv M$.
It remains to remark that
$\Hom_{R\smod}\bl M\conv N,\soc(N\conv M)\br=\cor\,\rmat{M,N}$.
\QED

For simple modules $M$ $N$, let us denote by $M\hconv N$
the head of $M\conv N$.

\subsection{Properties of $\tLa(M,N)$ and $\de(M,N)$}

\begin{lemma} [{\cite[Lemma 2.9]{KKKO15}}]
Let $M$ and $N$ be non-zero modules.
Then we have
\begin{enumerate}
\item[{\rm (1)}] $\Lambda(M,N)+\Lambda(N,M) \in 2 \Z_{\ge 0}.$
\item[{\rm (2)}] Setting $\Lambda(M,N)+\Lambda(N,M) =2m$
with $m \in \Z_{\ge 0}$, we have
$$
R^{{\rm ren}}_{M_z,N} \circ R^{{\rm ren}}_{N,M_z}=z^m {\rm id}_{N \conv M_z} \quad
\text{and} \quad
R^{{\rm ren}}_{N,M_z} \circ R^{{\rm ren}}_{M_z,N}=z^m {\rm id}_{M_z \conv N}
$$  up to constant multiples.
\end{enumerate}
\end{lemma}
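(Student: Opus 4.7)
The plan is to reduce both statements to a single algebraic identity: the composition $R_{N,M_z}\circ R_{M_z,N}$ is a scalar polynomial in $z$ times $\id_{M_z\conv N}$. Once this is in hand, the two claims are a degree count.

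The first step is to establish
\begin{equation*}
R_{N,M_z}\circ R_{M_z,N}=a(z)\,\id_{M_z\conv N}\qquad\text{for some }a(z)\in\ko[z].
\end{equation*}
This is the ``double-braiding is central'' property of KLR $R$-matrices. It follows from the construction of $R_{M,N}$ via the intertwining elements of $R(\beta+\gamma)$ associated to the longest shuffle permutation between the two tensor factors, together with the KLR quadratic and braid relations: traversing the braid once forward and once backward rewrites the composition as a central element of the polynomial subalgebra generated by the $x_i$'s, which in $\End_{\ko[z]\tens R(\beta+\gamma)}(M_z\conv N)$ descends to an element of $\ko[z]\cdot\id$ after identifying the action of the $x_i$ on the first factor with $x_i+z$.

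Next I carry out a degree count. By definition $R^{\mathrm{ren}}_{M_z,N}=z^{-s}R_{M_z,N}$ has degree $\La(M,N)$, and $R^{\mathrm{ren}}_{N,M_z}=z^{-s'}R_{N,M_z}$ has degree $\La(N,M)$, where $s'$ is the analogous order of vanishing of $R_{N,M_z}$. Therefore $R_{N,M_z}\circ R_{M_z,N}$ is homogeneous of degree $\La(M,N)+\La(N,M)+2(s+s')$. Since $\deg z=2$, the polynomial $a(z)$ must be a monomial $c\,z^{N}$ with $2N=\La(M,N)+\La(N,M)+2(s+s')$; in particular $\La(M,N)+\La(N,M)$ is an even integer. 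Dividing by $z^{s+s'}$ gives
\begin{equation*}
R^{\mathrm{ren}}_{N,M_z}\circ R^{\mathrm{ren}}_{M_z,N}= c\, z^{N-s-s'}\,\id_{M_z\conv N}.
\end{equation*}
Because the left-hand side is a well-defined morphism over $\ko[z]\tens R(\beta+\gamma)$, the exponent $N-s-s'$ must be a non-negative integer; hence $m\seteq(\La(M,N)+\La(N,M))/2\in\Z_{\ge 0}$, which is (1), and the identity above is (2) on $M_z\conv N$ provided $c\ne 0$.

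The non-vanishing of $c$ is the only remaining subtlety: I would argue that $R_{M_z,N}$ becomes an isomorphism after inverting $z$ (which is precisely what makes the order of vanishing $s$ a well-defined finite integer in the first place), and likewise for $R_{N,M_z}$; thus the composition is generically an isomorphism, so $a(z)\not\equiv 0$ and $c\ne 0$. The second identity $R^{\mathrm{ren}}_{M_z,N}\circ R^{\mathrm{ren}}_{N,M_z}=c\,z^{m}\,\id_{N\conv M_z}$ follows by the symmetric argument with the roles of $M$ and $N$ exchanged. The main obstacle is the first step — showing that the double composition actually lands in the one-dimensional subspace $\ko[z]\cdot\id$ — which is where the specific form of the KLR intertwiners and their interaction with the $z$-shift $\psi_z$ is essential; the degree and positivity arguments afterwards are automatic.
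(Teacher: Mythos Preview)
The present paper does not give a proof of this lemma; it is quoted verbatim from \cite[Lemma~2.9]{KKKO15}. So there is no ``paper's own proof'' here to compare your attempt against.

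On the merits of your argument: once the scalar identity
\[
R_{N,M_z}\circ R_{M_z,N}=a(z)\,\id_{M_z\conv N},\qquad a(z)\in\ko[z],
\]
is in hand, your degree count and non-negativity argument are correct and are exactly how one deduces (1) and (2). Your handling of the non-vanishing of the leading coefficient (via invertibility after inverting $z$) is also the right idea.

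The gap is in your justification of the scalar identity itself. You argue that the double braiding is ``a central element of the polynomial subalgebra generated by the $x_i$'s'' and that this ``descends to an element of $\ko[z]\cdot\id$ after identifying the action of the $x_i$ on the first factor with $x_i+z$''. This does not follow: even if the composition were literally given by a symmetric polynomial $P$ in the $x_a$'s, its action on $M_z\conv N$ involves operators of the form (nilpotent $+\,z$) on the $M$-side and nilpotent on the $N$-side; such an operator is \emph{not} a scalar in $\ko[z]$ in general, and $\End_{\ko[z]\otimes R(\beta+\gamma)}(M_z\conv N)$ is typically larger than $\ko[z]$. What is actually used in \cite{KKK13,KKKO15} is the explicit intertwiner calculus: the $R$-matrix is built from the elements $\varphi_w$, and one computes $R_{N,M_z}\circ R_{M_z,N}$ directly as a specific product of the polynomials $Q_{i,j}$ evaluated at the (shifted) generators. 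It is this explicit formula, together with the form \eqref{eq:Q} of $Q_{i,j}$ and nilpotency of the $x_k$ on $M$ and $N$, that forces the composition to act as a monomial in $z$ times the identity. Your sketch names the right phenomenon but stops short of the computation that makes it true.
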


By \cite[Lemma 2.5]{KKKO15} and the above lemma, we can define the following
integers:

\begin{definition}[\cite{KKKO15}]
Let  $M$ and $N$ be non-zero modules.
\eqn
&&\tLa(M,N)\seteq\big((\wt(M),\wt(N))+\La(M,N) \big)/2 \in \Z,\\
&&  \de(M,N) \seteq \big( \La(M,N)+\La(N,M) \big)/2 \in \Z_{\ge 0}.
\eneqn
\end{definition}

\begin{lemma} [\cite{KKKO15}] \label{lem: d=0}
Let $M$ and $N$ be simple modules. Assume that one of them is real. Then the following conditions are equivalent:
\begin{itemize}
\item[{\rm (i)}] $\de(M,N)=0$,
\item[{\rm (ii)}] $\rmat{M,N}$ and $\rmat{N,M}$ are inverse to each other up to a constant multiple,
\item[{\rm (iii)}] $M \conv N \simeq N \conv M$ up to a grading shift,
\item[{\rm (iv)}] $M \hconv N \simeq N \hconv M$ up to a grading shift,
\item[{\rm (v)}] $M \conv N$ is simple.
\end{itemize}
\end{lemma}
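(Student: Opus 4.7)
My plan is to establish the circle $\text{(i)} \Rightarrow \text{(ii)} \Rightarrow \text{(iii)} \Rightarrow \text{(v)} \Rightarrow \text{(i)}$ and treat $\text{(iii)} \Leftrightarrow \text{(iv)}$ separately. The linchpin is the defining identity for renormalized R-matrices, $\Rren_{N, M_z} \circ \Rren_{M_z, N} = z^m \cdot \id_{M_z \conv N}$ up to a nonzero scalar, where $2m = \La(M, N) + \La(N, M) = 2\de(M, N)$. Specializing at $z = 0$ yields $\rmat{N, M} \circ \rmat{M, N} = 0$ when $m > 0$ and a nonzero scalar multiple of $\id_{M \conv N}$ when $m = 0$, which immediately gives $\text{(i)} \Leftrightarrow \text{(ii)}$.

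For $\text{(ii)} \Rightarrow \text{(iii)}$, an invertible $\rmat{M, N}$ is the required isomorphism up to a grading shift. For $\text{(iii)} \Rightarrow \text{(ii)}$, since Proposition~\ref{prop:rmat} gives that $\Hom_{R\gmod}(M \conv N, N \conv M) = \cor \cdot \rmat{M, N}$ is one-dimensional (in the appropriate grading), any isomorphism between these modules must be a scalar multiple of $\rmat{M, N}$, forcing invertibility; applying Proposition~\ref{prop:rmat} in the opposite direction identifies the inverse with a scalar multiple of $\rmat{N, M}$. For $\text{(iii)} \Rightarrow \text{(v)}$, the invertibility of $\rmat{M, N}$ forces its image, which equals $\hd(M \conv N)$ viewed as a quotient of $M \conv N$, to exhaust $N \conv M$; hence the quotient $M \conv N \epito \hd(M \conv N)$ is an isomorphism, so $M \conv N$ is simple. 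For $\text{(v)} \Rightarrow \text{(i)}$, if $M \conv N$ is simple then $\rmat{M, N}$ is injective, and since $\dim_\cor(M \conv N) = \dim_\cor(N \conv M)$ the image exhausts $\soc(N \conv M) = N \conv M$, so $\rmat{M, N}$ is an isomorphism. By the symmetric argument $\rmat{N, M}$ is also an isomorphism, so their composition is a nonzero endomorphism, and the $z^m$-identity forces $m = 0$.

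The implication $\text{(iii)} \Rightarrow \text{(iv)}$ is immediate upon taking heads. The converse $\text{(iv)} \Rightarrow \text{(v)}$ is the main obstacle. Under (iv), both $\hd(M \conv N) = M \hconv N$ and $\soc(M \conv N) \simeq N \hconv M$ (the latter is the image of $\rmat{N, M}$, by Theorem~3.2 of \cite{KKKO14}) are isomorphic up to grading shift to a common simple module $S$. I would then examine the natural composition $\soc(M \conv N) \monoto M \conv N \epito \hd(M \conv N)$; as a morphism between simples isomorphic up to shift, it is either an isomorphism or zero. If an isomorphism, then $\soc$ splits off as a direct summand of $M \conv N$ complementary to $\rad(M \conv N)$, whose remaining socle is then trivial, forcing $\rad(M \conv N) = 0$ and $M \conv N$ to be simple. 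To rule out the zero case, in which $S$ would appear with multiplicity at least two in the Jordan--H\"older series of $M \conv N$, one invokes the fact that the ungraded composition multiplicity $[M \conv N : M \hconv N]$ equals one; this is a consequence of the categorification isomorphism $K(R\gmod) \simeq A_q(\n)_\A$ of Theorem~\ref{Thm:categorification} combined with the monomial form of the leading coefficient of $[M][N]$ at $[M \hconv N]$ in the upper global basis expansion, a standard positivity feature of the dual canonical basis.
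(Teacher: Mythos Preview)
The paper does not prove this lemma; it is quoted from \cite{KKKO15} without argument, so there is no paper proof to compare against directly. Your chain $(\mathrm{i})\Leftrightarrow(\mathrm{ii})\Rightarrow(\mathrm{iii})\Rightarrow(\mathrm{v})\Rightarrow(\mathrm{i})$ is correct and uses exactly the right tools---the identity $\Rren_{N,M_z}\circ\Rren_{M_z,N}=z^{\de(M,N)}\id$ up to scalar, Proposition~\ref{prop:rmat}, and a dimension count---as is the trivial $(\mathrm{iii})\Rightarrow(\mathrm{iv})$.

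The gap is in $(\mathrm{iv})\Rightarrow(\mathrm{v})$. You dispose of the zero case of your socle--head dichotomy by asserting that the ungraded multiplicity $[M\conv N:M\hconv N]$ equals $1$, justified as ``the monomial form of the leading coefficient \ldots\ a standard positivity feature of the dual canonical basis.'' This does not hold up. Positivity says the structure constants of the upper global basis lie in $\Z_{\ge0}[q^{\pm1}]$; it does not say that any particular coefficient is a single power of $q$. Moreover, translating positivity into a statement about composition multiplicities of $R$-modules requires that self-dual simples correspond to the upper global basis (Theorem~\ref{thm:categorification 2}), which needs $\mathrm{char}\,\ko=0$---a hypothesis this paper imposes only later, after the present lemma. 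Most importantly, the multiplicity-one claim is not an independent input here but essentially the point at issue: if $(\mathrm{iv})$ held together with $\de(M,N)=1$, then Proposition~\ref{Prop: l2} would force $M\conv N$ to have length two with both composition factors isomorphic (up to shift) to $M\hconv N\simeq N\hconv M$, hence multiplicity $2$; excluding this scenario is exactly $(\mathrm{iv})\Rightarrow(\mathrm{i})$. You should consult \cite{KKKO15} for the intended argument for this implication.
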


\begin{definition} Let $M$ and $N$ be simple modules.
\begin{itemize}
\item[{\rm (i)}] We say that $M$ and $N$ \emph{commute} if $\de(M,N)=0$.
\item[{\rm (ii)}] We say that $M$ and $N$ are \emph{simply linked} if $\de(M,N)=1$.
\end{itemize}
\end{definition}

\begin{definition}
Let $M_1,\ldots,M_m$ be simple modules. Assume that they commute with each other. We set
\eqn
&&M_1\sodot M_2\seteq q^{\tLa(M_1,M_2)}M_1\conv M_2,\\
&&\sodot_{1 \le k \le m}M_k
\seteq(\cdots(M_1\sodot M_2)\cdots)\sodot M_{m-1})\sodot M_m\\
&&\hs{20ex}\simeq q^{\sum_{1 \le i <j\le m} \tLa(M_i,M_j)}M_1 \conv \cdots \conv M_m.
\eneqn
\end{definition}
It is invariant by the permutations of $M_1,\ldots,M_m$.

\begin{lemma} [{\cite[Lemma 2.15]{KKKO15}}] \label{lem: tLa}
Let $M_1,\ldots,M_m$ be real simple modules commuting with each other. Then for any $\sigma \in \mathfrak{S}_m$, we have
$$\sodot_{1 \le k \le m}M_k \simeq \sodot_{1 \le k \le m}M_{\sigma(k)} \quad \text{ in } R \gmod.$$
Moreover, if $M_k$'s are self-dual, so is $\sodot_{1 \le k \le m}M_k$.
\end{lemma}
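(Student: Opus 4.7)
The plan is to reduce the symmetric-group statement to adjacent transpositions and then verify both claims by tracking grading shifts using the commuting hypothesis, which via Lemma \ref{lem: d=0} forces $\La(M_i,M_j)+\La(M_j,M_i)=0$.

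First, I would record the unwinding of the nested definition. By induction on $m$ using additivity $\La(M\conv N,L)=\La(M,L)+\La(N,L)$ and the analogous identity for weights, one obtains
\[
\sodot_{1\le k\le m}M_k\simeq q^{\sum_{1\le i<j\le m}\tLa(M_i,M_j)}\,M_1\conv\cdots\conv M_m,
\]
so the nesting on the left is unambiguous up to canonical isomorphism.

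For permutation invariance, since adjacent transpositions generate $\sym_m$ it suffices to treat $\sigma=(k,k+1)$. Outside the $k,k+1$ slot the formula is unchanged, so the claim reduces to the two-factor case: for commuting real simples $M,N$, we want $q^{\tLa(M,N)}M\conv N\simeq q^{\tLa(N,M)}N\conv M$. By Lemma \ref{lem: d=0}, $\rmat{M,N}\colon M\conv N\to N\conv M$ is an isomorphism of homogeneous degree $\La(M,N)$, i.e.\ $q^{\La(M,N)}M\conv N\simeq N\conv M$. Using $\de(M,N)=0$, so $\La(N,M)=-\La(M,N)$, a direct computation gives
\[
\tLa(M,N)-\tLa(N,M)=\tfrac12\bl\La(M,N)-\La(N,M)\br=\La(M,N),
\]
which is exactly the shift needed to turn the $R$-matrix into a degree-zero isomorphism $M\sodot N\isoto N\sodot M$.

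For self-duality, I would iterate \eqref{eq:dualconv}, giving
\[
(M_1\conv\cdots\conv M_m)^*\simeq q^{\sum_{i<j}(\wt(M_i),\wt(M_j))}\,M_m^*\conv\cdots\conv M_1^*.
\]
Assuming $M_i^*\simeq M_i$, pulling out the shift $q^{\sum_{i<j}\tLa(M_i,M_j)}$ yields
\[
\bl\sodot_{k}M_k\br^{\!*}\simeq q^{\sum_{i<j}\bl(\wt(M_i),\wt(M_j))-\tLa(M_i,M_j)\br}\,M_m\conv\cdots\conv M_1.
\]
On the other hand the first part gives $\sodot_{k}M_k\simeq\sodot_{k}M_{m-k+1}$, and unwinding the definition of the right-hand side produces $q^{\sum_{i<j}\tLa(M_j,M_i)}M_m\conv\cdots\conv M_1$. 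Self-duality therefore reduces to the identity
\[
\tLa(M_i,M_j)+\tLa(M_j,M_i)=(\wt(M_i),\wt(M_j))\qquad(i<j),
\]
which follows at once from the defining formula of $\tLa$ together with $\La(M_i,M_j)+\La(M_j,M_i)=0$.

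The argument is essentially bookkeeping rather than conceptual; the only place where genuine input is needed is Lemma \ref{lem: d=0} (equivalence of commuting with the $R$-matrix being an isomorphism) and the additivity of $\La$ and $\wt$ in each slot. The main thing to watch is that the shift in the definition of $\sodot$ is precisely the one making the cocycle-type identities above hold, so once the two scalar identities for $\tLa(M,N)\pm\tLa(N,M)$ are in hand, both assertions fall out by direct substitution.
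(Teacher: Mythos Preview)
Your argument is correct. The paper does not give its own proof of this lemma; it is simply quoted from \cite[Lemma 2.15]{KKKO15}. Your verification is the natural one and is essentially what one finds in that reference: reduce to adjacent transpositions, use Lemma~\ref{lem: d=0} to turn $\rmat{M,N}$ into a graded isomorphism, and check the two scalar identities $\tLa(M,N)-\tLa(N,M)=\La(M,N)$ and $\tLa(M,N)+\tLa(N,M)=(\wt(M),\wt(N))$, both of which follow from $\de(M,N)=0$.

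Two minor remarks. First, the unwinding formula $\sodot_{k}M_k\simeq q^{\sum_{i<j}\tLa(M_i,M_j)}M_1\conv\cdots\conv M_m$ is already part of the definition immediately preceding the lemma, so you need not rederive it (and in particular you can avoid invoking additivity of $\La$ in the first slot, which, while true, is not stated in this paper). Second, in the self-duality step remember that the grading-shift functor satisfies $(qM)^*\simeq q^{-1}M^*$, so the exponent $\sum_{i<j}\tLa(M_i,M_j)$ flips sign before you combine it with the exponent from \eqref{eq:dualconv}; your final identity is the right one, just make sure the sign is tracked explicitly.
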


\Lemma\label{lem:MN}
Let $\{M_i\}_{1\le i\le n}$ and $\{N_i\}_{1\le i\le n}$ be a pair of
commuting families of real simple modules.
We assume that
\bnam
 \item $\{M_i\hconv N_i\}_{1\le i\le n}$ is a commuting family of real simple
modules,
\item $M_i\hconv N_i$ commutes with $N_{j}$
for any $1\le i,j\le n$.
\ee
Then we have
$$(\conv_{1\le i\le n}M_i)\hconv (\conv_{1\le j\le n}N_j)
\simeq \conv_{1\le i\le n}(M_i\hconv N_i)\quad
\text{up to a grading shift}
$$
\enlemma
 Since the proof is elementary and similar to the proof of
\cite[Lemma 2.23]{KKKO15}, we omit it.

\begin{proposition} \label{prop:3simple}
Let $L$, $M$ and  $N$ be simple modules.
We assume that $L$ is real and
one of $M$ and $N$ is real.
\begin{enumerate}
\item[{\rm (i)}]
If $\La(L,M\hconv N)=\La(L,M)+\La(L,N)$,
then
$L\conv M\conv N$ has a simple head and
$N\conv M\conv L$ has a simple socle.
\item[{\rm (ii)}]
If $\La(M\hconv N,L)=\La(M,L)+\La(N,L)$,
then
$M\conv N\conv L$ has a simple head and
$L\conv N\conv M$ has a simple socle.
\item[{\rm (iii)}]
If $\de(L,M\hconv N)=\de(L,M)+\de(L,N)$,
then
$L\conv M\conv N$ and $M\conv N\conv L$  have a simple head, and
$N\conv M\conv L$ and $L\conv N\conv M$ have a simple socle.
\end{enumerate}
\end{proposition}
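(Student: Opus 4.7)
I will study the composite morphism
\[
\Phi \seteq (M\conv \rmat{L,N})\circ (\rmat{L,M}\conv N)\cl L\conv M\conv N\To M\conv N\conv L,
\]
of homogeneous degree $\La(L,M)+\La(L,N)$. Since one of $M,N$ is real, $M\conv N$ has a simple head, so there is a canonical surjection $p\cl M\conv N\epito M\hconv N$. Naturality of the r-matrix with respect to $p$ produces the commutative square
\[
\xymatrix@C=10ex{
L\conv M\conv N \ar[r]^{\Phi}\ar[d]_{L\conv p}
& M\conv N\conv L\ar[d]^{p\conv L}\\
L\conv (M\hconv N) \ar[r]^{\rmat{L,\,M\hconv N}}
& (M\hconv N)\conv L,
}
\]
and the hypothesis $\La(L,M\hconv N)=\La(L,M)+\La(L,N)$ forces both paths to carry the same homogeneous degree, so the square commutes strictly. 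By the main theorem of \cite{KKKO14} applied to $L$ (real) and $M\hconv N$ (simple), the image of $\rmat{L,\,M\hconv N}$ is the simple module $S\seteq L\hconv(M\hconv N)$, with kernel $\Rad\bl L\conv(M\hconv N)\br$.

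\textbf{Completion of (i) and proof of (ii).} The decisive step is to identify $\im(\Phi)$ with $S$, which simultaneously yields $S$ as a simple quotient of $L\conv M\conv N$ and as a simple submodule of $M\conv N\conv L$. Via the relation $\La(L,X)=(\wt(L),\wt(X))-2s_L(X)$, where $s_L(X)$ is the $z$-vanishing order of $R_{L_z,X}$, the hypothesis is equivalent to the equality of vanishing orders $s_L(M\conv N)=s_L(M\hconv N)=s_L(M)+s_L(N)$. Under this equality, $\Phi$ is the $z=0$ specialization $\rmat{L,M\conv N}$ of $R^{\mathrm{ren}}_{L_z,M\conv N}$, and the commutative square lifts to an equality of renormalized r-matrices $(p\conv L_z)\circ R^{\mathrm{ren}}_{L_z,M\conv N}=R^{\mathrm{ren}}_{L_z,M\hconv N}\circ(L_z\conv p)$ with no residual $z$-factor. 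A diagram-chase with kernels and images, together with the simplicity of $\im(\rmat{L,\,M\hconv N})=S$, then forces $\im(\Phi)\simeq S$ and identifies $\Ker(\Phi)$ with the radical of $L\conv M\conv N$. Hence $\hd(L\conv M\conv N)=S$. The simple socle of $N\conv M\conv L$ follows by applying $(-)^*$ via \eqref{eq:dualconv}. Part (ii) is the mirror argument using the composite $\Phi'\seteq(\rmat{M,L}\conv N)\circ(M\conv \rmat{N,L})\cl M\conv N\conv L\To L\conv M\conv N$ and the analogous square with $p\conv L$ on the source.

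\textbf{Part (iii) and main obstacle.} The same $z$-order analysis yields the general subadditivity inequalities $\La(L,M\hconv N)\le\La(L,M)+\La(L,N)$ and $\La(M\hconv N,L)\le\La(M,L)+\La(N,L)$. Summing gives $2\de(L,M\hconv N)\le 2\bl\de(L,M)+\de(L,N)\br$, and the hypothesis of (iii) is precisely the equality case, forcing each summand to be an equality; thus (i) and (ii) both apply. The principal obstacle of the whole argument is the identification $\im(\Phi)\simeq S$: the commutative square supplies only the surjection $(p\conv L)|_{\im(\Phi)}\cl\im(\Phi)\epito S$, and ruling out additional composition factors in $\im(\Phi)$ rests on the tight matching of $z$-vanishing orders provided by the hypothesis, which guarantees that $\Phi$ coincides with the genuine renormalized r-matrix $\rmat{L,M\conv N}$ rather than some strictly more degenerate specialization.
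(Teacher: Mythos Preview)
Your overall approach matches the paper's: set up a commutative diagram relating $\rmat{L,M\conv N}$ to $\rmat{L,M\hconv N}$ via the projection $p\cl M\conv N\epito M\hconv N$, and use the hypothesis on $\La$ to match homogeneous degrees so that the diagram commutes on the nose. You are also correct that the hypothesis forces the equality of $z$-vanishing orders and hence $\Phi=\rmat{L,M\conv N}$, and your reduction of (iii) to (i) and (ii) via the subadditivity inequalities is identical to the paper's.

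The gap lies exactly where you flag the ``principal obstacle,'' and your proposed resolution does not close it. Knowing that $\Phi=\rmat{L,M\conv N}$ does \emph{not} force $\im(\Phi)\simeq S$: when $X$ is not simple there is no general principle guaranteeing that $\rmat{L,X}$ has simple image, so the claim that the $z$-order matching ``rules out additional composition factors in $\im(\Phi)$'' is an assertion rather than an argument---your square yields only a surjection $\im(\Phi)\epito S$. There is a second, unflagged gap: even granting $\im(\Phi)\simeq S$, you would have exhibited \emph{one} maximal submodule $\Ker(\Phi)$ of $L\conv M\conv N$, not shown it equals the radical; that gives a simple quotient, not a simple head. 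The paper extends your square to a three-row diagram by adjoining the inclusion $M\hconv N\monoto N\conv M$ and the row $\rmat{L,N\conv M}$, and then appeals to \cite[Proposition~2.1, Proposition~2.2]{KKKO15} to pass from the diagram to the simple-head and simple-socle conclusions; those propositions are precisely the missing structural input in your argument.
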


\begin{proof}
(i)\ Denote $k=\La(L,M\hconv N)=\La(L,M)+\La(M,N)$ and $m=\La( M,N)$. Then the diagram
$$\xymatrix@C=12ex@R=3ex
{{L\conv M\conv N}\ar[r]^{\rmat{L,M\circ N}}\ar@{->>}[d]
& q^{-k}{M\conv N\conv L}\ar@{->>}[d]
\\
\wb{L\conv (M\hconv N)}\ar[r]^{\rmat{L,M\shconv N}}\ar@{>->}[d]
&\wb{q^{-k}(M\hconv N)\conv L}\ar@{>->}[d]\\
q^{-m}L\conv N\conv M\ar[r]^{\rmat{L,N\circ M}}&q^{-k-m} N\conv M\conv L
}
$$
commutes. Hence \cite[Proposition 2.1, Proposition 2.2]{KKKO15}
implies that $L\conv M\conv N$ has a simple head and $N\conv M\conv L$ has a simple socle.
(ii) are proved similarly.

\medskip\noindent
(iii)\ If $\de(L,M\hconv N)=\de(L,M)+\de(L,N)$,
the we have $\La(L,M\hconv N)=\La(L,M)+\La(L,N)$
and $\La(M\hconv N,L)=\La(M,L)+\La(N,L)$. Thus the third assertion follows from the first and second assertion.
\end{proof}

\Prop\label{Prop: l2}
Let $M$ and $N$ be simple modules.
Assume that one of them is real and $\de(M,N)=1$.
Then we have an exact sequence
\eqn
&&0\to q^{\La(N,M)}N\hconv M\to M\conv N\to M\hconv N\to 0.
\eneqn
In particular, $M\conv N$ has length $2$.
\enprop
\Proof In the course of the proof, we ignore the grading.

Set $X=M_z\circ N$
and $Y=N\circ M_z$.
By $\Rm_{N,M_z}\cl Y\monoto X$ let us regard $Y$ as a submodule of $X$.
By the condition, we have
$\Rm_{N,M_z}\circ \Rm_{M_z,N}=z\id_X$ up to a constant multiple,
and hence we have
$$zX\subset Y\subset X.$$

We have an exact sequence
$$ 0\To \dfrac{Y}{zX}\To\dfrac{X}{zX}\To\dfrac{X}{Y}\To0.$$
Since
$$M\conv N\simeq\dfrac{X}{zX}\epito \dfrac{X}{Y}\monoto \dfrac{z^{-1}Y}{Y}
\simeq N\conv M,$$
we have $\dfrac{X}{Y}\simeq M\hconv N$.
Similarly
$$N\conv M\simeq\dfrac{Y}{zY}\epito \dfrac{Y}{zX}\monoto \dfrac{X}{zX}
\simeq M\conv N$$
implies that
$\dfrac{Y}{zX}\simeq N\hconv M$.
\QED

\Lemma\label{lem:short}
Let $M$ and $N$ be simple modules.
Assume that one of them is real.
If the equation
$$[M][N]=q^m[X]+q^n[Y]$$
holds in $K(R\gmod)$ for self-dual simple modules $X$, $Y$ and
integers $m$, $n$ such that $m\ge n$,
then we have
\bnum
\item $\de(M,N)=m-n>0$,
\item there exists an exact sequence
$$0\To q^mX\To M\conv N\To q^nY\To0,$$
\item
 $ q^mX$ is a socle of $M\conv N$ and $q^nY$ is
a
head of
$M\conv N$.
\ee
\enlemma
\Proof
First note that $\de(M,N)>0$ since $M\conv N$ is not simple.
By the assumption
there exists either an exact sequence
$$0\To q^mX\To M\conv N\To q^nY\To0,$$
or
$$0\To q^nY\To M\conv N\To q^mX\To0.$$
The last sequence cannot exist by \cite[Corollary 2.24]{KKKO15}
because $\de(M,N)=n-m\le0$.
Hence the first sequence exists, and the assertions follow from
loc.cit.
\QED

\subsection{Chevalley and Kashiwara operators}
 Let us recall the definition of several functors used to categorify $\Um$ by using KLR algebras.

\begin{definition} Let $\beta\in\rtl^+$.
\bnum
\item For $i \in I$ and $1 \le a \le |\beta|$, set
$$ e_a(i)=\sum_{\nu \in I^\beta,\nu_a=i}e(\nu)\in R(\beta).$$
\item We take conventions:
\begin{align*}
&E_iM=e_1(i)M, \allowdisplaybreaks\\
&E^*_iM=e_{|\beta|}(i)M,
\end{align*}
which are functors from $R(\beta) \gmod$ to $R(\beta-\al_i) \gmod$.
\item For a simple module $M$, we set
\begin{align*}
&\eps_i(M)=\max\set{n\in\Z_{\ge0}}{E_i^nM\not=0},\\
&\eps^*_i(M)=\max\set{n\in\Z_{\ge0}}{E_i^{*\;n}M\not=0},\\
& \widetilde{F}_iM = q^{\ve_i(M)} L(i) \hconv M, \allowdisplaybreaks\\
& \widetilde{F}^*_iM = q^{\ve^*_i(M)} M \hconv L(i), \allowdisplaybreaks\\
& \widetilde{E}_iM = q^{1-\ve_i(M)} \soc(E_iM) \simeq q^{\ve_i(M)-1} \hd(E_iM), \allowdisplaybreaks\\
& \tE^*_iM = q^{1-\ve^*_i(M)} \soc(E^*_iM) \simeq q^{\ve^*_i(M)-1}
\hd(E_i^*M),\\
&\tE_i^{\,\max} M=\tE_i^{\eps_i(M)}M\quad\text{and}\quad
\tE_i^{*\;\max} M=\tE_i^{*\;\eps^*_i(M)}M.
\end{align*}
\item For $i \in I$ and $n \in \Z_{\ge 0}$, we set
$$L(i^n)=q^{n(n-1)/2} \underbrace{L(i)\conv \cdots \conv L(i)}_n.$$
Here $L(i)$ denotes the $R(\al_i)$-module $R(\al_i)/R(\al_i)x_1$.
Then  $L(i^n)$ is a self-dual real simple  $R(n\alpha_i)$-module.
\ee
\end{definition}
In the course of the following propositions, we use the following notations.

\begin{align} \label{eq: overQ}
\overline{Q}_{i,j}(x_a,x_{a+1},x_{a+2}) \seteq
\dfrac{Q_{i,j}(x_a,x_{a+1})-Q_{i,j}(x_{a+2},x_{a+1})}{x_a-x_{a+2}}.
\end{align}
Then we have
$$\tau_{a+1}\tau_a\tau_{a+1}-\tau_{a}\tau_{a+1}\tau_{a}
=\sum_{i,j\in I}\overline{Q}_{i,j}(x_a,x_{a+1},x_{a+2})
e_a(i)e_{a+1}(j)e_{a+2}(i).$$
\begin{proposition} \label{prop: La i}
Let $\beta \in \rl^+$ with $n=|\beta|$. Assume that an $R(\beta)$-module $M$ satisfies $E_iM=0$. Then the morphism $R(\alpha_i) \tens M \Lto
q^{(\alpha_i,\beta)}M \conv R(\alpha_i)$ given by
\begin{align} \label{eq: pre-form}
e(i) \tens u \longmapsto \tau_1 \cdots \tau_n(u \tens e(i))
\end{align}
extends uniquely to an $(R(\alpha_i+\beta),R(\alpha_i))$-bilinear homomorphism
\begin{align} \label{eq: ext-form}
R(\alpha_i) \conv M \Lto q^{(\alpha_i,\beta)}M \conv R(\alpha_i)
\end{align}
\end{proposition}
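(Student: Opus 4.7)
The plan is to exploit Frobenius reciprocity. By definition,
\[
R(\alpha_i)\conv M = R(\alpha_i+\beta)\,e(\alpha_i,\beta)\otimes_{R(\alpha_i)\otimes R(\beta)}\bl R(\alpha_i)\otimes M\br,
\]
so specifying an $R(\alpha_i+\beta)$-linear, right $R(\alpha_i)$-linear map out of $R(\alpha_i)\conv M$ is equivalent to giving an $R(\alpha_i)\otimes R(\beta)$-balanced, right $R(\alpha_i)$-linear map $R(\alpha_i)\otimes M\to q^{(\alpha_i,\beta)}M\conv R(\alpha_i)$, where $R(\alpha_i)\otimes R(\beta)$ sits inside $R(\alpha_i+\beta)$ via the parabolic embedding placing $\alpha_i$ at position $1$ and $\beta$ at positions $2,\ldots,n+1$. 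Uniqueness of the extension is immediate from this adjunction, and existence reduces to checking the balancing property of $\Phi\colon e(i)\otimes u\mapsto \tau_1\cdots\tau_n(u\otimes e(i))$.

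The grading is correct: on the color component $e(\nu_1,\ldots,\nu_n,i)$ the element $\tau_1\cdots\tau_n$ swaps the color $i$ leftward past each $\alpha_{\nu_\ell}$, contributing total degree $-\sum_\ell(\alpha_i,\alpha_{\nu_\ell})=-(\alpha_i,\beta)$, as required by the shift $q^{(\alpha_i,\beta)}$ in the target. Right $R(\alpha_i)$-linearity is immediate, since the right action touches only the rightmost factor of $M\conv R(\alpha_i)$ and commutes with left multiplication by $\tau_1\cdots\tau_n$. Compatibility with the $R(\alpha_i)$-factor on the left and with the generators $e(\nu)$ and $x_\ell\in R(\beta)$ is standard and reduces to pushing polynomials through $\tau_1\cdots\tau_n$ using $\tau_\ell x_{\ell+1}=x_\ell\tau_\ell+\delta_{\nu_\ell=\nu_{\ell+1}}e(\nu)$ and tracking color components.

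The essential step is compatibility with $\tau_k\in R(\beta)$, which corresponds to $\tau_{k+1}\in R(\alpha_i+\beta)$ under the embedding. The required identity on $u\otimes e(i)$ is $\tau_{k+1}\tau_1\cdots\tau_n\,(u\otimes e(i))=\tau_1\cdots\tau_n\,\tau_k\,(u\otimes e(i))$. The underlying identity $s_{k+1}s_1\cdots s_n=s_1\cdots s_n s_k$ in $S_{n+1}$ holds (a direct check using commutation and the braid relation); lifting it to $R(\alpha_i+\beta)$ via the braid-like relation
\[
\tau_{k+1}\tau_k\tau_{k+1}-\tau_k\tau_{k+1}\tau_k=\sum_{i',j'}\overline Q_{i',j'}(x_k,x_{k+1},x_{k+2})\,e_k(i')e_{k+1}(j')e_{k+2}(i'),
\]
and commuting $\tau_{k+1}$ past $\tau_\ell$ for $\ell\le k-1$ and $\tau_k$ past $\tau_\ell$ for $\ell\ge k+3$, the two sides differ by a single correction of the form $\tau_1\cdots\tau_{k-1}\cdot C_k\cdot\tau_{k+2}\cdots\tau_n\cdot(u\otimes e(i))$, where $C_k$ is the $\overline Q$-term.

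The main obstacle is to show this correction vanishes; this is where $E_iM=0$ enters. After $\tau_{k+2}\cdots\tau_n$ moves the rightmost $i$ to position $k+2$, the idempotent factor in $C_k$ forces the color at position $k$ to equal $i$ as well; otherwise the term is already zero. In that case the subsequent $\tau_1\cdots\tau_{k-1}$ transports this new $i$ leftward to position $1$, and commuting the intervening polynomial factors past the leftmost $\tau_1$ produces only $\delta_{\nu_\ell,\nu_{\ell+1}}$-corrections between pairs of colors that are necessarily distinct in our setting (because the neighbouring color forced by $C_k$ is $i$, while the $M$-side has no $i$ at position $1$). The net result is $\tau_1(u'\otimes e(i))=(\tau_1 u')\otimes e(i)$ for some $u'\in M$, and $\tau_1 u'\in e_1(i)M=E_iM=0$. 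The hard part is the careful combinatorial bookkeeping of color sequences through this cascade of $\tau$-commutations, and recognising that $E_iM=0$ is precisely the hypothesis needed to absorb every residual correction term.
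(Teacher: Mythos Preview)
Your approach via Frobenius reciprocity is the same framework the paper uses, and identifying the $\tau_k$-compatibility as the main step is correct. However, there are two gaps.

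First, you dismiss the compatibility with $x_\ell\in R(\beta)$ and with the left $R(\alpha_i)$-factor as ``standard,'' saying it reduces to pushing polynomials through $\tau_1\cdots\tau_n$ with $\delta$-correction terms and ``tracking color components.'' But these corrections do not vanish for free. When you commute $x_{a+1}$ past $\tau_a$ in $x_{a+1}\tau_1\cdots\tau_n(u\otimes e(i))$, the color at position $a+1$ after $\tau_{a+1}\cdots\tau_n$ has acted is $i$, so the correction term is $\tau_1\cdots\tau_{a-1}\,e_a(i)\,\tau_{a+1}\cdots\tau_n(u\otimes e(i))$. This vanishes only because $\tau_1\cdots\tau_{a-1}e_a(i)=e_1(i)\tau_1\cdots\tau_{a-1}$ and then $e_1(i)$ annihilates anything of the form $(\text{element of }M)\otimes e(i)$ by the hypothesis $E_iM=0$. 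The paper isolates exactly this vanishing as its step (i) and invokes it for the $x_\ell$-check, for the $\tau_k$-check, and for the right $R(\alpha_i)$-linearity (the last of which amounts to the identity $x_1\tau_1\cdots\tau_n(u\otimes e(i))=\tau_1\cdots\tau_n x_{n+1}(u\otimes e(i))$, again not automatic). So $E_iM=0$ is essential in all three places, not only in the $\tau_k$-check where you invoke it.

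Second, your $\tau_k$-argument is overcomplicated and slightly off. The correction term
\[
\tau_1\cdots\tau_{k-1}\,\overline Q_{i,j}(x_k,x_{k+1},x_{k+2})\,e_k(i)\,\tau_{k+2}\cdots\tau_n(u\otimes e(i))
\]
involves polynomials in $x_k,x_{k+1},x_{k+2}$, and these \emph{commute} with $\tau_1,\ldots,\tau_{k-1}$ since they act on disjoint positions. There are no further $\delta$-corrections to chase; one simply moves $e_k(i)$ leftward through $\tau_1\cdots\tau_{k-1}$ to obtain $e_1(i)$, and then $E_iM=0$ kills the term directly. Your description of ``$\delta$-corrections between pairs of colors that are necessarily distinct'' and the claim ``$\tau_1 u'\in e_1(i)M$'' do not reflect what actually happens.
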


\begin{proof} {\rm (i)} First note that, for $1 \le a \le n$,
\begin{equation} \label{eq: e_a(i) vanish}
\tau_1 \cdots \tau_{a-1}e_a(i)\tau_{a+1}\cdots\tau_n\big( u \tens e(i) \big) =
\tau_{a+1}\cdots\tau_n\big(e_1(i)\tau_1 \cdots \tau_{a-1}(u \tens e(i))  \big)=0
\end{equation}
since $E_iM =0$. \\
\noindent
{\rm (ii)} In order to see that \eqref{eq: ext-form} is a well-defined $R(\alpha_i+\beta)$-linear homomorphism, it is enough
to show that \eqref{eq: pre-form} is $R(\beta)$-linear. \\
\noindent
{\rm (a)} Commutation with $x_a\in R(\beta)$ $(1\le a \le n)$: We have
\begin{align*}
x_{a+1}\tau_1 \cdots \tau_n\big(u \tens e(i)\big) &=  \tau_1 \cdots \tau_{a-1}x_{a+1}\tau_{a} \cdots \tau_n\big(u \tens e(i)\big) \allowdisplaybreaks \\
 &=  \tau_1 \cdots \tau_{a-1}\big( \tau_{a} x_{a}+ e_a(i) \big) \tau_{a+1}\cdots \tau_n\big(u \tens e(i) \big) \allowdisplaybreaks \\
 &=  \tau_1 \cdots  \tau_n  x_{a}\big(u \tens e(i)\big)
\end{align*}
by \eqref{eq: e_a(i) vanish}.  \\
\noindent
{\rm (b)} Commutation with $\tau_a\in R(\beta)$ $(1\le a < n)$:
 Then we have
\begin{align*}
&\tau_{a+1}\tau_1 \cdots \tau_n\big(u \tens e(i)\big) \\
& \hspace{5ex} =  \tau_1 \cdots \tau_{a-1}(\tau_{a+1}\tau_{a}\tau_{a+1})\tau_{a+2} \cdots \tau_n\big(u \tens e(i)\big) \allowdisplaybreaks \\
& \hspace{5ex} =  \tau_1 \cdots \tau_{a-1}\big(\tau_{a}\tau_{a+1}\tau_{a}+\sum_{j}\overline{Q}_{i,j}(x_{a},x_{a+1},x_{a+2})e_a(i)e_{a+1}(j) \big)\tau_{a+2}\cdots \tau_n\big(u \tens e(i) \big) \allowdisplaybreaks \\
& \hspace{5ex} =  \tau_1 \cdots  \tau_n  \tau_{a}\big(u \tens e(i)\big) \allowdisplaybreaks \\
& \hspace{10ex} + \sum_{j}\tau_1 \cdots \tau_{a-1}\overline{Q}_{i,j}(x_{a},x_{a+1},x_{a+2})e_a(i)e_{a+1}(j)\tau_{a+2}\cdots \tau_n\big(u \tens e(i) \big).
\end{align*}
The last term vanishes because $E_iM=0$ implies that
\begin{align*}
& \tau_1 \cdots \tau_{a-1}f(x_{a},x_{a+1})g(x_{a+2})e_a(i)\tau_{a+2}\cdots \tau_n\big(u \tens e(i) \big) \\
& \hspace{18ex} = g(x_{a+2})\tau_{a+2}\cdots \tau_n e_1(i)\tau_1 \cdots \tau_{a-1}f(x_{a},x_{a+1})\big(u \tens e(i) \big)=0
\end{align*}
for any polynomial $f(x_{a},x_{a+1})$ and $g(x_{a+2})$. \\
\noindent
{\rm (iii)} Now let us show that \eqref{eq: ext-form} is right $R(\alpha_i)$-linear. By \eqref{eq: e_a(i) vanish}, we have
\begin{align*}
 \tau_1 \cdots \tau_{a-1} x_a \tau_a \cdots \tau_{n}\big(u \tens e(i) \big) & =
\tau_1 \cdots \tau_{a-1} \big(\tau_a x_{a+1}-e_{a}(i) \big)\tau_{a+1}\cdots \tau_{n} \big(u \tens e(i) \big) \\
& = \tau_1 \cdots \tau_{a} x_{a+1} \tau_{a+1} \cdots \tau_{n}\big(u \tens e(i) \big)
\end{align*}
for $1 \le a \le n$. Therefore we have
$$ x_{1}\tau_1 \cdots \tau_n\big(u \tens e(i)\big) = \tau_1 \cdots \tau_n x_{n+1}\big(u \tens e(i)\big)=\tau_1 \cdots \tau_n \big(u \tens e(i)x_{1}\big).$$
\end{proof}

For $m,n\in\Z_{\ge0}$,
let us denote by $w[{m,n}]$ the element of $\sym_{m+n}$  defined by
\eq
&&w[{m,n}](k)=\begin{cases}k+n&\text{if $1\le k\le m$,}\\
k-m&\text{if $m<k\le m+n$}.\end{cases}
\eneq
Set $\tau_{w[{m,n}]}:=\tau_{i_1} \cdots \tau_{i_r}$, where $s_{i_1} \cdots s_{i_r}$ is a reduced expression
of $w[{m,n}]$.
Note that $\tau_{w[{m,n}]}$ does not depend on the choice of reduced expression (\cite[Corollary 1.4.3]{KKK13}).

\begin{proposition} Let $M \in R(\beta) \gmod$  and $N \in R(\gamma) \gmod$, and set $m=|\beta|$ and
$n=|\gamma|$. If $E_iM=0$ for any $i \in \supp(\gamma)$, then
\begin{align} \label{eq: pre-form beta}
v \tens u \longmapsto \tau_{w[m,n]}(u \tens v)
\end{align}
gives a well-defined $R(\beta+\gamma)$-linear homomorphism $N \conv M \Lto q^{(\beta,\gamma)} M \conv N$.
\end{proposition}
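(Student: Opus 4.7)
The plan is to establish the proposition by induction on $n = |\gamma|$, bootstrapping from Proposition~\ref{prop: La i}. The base case $n = 1$ is immediate: then $\gamma = \alpha_i$ for the unique $i \in \supp(\gamma)$, the hypothesis $E_i M = 0$ is exactly what Proposition~\ref{prop: La i} requires, and the map constructed there is in fact a bimodule map for the natural right $R(\alpha_i)$-action on $R(\alpha_i) \conv M$, as its part~(iii) verifies. Tensoring with an arbitrary $N \in R(\alpha_i)\gmod$ over $R(\alpha_i)$ and using the natural identification $(R(\alpha_i) \conv M) \otimes_{R(\alpha_i)} N \simeq N \conv M$, one extends the map to all such $N$.

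For the inductive step I would pick $j \in \supp(\gamma)$ and write $\gamma = \gamma' + \alpha_j$ with $|\gamma'| = n - 1$. Since $\supp(\gamma') \subseteq \supp(\gamma)$, the induction hypothesis applies to the pair $(\beta, \gamma')$. The key combinatorial input is the length-additive factorization
\[
w[m, n] = (s_n s_{n+1} \cdots s_{m+n-1}) \cdot w[m, n-1]
\]
in $\sym_{m+n}$, where $w[m, n-1]$ is regarded as an element of $\sym_{m+n}$ fixing the last position. Because lengths add and $\tau_{w[m,n]}$ is independent of the choice of reduced expression, this yields the identity $\tau_{w[m, n]} = \tau_{s_n \cdots s_{m+n-1}} \cdot \tau_{w[m, n-1]}$ in $R(\beta + \gamma)$.

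By Frobenius reciprocity, an $R(\beta+\gamma)$-linear map $N \conv M \to M \conv N$ corresponds to an $R(\gamma) \otimes R(\beta)$-linear map $N \otimes M \to M \conv N$ landing in the idempotent summand where the first $n$ positions carry a $\gamma$-word and the last $m$ carry a $\beta$-word. I would first restrict the $R(\gamma)$-action on $N$ to the subalgebra $R(\gamma') \otimes R(\alpha_j) \subset R(\gamma)$ and construct the map at that level. Under this restriction the formula $v \otimes u \mapsto \tau_{w[m,n]}(u \otimes v)$ factors, via the decomposition above, as the composition of the map coming from the induction hypothesis applied to the $\gamma'$-part with the map coming from Proposition~\ref{prop: La i} applied to the $\alpha_j$-part. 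This yields $R(\gamma') \otimes R(\alpha_j) \otimes R(\beta)$-linearity for free.

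The main obstacle will be the final descent step: showing that the resulting map is in fact $R(\gamma) \otimes R(\beta)$-linear, i.e., that it respects the additional generators of $R(\gamma)$ beyond $R(\gamma') \otimes R(\alpha_j)$ (essentially $\tau_{n-1}$ and the polynomial relations linking the two halves). Commuting these generators past $\tau_{w[m,n]}$ through the braid and polynomial relations of the KLR algebra will produce correction terms containing factors of the form $e_a(i)$ acting on $M$ with $i \in \supp(\gamma)$. All such corrections must vanish by the hypothesis $E_i M = 0$, following exactly the pattern of the vanishing computation~\eqref{eq: e_a(i) vanish} in the proof of Proposition~\ref{prop: La i}, but with the single index $i$ there replaced by each of the indices in $\supp(\gamma)$.
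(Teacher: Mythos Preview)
Your proposal is correct and essentially coincides with the paper's argument: the paper also bootstraps from Proposition~\ref{prop: La i} to obtain the map $R(\gamma)\conv M\to M\conv R(\gamma)$, and then reduces the remaining $R(\gamma)$-linearity check (your ``descent'' step for $\tau_{n-1}$) to the case $n=2$, where it is verified by exactly the braid-relation computation you anticipate, with error terms killed by $E_jM=0$. The only difference is organizational---you run an explicit induction on $n$ while the paper phrases the reduction directly---but the content is identical.
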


\begin{proof} The proceeding proposition implies that
$$ v \tens u \longmapsto \tau_{w[m,n]}(u \tens v) \quad \text{ for } u \in M, v \in R(\gamma)$$
gives a well-defined $R(\beta+\gamma)$-linear homomorphism $R(\gamma) \conv M \to M \conv R(\gamma)$. Hence
it is enough to show that it is right $R(\gamma)$-linear. Since we know that it commutes with
the right multiplication of $x_k$, it is enough to show that it commutes with the right
multiplication of $\tau_k$. For this, we may assume that $n = 2$ and $k = 1$. Set $\gamma = \alpha_i +\alpha_j$.

Thus we have reduced the problem to  the equality 
\begin{align} \label{eq: reduced}
\tau_1(\tau_2\tau_1)\cdots(\tau_{m+1}\tau_m)\big( u \tens e(i) \tens e(j) \big)
= (\tau_2\tau_1)\cdots(\tau_{m+1}\tau_m) \tau_{m+1}\big( u \tens e(i) \tens e(j) \big)
\end{align}
for $u \in M$. It is a consequence of
\begin{equation} \label{eq: conse}
\begin{aligned}
& (\tau_2\tau_1)\cdots(\tau_{a}\tau_{a-1})\tau_a(\tau_{a+1}\tau_{a})\cdots(\tau_{m+1}\tau_m)\big( u \tens e(i) \tens e(j) \big) \allowdisplaybreaks \\
& \hs{5ex} = (\tau_2\tau_1)\cdots(\tau_{a+1}\tau_{a})
\tau_{a+1}(\tau_{a+2}\tau_{a+1})\cdots(\tau_{m+1}\tau_m)\big( u \tens e(i) \tens e(j) \big)
\end{aligned}
\end{equation}
for $1 \le a \le m$. Note that
\begin{align*}
& \tau_a(\tau_{a+1}\tau_{a})\cdots(\tau_{m+1}\tau_m)\big( u \tens e(i) \tens e(j) \big) \allowdisplaybreaks \\
& \hspace{10ex} = \tau_a(\tau_{a+1}\tau_{a})e_{a+1}(i)e_{a+2}(j)(\tau_{a+2}\tau_{a+1})\cdots(\tau_{m+1}\tau_m)\big( u \tens e(i) \tens e(j) \big)
\end{align*}
and
\begin{align*}
& \tau_a(\tau_{a+1}\tau_{a})e_{a+1}(i)e_{a+2}(j) \allowdisplaybreaks \\
& \hspace{10ex} = (\tau_{a+1}\tau_{a})\tau_{a+1} e_{a+1}(i)e_{a+2}(j)-\overline{Q}_{ji}(x_{a},x_{a+1},x_{a+2})e_{a}(j)e_{a+1}(i)e_{a+2}(j).
\end{align*}
Hence it is enough to show
\begin{align*}
& (\tau_2\tau_1)\cdots(\tau_{a}\tau_{a-1})\overline{Q}_{j,i}(x_{a},x_{a+1},x_{a+2})
e_a(j)
\allowdisplaybreaks\\
& \hspace{20ex}(\tau_{a+2}\tau_{a+1})\cdots (\tau_{m+1}\tau_{m})\big( u \tens e(i) \tens e(j) \big)=0.
\end{align*}
This follows from
\begin{align*}
&(\tau_2\tau_1)\cdots(\tau_{a}\tau_{a-1})f(x_a)g(x_{a+1},x_{a+2}) e_a(j)
(\tau_{a+2}\tau_{a+1})\cdots(\tau_{m+1}\tau_m)
\big( u\tens e(i)\tens e(j)\big)\allowdisplaybreaks\\
& \hspace{10ex}=(\tau_2\cdots\tau_a)
(\tau_1\cdots\tau_{a-1})f(x_a)g(x_{a+1},x_{a+2}) e_a(j) \allowdisplaybreaks\\
&\hspace{35ex}(\tau_{a+2}\tau_{a+1})\cdots(\tau_{m+1}\tau_m)
\big( u\tens e(i)\tens e(j)\big)\allowdisplaybreaks\\
&\hspace{10ex}=(\tau_2\cdots\tau_a)
g(x_{a+1},x_{a+2})(\tau_{a+2}\tau_{a+1})\cdots(\tau_{m+1}\tau_m)\allowdisplaybreaks\\
&\hspace{35ex}
e_1(j)(\tau_1\cdots\tau_{a-1})f(x_a)\big( u\tens e(i)\tens e(j)\big)\allowdisplaybreaks\\
&\hspace{10ex}=0
\end{align*}
for $1 \le a \le m$ and $f(x_a) \in \ko[x_{a}]$, $g(x_{a+1},x_{a+2}) \in \ko[x_{a+1},x_{a+2}]$.
\end{proof}

\Cor
Let $i\in I$ and $M$ a simple module.
Then we have
\eqn
&&\tLa\bl L(i),M\br=\eps_i(M),\\
&&\La\bl L(i),M\br=2\eps_i(M)+\lan h_i,\wt(M)\ran=\eps_i(M)+\vphi_i(M).\\
\eneqn
\encor
\Proof
Set $n=\eps_i(M)$ and $M_0=E_i^{(n)}(M)$.
Then the preceding proposition implies
$\La(L(i),M_0)=\bl \al_i,\wt(M_0)\br$.
Hence we have $\tLa(L(i),M_0)=0$, which implies
$$\tLa(L(i),M)=\tLa(L(i), L(i^n)\circ M_0)
=\tLa\bl L(i), L(i^n)\br+\tLa(L(i),M_0)=n.$$
\QED

Let $P(i^n)$ be a projective cover of
$L(i^n)$.
The functor
$$E_i^{(n)} : R(\beta) \Mod \to R(\beta - n \alpha_i) \Mod$$
is defined by
$$E_i^{(n)}(M) := P(i^n)^\psi \tens_{R(n\al_i)}E_i^nM, $$
where
$P(i^n)^\psi $ denotes the right $R(n\al_i)$-module obtained from the left
$R(\beta)$-module $P(i^n)$  via the anti-automorphism $\psi$.
Similarly we define $E_i^{*\,(n)}$.
We have $$E_i^n\simeq [n]! E_i^{(n)}.$$

\begin{proposition}\label{prop: Econv} Let $M$ and $N$ be modules,
and $m,n \in \Z_{\ge 0}$,
\begin{enumerate}
\item[{\rm (i)}] If $E_i^{m+1}M=0$ and $E_i^{n+1}N=0$, then we have
\begin{align} \label{eq: shuffle 1}
E_i^{(m+n)} (M \conv N) \simeq q^{mn+n \lan h_i, \wt(M) \ran}
 E_i^{(m)} M \conv  E_i^{(n)}N.
\end{align}
\item[{\rm (ii)}] If $E_i^{*\,m+1}M=0$ and $E_i^{*\,n+1}N=0$, then we have
\begin{align} \label{eq: shuffle 2}
E_i^{*\,(m+n)} (M \conv N) \simeq
q^{mn+m \lan h_i, \wt(N) \ran} E_i^{*\,(m)} M \conv E_i^{*\,(n)} N.
\end{align}
\end{enumerate}
\end{proposition}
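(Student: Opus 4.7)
The plan is to prove (i) by directly analysing how the idempotent $e_1(i^{m+n})$ acts on the convolution product $M \conv N$; the proof of (ii) is entirely parallel, using the right idempotent $e_{|\beta|}(i^{m+n})$ in place of $e_1(i^{m+n})$ and the right-end analogue of the proposition immediately preceding Proposition~\ref{prop: Econv}, with the roles of $M$ and $N$ (and of $m$ and $n$) swapped throughout.

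For (i), I would realise $M \conv N$ as an induced module from $R(-\wt M) \tens R(-\wt N)$ to $R(-\wt M-\wt N)$, and observe that every element of $e_1(i^{m+n})(M \conv N)$ is a sum of shuffle terms $\tau_w(u \tens v)$ with $u \in M$, $v \in N$, such that the first $m+n$ positions of the resulting residue sequence are coloured $i$. The hypothesis $E_i^{m+1}M = 0$ bounds by $m$ the number of leading $i$-strands any such $u$ can contribute, and likewise $E_i^{n+1}N = 0$ bounds by $n$ the contribution from any $v$; since the total must equal $m+n$, the only non-vanishing shuffle is the one pulling exactly $m$ leading $i$-strands from $M$ and $n$ from $N$.

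This analysis yields a natural morphism
$$\Phi\colon E_i^{(m)}M \conv E_i^{(n)}N \longrightarrow q^{mn + n\langle h_i, \wt(M)\rangle}\, E_i^{(m+n)}(M \conv N),$$
built from the intertwiner of the preceding proposition (which applies because $E_i(E_i^{(m)}M) = 0$, allowing the $n$ $i$-strands of $E_i^{(n)}N$ to be threaded past the non-$i$ remainder of $E_i^{(m)}M$), together with the identification between $P(i^m) \conv P(i^n)$ and the appropriate multiple of $P(i^{m+n})$ inside the nilHecke algebra $R((m+n)\alpha_i)$. To verify that $\Phi$ is an isomorphism, I would compare Grothendieck classes via the iterated $q$-boson identity
$$e_i^{(k)}(xy) = \sum_{a+b=k} q^{b\langle h_i, \wt(x)\rangle - ab}(e_i^{(a)}x)(e_i^{(b)}y),$$
which, under the vanishing hypothesis, collapses to the single term $(a,b)=(m,n)$ when $k=m+n$; combined with the non-vanishing of $\Phi$, this forces $\Phi$ to be an isomorphism.

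The main obstacle is the careful bookkeeping of the grading shifts: the shuffle intertwiner $\tau_{w[\cdot,n]}$ contributes a degree of order $q^{n(\langle h_i, \wt(M)\rangle - 2m)}$, and the normalisations that relate $E_i^{m+n}$ to $E_i^{(m+n)}$ (through $P(i^{m+n})^\psi$ and the $q^{k(k-1)/2}$ shift in the definition of $L(i^k)$) contribute further shifts, which together must combine to exactly $q^{mn + n\langle h_i, \wt(M)\rangle}$. A secondary subtlety is ensuring that $\Phi$ lands in the correct divided-power piece rather than in a larger filtration step; this is taken care of by the Grothendieck-class computation, which leaves no room for additional contributions.
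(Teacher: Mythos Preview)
Your proposal is correct and is essentially the same approach as the paper's: the paper simply cites the shuffle lemma \cite[Lemma 2.20]{KL09}, which gives precisely the Mackey-type filtration of $\Res_{\beta_1',\beta_2'}\Ind_{\beta_1,\beta_2}(M\boxtimes N)$ that you are unpacking by hand, and under the vanishing hypotheses all but one subquotient of that filtration dies, leaving exactly the term with the grading shift you compute. In other words, you have rederived the relevant special case of the shuffle lemma rather than invoking it.
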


\begin{proof}
It follows from the shuffle lemma (\cite[Lemma 2.20]{KL09}).
\end{proof}

The following corollary is an immediate consequence of Proposition~\ref{prop: Econv}.
\Cor\label{cor:Ereal} Let $i\in I$ and let $M$ be a real simple module.
Then $\tE_i^{\,\max}M$ is also real simple.
\encor

\begin{proposition} \label{prop: acted on left only}
Let $M$ and $N$ be simple modules.
 We assume that one of them is real.
Assume further that $\ve_i(M \hconv N)=\ve_i(M)$. 
Then we have an isomorphism in $R\gmod$.
$$ \widetilde{E}^{\,\max}_i (M \hconv N) \simeq (\widetilde{E}^{\,\max}_i M) \hconv N.$$
\end{proposition}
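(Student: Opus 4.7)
The plan is to reduce the statement, via the categorification of $\An$ by $R\gmod$ (Theorems~\ref{Thm:categorification} and~\ref{thm:categorification 2}), to the crystal-theoretic identity
\[
\te_i^{\,\max}(b_M\otimes b_N)=(\te_i^{\,\max}b_M)\otimes b_N
\]
in $B(\infty)\otimes B(\infty)$, where $b_X\in B(\infty)\simeq B(\An)$ denotes the crystal index of a self-dual simple module $X$. Both $\tE_i^{\,\max}(M\hconv N)$ and $(\tE_i^{\,\max}M)\hconv N$ are self-dual simple (the second because $\tE_i^{\,\max}M$ is real by Corollary~\ref{cor:Ereal}, so one of $\tE_i^{\,\max}M, N$ is real and the $\hconv$ is well-defined). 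Under the module-to-crystal dictionary, the class of $M\hconv N$ corresponds up to a power of $q$ to $b_M\otimes b_N$ via Kashiwara's embedding $B(\infty)\hookrightarrow B(\infty)\otimes B(\infty)$, and module-theoretic $\tE_i^{\,\max}$ matches crystal $\te_i^{\,\max}$; so the two simple modules are isomorphic up to a grading shift precisely when the crystal identity above holds.

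To prove the crystal identity, set $m=\ve_i(b_M)$. The hypothesis $\ve_i(M\hconv N)=\ve_i(M)$ translates through the tensor-product rule for crystals to $\vphi_i(b_M)\ge\ve_i(b_N)$. For each $0\le k\le m-1$, since
\[
\vphi_i(\te_i^{\,k}b_M)=\vphi_i(b_M)+k\ge\ve_i(b_N),
\]
the tensor-product rule yields $\te_i(\te_i^{\,k}b_M\otimes b_N)=(\te_i^{\,k+1}b_M)\otimes b_N$. Iterating up to $k=m$ gives
\[
\te_i^{\,m}(b_M\otimes b_N)=(\te_i^{\,m}b_M)\otimes b_N=(\te_i^{\,\max}b_M)\otimes b_N,
\]
and a further application of $\te_i$ still targets the (now killed) left factor, so this element is annihilated by $\te_i$, confirming $\te_i^{\,\max}(b_M\otimes b_N)=(\te_i^{\,\max}b_M)\otimes b_N$.

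The main obstacle I foresee is making the module-to-crystal dictionary fully precise here, in particular identifying the crystal index of $M\hconv N$ as $b_M\otimes b_N$ and matching $\tE_i^{\,\max}$ with $\te_i^{\,\max}$. A more self-contained alternative would be to prove the one-step claim $\tE_i(M\hconv N)\simeq(\tE_i M)\hconv N$ (under $\ve_i(M\hconv N)=\ve_i(M)>0$) directly from the shuffle short exact sequence
\[
0\to q^{(\al_i,\wt(M))}M\conv E_iN\to E_i(M\conv N)\to E_iM\conv N\to 0,
\]
using that $\tE_i(M\hconv N)$ is a simple quotient of $E_i(M\conv N)$ and that such simple quotients are (up to grading) either $(\tE_i M)\hconv N$ or $M\hconv(\tE_i N)$; the two candidates are distinguished by their $\ve_i$ values ($m-1$ versus $m$ under the hypothesis), forcing the first, and induction on $m$ then yields the full statement.
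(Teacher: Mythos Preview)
Neither route matches the paper's argument, which is a short direct construction: with $n=\ve_i(M)$ and $M_0=\tE_i^{\,\max}M$, one exhibits a nonzero map $L(i^n)\otimes M_0\otimes N \to E_i^n(M\hconv N)\simeq L(i^n)\otimes \tE_i^{\,\max}(M\hconv N)$, hence a surjection $M_0\conv N\twoheadrightarrow\tE_i^{\,\max}(M\hconv N)$. Since one of $M_0$, $N$ is real by Corollary~\ref{cor:Ereal}, $M_0\conv N$ has simple head, and the result follows.

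Your crystal approach has precisely the gap you flag, and it is a genuine one: the assertion that the crystal element of $M\hconv N$ is $b_M\otimes b_N$ under a crystal embedding $B(\infty)\hookrightarrow B(\infty)\otimes B(\infty)$ is not a consequence of the Lauda--Vazirani categorification and is not available in the paper's framework without essentially proving statements of the type you are after.

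Your inductive alternative has a more concrete obstruction. The one-step claim $\tE_i(M\hconv N)\simeq(\tE_iM)\hconv N$ presupposes that $\tE_iM\conv N$ has a simple head, so that $(\tE_iM)\hconv N$ is well defined and can be identified as the unique simple quotient; for this you need one of $\tE_iM$, $N$ to be real. If $N$ is the real one the induction runs. But if only $M$ is real, then $\tE_iM$ need not be real---Corollary~\ref{cor:Ereal} guarantees only that $\tE_i^{\,\max}M$ is real---so after a single step the hypothesis ``one of them is real'' may fail and the induction breaks. The paper sidesteps this by passing to $\tE_i^{\,\max}$ in one stroke rather than one $\tE_i$ at a time.
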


\begin{proof}
Set $n=\ve_i(M \hconv N) = \ve_i(M)$ and $M_0=\widetilde{E}^{\,\max}_i M$.
Then $M_0$ or $N$ is real.
Now we have
$$ L(i^n) \otimes M_0 \otimes N \monoto
E_i^n(M\hconv N)\simeq L(i^n) \otimes \widetilde{E}^{\,\max}_i (M \hconv N),$$
which induces a non-zero map $M_0 \otimes N \to
\widetilde{E}^{\,\max}_i (M \hconv N)$. Hence we have a surjective map
$$M_0 \conv N \twoheadrightarrow \widetilde{E}^{\,\max}_i (M \hconv N).$$
Since $M_0$ or $N$ is real by Corollary~\ref{cor:Ereal}, $M_0\conv N$ has a simple head
and we obtain the desired result. 
\end{proof}

\subsection{Determinantial modules and T-systems}
In the rest of this paper,
we assume further that {\em the base field $\ko$ is of characteristic $0$.}
Under this condition, the family of self-dual simple $R$-modules
corresponds to the upper global basis of $\An$
by Theorem \ref{thm:categorification 2}.
\begin{definition}
For $\lambda \in \wl^+$ and $\mu,\zeta \in W \lambda$ such that $\mu \preceq \zeta$, let $\M(\mu,\zeta)$ be a simple $R(\zeta-\mu)$-module
such that $\ch(\M(\mu,\zeta))=\D(\mu,\zeta)$.
\end{definition}
Since $\D(\mu,\zeta)$ is a member of the upper global basis,
such a module exists uniquely due to Theorem \ref{thm:categorification 2}.
The module $\M(\mu,\zeta)$ is self-dual and we call it {\em the determinantial module}. 

\begin{lemma}
$\M(\mu,\zeta)$ is a real simple module.
\end{lemma}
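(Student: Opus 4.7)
The plan is to prove the lemma by showing directly that $\M(\mu,\zeta)\conv\M(\mu,\zeta)$ is simple, which is precisely the condition for $\M(\mu,\zeta)$ to be real.

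The key input is Corollary~\ref{cor:Duv}. Writing $\mu = u\lambda$ and $\zeta = v\lambda$ with $u\le v$ in $W$ (possible by Lemma~\ref{lem: D-property}), I apply Corollary~\ref{cor:Duv} with both weight arguments equal to $\lambda$ to obtain
$$\D(\mu,\zeta)\,\D(\mu,\zeta) = q^{-(\zeta,\zeta-\mu)}\,\D(2\mu,2\zeta)$$
in $\An$. Since $\mu \preceq \zeta$ immediately implies $2\mu \preceq 2\zeta$ (the defining sequence of real-root reflections for $\mu \preceq \zeta$ also witnesses $2\mu \preceq 2\zeta$), the element $\D(2\mu,2\zeta)$ is a non-zero member of $\B^\up(\An)$ by Lemma~\ref{lem: global element} together with Lemma~\ref{lem: D-property}. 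By definition it therefore corresponds to the self-dual simple $R$-module $\M(2\mu,2\zeta)$ under the categorifications of Theorem~\ref{Thm:categorification} and Theorem~\ref{thm:categorification 2}.

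Translating the identity above into the Grothendieck ring $K(R\gmod)$ via $[\M(\mu,\zeta)]=\D(\mu,\zeta)$, I obtain
$$[\M(\mu,\zeta)\conv\M(\mu,\zeta)] = q^{-(\zeta,\zeta-\mu)}\,[\M(2\mu,2\zeta)].$$
The exponent $-(\zeta,\zeta-\mu)$ lies in $\Z$ because $\zeta\in\wl$ and $\zeta-\mu\in\rl$. Thus the right-hand side is a single grading shift of the class of a simple module, and so $\M(\mu,\zeta)\conv\M(\mu,\zeta)$ has exactly one composition factor (with multiplicity one); in particular it is itself simple. This shows that $\M(\mu,\zeta)$ is real.

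I do not foresee any genuine obstacle: the whole argument is a formal consequence of (i) the multiplicativity of unipotent quantum minors inside the upper global basis, already packaged in Corollary~\ref{cor:Duv}, and (ii) the Varagnolo--Vasserot--Rouquier correspondence between $\B^\up(\An)$ and self-dual simples of $R\gmod$.
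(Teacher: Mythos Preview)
Your proof is correct and follows essentially the same approach as the paper's: both apply Corollary~\ref{cor:Duv} to see that $\D(\mu,\zeta)^2$ is a power of $q$ times the upper global basis element $\D(2\mu,2\zeta)$, whence $\M(\mu,\zeta)\conv\M(\mu,\zeta)$ is simple. One cosmetic slip: by Lemma~\ref{lem: D-property} the Bruhat relation should read $u\ge v$ rather than $u\le v$, but this is irrelevant since Corollary~\ref{cor:Duv} imposes no order hypothesis on $u,v$.
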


\begin{proof}
It follows from $\ch(\M(\mu,\zeta) \conv \M(\mu,\zeta)) =\ch\big( \M(\mu,\zeta) \big)^2=q^{-(\zeta,\zeta-\mu)}\D(2\mu,2\zeta)$ which is a member of the upper global
basis up to a power of $q$.
Here the last equality follows from Corollary~\ref{cor:Duv}.
\end{proof}

\begin{proposition} \label{prop: commute 1}
Let $\lambda,\mu \in \wl^+$, and $s,s',t,t' \in W$ such that $\ell(s's)=\ell(s')+\ell(s)$,
$\ell(t't)=\ell(t')+\ell(t)$, $s's \lambda \preceq t'\lambda $ and
$s'\mu \preceq t't\mu$. Then
\bnum
\item $\M(s's\lambda,t'\lambda)$ and $\M(s'\mu,t't\mu)$ commute,
\item $\La\bl\M(s's\lambda,t'\lambda),\M(s'\mu,t't\mu)\br
= (s's\lambda+t'\lambda,\;t't\mu-s'\mu)$. 
\item $\tLa\bl\M(s's\lambda,t'\lambda),\M(s'\mu,t't\mu)\br
=(t'\lambda,\;t't\mu-s'\mu)$ and \\
$\tLa\bl\M(s'\mu,t't\mu),\M(s's\lambda,t'\lambda)\br
=(s'\mu-t't\mu,\;s's\lambda)$.
\end{enumerate}
\end{proposition}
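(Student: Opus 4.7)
The plan is to deduce all three claims from the $q$-commutation relation in \eqref{eq: quantum commuting factor 2} of Proposition~\ref{prop: BZ form}~(ii), together with the categorification isomorphisms of Theorems~\ref{Thm:categorification} and \ref{thm:categorification 2}. Set $M_1\seteq\M(s's\lambda,t'\lambda)$ and $M_2\seteq\M(s'\mu,t't\mu)$; by Lemma~\ref{lem: global element} each $\ch(M_i)$ is the corresponding unipotent quantum minor $\D_i$, and both $M_i$ are real simple (by the lemma immediately preceding this proposition).

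To prove (i), I will invoke the remark following Proposition~\ref{prop: BZ form}, which asserts that both sides of \eqref{eq: quantum commuting factor 2} are bar-invariant and hence members of $\B^\up(A_q(\n))$ via \cite[Corollary~3.5]{KKKO15}. Thus $q^{(t'\lambda,\,t't\mu-s'\mu)}\D_1\D_2$ is a single upper global basis element, and applying Theorem~\ref{thm:categorification 2} shows that $[M_1\conv M_2]$ in $K(R\gmod)$ is a grading shift of the class of one self-dual simple module. Therefore $M_1\conv M_2$ has length one, and Lemma~\ref{lem: d=0} forces $\de(M_1,M_2)=0$.

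Granting (i), the $r$-matrix $\rmat{M_1,M_2}$ is an isomorphism $q^{\La(M_1,M_2)}M_1\conv M_2\isoto M_2\conv M_1$ in $R\gmod$, so applying $\ch$ gives $q^{\La(M_1,M_2)}\D_1\D_2=\D_2\D_1$. Comparing with \eqref{eq: quantum commuting factor}, namely $\D_1\D_2=q^{(s's\lambda+t'\lambda,\,s'\mu-t't\mu)}\D_2\D_1$, immediately yields $\La(M_1,M_2)=(s's\lambda+t'\lambda,\,t't\mu-s'\mu)$, which is (ii). For (iii) I will substitute (ii) together with $\wt(M_1)=s's\lambda-t'\lambda$ and $\wt(M_2)=s'\mu-t't\mu$ into the definition $\tLa(M,N)=\tfrac12\bl(\wt(M),\wt(N))+\La(M,N)\br$; the symmetric expression for $\tLa(M_2,M_1)$ is then immediate using $\La(M_2,M_1)=-\La(M_1,M_2)$, which holds because $\de(M_1,M_2)=0$.

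The only nontrivial point is the passage in~(i) from bar-invariance of the product to its being a single upper global basis element; this is exactly the symmetric KLR positivity encoded in Theorem~\ref{thm:categorification 2} and \cite[Corollary~3.5]{KKKO15}. Both ingredients are already in hand, so the argument is otherwise essentially formal.
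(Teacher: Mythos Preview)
Your proposal is correct and follows essentially the same route as the paper's one-line proof, which simply cites Proposition~\ref{prop: BZ form}~(ii) together with \cite[Corollary~3.4]{KKKO15}. You have unpacked that citation: the paper's Corollary~3.4 packages precisely the argument you give (a $q$-commutation of self-dual simple characters with one real implies commutativity of the modules and reads off $\La$), whereas you route through the bar-invariance remark and \cite[Corollary~3.5]{KKKO15} to reach the same conclusion.
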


\begin{proof}
It is a consequence of Proposition \ref{prop: BZ form}~(ii) and
\cite[Corollary 3.4]{KKKO15}.
\end{proof}

\begin{proposition} \label{prop: weyl left right}
Let $\lambda \in \wl^+$, $\mu,\zeta \in W \lambda$
such that $\mu \preceq \zeta$ and $i \in I$.
\begin{enumerate}
\item[{\rm (i)}] If $n \seteq \lan h_i,\mu \ra \ge 0$, then
$$ \ve_i(\M(\mu,\zeta))=0 \ \ \text{ and } \ \ \M(s_i\mu,\zeta) \simeq \widetilde{F}_i^n\M(\mu,\zeta) \simeq L(i^n) \hconv \M(\mu,\zeta)
\ \text{in $R\gmod$.}$$
\item[{\rm (ii)}] If $\lan h_i,\mu \ra \le 0$ and $s_i\mu \preceq \zeta$, then $\ve_i(\M(\mu,\zeta))=-\lan h_i,\mu \ra$.
\item[{\rm (iii)}] If $m \seteq -\lan h_i,\zeta \ra \le 0$, then
$$ \ve^*_i(\M(\mu,\zeta))=0 \ \ \text{ and } \ \ \M(\mu,s_i\zeta) \simeq
\tF_i^{*\,m}\M(\mu,\zeta) \simeq \M(\mu,\zeta) \hconv L(i^m )
\ \text{ in $R\gmod$.}$$
\item[{\rm (iv)}] If $\lan h_i,\zeta \ra \ge 0$ and $\mu \preceq s_i\zeta$, then $\ve^*_i(\M(\mu,\zeta))=\lan h_i,\zeta \ra$.
\end{enumerate}
\end{proposition}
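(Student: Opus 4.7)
The plan is to derive each of (i)--(iv) by transporting the algebraic identities for unipotent quantum minors in Lemma \ref{lem: weyl left right} to the module category, via the categorification $\ch\cl\soplus_{\beta\in\rl^+} K(R(\beta)\gmod)\isoto\An_\A$ of Theorem \ref{Thm:categorification} combined with the identification of self-dual simple $R$-modules with the upper global basis of $\An$ (Theorem \ref{thm:categorification 2}). Under this dictionary, the module-theoretic quantities $\ve_i(M)$ and $\ve_i^*(M)$ for a self-dual simple $M$ coincide with the algebraically defined $\ve_i,\ve_i^*$ of $\ch(M)\in\An$, and the Kashiwara operators $\widetilde{E}_i,\widetilde{F}_i,\widetilde{E}_i^*,\widetilde{F}_i^*$ on $R\gmod$ categorify the crystal operators $\te_i,\tf_i,\te_i^*,\tf_i^*$ on $B(\An)\simeq B(\infty)$. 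Consequently every assertion in (i)--(iv) about $\ve_i(\M(\mu,\zeta))$ or $\ve_i^*(\M(\mu,\zeta))$ is immediate from the corresponding statement in Lemma \ref{lem: weyl left right}.

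For the isomorphism $\M(s_i\mu,\zeta)\simeq\widetilde{F}_i^n\M(\mu,\zeta)$ in (i), I begin with the identity $e_i^{(n)}\D(s_i\mu,\zeta)=\D(\mu,\zeta)$ of Lemma \ref{lem: weyl left right}(i). Both sides belong to the upper global basis (up to a power of $q$), so passing to the crystal via $L^\up(\An)/q\,L^\up(\An)$ gives $\te_i^n b(s_i\mu,\zeta)=b(\mu,\zeta)$, equivalently $\tf_i^n b(\mu,\zeta)=b(s_i\mu,\zeta)$, where $b(\nu,\xi)\in B(\An)$ denotes the unique crystal element with $G^\up\bl b(\nu,\xi)\br=\D(\nu,\xi)$. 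Applying the categorification dictionary yields the first isomorphism.

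The remaining isomorphism $\widetilde{F}_i^n\M(\mu,\zeta)\simeq L(i^n)\hconv\M(\mu,\zeta)$ follows by induction on $n$. For $n=1$, the equality $\ve_i(\M(\mu,\zeta))=0$ combined with the definition $\widetilde{F}_iM=q^{\ve_i(M)}L(i)\hconv M$ gives the result. For the step $n\rightsquigarrow n+1$, the crystal rule yields $\ve_i(\widetilde{F}_i^n\M(\mu,\zeta))=n$, so $\widetilde{F}_i^{n+1}\M(\mu,\zeta)\simeq q^n L(i)\hconv\widetilde{F}_i^n\M(\mu,\zeta)\simeq q^n L(i)\hconv L(i^n)\hconv\M(\mu,\zeta)$ by induction hypothesis. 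The identity $L(i^{n+1})\simeq q^n L(i)\conv L(i^n)$ (read off from the defining grading of $L(i^n)$) together with the fact that a surjection of convolution products induces an isomorphism of their simple heads gives $L(i^{n+1})\hconv\M(\mu,\zeta)\simeq q^n L(i)\hconv L(i^n)\hconv\M(\mu,\zeta)$, completing the induction.

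Parts (ii), (iii) and (iv) are obtained by the same procedure from Lemma \ref{lem: weyl left right}(ii), (iii), (iv); the $*$-mirror assertions (iii) and (iv) use the right-handed Kashiwara operators $\widetilde{E}_i^*,\widetilde{F}_i^*$ and right-convolution by $L(i^m)$, and are formally identical via the anti-involution $\psi$ that exchanges left and right $R$-module structures. The only genuine input is the compatibility of $\widetilde{F}_i$ on $R\gmod$ with $\tf_i$ on $B(\An)$ used in the second paragraph; this is a standard consequence of Theorem \ref{thm:categorification 2} together with the shuffle formulas (Proposition \ref{prop: Econv}), so the argument is essentially a careful translation of Lemma \ref{lem: weyl left right} rather than presenting a genuine obstacle.
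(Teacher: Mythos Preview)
Your proposal is correct and follows exactly the approach of the paper, whose entire proof reads ``It is a consequence of Lemma~\ref{lem: weyl left right}.'' You have simply spelled out in detail the translation mechanism (via Theorems~\ref{Thm:categorification} and~\ref{thm:categorification 2} and the compatibility of the module-theoretic Kashiwara operators with the crystal operators on $B(\An)$) that the paper leaves implicit.
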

\Proof
It is a consequence of Lemma~\ref{lem: weyl left right}.
\QED

\begin{proposition}\label{prop:T}
Assume that $u,v \in W$ and $i \in I$ satisfy $u<us_i$ and $v<vs_i \le u$.
\bnum
\item
We have exact sequences
\begin{equation} \label{eq: deteminatial seq1}
\begin{aligned}
& 0 \Lto \M(u\lambda,v\lambda) \Lto q^{(vs_i\varpi_i,v\varpi_i-u\varpi_i)}\M(us_i\varpi_i,vs_i\varpi_i) \conv \M(u\varpi_i,v\varpi_i) \\
& \hspace{14ex} \Lto q^{-1+(v\varpi_i,vs_i\varpi_i-u\varpi_i)}\M(us_i\varpi_i,v\varpi_i) \conv \M(u\varpi_i,vs_i\varpi_i) \Lto 0,
\end{aligned}
\end{equation}
and
\begin{equation} \label{eq: deteminatial seq2}
\begin{aligned}
& 0 \Lto  q^{1+(v\varpi_i,vs_i\varpi_i-u\varpi_i)}  \M(us_i\varpi_i,v\varpi_i) \conv \M(u\varpi_i,vs_i\varpi_i)  \\
& \hspace{3ex} \Lto   q^{(v\varpi_i,vs_i\varpi_i-us_i\varpi_i)}  \M(u\varpi_i,v\varpi_i)\conv \M(us_i\varpi_i,vs_i\varpi_i) \Lto \M(u\lambda,v\lambda)  \Lto 0,
\end{aligned}
\end{equation}
where $\lambda = s_i\varpi_i+\varpi_i = \sum_{j \ne i} -a_{j,i}\varpi_j$.
\item
$\de\bl \M(u\varpi_i,v\varpi_i), \M(us_i\varpi_i,vs_i\varpi_i)\br=1$.
\ee
\end{proposition}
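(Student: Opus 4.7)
The plan is to lift the T-system identities of Proposition \ref{prop: the ses} to short exact sequences in $R\gmod$ by applying Lemma \ref{lem:short} through the categorification isomorphism ${\rm ch}$ of Theorem \ref{Thm:categorification}.

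The crucial preparatory step is to show that $M':=\M(us_i\varpi_i,v\varpi_i)$ and $N':=\M(u\varpi_i,vs_i\varpi_i)$ commute, so that $M'\conv N'$ is a grading shift of the self-dual simple module $M'\sodot N'$ and is admissible as a summand in Lemma \ref{lem:short}. This follows from Proposition \ref{prop: commute 1} with the substitution $\lambda=\mu=\varpi_i$, $s'=u$, $s=s_i$, $t'=v$, $t=s_i$: the two length hypotheses are exactly the given conditions $u<us_i$ and $v<vs_i$, while the order hypotheses $us_i\varpi_i\preceq v\varpi_i$ and $u\varpi_i\preceq vs_i\varpi_i$ follow from the chain $v\le vs_i\le u<us_i$ via Lemma \ref{lem: D-property}. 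As a byproduct, Proposition \ref{prop: commute 1}(iii) gives $\tLa(M',N')=(v\varpi_i,vs_i\varpi_i-u\varpi_i)$, which I use to convert between $M'\conv N'$ and $M'\sodot N'$ while tracking the resulting powers of $q$.

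For the first exact sequence \eqref{eq: deteminatial seq1}, set $M:=\M(us_i\varpi_i,vs_i\varpi_i)$ and $N:=\M(u\varpi_i,v\varpi_i)$, and apply ${\rm ch}$ to the first of the four unipotent T-system identities in Proposition \ref{prop: the ses}. This produces an identity of the shape $[M][N]=q^{m}[\M(u\lambda,v\lambda)]+q^{n}[M'\sodot N']$ in $K(R\gmod)$, and a short computation using the value of $\tLa(M',N')$ above shows $m-n=1$. Both summands are (shifts of) self-dual simple modules and $M,N$ are real, so Lemma \ref{lem:short} yields simultaneously $\de(M,N)=1$, the exact sequence \eqref{eq: deteminatial seq1} (once the shift between $\conv$ and $\sodot$ is undone), and the fact that $\M(u\lambda,v\lambda)$ is the socle of $q^{m}(M\conv N)$. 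Assertion (ii) of the proposition follows immediately by the symmetry of $\de$.

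For the second exact sequence \eqref{eq: deteminatial seq2}, the same strategy applies verbatim to the last of the four unipotent T-system identities of Proposition \ref{prop: the ses} (the one whose left-hand side is $q^{(v\varpi_i,vs_i\varpi_i-us_i\varpi_i)}\D(u\varpi_i,v\varpi_i)\D(us_i\varpi_i,vs_i\varpi_i)$); Lemma \ref{lem:short} now places $M'\sodot N'$ as the socle and $\M(u\lambda,v\lambda)$ as the head of a grading shift of $N\conv M$. Alternatively, once $\de(M,N)=1$ is in hand, Proposition \ref{Prop: l2} directly produces the desired length-two filtration of $N\conv M$, and the identification of its socle and head with those already determined by \eqref{eq: deteminatial seq1} relies on $\soc(M\conv N)=q^{\La(N,M)}(N\hconv M)$, $\hd(M\conv N)=M\hconv N$, together with $\La(M,N)+\La(N,M)=2\de(M,N)=2$. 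The main obstacle throughout is the bookkeeping of grading shifts needed to match the precise exponents appearing in \eqref{eq: deteminatial seq1} and \eqref{eq: deteminatial seq2}; once the commutation of $M'$ and $N'$ is secured, every remaining step is a mechanical application of the cited lemmas.
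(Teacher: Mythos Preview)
Your proposal is correct and follows essentially the same route as the paper: both arguments transport the unipotent T-system identities of Proposition~\ref{prop: the ses} into $K(R\gmod)$ and then invoke Lemma~\ref{lem:short} to obtain the exact sequences and $\de=1$. The paper's version is terser---it simply asserts that the shifted product $q^{(v\varpi_i,vs_i\varpi_i-u\varpi_i)}\M(us_i\varpi_i,v\varpi_i)\conv\M(u\varpi_i,vs_i\varpi_i)$ is simple and self-dual and then applies Lemma~\ref{lem:short}---whereas you make explicit the reason this holds (commutation via Proposition~\ref{prop: commute 1} and the computation of $\tLa$), which is exactly the content the paper leaves implicit.
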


\begin{proof}
Since the proof of \eqref{eq: deteminatial seq1} is similar,
let us only prove \eqref{eq: deteminatial seq2}.
(Indeed, they are dual to each other.)

Set
\eqn
&&X=q^{(v\varpi_i,vs_i\varpi_i-u\varpi_i)}  \M(us_i\varpi_i,v\varpi_i) \conv \M(u\varpi_i,vs_i\varpi_i) ,\\
&&Y=q^{(v\varpi_i,vs_i\varpi_i-us_i\varpi_i)}  \M(u\varpi_i,v\varpi_i)\conv \M(us_i\varpi_i,vs_i\varpi_i),\\
&&Z=\M(u\lambda,v\lambda).
\eneqn
Then Proposition \ref{prop: the ses} implies that
$$\ch(Y)=\ch(qX)+\ch(Z).$$
Since $X$ and $Z$ are simple and self-dual,
the assertion follows from
Lemma~\ref{lem:short}.
\QED

\subsection{Generalized T-system on determinantial modules.}

\begin{theorem} \label{thm: canonical surjection}
Let $\lambda \in \wl^+$ and $\mu_1,\mu_2,\mu_3 \in W \lambda$ such that $\mu_1 \preceq \mu_2 \preceq \mu_3$. Then there exists a canonical epimorphism
$$ \M(\mu_1,\mu_2) \conv \M(\mu_2,\mu_3) \twoheadrightarrow \M(\mu_1,\mu_3),$$
which is equivalent to saying that $\M(\mu_1,\mu_2) \hconv \M(\mu_2,\mu_3) \simeq \M(\mu_1,\mu_3)$.

In particular, we have
$$\tLa(\M(\mu_1,\mu_2),\M(\mu_2,\mu_3))=0\quad\text{and}\quad
\La(\M(\mu_1,\mu_2),\M(\mu_2,\mu_3)) = -(\mu_1-\mu_2,\mu_2-\mu_3). $$
\end{theorem}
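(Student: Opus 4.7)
The plan is to proceed by induction on the length $\ell(w_3)$ of a minimal $w_3 \in W$ with $\mu_3 = w_3\lambda$, reducing the general statement to the case $\mu_3 = \lambda$, which is handled directly by Theorem~\ref{thm: DDD}.

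For the base case $\mu_3 = \lambda$, I would read Theorem~\ref{thm: DDD} through the categorification isomorphism of Theorem~\ref{Thm:categorification}: the congruence $\D(\mu_1,\mu_2)\,\D(\mu_2,\lambda) \equiv \D(\mu_1,\lambda) \pmod{qL^\up(A_q(\n))}$ becomes the statement that, in the expansion of $[\M(\mu_1,\mu_2)\conv\M(\mu_2,\lambda)]$ in the basis of self-dual simples of $K(R\gmod)$, the coefficient of $[\M(\mu_1,\lambda)]$ is $\equiv 1 \pmod q$ while every other coefficient lies in $q\,\Z_{\ge0}[q^{\pm1}]$ (positivity coming from graded composition-factor multiplicities). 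Because $\M(\mu_1,\mu_2)$ is real, the convolution has a simple head, and the uniqueness of the mod-$q$ leading term forces this head to be $\M(\mu_1,\lambda)$ as a graded module, giving the isomorphism and $\tLa=0$ in the base case.

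For the inductive step, suppose $\ell(w_3) > 0$, choose $i \in I$ with $s_iw_3 < w_3$ (so $\lan h_i,\mu_3\ran \le 0$), and set $m = -\lan h_i,\mu_3\ran$ and $\mu_3' = s_i\mu_3$. The lifting property of the Bruhat order upgrades $\mu_2 \preceq \mu_3$ to $\mu_2 \preceq \mu_3'$, and Proposition~\ref{prop: weyl left right}\,(iii) yields $\M(\mu_k,\mu_3') \simeq \M(\mu_k,\mu_3) \hconv L(i^m)$ together with $\ve_i^*\bl\M(\mu_k,\mu_3)\br = 0$ for $k=1,2$. Applying the induction hypothesis to $(\mu_1,\mu_2,\mu_3')$ gives
$$\M(\mu_1,\mu_2) \hconv \bigl(\M(\mu_2,\mu_3) \hconv L(i^m)\bigr) \simeq \M(\mu_1,\mu_3) \hconv L(i^m).$$
By Proposition~\ref{prop: weyl left right}\,(iv), $\ve_i^*\bl\M(\mu_1,\mu_3')\br = m$, and so $\ve_i^*$ of the left-hand side equals $m = \ve_i^*\bl\M(\mu_2,\mu_3')\br$; the $*$-analogue of Proposition~\ref{prop: acted on left only} (obtained via the left-right symmetric anti-involution on $R$) then transports $\tE_i^{*\;\max}$ onto the rightmost factor on each side, yielding $\M(\mu_1,\mu_2) \hconv \M(\mu_2,\mu_3) \simeq \M(\mu_1,\mu_3)$. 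The degree assertions are then immediate: the head of the convolution is self-dual without grading shift, so $\tLa = 0$, and the formula for $\La$ follows from $\tLa = \bl(\wt M,\wt N)+\La\br/2$ together with $\wt\M(\mu_i,\mu_j) = \mu_i-\mu_j$.

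The hard part will be the base case: rigorously showing that the mod-$q$ identity of Theorem~\ref{thm: DDD} pins down the head without any residual grading shift. Concretely, one must rule out the scenario in which the head is $q^c L$ with $c \neq 0$, the missing $q^0$ contribution being supplied by a lower composition factor. This rests on the principle --- a consequence of the $R$-matrix and $\tLa$-formalism of \cite{KKKO14} --- that the self-dual head of a convolution of real self-dual simples is identified by the unique upper global basis element whose coefficient in the product has the lowest $q$-valuation, with that valuation determining $\tLa$.
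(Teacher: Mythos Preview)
Your proposal is correct and follows essentially the same route as the paper: induction reducing to $\mu_3=\lambda$, the base case via Theorem~\ref{thm: DDD}, and the inductive step by applying the $*$-analogue of Proposition~\ref{prop: acted on left only} to $\M(\mu_1,\mu_2)\hconv\M(\mu_2,s_i\mu_3)$ with $\eps_i^*$ equal on both sides. The paper runs the induction on $|\lambda-\mu_3|$ rather than $\ell(w_3)$, and handles your ``hard part'' in the base case by a direct appeal to \cite[Theorem~3.6]{KKKO15}, which is exactly the principle you spell out (that $b_1b_2\equiv b_3 \bmod qL^\up$ for upper-global elements forces the corresponding head identification with $\tLa=0$).
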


\begin{proof}
(a)\ Our assertion follows from Theorem \ref{thm: DDD} and \cite[Theorem 3.6]{KKKO15} when $\mu_3=\lambda$.

\medskip\noi
(b) We shall prove the general case by induction on $|\la-\mu_3|$.
By (a), we may assume that $\mu_3 \ne \lambda$. Then there exists $i$ such that $\lan h_i,\mu_3\ra <0$. The induction hypothesis implies that
$$\M(\mu_1,\mu_2) \hconv \M(\mu_2,s_i \mu_3) \simeq \M(\mu_1,s_i \mu_3).$$
Since $\mu_1\preceq\mu_2\preceq\mu_3\preceq s_i\mu_3$,
Proposition~\ref{prop: weyl left right} (iv) implies that
$$\eps_i^*\bl\M(\mu_2,s_i\mu_3)\br=\eps_i^*\bl\M(\mu_1,s_i\mu_3)\br
=-\lan h_i,\mu_3\ran.$$
Then Proposition~\ref{prop: acted on left only} implies that
$$\tEs_i{}^\max\bl \M(\mu_1,\mu_2)\hconv \M(\mu_2,s_i\mu_3)\br
\simeq
\M(\mu_1,\mu_2)\hconv\bl \tEs_i{}^\max \M(\mu_2,s_i\mu_3)\br,$$
from which we obtain
$$\M(\mu_1,\mu_3)\simeq \M(\mu_1,\mu_2)\hconv  \M(\mu_2,\mu_3).$$

\medskip\noi
 Lemma \ref{lem: tLa} implies that
$\tLa\bl\M(\mu_1,\mu_2),\M(\mu_2,\mu_3)\br=0$. Hence we have
$$\La\bl\M(\mu_1,\mu_2),\M(\mu_2,\mu_3)\br
=-\bl\wt(\M(\mu_1,\mu_2),\wt(\M(\mu_2,\mu_3))\br.$$
\end{proof}

\begin{proposition}\label{prop:dMM}
Let $i \in I$ and $x,y,z \in W$.
\begin{enumerate}
\item[{\rm (i)}] If $\ell(xy)=\ell(x)+\ell(y)$, $zs_i>z$,
 $y>ys_i$, 
and $x \ge z$, then we have
$$ \de(\M(xy\varpi_i,zs_i\varpi_i),\M(x\varpi_i,z\varpi_i)) \le 1.$$
\item[{\rm (ii)}] If $\ell(zy)=\ell(z)+\ell(y)$, $xs_i>z$, $xs_i\ge zy$ and $x \ge z$, then we have
$$ \de(\M(xs_i\varpi_i,zy\varpi_i),\M(x\varpi_i,z\varpi_i)) \le 1.$$
\end{enumerate}
\end{proposition}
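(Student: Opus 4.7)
I prove (i) and (ii) together by induction on $\ell(y)$, drawing on four ingredients: the $T$-system (Proposition~\ref{prop:T}), the canonical surjection $\M(\mu_1,\mu_3)\simeq\M(\mu_1,\mu_2)\hconv\M(\mu_2,\mu_3)$ of Theorem~\ref{thm: canonical surjection}, the commutation criterion of Proposition~\ref{prop: commute 1}, and the standard subadditivity
\[
\de(A\hconv B,L)\le\de(A,L)+\de(B,L)
\]
for real simples, which follows from the $\mathrm{R}$-matrix formalism of~\cite{KKKO15}.

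For the base case of~(i), $\ell(y)=1$ combined with $y>ys_i$ forces $y=s_i$, so the claim reduces to $\de\bl\M(xs_i\varpi_i,zs_i\varpi_i),\M(x\varpi_i,z\varpi_i)\br\le1$. If $zs_i\le x$, then (together with the standing conditions $x<xs_i$ and $z<zs_i$) the $T$-system of Proposition~\ref{prop:T}(ii) applies with $u=x$, $v=z$ and yields $\de=1$. If $zs_i\not\le x$, I verify commutation ($\de=0$) by applying Proposition~\ref{prop: commute 1} after choosing equivalent $W_{\varpi_i}$-coset representatives for $x$ and $z$ so as to satisfy the required length identities. For the inductive step of~(i), write $y=y's_i$ with $\ell(y')=\ell(y)-1$; then $\ell(xy')=\ell(x)+\ell(y')$ and $xy=xy's_i>xy'$, so $xy\varpi_i\preceq xy'\varpi_i$ in $W\varpi_i$. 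When $zs_i\le x$, the chain $xy\varpi_i\preceq x\varpi_i\preceq zs_i\varpi_i$ holds, and Theorem~\ref{thm: canonical surjection} gives the factorization
\[
\M(xy\varpi_i,zs_i\varpi_i)\simeq\M(xy\varpi_i,x\varpi_i)\hconv\M(x\varpi_i,zs_i\varpi_i).
\]
Subadditivity of $\de$ against $L=\M(x\varpi_i,z\varpi_i)$ then bounds the desired quantity by $\de(\M(xy\varpi_i,x\varpi_i),L)+\de(\M(x\varpi_i,zs_i\varpi_i),L)$: one summand is shown to vanish (either by Proposition~\ref{prop: commute 1} or by identifying it as a commutation via the induction hypothesis of part~(ii) at smaller length), while the other is controlled by a direct $T$-system or the induction hypothesis of~(i). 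When $zs_i\not\le x$, I choose the alternative intermediate $\nu=xy'\varpi_i$ (verifying $zs_i\le xy'$ by the Bruhat lifting $z\le xy'$ and $xy'<xy's_i$) and carry out the analogous split. Part~(ii) is proved dually: one inducts on $\ell(y)$ with $y$ multiplying $z$ on the right, factoring $\M(xs_i\varpi_i,zy\varpi_i)$ through an intermediate $zy'\varpi_i$.

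The principal difficulty is the Bruhat-order bookkeeping: verifying that the intermediate lies in the required interval, that at least one factor in the subadditivity split commutes with $L$ or falls under the induction hypothesis, and that the hypotheses transfer correctly between~(i) and~(ii) across the joint induction. In edge cases where no natural intermediate satisfies the chain $\mu_1\preceq\mu_2\preceq\mu_3$, I fall back on the generalized $T$-system of Proposition~\ref{prop: g T-system D} combined with Lemma~\ref{lem:short} to produce a substitute short exact sequence, from which the same subadditivity argument extracts the bound $\de\le1$. This case analysis, together with the parallel structure of the two inductions, is the technical heart of the proof.
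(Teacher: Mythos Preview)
Your inductive scheme is a genuinely different route from the paper's. The paper argues directly: it starts from the generalized $T$-system identity of Proposition~\ref{prop: g T-system D}, applies the operator $S_z^*$ (right multiplication by divided powers $e_j^{(a)}$ along a reduced word for $z$) and then a left operator $S_{x,\mu}$ built from $f_j^{(c)}$'s (using the notion of $x$-highest vectors, which guarantees the result is again a sum of upper-global-basis elements), projects via $p_\n$, and reads off an equation of the form
\[
q^{c}\,\D(xy\varpi_i,zs_i\varpi_i)\,\D(x\varpi_i,z\varpi_i)=q^{-1}[\text{global basis element}]+[\text{global basis element or }0].
\]
Lemma~\ref{lem:short} then gives $\de\le 1$. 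Part~(ii) is obtained by first applying $\varphi^*$ to the starting identity and repeating the argument.

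Your induction, however, has a real gap in the inductive step. In the case $zs_i\le x$ you factor through $x\varpi_i$ and bound
\[
\de(\M(xy\varpi_i,zs_i\varpi_i),L)\le\de(\M(xy\varpi_i,x\varpi_i),L)+\de(\M(x\varpi_i,zs_i\varpi_i),L),\qquad L=\M(x\varpi_i,z\varpi_i).
\]
The second summand does vanish by Proposition~\ref{prop: commute 1} (take $s'=x$, $s=e$, $t'=z$, $t=s_i$). But the first summand $\de(\M(xy\varpi_i,x\varpi_i),L)$ is \emph{not} an instance of~(i) or~(ii) at smaller length: the full $y$ still appears, and to view it as~(i) you would need $x\varpi_i=z's_i\varpi_i$ with $z'\varpi_i=z\varpi_i$, which forces $x\varpi_i$ and $z\varpi_i$ to differ by a single $s_i$-step and is false in general. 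Proposition~\ref{prop: commute 1} does not make this summand vanish either, since the length-additivity constraints cannot be satisfied (one would need $t'=x$ and $xt\varpi_i=z\varpi_i$ with $\ell(xt)=\ell(x)+\ell(t)$, but $z\varpi_i\succeq x\varpi_i$). The alternative intermediate $xy'\varpi_i$ fails for the dual reason: it produces $\de(\M(xy'\varpi_i,zs_i\varpi_i),L)$, but $y'=ys_i$ satisfies $y'<y's_i$, so the hypothesis $y>ys_i$ of~(i) is violated and you cannot invoke the induction hypothesis. Your base-case claim that $zs_i\not\le x$ forces commutation is likewise asserted without proof. Finally, your fallback on the generalized $T$-system ``in edge cases'' is exactly the paper's argument; if it is needed, the induction is not contributing.
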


\begin{proof}
In the course of proof, we omit $\oi_{\vpi,\vpi}^{-1}$ for the sake of simplicity.
Set $y' \seteq ys_i < y$.

\medskip

Let us show {\rm (i)}. By Proposition \ref{prop: g T-system D}, we have
\begin{equation} \label{eq: ac le 1}
\begin{aligned}
\Delta(y\vpi,s_i\vpi)\Delta(\vpi,\vpi) & =q^{-1}G^\up\left( (u_{y\vpi} \otimes u_{\vpi}) \otimes (u_{s_i\vpi} \otimes u_{\vpi})^\ri \right) \\
& \hspace{10ex} +G^\up(\oi_\lambda^{-1}(\te_i^*b) \otimes u_\lambda^\ri),
\end{aligned}
\end{equation}
where $\lambda=\vpi+s_i\vpi$ and $b=\oi_\vpi(u_{y\vpi}) \in B(\infty)$.
Let $S_z^*$ be the operator on $\Ag$ given by
the multiplication of $e_{j_1}^{(a_1)}\cdots e_{j_t}^{(a_t)}$ from the right,
where $z=s_{j_t}\cdots s_{j_1}$ is a reduced expression of $z$ and
$a_k=\lan h_{j_k},s_{j_{k-1}}\cdots s_{j_1}\la\ran$.
Then applying $S_z^*$  to \eqref{eq: ac le 1}, we obtain
\begin{align*}
\Delta(y\vpi,zs_i\vpi)\Delta(\vpi,z\vpi) &  =q^{-1}G^\up\left( (u_{y\vpi} \otimes u_{\vpi} ) \otimes (u_{zs_i\vpi} \otimes u_{z\vpi})^\ri \right) \\
& \hspace{10ex} +G^\up(\oi_\lambda^{-1}(\te_i^*b) \otimes u_{z\lambda}^\ri).
\end{align*}

Recall that $\mu\in\Po$ is called {\em $x$-dominant}
if $c_k\ge0$.
Here $x=s_{i_r} \cdots s_{i_1}$ is a reduced expression of $x$ and
 $c_k\seteq \lan h_{i_k}, s_{i_{k-1}} \cdots s_{i_1} \mu \ra$ ($1\le k\le r$).
Recall that an element $v\in\Ag$ with $\wtl(v)=\mu$ is called
{\em $x$-highest}
if $\mu$ is $x$-dominant and
$$\text{$f_{i_k}^{1+c_k}f_{i_{k-1}}^{(c_{k-1})}\cdots f_{i_1}^{(c_{1})}v=0$ for any $k$
($1\le k\le r$).}$$
If $v$ is $x$-highest, then
$v$ is a linear combination of
$x$-highest $G^\up(b)$'s.
Moreover, $S_{x,\mu}G^\up(b)\seteq f^{(c_r)}_{i_r}\cdots f^{(c_1)}_{i_1} G^\up(b)$ is either a member of the upper global basis or zero.
Since $\Delta(y\vpi,zs_i\vpi)\Delta(\vpi,z\vpi)$ is $x$-highest
of weight $\mu\seteq\vpi_i+\vpi_i$,
we obtain
\begin{align*}
\Delta(xy\vpi,zs_i\vpi)\Delta(x\vpi,z\vpi) & =q^{-1}G^\up\left( (u_{xy\vpi} \otimes u_{x\vpi} ) \otimes (u_{zs_i\vpi} \otimes u_{z\vpi})^\ri \right) \\
& \hspace{10ex} +S_{x,\mu}G^\up(\oi_\lambda^{-1}(\te_i^*b) \otimes u_{z\lambda}^\ri).
\end{align*}

Applying $p_\n$, we obtain
\begin{align*}
q^c \D(xy\vpi,zs_i\vpi)\D(x\vpi,z\vpi) & =q^{-1}p_\n G^\up\left( (u_{xy\vpi} \otimes u_{x\vpi} ) \otimes (u_{zs_i\vpi} \otimes u_{z\vpi})^\ri \right) \\
& \hspace{10ex} +p_\n S_{x,\mu}G^\up(\oi_\lambda^{-1}(\te_i^*b) \otimes u_{z\lambda}^\ri)
\end{align*}
for some integer $c$. Hence we obtain {\rm (i)} by
Lemma~\ref{lem:short}.

\medskip\noi
{\rm (ii)} is proved similarly. By applying $\varphi^*$  to \eqref{eq: ac le 1}, we obtain
\begin{align*}
q^{(s_i\vpi_i,\vpi_i)-(y\vpi_i,\vpi_i)}\Delta(s_i\vpi,y\vpi)\Delta(\vpi,\vpi) & =q^{-1}G^\up\left( (u_{s_i\vpi} \otimes u_{\vpi}) \otimes (u_{y\vpi} \otimes u_{\vpi})^\ri \right) \\
& \hspace{10ex} +G^\up(u_\lambda  \otimes (\oi_\lambda^{-1}\te_i^*b)^\ri).
\end{align*}
 Here we used Proposition~\ref{prop:phimul}.
Then the similar arguments as above show {\rm (ii)}.
\end{proof}

\begin{proposition}\label{prop:deMM}
Let $x \in W$ such that $xs_i > x$ and $x \vpi \ne \vpi$. Then we have
$$\de(\M(xs_i\vpi,x\vpi),\M(x\vpi,\vpi)) =1.$$
\end{proposition}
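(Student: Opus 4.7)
The proof combines the upper bound from Proposition~\ref{prop:dMM}(ii) with a non-vanishing argument via the generalized T-system of Proposition~\ref{prop: g T-system D}.

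For the upper bound, apply Proposition~\ref{prop:dMM}(ii) with the substitution $z=e$, $y=x$. The hypotheses $\ell(zy)=\ell(y)$, $xs_i>e$, $xs_i\ge x$, and $x\ge e$ are all automatic (using $xs_i>x$ and $x\ne e$, the latter following from $x\vpi\ne\vpi$), so we obtain $\de(\M(xs_i\vpi,x\vpi),\M(x\vpi,\vpi))\le 1$.

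For the lower bound, I would argue by contradiction: if $\de=0$, then Lemma~\ref{lem: d=0} together with Theorem~\ref{thm: canonical surjection} would force
\begin{align*}
\D(xs_i\vpi,x\vpi)\,\D(x\vpi,\vpi)=q^m\,\D(xs_i\vpi,\vpi)
\end{align*}
in $A_q(\n)$ for some integer $m$. To refute this, I would execute the derivation used in the proof of Proposition~\ref{prop:dMM}(ii) in the special case $z=e,y=x$. Starting from Proposition~\ref{prop: g T-system D} applied with $\mu=x\vpi$, the second summand $G^\up(\oi_\lambda^{-1}(\widetilde{e^*_i}b)\otimes u_\lambda^\ri)$ is non-zero in $A_q(\g)$, because $b=\oi_{\vpi}(u_{x\vpi})\ne b_\infty$ (as $x\vpi\ne\vpi$) and, by the note following Proposition~\ref{prop: g T-system D}, $\widetilde{e^*_i}b\ne 0$. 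Applying $\varphi^*$ (via Proposition~\ref{prop:phimul}), then the left-shift $S_{x,\lambda}$ (valid since $\lambda=\vpi+s_i\vpi$ is $x$-dominant---ensured by $xs_i>x$---and both summands are $x$-highest), and finally $p_\n$ produces an identity
\begin{align*}
q^c\,\D(xs_i\vpi,x\vpi)\,\D(x\vpi,\vpi)=q^{c_1}\D(xs_i\vpi,\vpi)+q^{c_2}\,G^\up(\eta)
\end{align*}
in $A_q(\n)$. The critical verification is that $G^\up(\eta)\ne 0$; granted this, the two-term upper-global-basis expansion contradicts the single-term identity assumed above, and Lemma~\ref{lem:short} yields $\de=c_1-c_2>0$. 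Combined with the upper bound, $\de=1$.

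The main obstacle is confirming $G^\up(\eta)\ne 0$. By Proposition~\ref{prop: nonzero elt}, this amounts to verifying that the right $B(-\infty)$-component of the bi-crystal image $\bg$ of the shifted second summand equals $b_{-\infty}$. Concretely, the shifted right vector involves $\oi_\lambda^{-1}(\widetilde{e^*_i}b)\in V(\lambda)$, and the required non-vanishing of the associated unipotent quantum minor reduces to a Bruhat comparison, which can be established using the explicit Weyl-group structure of the shift $S_{x,\lambda}$ together with the hypotheses $xs_i>x$ and $x\vpi\ne\vpi$.
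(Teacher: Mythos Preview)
Your upper bound via Proposition~\ref{prop:dMM}(ii) with $z=e$, $y=x$ is correct and identical to the paper's first step.

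For the lower bound, however, the paper takes a much simpler route than yours, and your route has a genuine gap. The paper argues: if $\de=0$, then by Lemma~\ref{lem: d=0} and Theorem~\ref{thm: canonical surjection} the convolution $\M(xs_i\vpi,x\vpi)\conv\M(x\vpi,\vpi)$ is simple and isomorphic to $\M(xs_i\vpi,\vpi)$. Hence $\ve_j^*$ is additive on the two factors for every $j$, and since both $\M(xs_i\vpi,\vpi)$ and $\M(x\vpi,\vpi)$ have $\ve_j^*=\langle h_j,\vpi\rangle$ (Proposition~\ref{prop: weyl left right}(iv), using $xs_i\vpi\preceq x\vpi\preceq s_i\vpi$), one gets $\ve_j^*(\M(xs_i\vpi,x\vpi))=0$ for all $j$, contradicting $\wt(\M(xs_i\vpi,x\vpi))\ne 0$. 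This avoids any crystal computation in $B(\tU)$.

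Your approach instead tries to exhibit two nonzero terms in the global-basis expansion of the product by tracing the generalized $T$-system through $\varphi^*$, $S_{x,\lambda}$, and $p_\n$. After $S_{x,\lambda}$ the second summand becomes $G^\up(u_{x\lambda}\otimes v^\ri)$ with $v=\oi_\lambda^{-1}(\te_i^* b)\in B(\lambda)$, and you need $p_\n$ of this to be nonzero. By Proposition~\ref{prop: nonzero elt} this requires the right $B(-\infty)$-component of $\bg(u_{x\lambda}\otimes v^\ri)$ to be $b_{-\infty}$. You assert this ``reduces to a Bruhat comparison'' because the result would be a unipotent quantum minor, but that is not justified: $v$ has weight $x\vpi+\vpi$, which need not lie in $W\lambda$, so $v$ is in general \emph{not} an extremal weight vector of $V(\lambda)$, and the image under $p_\n$ is not a priori a unipotent quantum minor. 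Without a direct argument that $v$ lies in the Demazure-type crystal giving $b_{-\infty}$ on the right, the non-vanishing is unproved. This is precisely the step you flagged as ``the main obstacle'', and your sketch does not close it.

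In short: the strategy could in principle be completed, but the missing non-vanishing is the whole content of the lower bound, and the paper's $\ve_j^*$-argument bypasses it entirely with two lines.
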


\begin{proof}
 By Proposition~\ref{prop:dMM} (ii),
 we have $\de(\M(xs_i\vpi,x\vpi),\M(x\vpi,\vpi)) \le1$.
Assuming $\de(\M(xs_i\vpi,x\vpi),\M(x\vpi,\vpi))=0$, let us derive a contradiction.

By Theorem~\ref{thm: canonical surjection} and the assumption, we have
$$\M(xs_i\vpi,x\vpi) \conv \M(x\vpi,\vpi) \simeq \M(xs_i\vpi,\vpi).$$
Hence we have
$$ \ve_j^*(\M(xs_i\vpi,\vpi))= \ve_j^*(\M(xs_i\vpi,x\vpi))+\ve_j^*(\M(x\vpi,\vpi))$$
for any $j \in I$. Since $xs_i\vpi \preceq x\vpi \preceq s_i\vpi$, Proposition \ref{prop: weyl left right} implies that
$$ \ve_j^*(\M(xs_i\vpi,\vpi))=\ve_j^*(\M(x\vpi,\vpi))=\lan h_j,\vpi \ra.$$
It implies that
$$\ve_j^*(\M(xs_i\vpi,x\vpi))=0 \quad \text{ for any } j \in I.$$
It is a contradiction  since $\wt\bl\M(xs_i\vpi,x\vpi)\br=xs_i\vpi-x\vpi$ does not vanish.
\end{proof}

\section{Quantum cluster algebras
and monoidal categorifications}

\subsection{Quantum cluster algebras} In this subsection, we briefly recall
the definition of a quantum cluster algebra following \cite{BZ05}, \cite[\S 8]{GLS11} and \cite[\S 4.1]{KKKO15}.

\medskip

Let us fix a finite index set $\K$
with a decomposition
$\K=\Kex\sqcup\Kfr$
into the set $\Kex$ of {\em exchangeable indices} and
the set $\K_\fr$ of {\em frozen indices}.
Let $L=(\lambda_{i,j})_{i,j \in\K}$ be a skew-symmetric  integer-valued $\K \times \K$-matrix.
We denote by $\mathscr{P}(L)$ the $\Z[q^{\pm 1/2}]$-algebra generated by $\{X_i\}_{i \in\K}$ subject to the following defining relations:
\begin{align}\label{eq: L-commuting} X_iX_j=q^{\lambda_{i,j}}X_jX_i \quad (i,j \in\K). \end{align}

For $\mathbf{a}\seteq(a_i)_{i\in\K}\in \Z_{\ge0}^\K$, we define the element $X^\mathbf{a}$ of $\mathscr{P}(L)$ as
$$
X^{\bf a}\seteq q^{1/2 \sum_{i > j} a_ia_j\lambda_{i,j}} \overset{\Lto}{\prod}_{i\in\K} X_i^{a_i}
$$
where $<$ is a total order on $J$ and
$\overset{\Lto}{\prod}_{i\in\K} X_i^{a_i}=X_{i_1}^{a_1}\cdots X_{a_r}^{i_r}$
with $\K=\{i_1,\ldots,i_r\}$ such that
$i_1 < i_2 < \cdots < i_r$.
Note that $X^{\bf a}$ does not depend on the choice of a total order on $J$.
We have
\begin{align}
X^{\bf a}X^{\bf b} =  q^{1/2 \sum_{i,j \in\K}a_ib_j\lambda_{i,j}}  X^{\bf a+b} \quad \text{ for all ${\bf a}$, ${\bf b} \in \Z_{\ge0}^\K$.}
\end{align}
It is well-known that $\{ X^{\bf a} \ | \ {\bf a} \in \Z_{\ge 0}^\K \}$ forms a $\Z[q^{\pm 1/2}]$-basis of $\mathscr{P}(L)$.

For a $\Z[q^{\pm 1/2}]$-algebra $A$, we say that a family of elements $\{x_i\}_{i \in\K}$ of $A$ is {\em $L$-commuting} if it satisfies the relation
\eqref{eq: L-commuting}, i.e., $x_ix_j=q^{\lambda_{i,j}}x_jx_i$.

\medskip
Let $\wB=(b_{i,j})$ be an integer-valued $\K \times \K_\ex$ matrix whose {\em principal part} $B\seteq (b_{i,j})_{i,j \in\K_\ex}$ is skew-symmetric.
To the matrix $\wB$ we can associate the quiver $Q_{\wB}$ without loops,  $2$-cycles  and arrows between frozen vertices
such that its vertices are labeled by $J$ and
the arrows are given by
\begin{equation} \label{eq: bij}
 b_{i,j} =  (\text{the number of arrow from $i$ to $j$}) - (\text{the number of arrow from $j$ to $i$}).
 \end{equation}
Here we extend the $\K\times\Kex$-matrix $\wB$ to
the skew-symmetric $\K\times\K$-matrix $\wB'=(b_{i,j})_{i,j\in\K}$
by setting $b_{i,j}=0$ for $i,j\in\Kfr$.

Conversely, whenever we have a quiver with vertices labeled by $J$ and without loops, $2$-cycles and arrows between frozen vertices, we can
associate a $\K \times \K_\ex$-matrix $\wB$ by \eqref{eq: bij}.

We say that the pair of matrices $(L,\wB)$ is {\em compatible} if there exists a positive integer $d$ such that
\begin{align}\label{eq: integer d}
  \sum_{k\in\K} \lambda_{i,k}b_{k,j} = \delta_{i,j}d, \quad ( i \in\K, \  j \in\K_\ex).
  \end{align}

For a $\Z[q^{\pm 1/2}]$-algebra $A$ and a compatible pair $(L,\wB)$, we say that the datum $\Seed=(\{x_i\}_{i \in\K},L,\wB)$ is
a {\em quantum seed} if $\{x_i\}_{i \in\K}$ is an algebraically independent $L$-commuting family of elements in $A$.

The set $\{ x_i\}_{i \in\K}$ is called a {\em cluster} of $\Seed$ and its elements are called {\em cluster variables}. In particular,
the elements  in  $\{x_i\}_{i \in\K_\fr}$ are called {\em frozen variables}. The elements in $\{ x^{\mathbf{a}} \ | \ \mathbf{a} \in \Z_{\ge 0}^r \}$
are called {\em cluster monomials}.

See \cite{KKKO15} for the definition of
\begin{align}\label{eq: mutation}
\mu_k(\Seed)\seteq \big(\{\mu_k(x)_i\}_{i\in\K},\mu_k(L),\mu_k(\widetilde B)\big),
\end{align}
the {\em mutation of $\Seed$ in direction $k\in \Kex$}.

\begin{definition}
Let $\Seed=(\{x_i\}_{i\in\K},L, \widetilde B)$ be a quantum seed in a $\Z[q^{\pm1/2}]$-algebra $A$ which is contained a skew field $K$.
The {\em quantum cluster algebra $\mathscr A_{q^{1/2}}(\mathscr S)$} associated with the quantum seed $\mathscr S$ is
the $\Z[q^{\pm 1/2}]$-subalgebra of the skew field $K$ generated by all the quantum cluster variables
in the quantum seeds obtained from $\Seed$ by any sequence of {\em mutations}.
 \end{definition}

We call $\Seed$ the {\em initial quantum seed}
of the quantum cluster algebra $\mathscr A_{q^{1/2}}(\Seed)$.

\subsection{Monoidal categorifications of quantum cluster algebras}
In this subsection, we shall review  monoidal categorifications of
cluster algebras. In this paper, we shall restrict ourselves to the case when
monoidal categories are full subcategories of $R\gmod$.
Here $R\gmod$ is the category of finite-dimensional graded modules
over a symmetric KLR algebra $R$ with the base field $\cor$ of
characteristic $0$.
We refer to \cite{KKKO14} in a more general setting.

Let $\shc$ be a full subcategory of $R\gmod$ stable by taking
convolution products, subquotients, extensions and grading shifts.
Hence $\shc$ is a $\ko$-linear abelian monoidal category with the decomposition
$$\shc=\soplus_{\beta\in\rtl^-}\shc_\beta\quad
\text{with $\shc_\beta\seteq\shc\cap R(-\beta)\gmod$.}$$

\begin{definition} \label{def:quantum monoidal seed}
We call a quadruple $\Seed = (\{M_i\}_{i \in\K}, L,\wB, D)$
a \emph{quantum monoidal seed  in $\shc$}
 if it satisfies the following conditions:
\begin{enumerate}
\item[{\rm (i)}] $\wB = (b_{i,j})$ is an integer-valued $\K \times \K_\ex$-matrix whose principal part is skew-symmetric.
\item[{\rm (ii)}] $L=(\lambda_{i,j})$ is an integer-valued skew-symmetric $\K \times \K$-matrix.
\item[{\rm (iii)}] $D=\{d_i\}_{i \in\K}$ is a family of elements in $\rl$.
\item[{\rm (iv)}] $\{M_i\}_{i \in\K}$ is a family of
simple objects such that $M_i \in \shc_{d_i}$ for $i \in\K$.
\item[{\rm (v)}] $M_i \conv M_j \simeq q^{\lambda_{i,j}} M_j \conv M_i$ for all $i,j \in\K$.
\item[{\rm (vi)}] $M_{i_1} \conv \cdots \conv M_{i_t}$ is simple for every sequence $(i_1,\ldots,i_t)$ in $J$.
\item[{\rm (vii)}] The pair  $(L,\wB)$  is  compatible in the sense of \eqref{eq: integer d} with $d=2$.
\item[{\rm (viii)}] $\lambda_{i,j} - (d_i,d_j) \in 2\Z$ for all $i,j \in\K$.
\item[{\rm (ix)}] $\displaystyle\sum_{i \in\K} b_{i,k}d_i =0$ for all $k \in\K_\ex$.
\end{enumerate}
\end{definition}
By (vi), every $M_i$ is real simple.
The integer $\la_{i,j}$ in (ii) is given by $-\Lambda(M_i,M_j)$.
Note that (viii) is redundant since it follows from $\tLa(M_i,M_j)\in\Z$.
For a $\K \times \K_\ex$-matrix $\wB$ with skew-symmetric principal part and  $D=\{d_i\}_{i \in\K}$,
we define the \emph{mutation $\mu_k(D) \in \rl^\K$ of $D$ in direction $k$ with respect to $\wB$} by
\begin{align*}
\mu_k(D)_i =d_i \ (i \neq k), \quad \mu_k(D)_k=-d_k+\sum_{b_{i,k} >0}   b_{i,k} d_i.
\end{align*}

Note that
\begin{itemize}
\item $\mu_k(\mu_k(D))=D \quad (\text{for } k \in\K_\ex)$,
\item $\big(\mu_k(L),\mu_k(\wB),\mu_k(D)\big)$ satisfies the conditions {\rm (viii)} and {\rm (ix)} for any $k \in\K_\ex$.
\end{itemize}

\begin{definition} [{\cite[Definition 5.6]{KKKO15}}] \label{def:monoidal mutation}
For $k\in\K_\ex$,
we say that a quantum monoidal seed
$\mathscr S =(\{M_i\}_{i\in\K}, L,\widetilde B, D)$
 \emph{admits a mutation in direction $k$} if
there exists a simple object  $M_k' \in \shc_{\mu_k(D)_k}$
such that
\bnum
  \item
there exist exact sequences in $\shc$
\begin{align}
&0 \to q \sodot_{b_{i,k} >0} M_i^{\snconv b_{i,k}} \to q^{m_k} M_k \conv M_k' \to
 \sodot_{b_{i,k} <0} M_i^{\snconv (-b_{i,k})} \to 0, \label{eq:ses graded mutation1} \\
&0 \to q \sodot_{b_{i,k} <0} M_i^{\snconv(-b_{i,k})} \to q^{m'_k} M_k' \conv M_k \to
  \sodot_{b_{i,k} >0} M_i^{\snconv b_{i,k}} \to 0, \label{eq:ses graded mutation2}
\end{align}
where
\begin{equation} \label{eq: mm}
m_k=\dfrac{1}{2}(d_k,\xi) +\dfrac{1}{2} \displaystyle \sum_{b_{i,k} < 0} \lambda_{k,i}b_{i,k}
\quad \text{and} \quad
m'_k=\dfrac{1}{2}(d_k,\xi) +\dfrac{1}{2} \displaystyle \sum_{b_{i,k} > 0} \lambda_{k,i}b_{i,k}
\end{equation}
with $\xi=-d_k+\sum_{b_{i,k}>0}b_{i,k}d_i$.
\item the quadruple $\mu_k(\Seed)\seteq(\{M_i\}_{i\neq k}\cup\{M_k'\},\mu_k(L),
\mu_k(\widetilde B), \mu_k(D))$ is
a quantum monoidal seed in $\shc$.
\end{enumerate}
\end{definition}
We call $\mu_k(\Seed)$ the {\em mutation} of $\Seed$ in direction $k$.

\begin{definition}[{\cite[Definition 5.8]{KKKO15}}]
The category $\shc$ is called a \em{monoidal categorification of a quantum cluster algebra $A$ over $\Z[q^{\pm1/2}]$}
if
\begin{enumerate}
\item[{\rm (i)}] the Grothendieck ring $\Z[q^{\pm1/2}]\tens_{\Z[q^{\pm1}]} K(\shc)$ is isomorphic to $A$,
\item[{\rm (ii)}] there exists a quantum monoidal seed
$\mathscr S =(\{M_i\}_{i\in\K}, L,\widetilde B, D)$ in $\shc$ such that
$[\mathscr S]\seteq(\{q^{-(d_i,d_i)/4}[M_i]\}_{i\in\K}, L, \widetilde B)$
 is the image of a quantum seed of $A$ by the isomorphism in {\rm(i)}.
\item[{\rm (iii)}] $\mathscr S$ admits successive mutations in all directions,
\end{enumerate}
\end{definition}
Note that if $\shc$ is a monoidal categorification of $A$, then any quantum seed in $A$ obtained by mutations from the initial quantum seed is
of the form $(\{q^{-(d_i,d_i)/4}[M_i]\}_{i\in\K}, L, \widetilde B)$  for some monoidal seed
$(\{M_i\}_{i\in\K}, L,\widetilde B, D)$.
Thus any quantum cluster monomial in $A$ coincides with
$q^{-(\wt(S),\wt(S))/4}[S]$ for some real simple module
$S$ in $\shc$.

\subsection{Main result of \cite{KKKO15}} In this subsection, we briefly recall the main result of \cite{KKKO15}.

\begin{definition}[{\cite[Definition 6.1]{KKKO15}}] \label{def:admissible}
A pair $(\{M_i\}_{i \in\K}, \widetilde B)$ is called \emph{admissible} if
\be
\item  $\{M_i\}_{i \in\K}$ is a commuting family of real simple  self-dual  objects of $\shc$,
\item $\widetilde B$ is an integer-valued $\K \times \K_\ex$-matrix with skew-symmetric principal part, and
\item
 for each $k \in\K$, there exists a  self-dual  simple object $M'_k$ in $\shc$
 such that there is an exact sequence in $\shc$
\begin{align}  \label{eq:eq:ses graded mutation KLR}
0 \to q \sodot_{b_{i,k} >0} M_i^{\snconv b_{i,k}} \to q^{\tLa(M_k,M_k')} M_k \conv M_k' \to
 \sodot_{b_{i,k} <0} M_i^{\snconv (-b_{i,k})} \to 0,
\end{align}
and
$M_k'$ commutes with $M_i$  for all  $i \neq k \in\K$.
  \end{enumerate}
\end{definition}

For an admissible pair  $(\{M_i\}_{i \in\K}, \widetilde B)$, let
$\La=(\La_{i,j})_{i,j \in\K}$
be the skew-symmetric matrix
given by $\La_{i,j}=\Lambda(M_i,M_j)$.
and let $D=\{d_i\}_{ i \in\K}$ be the family of elements of $\rl^-$ given by $d_i=\wt(M_i)$.

\begin{theorem}[{\cite[Theorem 6.3, Corollary 6.4]{KKKO15}}] Let $\mathscr S=(\{M_i\}_{i \in\K},-\La,\wB,D)$ be a quantum monoidal seed.
 We assume the following:
\begin{eqnarray*}&&
\parbox{80ex}{The $\Q(q^{1/2})$-algebra $\Q(q^{1/2})\tens\limits_{\Z[q^{\pm1}]}K(\shc)$
is isomorphic to $\Q(q^{1/2})\tens\limits_{\Z[q^{\pm1/2}]}\mathscr A_{q^{1/2}}([\Seed])$.
}
\end{eqnarray*}
If the pair $(\{M_i\}_{i \in\K},\widetilde B)$ is admissible, then
$\mathscr S$ admits successive mutations in all directions
and the category $\shc$ is
a monoidal categorification of $\mathscr{A}_{q^{1/2}}([\Seed])$.
\end{theorem}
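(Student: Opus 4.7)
The plan is to verify, in turn, the two defining properties of a monoidal categorification (Definition~5.8): item (i) on the Grothendieck ring is given by hypothesis, item (ii) is built into the data, so the real content is item (iii)---that $\mathscr S$ admits successive mutations in every direction $k\in\Kex$.

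First I would show that $\mathscr S$ admits a single mutation in each direction $k\in\Kex$. The admissibility hypothesis furnishes a self-dual simple $M_k'$ together with the first exact sequence \eqref{eq:eq:ses graded mutation KLR}. To produce the companion sequence \eqref{eq:ses graded mutation2}, I would apply the duality functor $(-)^*$ to \eqref{eq:eq:ses graded mutation KLR}; using \eqref{eq:dualconv} repeatedly on both the convolution $M_k\conv M_k'$ and on each $\sodot$-product, together with self-duality of the $M_i$ and $M_k'$, the dualized sequence rearranges into the form \eqref{eq:ses graded mutation2}, and a direct bookkeeping of quadratic forms in the weights shows the shifts match the prescribed $m_k,m_k'$ of \eqref{eq: mm}. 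Next, I would verify that the mutated quadruple $\mu_k(\Seed)=(\{M_i\}_{i\ne k}\cup\{M_k'\},\mu_k L,\mu_k\widetilde B,\mu_k D)$ satisfies all of (i)--(ix) of Definition~\ref{def:quantum monoidal seed}: commutativity of $M_k'$ with each $M_i$ ($i\ne k$) is part of admissibility; the simplicity of arbitrary convolution products among the new family is checked using Lemma~\ref{lem: d=0}; the compatibility of $(\mu_k L,\mu_k\widetilde B)$ with $d=2$ and the weight relations (viii), (ix) are purely combinatorial consequences of the original compatibility and of the mutation formulas.

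The crucial step is then to propagate admissibility under iterated mutations, i.e.\ to prove that the mutated pair $(\{M_i\}_{i\ne k}\cup\{M_k'\},\mu_k\widetilde B)$ is itself admissible, so that the construction can be repeated indefinitely. The strategy I would follow is to work through the quantum cluster algebra $A_{q^{1/2}}([\Seed])$: once the classes of the new seed are identified with $[\mu_k(\Seed)]$ in the sense of (ii), the quantum exchange relations in $A_{q^{1/2}}$ force the required numerical identities in $K(\shc)$ of the form $[N][N']=q^a[X]+q^b[Y]$ for the candidate next-mutation variables. The linchpin is Lemma~\ref{lem:short}: such an identity in $K(\shc)$, combined with the fact that one of $N,N'$ is real, automatically upgrades to an actual short exact sequence $0\to q^aX\to N\conv N'\to q^bY\to 0$ in $\shc$ with $\de(N,N')=1$, which is precisely \eqref{eq:eq:ses graded mutation KLR} at the next step.

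The main obstacle will be in this last propagation step: at a general mutated seed one does not a priori know that the candidate new exchangeable module $(M_j')'$ is self-dual simple, since the identifications with classes in $A_{q^{1/2}}(\n(w))$ are only up to a power of $q^{1/2}$. I would handle this by induction on the number of mutations, using the quantum Laurent phenomenon of \cite{BZ05} to express every new cluster variable as a Laurent polynomial in the preceding cluster, and then using Theorem~\ref{thm:categorification 2} (the correspondence between the upper global basis and self-dual simples) to conclude self-duality. Once admissibility is propagated, the remaining claim---that $\shc$ is a monoidal categorification of $\mathscr A_{q^{1/2}}([\Seed])$---follows from the fact that every cluster monomial, being obtained from $\{[M_i]\}_{i\in\K}$ via successive mutations, is of the form $q^{-(\wt(S),\wt(S))/4}[S]$ for a real simple self-dual $S\in\shc$.
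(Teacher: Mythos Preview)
The paper does not contain a proof of this theorem; it is quoted verbatim from \cite[Theorem~6.3, Corollary~6.4]{KKKO15} as the ``Main result of \cite{KKKO15}'' and is used as a black box in Section~6. So there is no in-paper proof to compare against, only the architecture of \cite{KKKO15} as reflected in the lemmas recalled here.

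That said, your outline has the right shape up to the propagation step, and then contains a genuine gap. Your proposed resolution of ``the main obstacle''---invoking the quantum Laurent phenomenon together with Theorem~\ref{thm:categorification 2} to conclude that the next-step mutation candidate $(M_j')'$ is a self-dual simple---does not work. The Laurent phenomenon only tells you that the new cluster variable is a Laurent polynomial in the previous cluster; it says nothing about that element lying in the upper global basis, so Theorem~\ref{thm:categorification 2} gives you no purchase. Indeed, the statement ``every cluster variable is a global basis element'' is exactly Conjecture~\ref{conj:intro}, which is the \emph{output} of this whole machinery, not an input you may feed back into the induction. Relatedly, your appeal to Lemma~\ref{lem:short} at the inductive step presupposes that both factors $N,N'$ are already simple (with one real); at that stage you know $M_j$ is real simple but you do not yet have a module playing the role of $N'$---only a class in $K(\shc)$ produced by the exchange relation.

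The approach actually taken in \cite{KKKO15} is categorical rather than via the Laurent phenomenon: one first proves that the object $M_k'$ supplied by admissibility is \emph{real} (this is the crux, and uses the length-two structure of $M_k\conv M_k'$ coming from $\de(M_k,M_k')=1$, cf.\ Proposition~\ref{Prop: l2}), so that the mutated family $\{M_i\}_{i\ne k}\cup\{M_k'\}$ again consists of pairwise-commuting real simple self-dual modules. The next-step candidate $(M_j)'$ is then constructed inside $\shc$ (as a head of an explicit convolution), and its self-duality and the required exact sequence are verified directly from the head/socle theory and the $\Lambda$-calculus, not by identifying a cluster variable with a global basis element. Once admissibility is shown to be stable under a single mutation, iteration is immediate.
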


\section{Monoidal categorification of $A_q(\n(w))$}

\subsection{Quantum cluster algebra structure on $A_q(\n(w))$} \label{subsec: Q cluster Anw}
In this subsection, we shall consider the Kac-Moody algebra
$\g$ associated with a symmetric Cartan matrix $\cmA=(a_{i,j})_{i,j\in I}$.
We shall recall briefly the definition of the subalgebra $A_q(\n(w))$ of $A_q(\g)$ and its quantum cluster algebra structure by using
the results of \cite{GLS11} and \cite{Kimu12}.
Remark that we bring the results in \cite{GLS11}
through the isomorphism in \eqref{eq:isoAqn}.
\medskip

For a given $w \in W$, fix a reduced expression $\widetilde{w}=s_{i_r}\cdots s_{i_1}$.
For $s \in \{ 1,\ldots,r \}$ and $j \in I$, we set
\begin{equation} \label{eq: s+ s-}
\begin{aligned}
s_+&\seteq\min( \{k \ | \ s<k\le r,\; i_k=i_s \}\cup\{r+1\}),\\
s_-&\seteq\max(\{k \ | \ 1\le s<k,\; i_k=i_s \}\cup\{0\}),\\
s^-(j)&\seteq\max(\{k \ | \ 1\le k<s,\; i_k=j \}\cup\{0\}).
\end{aligned}
\end{equation}
For $1\le k\le r$, set
\eq
\text{$\la_k\seteq u_k\varpi_{i_k}$ where $u_k\seteq s_{i_1}\cdots s_{i_k}$.}\label{eq:lau}
\eneq
Note that $\la_{k_- }=u_{k-1}\varpi_{i_k}$.
For $0\le t\le s\le r$, we set
\begin{align} \label{eq: D(s,t)}
\D(s,t)&=
\bc
\D(\lambda_s,\lambda_t) & \text{if $0<t$,}\\
\D(\lambda_s,\varpi_{i_s})& \text{if $0=t<s\le r$,}\\
\one&\text{if $t=s=0$.}
\ec
\end{align}

 The $\Q(q)$-subalgebra of  $A_q(\n)$ generated by
$\D(i,i_-)$ ($1\le i\le r$) is independent of the choice of a reduced expression of $w$. We denote it by $A_q(\n(w))$.
Then every $\D(s,t)$ ($0\le t\le s\le r$) is contained in $A_q(\n(w))$
(\cite[Corollary 12.4]{GLS11}).
The set $\B^\up\bl A_q(\n(w))\br\seteq\B^\up\bl A_q(\g)\br\cap A_q(\n(w))$
is a basis of $A_q(\n(w))$ as a $\Q(q)$-vector space
(\cite[Theorem 4.2.5]{Kimu12}).
We call it the upper global basis of $A_q(\n(w))$.
 We denote by $A_q(\n)_\A$ the $\A$-module generated by $\B^\up\bl A_q(\n(w))$.
Then it is a $\A$-subalgebra of $A_q(\n(w))$ (\cite[\S\,4.7.2]{Kimu12}).
We set $A_{q^{1/2}}(\n(w)) \seteq \Q(q^{1/2})  \otimes_{\Q(q)}  A_{q}(\n(w))$.

\medskip
We set $\K=\{1,\ldots,r\}$,
$\Kfr\seteq\set{k\in\K}{k_+=r+1}$ and $\Kfr\seteq\K\setminus\Kfr$.
\begin{definition} We define the quiver $Q$
with the set of vertices $Q_0$ and the set of
arrows $Q_1$ as follows:
\begin{enumerate}
\item[$(Q_0)$] 
$Q_0=\K=\{1,\ldots,r\}$,
\item[$(Q_1)$] There are two types of arrows:
\eqn
&&\ba{lccl}\text{{\em ordinary arrows}}&:&\text{$s\To[\ |a_{i_s,i_t}|\ ] t$}
&\text{if $1\le s<t<s_+<t_+\le r+1$,}\\[1ex]
\text{{\em horizontal arrows}}&:&
\text{$s\longrightarrow s_-$}&
\text{if $1\le s_-<s\le r$.} 
\ea
\eneqn
\end{enumerate}
Let $\widetilde B = (b_{i,j})$ be the integer-valued $\K\times\Kex$-matrix associated to the quiver $Q$ by \eqref{eq: bij}.
\end{definition}

\begin{lemma}
Assume that $0 \le d\le b\le a\le c \le r$ and
\begin{itemize}
\item $i_b=i_a$ if $b \ne 0$,
\item $i_d=i_c$ if $d \ne 0$.
\end{itemize}
Then $\D(a,b)$ and $\D(c,d)$ $q$-commute; that is, there exists $\lambda \in \Z$ such that
$$ \D(a,b)\D(c,d)= q^{\lambda}\D(c,d)\D(a,b).$$
\end{lemma}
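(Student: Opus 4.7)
The plan is to obtain the $q$-commutation as a direct specialization of Proposition~\ref{prop: BZ form}~(ii). Adopting the convention $u_0 = e$ and interpreting $u_0\varpi_{i_a}$ as $\varpi_{i_a}$ (which is consistent with the definition $\D(a,0) = \D(\lambda_a,\varpi_{i_a})$), the hypothesis $i_b = i_a$ gives $\lambda_b = u_b\varpi_{i_b} = u_b\varpi_{i_a}$, and similarly $\lambda_d = u_d\varpi_{i_c}$. Hence the two minors can be rewritten in the symmetric form
$$\D(a,b) = \D(u_a\varpi_{i_a},\, u_b\varpi_{i_a}), \qquad \D(c,d) = \D(u_c\varpi_{i_c},\, u_d\varpi_{i_c}).$$

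I would then invoke Proposition~\ref{prop: BZ form}~(ii) with the choice
$$\lambda = \varpi_{i_c},\quad \mu = \varpi_{i_a},\quad s' = u_a,\quad s = u_a^{-1}u_c,\quad t' = u_d,\quad t = u_d^{-1}u_b,$$
so that $s's = u_c$ and $t't = u_b$. Since the four elements $u_d, u_b, u_a, u_c$ are successive prefixes of the fixed reduced expression $\widetilde w = s_{i_r}\cdots s_{i_1}$, both length-additivity hypotheses $\ell(s's) = \ell(s') + \ell(s)$ and $\ell(t't) = \ell(t') + \ell(t)$ are automatic. The further comparability conditions $s's\lambda \preceq t'\lambda$ and $s'\mu \preceq t't\mu$ required in part~(ii) translate to $u_d \le u_c$ and $u_b \le u_a$ in Bruhat order, which again follow from $d \le b \le a \le c$.

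Plugging these data into Proposition~\ref{prop: BZ form}~(ii) then produces
$$\D(c,d)\,\D(a,b) = q^{\,(u_c\varpi_{i_c} + u_d\varpi_{i_c},\; u_a\varpi_{i_a} - u_b\varpi_{i_a})}\,\D(a,b)\,\D(c,d),$$
which is exactly the asserted $q$-commutation, with an explicit exponent. There is no substantial obstacle: the whole argument is a matter of matching notation and noticing that the hypotheses $d \le b \le a \le c$, together with the coincidence of the $i$-indices at the endpoints of the intervals $[b,a]$ and $[d,c]$, put the pair $\bigl(\D(a,b),\D(c,d)\bigr)$ precisely in the situation governed by the quantum Plücker-type relation of Proposition~\ref{prop: BZ form}~(ii).
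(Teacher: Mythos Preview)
Your proof is correct and follows exactly the same approach as the paper's own proof: the paper makes the identical choices $s'=u_a$, $s=u_a^{-1}u_c$, $t'=u_d$, $t=u_d^{-1}u_b$ and invokes Proposition~\ref{prop: BZ form}. You simply spell out more of the verification (length additivity, the Bruhat comparisons yielding the $\preceq$ hypotheses, and the explicit exponent), whereas the paper is content with one line; but the substance is the same.
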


\begin{proof}
We may assume $a>0$.
Let $u_k$ be as in \eqref{eq:lau}.
Take $s'=u_a$, $s=u_a^{-1}u_c$, $t'=u_d$ and $t=u_d^{-1}u_b$. Then we have
$$ \D(s'\varpi_{i_a},t't\varpi_{i_a})=\D(a,b)\quad \text{and}\quad
  \D(s's\varpi_{i_c},t'\varpi_{i_c})=\D(c,d).$$
From Proposition \ref{prop: BZ form}, our assertion follows.
\end{proof}

Hence we have an integer-valued skew-symmetric matrix $L=(\lambda_{i,j})_{1 \le i,j \le r}$ such that
$$ \D(i,0)\D(j,0)=q^{\lambda_{i,j}}\D(j,0)\D(i,0).$$

\begin{proposition} [{\cite[Proposition 10.1]{GLS11}}] The pair $(L,\wB)$ is compatible with $d=2$ in \eqref{eq: integer d}.
\end{proposition}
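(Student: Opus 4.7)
The plan is to verify
$$\sum_{k\in\K}\lambda_{i,k}\,b_{k,j}=2\delta_{i,j}\qquad(i\in\K,\ j\in\Kex)$$
by direct computation, assembling the left-hand side from an explicit formula for $\lambda_{i,k}$ together with an explicit description of the column $b_{\,\cdot\,,j}$ of $\wB$.

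First, I would derive a closed formula for $\lambda_{i,k}$ from Proposition~\ref{prop: BZ form}(ii). Since the expression $\widetilde w=s_{i_r}\cdots s_{i_1}$ is reduced, the chain $u_1<u_2<\cdots<u_r$ is Bruhat-increasing, so for $i<k$ the factorization $u_k=u_i\cdot(u_i^{-1}u_k)$ is length-additive. Plugging $\lambda=\varpi_{i_k}$, $\mu=\varpi_{i_i}$, $s'=u_i$, $s=u_i^{-1}u_k$, $t'=t=e$ into the proposition yields
$$\lambda_{i,k}=-(\lambda_k+\varpi_{i_k},\;\lambda_i-\varpi_{i_i})\qquad(i<k),$$
while skew-symmetry handles $i>k$ and $\lambda_{i,i}=0$ is immediate.

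Next, I would read off the $j$-th column of $\wB$ from the quiver: for $j\in\Kex$ the non-zero entries are $b_{j_+,j}=+1$ and $b_{j_-,j}=-1$ (the latter only when $j_->0$) from horizontal arrows, together with $b_{s,j}=+|a_{i_s,i_j}|$ for every $s$ with $s<j<s_+<j_+$ and $b_{t,j}=-|a_{i_j,i_t}|$ for every $t$ with $j<t<j_+<t_+$ from ordinary arrows. Substitution rewrites $\sum_k\lambda_{i,k}b_{k,j}$ as a horizontal telescoping piece $\lambda_{i,j_+}-\lambda_{i,j_-}$ plus two ordinary-arrow sums, all of them linear combinations of inner products of the above form.

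The key simplification is the identity $\lambda_k-\lambda_{k_-}=u_{k-1}(s_{i_k}-1)\varpi_{i_k}=-u_{k-1}\alpha_{i_k}$, which collapses the horizontal telescope to a single simple-root term. Combined with the identity $\sum_{\ell\ne i_j}(-a_{i_j,\ell})\varpi_\ell=\varpi_{i_j}+s_{i_j}\varpi_{i_j}$ applied to the ordinary-arrow sums and with $W$-invariance of $(\cdot,\cdot)$, the whole inner-product expression collapses to a short boundary contribution. The hard part is the bookkeeping of cases based on the position of $i$ relative to $j,j_-,j_+$ (including whether any of $j_\pm$ lies outside $\{1,\ldots,r\}$), since each regime activates a different subset of the above terms; off the diagonal all contributions cancel, while at $i=j$ the surviving term evaluates to $(\alpha_{i_j},\alpha_{i_j})=2$. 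This matches the value $d=2$ asserted in the proposition and, at the level of commutation exponents, mirrors the T-system of Proposition~\ref{prop: the ses} viewed as the cluster exchange relation in direction $j$.
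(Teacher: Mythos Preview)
The paper does not give its own proof of this proposition: it is quoted verbatim from \cite[Proposition~10.1]{GLS11} and simply cited. Your direct computation---extracting $\lambda_{i,k}$ from Proposition~\ref{prop: BZ form}(ii), reading off the column of $\wB$ from the quiver, and collapsing the sum via the telescoping identity $\lambda_k-\lambda_{k_-}=-u_{k-1}\alpha_{i_k}$---is precisely the method used in \cite{GLS11}, so there is nothing to compare against within the present paper.

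One small correction of phrasing: the relevant fact about the $u_k$ is length-additivity of $u_k=u_i\cdot(u_i^{-1}u_k)$, which follows immediately from $s_{i_1}\cdots s_{i_r}$ being reduced; calling the chain ``Bruhat-increasing'' is true but not quite the property you use. Otherwise the outline is sound, and your honest remark that the case analysis (position of $i$ relative to $j_-,j,j_+$ and the boundary cases $j_-=0$) is the bookkeeping burden is exactly right---this is where the argument in \cite{GLS11} spends its effort as well.
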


\begin{theorem}[{\cite[Theorem 12.3]{GLS11}}] Let
$\mathscr{A}_{q^{1/2}}([\mathscr{S}])$ be the quantum cluster algebra associated to the initial quantum seed
$[\mathscr{S}]\seteq( \{ q^{-(d_s,d_s)/4}\D(s,0) \}_{1 \le s \le r}, L, \wB )$.
Then we have an isomorphism
$$ \Q(q^{1/2})  \otimes_{\Z[q^{\pm 1/2}]} \mathscr{A}_{q^{1/2}}([\mathscr{S}]) \simeq   A_{q^{1/2}}(\n(w)),$$
where $d_s \seteq \lambda_s-\varpi_{i_s}=\wt(D(s,0))$ and
$A_{q^{1/2}}(\n(w)) \seteq \Q(q^{1/2})  \otimes_{\Q(q)}  A_{q}(\n(w))$.
\end{theorem}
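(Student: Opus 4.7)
The plan is to establish the two inclusions between $\mathscr{A}_{q^{1/2}}([\mathscr{S}])$ and $A_{q^{1/2}}(\n(w))$, following the strategy of \cite{GLS11}. First I would verify that $[\mathscr{S}]$ really is a quantum seed: the compatibility of $(L,\wB)$ with $d=2$ is the preceding proposition, the $L$-commutation of the family $\{\D(s,0)\}_{1\le s\le r}$ was established in the preceding lemma, and the algebraic independence of the $\D(s,0)$ follows from Lemma~\ref{lem: global element} together with the observation that these are distinct members of the upper global basis whose weights $\lambda_s-\varpi_{i_s}$ are linearly independent in $\rl$ (using that $\widetilde{w}=s_{i_r}\cdots s_{i_1}$ is reduced).

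For the inclusion $\mathscr{A}_{q^{1/2}}([\mathscr{S}])\subset A_{q^{1/2}}(\n(w))$, I would argue by induction on the length of a mutation sequence. Each cluster variable in $[\mathscr{S}]$ lies in $A_q(\n(w))$ by definition; for each exchangeable $k$, the quantum exchange relation expresses the new cluster variable as a Laurent monomial expression in the old cluster variables, with coefficients in $\Z[q^{\pm 1/2}]$. Since $A_{q^{1/2}}(\n(w))$ is a $\Z[q^{\pm 1/2}]$-subalgebra of the ambient skew field closed under the operations involved, this inclusion is preserved under mutation.

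For the reverse inclusion $A_{q^{1/2}}(\n(w)) \subset \mathscr{A}_{q^{1/2}}([\mathscr{S}])$, by the definition of $A_q(\n(w))$ it suffices to produce each generator $\D(k,k_{-})$ as a cluster variable of $\mathscr{A}_{q^{1/2}}([\mathscr{S}])$. For this I would exhibit, for each $k\in\K$, an explicit finite sequence of mutations of $[\mathscr{S}]$ producing $\D(k,k_{-})$. The main tool is the quantum T-system of Proposition~\ref{prop: the ses}: for suitable choices of Weyl group elements $u$, $v$ and the simple reflection $s_i$ with $i=i_k$, the T-system asserts a three-term identity in $A_q(\n)$ whose shape matches the exchange relation dictated by the $k$-th column of $\wB$. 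In particular, mutating at a single exchangeable vertex $k$ replaces $\D(k,0)$ by $\D(k_+,k)$ (up to a power of $q^{1/2}$ determined by Proposition~\ref{prop: BZ form}), and iterating this one-step process yields every $\D(s,t)$ with $0 \le t < s \le r$ and $i_s = i_t$ as a cluster variable. The required generators $\D(k,k_{-})$ are obtained as a special case.

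The main obstacle will be the combinatorial identification of the exchange matrix $\wB$, encoded by the quiver $Q$, with the multiplicities appearing in the T-system. The ordinary arrows $s\to t$ of multiplicity $|a_{i_s,i_t}|$ (for $s<t<s_+<t_+$) must correspond exactly to the factor $\prod_{j\ne i_k}\D(u\varpi_j,v\varpi_j)^{-a_{i_k,j}}$ appearing on one side of the T-system, while the horizontal arrows $s\to s_-$ encode the partner factor $\D(us_i\varpi_i,v\varpi_i)\,\D(u\varpi_i,vs_i\varpi_i)$. This matching, together with the quasi-commutation formulas of Proposition~\ref{prop: BZ form}, should close the argument by showing that the quantum exchange relation coincides with the T-system relation up to the prescribed powers of $q^{1/2}$.
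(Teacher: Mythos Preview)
The paper does not prove this theorem at all: it is quoted verbatim from \cite[Theorem~12.3]{GLS11} and used as a black box. So there is no proof here to compare your proposal against.

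That said, your sketch has a genuine gap in the inclusion $\mathscr{A}_{q^{1/2}}([\mathscr{S}])\subset A_{q^{1/2}}(\n(w))$. You write that $A_{q^{1/2}}(\n(w))$ is ``closed under the operations involved'' in mutation, but the quantum exchange relation requires \emph{inverting} the old cluster variable $x_k$, and $A_{q^{1/2}}(\n(w))$ is not a skew field: it is not closed under inversion. So an induction on the length of the mutation sequence, as you describe it, does not go through. In \cite{GLS11} this direction is the substantial one; it is handled by showing that the quantum cluster algebra coincides with its upper bound (an intersection of quantum tori attached to neighbouring seeds), together with an Ore-localization argument identifying that upper bound with $A_{q^{1/2}}(\n(w))$. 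None of this is visible in your sketch.

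For the other inclusion your idea is correct in outline: one does produce the generators $\D(k,k_-)$ as cluster variables via an explicit mutation sequence whose exchange relations are instances of the quantum $T$-system. But your specific claim that a \emph{single} mutation at $k$ in the initial seed replaces $\D(k,0)$ by $\D(k_+,k)$ is not right; the $T$-system identity of Proposition~\ref{prop: the ses} with $v=1$ does not match the $k$-th column of $\wB$ in the initial seed. In \cite{GLS11} one first performs a carefully ordered sequence of mutations so that at each step the current neighbours of the mutating vertex are already the minors appearing in the relevant $T$-system identity. Getting this bookkeeping right is exactly the ``combinatorial identification'' you flag as the main obstacle, and it is more delicate than your one-step description suggests.
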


\subsection{Admissible seeds in the monoidal category $\shc_w$}

 For $0\le t\le s\le r$, we set
$\M(s,t)=\M(\la_s,\la_t)$.
It is a real simple module with $\ch(\M(s,t))=D(s,t)$.

\begin{definition} For $w \in W$, let $\shc_w$ be the smallest monoidal abelian full subcategory of $R \gmod$ satisfying the following properties:
\begin{enumerate}
\item[{\rm (a)}] It is stable under the subquotients, extensions and grading shifts.
\item[{\rm (b)}] It contains $\M(s,s_-)$ for all $1 \le s \le \ell(w)$.
\end{enumerate}
\end{definition}

Then by \cite{GLS11}, $M\in R\gmod$ belongs to $\shc_w$ if and only if $\ch(M)$
belongs to $A_q(\n(w))$. Hence we have an $\A$-algebra isomorphism
$$K(\shc_w)\simeq A_q(\n(w))_\A.$$

We set
$$\Lambda \seteq (\La(\M(i,0),\M(j,0)))_{1 \le i,j\le r} \quad \text{ and }
\quad D= (d_i)_{1 \le i \le r} \seteq (\wt(\M(i,0)))_{1 \le i \le r}.$$

Then, by Proposition~\ref{prop: commute 1},
$\Seed \seteq ( \{ \M(k,0) \}_{1 \le k \le r}, -\Lambda, \wB, D )$
is a quantum monoidal seed in $\shc_w$.
We are now ready to state the main theorem in this subsection:

\begin{theorem} \label{thm: main}
The pair
$\bl \{ \M(k,0) \}_{1 \le k \le r}, \wB\br$ is admissible.
\end{theorem}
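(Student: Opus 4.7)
The plan is to verify the three clauses of Definition~\ref{def:admissible}. The first clause (that $\{\M(k,0)\}_{1\le k\le r}$ is a commuting family of self-dual real simple modules) is a direct consequence of Proposition~\ref{prop: commute 1}: for $i<j$, set $\la=\varpi_{i_i}$, $\mu=\varpi_{i_j}$, $s'=u_i$, $s=1$, $t'=u_j$, $t=1$, and reducedness of $\widetilde w$ forces the length conditions $\ell(s's)=\ell(s')+\ell(s)$, $\ell(t't)=\ell(t')+\ell(t)$ together with $s's\la\preceq t'\la$ and $s'\mu\preceq t't\mu$. This also identifies $\lambda_{i,j}=-\La(\M(i,0),\M(j,0))$, so the matrix $L$ of the quantum seed of \cite{GLS11} matches $-\La$.

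The main work is, for each $k\in\Kex$, to construct $\M(k,0)'$ and to establish the mutation short exact sequence \eqref{eq:eq:ses graded mutation KLR} together with its dual. Apply the $T$-system of Proposition~\ref{prop:T} with $u=u_{k_+-1}$, $v=u_{k-1}$, $i=i_k$. Under these choices $us_i\varpi_i=\lambda_{k_+}$, $u\varpi_i=\lambda_k=vs_i\varpi_i$, and $v\varpi_i=\lambda_{k_-}$, so the $T$-system produces, up to grading shifts,
\[
0\to \M(k_+,k_-)\to \M(k,k_-)\conv \M(k_+,k)\to \M(u_{k_+-1}\lambda,u_{k-1}\lambda)\to 0,
\]
with $\lambda=\varpi_{i_k}+s_{i_k}\varpi_{i_k}=-\sum_{j\ne i_k}a_{j,i_k}\varpi_j$. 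By Corollary~\ref{cor:Duv} the right-hand end factorizes, up to a $q$-power, as $\sodot_{j\ne i_k}\M(u_{k_+-1}\varpi_j,u_{k-1}\varpi_j)^{\snconv(-a_{j,i_k})}$, and for each $j$ the module $\M(u_{k_+-1}\varpi_j,u_{k-1}\varpi_j)$ coincides with some $\M(s,t)$ where $s$ (resp.\ $t$) is the largest index $\le k_+-1$ (resp.\ $\le k-1$) of color $j$. Repeated application of Theorem~\ref{thm: canonical surjection}, together with pairwise commutations from Proposition~\ref{prop: commute 1}, converts each $\M(s,t)$ into $\M(s,0)\sodot\M(t,0)^{-1}$ (in the precise sense that $\M(s,t)\sodot\M(t,0)\simeq\M(s,0)$), and the indices arising are exactly those labeling the positive and negative neighbors of $k$ in the quiver $Q$: ordinary arrows produce the contributions from $s$ with $s<k<s_+<k_+$ and from $t$ with $k<t<k_+<t_+$, while horizontal arrows produce $\M(k_\pm,0)$. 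Reorganizing identifies
\[
\M(k,0)'\seteq\M(k_+,k)\sodot\bigl(\sodot_{\substack{1\le s\le r,\\ s<k<s_+<k_+}}\M(s,0)^{\snconv|a_{i_s,i_k}|}\bigr),
\]
and yields a Grothendieck-ring identity $[\M(k,0)][\M(k,0)']=q^{\bullet}[\sodot_{b_{i,k}>0}\M(i,0)^{\snconv b_{i,k}}]+q^{\bullet}[\sodot_{b_{i,k}<0}\M(i,0)^{\snconv(-b_{i,k})}]$; Lemma~\ref{lem:short} then upgrades it to the two short exact sequences \eqref{eq:ses graded mutation1}--\eqref{eq:ses graded mutation2} with the precise $q$-shifts $m_k$, $m_k'$ of Definition~\ref{def:monoidal mutation}.

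To conclude, $\M(k,0)'$ must be self-dual real simple and commute with every $\M(j,0)$, $j\ne k$. The first follows from Lemma~\ref{lem: tLa} once each factor $\M(s,0)$ is shown to commute with $\M(k_+,k)$, again by Proposition~\ref{prop: commute 1} applied with $\lambda=\varpi_{i_s}$, $\mu=\varpi_{i_k}$ and the Weyl-group data extracted from the reduced expression. For the commutation with $\M(j,0)$, the $\M(s,0)$-factors are treated by the first paragraph; for the factor $\M(k_+,k)$, Proposition~\ref{prop:dMM} gives $\de(\M(j,0),\M(k_+,k))\le 1$, and the case $\de=1$ is excluded by Proposition~\ref{Prop: l2}: it would produce a length-two module whose composition factors have weights incompatible with the combinatorial description of the arrows of $Q$ at $j$ and $k_+$.

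The principal obstacle is the reorganization in the second paragraph: matching the factorization of $\M(u_{k_+-1}\lambda,u_{k-1}\lambda)$ via Theorem~\ref{thm: canonical surjection} with the precise right-hand side of the mutation formula, and tracking the exact $q$-exponents $m_k$, $m_k'$, requires a systematic use of the additivity relations $\La(L,M\hconv N)=\La(L,M)+\La(L,N)$ from Proposition~\ref{prop:3simple} together with the commutation coefficients of Proposition~\ref{prop: commute 1}\,(iii); a secondary difficulty, ruling out $\de(\M(j,0),\M(k_+,k))=1$ uniformly in $j\ne k$, requires a case analysis based on the relative positions of $j$, $k$, $k_+$ in $\widetilde w$.
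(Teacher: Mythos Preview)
Your second paragraph contains a genuine gap. The ``reorganization'' rests on the assertion that $\M(s,t)\sodot\M(t,0)\simeq\M(s,0)$, but Theorem~\ref{thm: canonical surjection} gives only the surjection $\M(s,t)\conv\M(t,0)\twoheadrightarrow\M(s,0)$; for the $\sodot$-notation even to make sense the two factors must commute, and they generally do not. The very first conversion you need---turning $\M(k,k_-)$ into $\M(k,0)$---already fails when $k_->0$: Proposition~\ref{prop:deMM} (with $x=u_{k-1}$, $i=i_k$) gives $\de\bl\M(k,k_-),\M(k_-,0)\br=1$, so $\M(k,k_-)\conv\M(k_-,0)$ has length two and $[\M(k,k_-)][\M(k_-,0)]\ne q^{\bullet}[\M(k,0)]$ in $K(R\gmod)$. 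The same obstruction recurs for the factors $\M((k_+)^-(j),k^-(j))$. Hence the $T$-system identity cannot be multiplied through to the mutation identity in the Grothendieck ring, and Lemma~\ref{lem:short} (which requires that identity with simple summands as input) is inapplicable.

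The paper takes the same candidate $X=\M(s_+,s)\hconv A$ but does not try to massage the $T$-system algebraically into the mutation relation. Instead it builds the two surjections
\[
X\conv\M(s,0)\twoheadrightarrow\conv_{b_{t,s}>0}\M(t,0)^{\circ b_{t,s}}
\qquad\text{and}\qquad
\M(s,0)\conv X\twoheadrightarrow\conv_{b_{t,s}<0}\M(t,0)^{\circ(-b_{t,s})}
\]
directly as compositions of explicit maps (Theorem~\ref{thm: canonical surjection}, the $T$-system, Lemma~\ref{lem:MN}), using \cite[Corollary~3.11]{KKKO14} to guarantee the composites are nonzero; for the second one an additional step (Proposition~\ref{prop:3simple}) is needed to show the map factors through the head $\M(s,0)\conv X$. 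The equality $\de(X,\M(s,0))=1$ is proved by bounding $\de\le1$ via subadditivity and then excluding $\de=0$ with the primality result of \cite{GLS13}---an external ingredient your outline does not invoke. Length two (Proposition~\ref{Prop: l2}) plus dualization of the first surjection then yields the exact sequence. Finally, the commutation of $X$ with every $\M(j,0)$, $j\ne s$, is obtained not by a case analysis on $\de\bl\M(j,0),\M(k_+,k)\br$ (your proposed route, which you rightly flag as incomplete) but by a one-line computation: using additivity of $\La$ on the already-established simple products one finds $2\de\bl\M(j,0),X\br=-\sum_t\La\bl\M(j,0),\M(t,0)\br b_{t,s}=2\delta_{j,s}$, which is precisely the compatibility of $(L,\wB)$ from \cite[Proposition~10.1]{GLS11}.
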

As we already explained, this theorem implies
Theorem~\ref{th:main} in Introduction.
\begin{theorem}
The category $\shc_w$ is a monoidal categorification of the quantum cluster algebra $A_{q^{1/2}}(\n(w))$.
\end{theorem}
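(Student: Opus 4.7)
The plan is to deduce this theorem as an immediate consequence of Theorem \ref{thm: main} combined with the GLS quantum cluster algebra structure on $A_q(\n(w))$ and the general criterion Theorem \ref{th:main} from \cite{KKKO15} recalled in the introduction. In other words, once admissibility of the initial pair has been established (which is the substantive content, handled by the preceding theorem), the monoidal categorification statement follows by assembling pieces that are already available.

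The first step is to recall that $\Seed \seteq (\{\M(k,0)\}_{1\le k\le r}, -\La, \wB, D)$ is a quantum monoidal seed in $\shc_w$ in the sense of Definition \ref{def:quantum monoidal seed}. The defining conditions (i)--(iv) and (ix) are built into the construction, the commutation and simplicity conditions (v) and (vi) come from Proposition \ref{prop: commute 1}, compatibility (vii) of $(-\La,\wB)$ with $d=2$ is the result of Gei\ss--Leclerc--Schr\"oer cited as \cite[Proposition 10.1]{GLS11}, and (viii) is automatic since $\tLa(M_i,M_j)\in\Z$. Applying Theorem \ref{thm: main}, the pair $(\{\M(k,0)\}_{1\le k\le r},\wB)$ is admissible in the sense of Definition \ref{def:admissible}.

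The second step is to match the initial data with the GLS initial quantum seed. By construction, the Grothendieck ring isomorphism $K(\shc_w)\simeq A_q(\n(w))_\A$ sends $[\M(k,0)]$ to $D(k,0)$, so the tuple
\[
 [\Seed] = \bl\{q^{-(d_i,d_i)/4}[\M(i,0)]\}_{1\le i\le r},\,-\La,\,\wB\br
\]
coincides, after base change to $\Q(q^{1/2})$, with the initial quantum seed
$(\{q^{-(d_s,d_s)/4}D(s,0)\}_{1\le s\le r},L,\wB)$ of \cite[Theorem~12.3]{GLS11} (noting $L=-\La$ by the very definition of $\la_{i,j}$). Consequently
\[
\Q(q^{1/2})\otimes_{\Z[q^{\pm1}]}K(\shc_w)\;\simeq\;\Q(q^{1/2})\otimes_{\Z[q^{\pm1/2}]}\mathscr A_{q^{1/2}}([\Seed])\;\simeq\; A_{q^{1/2}}(\n(w)).
\]

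The third and final step is to invoke Theorem \ref{th:main} (the main theorem of \cite{KKKO15} recalled in the introduction): once an admissible pair whose image in the Grothendieck ring is an initial quantum seed is produced, the ambient monoidal category is automatically a monoidal categorification of the corresponding quantum cluster algebra. All hypotheses have been verified in the two previous steps, so we conclude that $\shc_w$ is a monoidal categorification of $A_{q^{1/2}}(\n(w))$. The only genuine difficulty in this argument is the admissibility statement (Theorem \ref{thm: main}), whose proof is the technical heart of the paper and has already been carried out; the present theorem is essentially a packaging of that result together with \cite{GLS11} and \cite{KKKO15}.
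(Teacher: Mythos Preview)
Your proposal is correct and follows exactly the approach the paper takes: the paper simply states that Theorem~\ref{thm: main} implies this result via Theorem~\ref{th:main} in the Introduction (i.e., \cite[Theorem 6.3, Corollary 6.4]{KKKO15}) together with the GLS quantum cluster algebra structure, without spelling out the details. You have merely made explicit the verification of the hypotheses that the paper leaves to the reader.
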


In the course of the proof of Theorem~\ref{thm: main},
{\em we omit grading shifts} if there is no afraid of confusion.

We shall start the proof of Theorem~\ref{thm: main}
by proving that, for each $s\in\Kex$,  there exists a simple module $X$ such that
\begin{eqnarray}&&\left\{
\parbox{70ex}{ \bnam
\item there exists a surjective homomorphism (up to a grading shift)
$$  X \conv \M(s,0) \twoheadrightarrow \conv_{t;\; b_{t,s}> 0 } \M(t,0)^{\circ b_{t,s} },$$
\item there exists a surjective homomorphism (up to a grading shift)
$$ \M(s,0) \conv X \twoheadrightarrow \conv_{t;\;\ b_{t,s}<0} \M(t,0)^{\circ -b_{t,s}},$$
\item $\de(X,\M(s,0))=1$.
\ee }\right. \label{eq: conditions}
\end{eqnarray}

We set
\eqn
&&x\seteq i_s\in I,\\
&&I_s\seteq\set{i_k}{s<k<s_+}\subset I\setminus\{x\},\\
&&A\seteq
  \displaystyle\conv\limits_{ \substack{t<s < t_+ < s_+ }  }
\M(t,0)^{\circ |a_{i_s,i_t}| }
= \displaystyle\conv\limits_{ y\in I_s}
\M(s^-(y),0)^{\circ |a_{x,y}| }.
\eneqn
Then $A$ is a real simple module.

Now we claim that the following simple module $X$ satisfies
the conditions in \eqref{eq: conditions}:
$$X \seteq \M(s_+,s) \hconv  A.$$

Let us show \eqref{eq: conditions} (a).
The incoming arrows to $s$ are
\begin{itemize}
\item $t \To[{|a_{x,i_t}|}]s $ for $1\le t<s<t_+<s_+$,
\item $s_+ \To s$.
\end{itemize}
Hence we have
$$\conv_{t;\;  b_{t,s} > 0 } \M(t,0)^{\circ b_{t,s}}
\simeq A\conv M(s_+,0).$$

\medskip

Then the morphism in (a)
is obtained  as the composition:
\begin{align} \label{eq: surjection 1}
X \conv \M(s,0)  \rightarrowtail A \conv \M(s_+,s) \conv \M(s,0) \twoheadrightarrow A \conv \M(s_+,0).
\end{align}
Here the second epimorphism
is given in Theorem \ref{thm: canonical surjection},  and
\cite[Corollary 3.11]{KKKO14} asserts that the composition
\eqref{eq: surjection 1} is non-zero
and hence an epimorphism.

\medskip

Let us show \eqref{eq: conditions} (b).
The outgoing arrows from $s$ are
\begin{itemize}
\item $s \To[{\;|a_{x,i_t}|\;}]t$\quad for $s<t<s_+<t_+\le r+1$.
\item $s \longrightarrow s_-$\quad if $s_- > 0$.
\end{itemize}
Hence we have
\eq
&&\conv_{t; b_{t,s}<0 } \M(t,0)^{\circ -b_{t,s}}
\simeq\M(s_-,0)\conv \left(\conv_{y\in I_s} \M((s_+)^-(y),0)^{\circ -a_{x,y}}\right).
\eneq

\begin{lemma} \label{lem: step1}
There exists an epimorphism  \ro up to a grading\rf
\begin{align} \label{eq: Omega}
\Omega: \M(s,0) \conv \M(s_+,s) \conv A \twoheadrightarrow \conv_{t;b_{t,s}<0} \M(t,0)^{\circ -b_{t,s}}.
\end{align}
\end{lemma}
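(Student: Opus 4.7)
My plan is to construct $\Omega$ as the composition of three maps: a T-system epimorphism, an iterated canonical surjection, and an injection obtained by dualizing another canonical surjection to introduce the factor $\M(s,0)$.

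First, I would apply Proposition~\ref{prop:T} (its second exact sequence) with $u = u_s s_{i_{s+1}} \cdots s_{i_{s_+-1}}$, $v = u_{s-1}$, and $i = x$; the hypotheses $u<us_x$ and $v<vs_x\le u$ follow from the subword property of the reduced expression $\widetilde{w} = s_{i_r}\cdots s_{i_1}$. Using that $s_{i_k}\varpi_y = \varpi_y$ whenever $i_k \ne y$, a direct weight computation gives $u\varpi_x = \lambda_s$, $us_x\varpi_x = \lambda_{s_+}$, $v\varpi_x = \lambda_{s_-}$, $vs_x\varpi_x = \lambda_s$, and for $y \ne x$ reduces $u\varpi_y$ and $v\varpi_y$ to $\lambda_{(s_+)^-(y)}$ and $\lambda_{s^-(y)}$ respectively---these agreeing whenever $y \notin I_s$. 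Corollary~\ref{cor:Duv} then decomposes $\M(u\lambda, v\lambda)$ as $B \seteq \sodot_{y \in I_s}\M((s_+)^-(y), s^-(y))^{\circ |a_{y,x}|}$, and Proposition~\ref{prop:T} produces (up to grading) the short exact sequence
\begin{equation*}
0 \to \M(s_+, s_-) \to \M(s, s_-) \conv \M(s_+, s) \to B \to 0.
\end{equation*}

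Next, I would convolve the resulting epimorphism $\M(s, s_-) \conv \M(s_+, s) \twoheadrightarrow B$ with $A$ on the right and $\M(s_-, 0)$ on the left. For each $y \in I_s$ with $|a_{x,y}|>0$, Theorem~\ref{thm: canonical surjection} applied to the chain $\lambda_{(s_+)^-(y)} \preceq \lambda_{s^-(y)} \preceq \varpi_y$ yields $\M((s_+)^-(y), s^-(y)) \conv \M(s^-(y), 0) \twoheadrightarrow \M((s_+)^-(y), 0)$, and Lemma~\ref{lem:MN}---whose hypotheses follow from the commutation properties of the cluster family $\{\M(k,0)\}$ established in Proposition~\ref{prop: commute 1}---assembles these into a surjection $B \conv A \twoheadrightarrow C$, where $C \seteq \sodot_{y \in I_s}\M((s_+)^-(y), 0)^{\circ|a_{x,y}|}$. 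Composing gives
\begin{equation*}
\M(s_-, 0) \conv \M(s, s_-) \conv \M(s_+, s) \conv A \twoheadrightarrow \M(s_-, 0) \conv C.
\end{equation*}

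To replace the leading factor $\M(s_-, 0) \conv \M(s, s_-)$ by $\M(s,0)$, I would dualize the canonical surjection $\M(s, s_-) \conv \M(s_-, 0) \twoheadrightarrow \M(s, 0)$ of Theorem~\ref{thm: canonical surjection}. Combined with \eqref{eq:dualconv} and the self-duality of the determinantial modules, this gives an injection $\M(s, 0) \hookrightarrow q^c\,\M(s_-, 0) \conv \M(s, s_-)$ for some integer $c$. Convolving with $\M(s_+, s) \conv A$ on the right and composing with the surjection of the previous step produces a map
\begin{equation*}
\Omega\cl \M(s, 0) \conv \M(s_+, s) \conv A \to q^c\, \M(s_-, 0) \conv C,
\end{equation*}
whose target is a simple module (a convolution of pairwise commuting real simples in the cluster). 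Hence $\Omega$ will be surjective as soon as it is non-zero.

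The hard part will be verifying this non-vanishing of $\Omega$. I expect it to follow from \cite[Corollary~3.11]{KKKO14}, in the same spirit as the proof of condition \eqref{eq: conditions}~(a) given in the excerpt: by tracking that the relevant r-matrix degrees add additively along the factorization, the composition of non-zero morphisms between real simple factors cannot vanish. A careful analysis of the socles and heads of the intermediate convolutions should make this precise.
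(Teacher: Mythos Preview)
Your proposal is correct and follows essentially the same route as the paper: the paper too builds $\Omega$ by composing the injection $\M(s,0)\rightarrowtail\M(s_-,0)\conv\M(s,s_-)$ (dual of Theorem~\ref{thm: canonical surjection}), the $T$-system epimorphism of Proposition~\ref{prop:T}, and the iterated canonical surjection assembled via Lemma~\ref{lem:MN}, invoking \cite[Corollary~3.11]{KKKO14} for non-vanishing. The step you flag as the hard part is exactly what that corollary supplies, applied once to the composition through the $T$-system map and once more after the final canonical surjection---no additional analysis of socles and heads is needed.
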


\begin{proof}
By the dual of Theorem \ref{thm: canonical surjection} and the $T$-system
\eqref{eq: deteminatial seq2}
with $i=i_s$, $u=u_{s_+-1}$ and $v=u_{s-1}$, we have morphisms
\eq
&& \M(s,0) \rightarrowtail \M(s_-,0) \conv \M(s,s_-), \label{eq: 1} \\
&&\M(s,s_-)\conv \M(s_+,s) \twoheadrightarrow
\displaystyle \conv_{y \in I\setminus\{x\}} \M((s_+)^-(y),s^-(y))^{\circ -a_{x,y}}\\\label{eq: 2}
&&\hs{35ex}\simeq \displaystyle \conv_{y \in I_s}
\M((s_+)^-(y),s^-(y))^{\circ -a_{x,y }}.  \nn
\eneq
Here the last isomorphism follows from the fact that
$(s_+)^-(y)=s^-(y)$ for any $y\not\in \{x\}\cup I_s=\set{i_k}{s\le k<s_+}$.

Thus we have a sequence of morphisms
\eqn
&&\M(s,0) \conv \M(s_+,s) \conv A\; \xymatrix{\ar@{>->}[r]^-{\varphi_1}&}
 \M(s_-,0) \conv \M(s,s_-) \conv \M(s_+,s) \conv A  \\
&&\hs{25ex}\xymatrix@C=6ex{\ar@{->>}[r]^-{\varphi_2} &}\M(s_-,0) \conv
\left(\conv_{y \in I_s} \M((s_+)^-(y),s^-(y))^{\circ -a_{x,y}}\right) \conv A.
\eneqn
By \cite[Corollary 3.11]{KKKO14}, the composition
$\varphi \seteq \varphi_2 \circ \varphi_1$ is non-zero.

Since
$A=  \displaystyle\conv_{ \substack{y\in I_s} }  \M(s^-(y),0)^{\circ -a_{x,y} }$,
Theorem \ref{thm: canonical surjection} gives the  morphisms
\begin{align*}
 \M(s,0) \conv \M(s_+,s) \conv A   & \xymatrix@C=4ex{\ar[r]^-{\varphi}&}
   \M(s_-,0) \conv
\left(\conv_{y\in I_s} \M((s_+)^-(y),s^-(y))^{\circ -a_{x,y}}\right) \conv A  \\
&\xymatrix@C=4ex{\ar@{->>}[r]^-{\phi} &}\M(s_-,0)\conv \left(\conv_{y\in I_s} \M((s_+)^-(y),0)^{\circ -a_{x,y}}\right)\simeq \conv_{t; b_{t,s}<0} \M(t,0)^{\circ -b_{t,s}}.
\end{align*}
Here we have used Lemma~\ref{lem:MN} to obtain the morphism $\phi$.
Note that  the module $\conv_{y \in I_s} \M((s_+)^-(y),s^-(y))^{\circ -a_{x,y}}$
is  simple.
By applying \cite[Corollary 3.11]{KKKO14} once again, $\phi \circ \varphi$ is non-zero, and hence it is an epimorphism.
\end{proof}

\Lemma\label{lem: simply linked}
We have $\de(X,\M(s,0))=1$.
\enlemma

\begin{proof}
Since $A$ and $\M(s,0)$ commute and
$\de\big(\M(s_+,s),\M(s,0)\big)=1$ by Proposition~\ref{prop:deMM}, we have
$$\de\big( X,\M(s,0) \big) \le\de\big(\M(s_+,s),\M(s,0)\big)+
\de\big(A,\M(s,0)\big)\le1,$$
by \cite[Corollary 2.17]{KKKO15} and Lemma \ref{lem: d=0}.
If $X$ and $\M(s,0)$ commute, then \eqref{eq: conditions} (a)
 would imply
that ${\rm ch}\left(\conv_{t;b_{st}<0} \M(t,0)^{\circ -b_{st}}\right)$
belongs to $K(R \gmod)\,{\rm ch}(\M(s,0))$.
It contradicts the result in \cite{GLS13} that all the ${\rm ch}(\M(k,0))$'s are
prime at $q=1$.
\end{proof}

\begin{proposition} \label{prop: step2}
The map $\Phi$ factors through $\M(s,0) \conv X$; that is,
\begin{align} \label{eq: surjection 2}
\xymatrix{
 \M(s,0) \conv \M(s_+,s) \conv A  \ar@{->>}[drr]^{\tau}\ar@{->>}[rrrr]^{\Omega} &&&&  \displaystyle \conv_{t; b_{t,s}<0} \M(t,0)^{\circ  -b_{t,s}}. \\
 && \M(s,0) \conv X \ar@{->>}[urr]^{\overline{\Omega}}
}
\end{align}
Here $\tau$ is the canonical surjection.
\end{proposition}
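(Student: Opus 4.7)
The strategy is to show that $\Omega$ and the composition through $\tau$ are both, up to a scalar, the unique surjection from $P := \M(s,0) \conv \M(s_+,s) \conv A$ onto its simple head. Writing $Y := \M(s_+,s) \conv A$, the surjection $\tau$ is induced by the head projection $Y \twoheadrightarrow X$, so $\ker\tau = \M(s,0) \conv K$ with $K := \ker(Y \twoheadrightarrow X)$, and the proposition reduces to establishing the inclusion $\ker\tau \subset \ker\Omega$.

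The first step is to prove that $P$ admits a simple head by invoking Proposition \ref{prop:3simple}(iii) with $L = \M(s,0)$, $M = \M(s_+,s)$, $N = A$. Both $\M(s,0)$ and $\M(s_+,s)$ are real simple (determinantial), so the proposition applies once the identity
\[
\de\bl\M(s,0),\, X\br \;=\; \de\bl\M(s,0),\,\M(s_+,s)\br \;+\; \de\bl\M(s,0),\,A\br
\]
is verified. Its right-hand side evaluates to $1+0$: Proposition \ref{prop:deMM} with $x = u_s$ and $i = i_s$ yields $\de(\M(s,0),\M(s_+,s)) = 1$, while $A$ is a convolution product of cluster modules $\M(s^-(y),0)$ that commute with $\M(s,0)$ by Proposition \ref{prop: commute 1}, giving $\de(\M(s,0), A) = 0$. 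The left-hand side equals $1$ by Lemma \ref{lem: simply linked} (together with the symmetry of $\de$). Hence $P$ has a simple head, which I denote $H$.

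Next I would identify $H$ through two independent realizations. Composing $\tau$ with the simple head surjection $\M(s,0) \conv X \twoheadrightarrow \M(s,0) \hconv X$, which exists since $\M(s,0)$ is real by the simple-head theorem of \cite{KKKO14} recalled in the excerpt, exhibits $\M(s,0)\hconv X$ as a simple quotient of $P$, forcing $H \simeq \M(s,0)\hconv X$ up to a grading shift. On the other hand, $\Omega$ exhibits the simple module $L := \sodot_{b_{t,s}<0} \M(t,0)^{\snconv(-b_{t,s})}$ (simple since the $\M(t,0)$ commute by Proposition \ref{prop: commute 1} and by iterated application of Lemma \ref{lem: d=0}) as a simple quotient of $P$, so $H \simeq L$. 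Since the head of $P$ is simple, $\Hom_{R\gmod}(P, L)$ is one-dimensional once the grading shift is fixed, and the two surjections
\[
\Omega \qquad\text{and}\qquad P \stackrel{\tau}{\twoheadrightarrow} \M(s,0)\conv X \twoheadrightarrow \M(s,0)\hconv X \simeq L
\]
must coincide up to a scalar. In particular $\ker\tau \subset \ker\Omega$, providing the factorization $\overline{\Omega}$.

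The principal obstacle is the bookkeeping of grading shifts: one must ensure that the isomorphisms $\M(s,0)\hconv X \simeq L \simeq H$ occur in compatible degrees so that the uniqueness in $\Hom_{R\gmod}(P,L)$ actually produces a scalar rather than a nontrivial power of $q$, and this propagates through $\overline{\Omega}$. Conceptually, however, the argument hinges only on the $\de$-identities already packaged into Lemma \ref{lem: simply linked} and Proposition \ref{prop:deMM}, plus the general principle that a surjection onto a simple head is unique up to a scalar.
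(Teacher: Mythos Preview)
Your approach is essentially the paper's: the entire proof in the paper consists of verifying the identity $\de(\M(s,0),X)=\de(\M(s,0),\M(s_+,s))+\de(\M(s,0),A)=1$ and then invoking Proposition~\ref{prop:3simple}(iii) to conclude that $P=\M(s,0)\conv\M(s_+,s)\conv A$ has a simple head. Two small remarks. First, in your application of Proposition~\ref{prop:deMM} you should take $x=u_{s_+-1}$ rather than $x=u_s$: with $x=u_s$ one has $xs_{i_s}=u_{s-1}<u_s$, so the hypothesis $xs_i>x$ fails, whereas $u_{s_+-1}\varpi_{i_s}=\la_s$ and $u_{s_+-1}s_{i_s}=u_{s_+}>u_{s_+-1}$. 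Second, your worry about grading shifts is avoidable: once $P$ has a simple head, $\ker\Omega$ is the unique maximal submodule of $P$ (the radical), and $\ker\tau$ is a proper submodule, hence automatically contained in $\ker\Omega$; no identification of Hom-spaces or matching of $q$-powers is needed.
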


\begin{proof}
We have $1=\de\big(\M(s,0), \M(s_+,s)\hconv A\big)$
by Lemma~\ref{lem: simply linked}, and
$$\de\big(\M(s,0),\M(s_+,s)\big)+
\de\big(\M(s,0),A\big)=1$$
by Proposition~\ref{prop:deMM} with $x=u_{s_+ -1}, \ i = i_s$.
Hence $ \M(s,0) \conv \M(s_+,s) \conv A
$ has a simple head by Proposition~\ref{prop:3simple} (iii).
\end{proof}

\begin{proof}[End of the proof of Theorem \ref{thm: main}]
By the arguments above, we have proved the existence of $X$ which satisfies
\eqref{eq: conditions}.
By Proposition \ref{Prop: l2} and \eqref{eq: conditions} (c),
$\M(s,0) \conv X$ has composition
length $2$. Moreover, it has a simple socle and simple head.
On the other hand, taking the dual of \eqref{eq: conditions} (a),
we obtain a monomorphism
$$\sodot_{t; b_{t,s}>0} \M(t,0)^{\snconv  b_{t,s}} \monoto \M(s,0)\conv X$$
in $R\smod$.
Together with \eqref{eq: conditions} (b),
there exists a short exact sequence in $R\gmod$:
$$ 0 \to q^c\sodot_{t;b_{t,s}>0} \M(t,0)^{\snconv b_{t,s}} \to q^{\tLa(\M(s,0), X)}  \M(s,0) \conv X \to \sodot_{t;b_{t,s}<0} \M(t,0)^{\snconv(-b_{t,s})}
\to 0,$$ for some $c\in \Z$. By \cite[Corollary 2.24]{KKKO15}, $c$
must be equal to $1$.

It remains to prove that $X$ commutes with $\M(k,0)$ ($k\not=s$).
For any $k\in\K$, we have
\begin{align*}
\La(\M(k,0),X)&=\La(\M(k,0),\M(s,0)\hconv X)-\La(\M(k,0), \M(s,0))\\
&=\sum_{t;\;b_{t,s}<0}\La(\M(k,0),\M(t,0))(-b_{t,s})-\La(\M(k,0), \M(s,0))
\end{align*}
and
\begin{align*}
\La(X,\M(k,0))&=\La(X\hconv\M(s,0),\M(k,0))-\La(\M(s,0), \M(k,0))\\
&=\sum_{t;\;b_{t,s}>0}\La(\M(t,0),\M(k,0))b_{t,s}-\La(\M(s,0), \M(k,0)).
\end{align*}
Hence we have
\begin{align*}
2\de(\M(k,0),X)&  =-2\de(\M(k,0),\M(s,0)) -
\sum_{t;\;b_{t,s}<0}\La(\M(k,0),\M(t,0))b_{t,s} \\
& \hspace{23.5ex}-\sum_{t;\;b_{t,s}>0}\La(\M(k,0),\M( t  ,0))b_{t,s} \\
& =- \sum_{1 \le t \le r}\La(\M(k,0),\M(t,0))b_{t,s} \\ & = 2\delta_{k,s},
\end{align*}
We conclude that $X$  commutes with $M(k,0)$ if $k\not=s$.
Thus we complete the proof of Theorem~\ref{thm: main}.
\end{proof}

As a corollary we obtain the following answer to
the conjecture on the cluster monomials.
\Th
{\rm Conjecture~\ref{conj:intro}} in {\rm Introduction} is true, i.e.,
every cluster variable in  $A_{q}(\n(w))$ is a member of the upper global basis.
\enth
Theorem \ref{thm: main} also implies \cite[Conjecture 12.7]{GLS11} in the refined form as follows:

\begin{corollary}
$\Z[q^{\pm1/2}]\tens_{\Z[q^{\pm1}]} A_{q}(\n(w))_{\Z[q^{\pm1}]}$ has a quantum cluster algebra structure associated with the initial quantum seed
$[\mathscr{S}]=( \{ q^{-(d_i,d_i)/4}\D(i,0) \}_{1 \le i \le r}, L, \wB )$; i.e.,
$$\Z[q^{\pm1/2}]\tens_{\Z[q^{\pm1}]} A_{q}(\n(w))_{\Z[q^{\pm1}]} \simeq \mathscr{A}_{q^{1/2}}([\Seed]).$$
\end{corollary}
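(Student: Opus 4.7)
The plan is to deduce this corollary directly from Theorem~\ref{thm: main} together with the monoidal categorification machinery of \cite{KKKO15} recalled in Theorem~\ref{th:main}. First, by Theorem~\ref{thm: main}, the pair $(\{\M(k,0)\}_{1\le k\le r},\wB)$ is admissible in $\shc_w$. By construction (Proposition~\ref{prop: commute 1} and the definitions in \S5.2), the associated quadruple
$$\Seed=(\{\M(k,0)\}_{1\le k\le r},\,-\La,\,\wB,\,D)$$
is a quantum monoidal seed in $\shc_w$, and
$$[\Seed]=\bl\{q^{-(d_i,d_i)/4}[\M(i,0)]\}_{1\le i\le r},\,L,\,\wB\br$$
under the identification $\La_{i,j}=\La(\M(i,0),\M(j,0))$ and $\lambda_{i,j}=-\La_{i,j}$.

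Next, I would invoke the theorem of Geiss--Leclerc--Schröer (\cite[Theorem 12.3]{GLS11}) recalled in \S5.1, which asserts
$$\Q(q^{1/2})\otimes_{\Z[q^{\pm1/2}]}\mathscr A_{q^{1/2}}([\Seed])\simeq A_{q^{1/2}}(\n(w)),$$
so that the hypothesis of Theorem~\ref{th:main} is satisfied (note that $\Z[q^{\pm1/2}]\tens_{\Z[q^{\pm1}]}K(\shc_w)\simeq A_{q^{1/2}}(\n(w))$ follows from Kimura's result \cite{Kimu12} together with the Varagnolo--Vasserot/Rouquier identification of the upper global basis with the self-dual simple $R$-modules). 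Applying Theorem~\ref{th:main} then tells us that $\shc_w$ is a monoidal categorification of $A_{q^{1/2}}(\n(w))$; in particular, $\Seed$ admits successive mutations in every exchangeable direction, and each mutated quantum monoidal seed yields (up to the normalization $q^{-(\wt(M),\wt(M))/4}[M]$) a quantum seed of the quantum cluster algebra $\mathscr A_{q^{1/2}}([\Seed])$.

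Finally, to upgrade the isomorphism from the $\Q(q^{1/2})$-level to the integral $\Z[q^{\pm1/2}]$-level, I would argue as follows. Every quantum cluster variable obtained by iterated mutation from $[\Seed]$ coincides, up to a half-integral power of $q$, with the class of a self-dual real simple module in $\shc_w$, hence lies in $\Z[q^{\pm1/2}]\tens_{\Z[q^{\pm1}]}A_q(\n(w))_{\Z[q^{\pm1}]}$; this gives the inclusion $\mathscr A_{q^{1/2}}([\Seed])\hookrightarrow\Z[q^{\pm1/2}]\tens_{\Z[q^{\pm1}]}A_q(\n(w))_{\Z[q^{\pm1}]}$. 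Conversely, by Kimura's theorem $A_q(\n(w))_{\Z[q^{\pm1}]}$ is spanned as a $\Z[q^{\pm1}]$-module by $\B^\up(A_q(\n(w)))$, and each such element is (up to a power of $q^{1/2}$) the class of a self-dual simple module, which by the monoidal categorification property is a quantum cluster monomial, hence lies in $\mathscr A_{q^{1/2}}([\Seed])$. Combining the two inclusions yields the desired equality.

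The main obstacle in this deduction is the step that converts the abstract existence of mutations (guaranteed by admissibility and Theorem~\ref{th:main}) into the identification of \emph{every} cluster monomial with a member of the upper global basis, which then forces the integral form $A_q(\n(w))_{\Z[q^{\pm 1}]}$ to be exactly spanned by cluster monomials and their frozen-variable inverses. This is precisely the content of the monoidal categorification statement, so once Theorem~\ref{thm: main} is in hand the corollary is essentially a matter of tracking the $\Z[q^{\pm1/2}]$-integral structures on both sides; the genuine work has already been done in establishing admissibility.
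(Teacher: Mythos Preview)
Your argument for the inclusion $\mathscr A_{q^{1/2}}([\Seed])\subset\Z[q^{\pm1/2}]\tens_{\Z[q^{\pm1}]}A_q(\n(w))_{\Z[q^{\pm1}]}$ is correct and is exactly what the monoidal categorification buys: every cluster variable is, up to a half-integral power of $q$, the class of a self-dual simple module and hence lies in the integral form.

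However, your argument for the reverse inclusion contains a genuine error. You write that each element of $\B^\up(A_q(\n(w)))$ ``by the monoidal categorification property is a quantum cluster monomial''. This is false, and in fact the implication runs the other way: the monoidal categorification (equivalently, Conjecture~\ref{conj:intro}, now a theorem) asserts only that every cluster monomial lies in the upper global basis, \emph{not} that every upper global basis element is a cluster monomial. Indeed the introduction of the paper recalls Leclerc's imaginary elements of $\B^\up$, which are precisely upper global basis elements that cannot be cluster monomials. So this step does not go through.

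The correct route to the reverse inclusion does not pass through arbitrary elements of $\B^\up$. By \cite{GLS11}, the elements $\D(k,k_-)$ (the dual PBW generators, up to the normalization by a power of $q^{1/2}$) are themselves quantum cluster variables, obtained from the initial seed by an explicit sequence of mutations; hence they lie in $\mathscr A_{q^{1/2}}([\Seed])$. By \cite{Kimu12} the ordered monomials in the $\D(k,k_-)$ form a $\Z[q^{\pm1}]$-basis of $A_q(\n(w))_{\Z[q^{\pm1}]}$ (the dual PBW basis, unitriangularly related to the upper global basis). Since $\mathscr A_{q^{1/2}}([\Seed])$ is a ring containing each $\D(k,k_-)$ up to a power of $q^{1/2}$, it contains all such monomials, and hence $\Z[q^{\pm1/2}]\tens_{\Z[q^{\pm1}]}A_q(\n(w))_{\Z[q^{\pm1}]}\subset\mathscr A_{q^{1/2}}([\Seed])$.
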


\bibliographystyle{amsplain}

\end{document}